\newcommand\NN{\mathbb{N}} 
\newcommand\ZZ{\mathbb{Z}} 
\newcommand\QQ{\mathbb{Q}} 
\newcommand\RR{\mathbb{R}} 
\newcommand\CC{\mathbb{C}} 
\newcommand\eps{\varepsilon} 
\newcommand\fleche{\longrightarrow} 
\DeclareMathOperator{\Ker}{Ker}
\DeclareMathOperator{\ord}{ord}
\DeclareMathOperator{\Cond}{Cond}
\DeclareMathOperator{\Coker}{Coker}
\DeclareMathOperator{\Frac}{Frac}
\DeclareMathOperator{\Isom}{Isom}
\DeclareMathOperator{\tr}{tr}
\DeclareMathOperator{\Tr}{Tr}
\DeclareMathOperator{\Ht}{ht}
\DeclareMathOperator{\Hom}{Hom}
\DeclareMathOperator{\Spec}{Spec}
\DeclareMathOperator{\Spf}{Spf}
\DeclareMathOperator{\Gal}{Gal}
\DeclareMathOperator{\Fil}{Fil}
\DeclareMathOperator{\im}{Im}
\DeclareMathOperator{\Ha}{Ha}
\DeclareMathOperator{\Frob}{Frob}
\DeclareMathOperator{\Diag}{Diag}
\DeclareMathOperator{\End}{End}
\DeclareMathOperator{\Res}{Res}
\DeclareMathOperator{\Deg}{Deg}
\DeclareMathOperator{\GL}{GL}
\DeclareMathOperator{\HT}{HT}
\DeclareMathOperator{\ha}{ha}
\DeclareMathOperator{\Sum}{S}
\DeclareMathOperator{\Spm}{Spm}
\DeclareMathOperator{\wt}{wt}
\DeclareMathOperator{\Ext}{Ext}
\definecolor{cqcqcq}{rgb}{0.752941176471,0.752941176471,0.752941176471}
\definecolor{ffqqqq}{rgb}{0.333333333333,0.333333333333,0.333333333333}
\definecolor{cqcqcq}{rgb}{0.752941176471,0.752941176471,0.752941176471}
\definecolor{qqqqff}{rgb}{0.,0.,1.}
\definecolor{cqcqcq}{rgb}{0.752941176471,0.752941176471,0.752941176471}
\definecolor{ffqqqq}{rgb}{1.,0.,0.}
 \def\dar[#1]{\ar@<2pt>[#1]\ar@<-2pt>[#1]}
 \def\tar[#1]{\ar@<4pt>[#1]\ar@<0pt>[#1]\ar@<-4pt>[#1]}
\theoremstyle{definition} 
\newtheorem{definen}{Definition}[section]
\newtheorem{defin}[definen]{Definition}
\theoremstyle{plain} 
\newtheorem{theor}[definen]{Theorem}
\newtheorem{lemma}[definen]{Lemma}  
\newtheorem{prop}[definen]{Proposition}
\newtheorem{cor}[definen]{Corollary}
\theoremstyle{remark} 
\newtheorem{rema}[definen]{Remark}
\begin{document}

\title{Families of coherent PEL automorphic forms.}
\author{Valentin Hernandez}
\date{}
\address{Bureau iC1, LMO, Orsay, France}
\email{valentin.hernandez@math.cnrs.fr}
\maketitle

\tableofcontents 

\section{Introduction}
%
%

Families of automorphic forms have proven to be a great tool in number theory in the last 30 years. Their construction dates back to Hida, \cite{Hida}, who first constructed families of 
\textit{ordinary} modular forms (for the group $\GL_2$). This construction was then improved by Coleman in the 1990's, for \textit{overconvergent, finite slope}, modular forms and rigid spaces 
over $\QQ_p$ (whereas Hida was able to construct his families integrally). One great and yet surprising achievement was the construction soon after by Coleman and Mazur of 
one rigid space, the Eigencurve, which parametrizes all possible families of overconvergent, finite slope modular forms, i.e. gluing all the families previously constructed.

Before motivating the construction of this spaces, let us say that these constructions have seen many generalisations in different directions. First dealing with level outside $p$ and quaternion algebra by Buzzard \cite{Buz}, or for other algebraic groups, unitary groups, compact at infinity by Chenevier \cite{Che1}, and to more general groups by \cite{Ash-Stevens} and \cite{urb} using families of (generalised) modular symbols. More recently, \cite{AIP} have been able to construct families and eigenvarieties for Siegel modular forms using families of automorphic sheaves on the Siegel moduli space. These families of sheaves live in the rigid world, they are Banach sheaves on certain strict neighborhoods of the ordinary locus, that interpolates (in some sense) the classical automorphic vector bundles. This strategy has been extended by \cite{Kassaei,BraS} in the 
case of Shimura curves, \cite{AIP2} for Hilbert modular forms, and \cite{Brasca} for PEL Shimura varieties for which the ordinary locus in non empty.

This spaces are particularly interesting ; through their local properties (see for example \cite{BC2} and \cite{CH} for applications to the Bloch-Kato conjecture, and to constructing Galois representation associated to automorphic representations), but also for their global geometry (see \cite{LWX} and the application to the parity conjecture), which remains completely mysterious in general.

In all cases, the construction goes by constructing huge Banach spaces $M$ together with an action of a (commutative) Hecke algebra $\mathbb T$ containing a distinguished compact operator $U$.
With this data, if $M$ is a projective Banach space, we can construct following \cite{ColBanach} a rigid space $\mathcal E$ which parametrises Hecke eigensystems for $\mathbb T$ acting on $M$, for which the eigenvalue for $U$ is non-zero.
In \cite{Ash-Stevens} and \cite{urb}, theses spaces $M$ are the sections on Shimura varieties of $p$-adic overconvergent modular symbols, which interpolate the etale 
cohomology of these varieties. In \cite{AIP} and its generalisations, one first construct varying Banach automorphic sheaves $\omega^{\kappa\dag}$, where $\kappa$ is a $p$-
adic weight, and take the sections of theses sheaves on strict neighborhoods of the ordinary locus. Theses spaces interpolate the coherent cohomology, but are constructed on 
PEL Shimura varieties (one needs the moduli interpretation), and need the non emptyness of the ordinary locus. Indeed, one central tool to construct $\omega^{\kappa\dag}$ is 
the theory of the canonical subgroup and its overconvergence (see \cite{Lubin,Far2} for example). 
In this article, we mainly remove the ordinariness assumption. Let $(G,X)$ be a PEL Shimura datum\footnote{We exclude factors of type D}, and $p$ a prime. 
Our main result is the following

\begin{theor}
Suppose that $G$ is unramified at $p$, and let $K^p$ be a level outside $p$, hyperspecial outside a finite set of primes $S$. 
Let $I$ be a Iwahori sugbroup at $p$ and $K = K^pI$.
There exists rigid spaces $\mathcal E$ and $\mathcal W$, called respectively the eigenvariety and the weight space, together with a locally finite map
\[ w : \mathcal E \fleche \mathcal W,\]
and $\mathbb T = \mathbb T^{Sp}\otimes A(p) \fleche \mathcal O(\mathcal E)$ such that, for all $\kappa \in \mathcal W$, $w^{-1}(\kappa)$ is in bijection with the eigenvalues for the Hecke algebra $\mathbb T$ acting on weight $\kappa$, overconvergent, locally analytic modular forms for $G$
which are finite slope for some $U \in \mathcal A(p)$. Here $\mathcal A(p)$ is a (commutative) Hecke algebra at $p$
\todo{Check si tout $\mathcal A(p)$ agit de manière compacte : non, mais de pente finie, oui (cf. Chenevier1 (these) 4.7.1} 
and $\mathbb T^{Sp}$ is the unramified Hecke algebra for $G$ outside $Sp$. $\mathcal E$ and $\mathcal W$ are equidimensionnal of the same dimension.
Moreover there is a Zariski dense subset $\mathcal Z \subset \mathcal E$ such that all $z\in \mathcal Z$ coïncide with a \textit{classical} Hecke eigensystem in the previous identification.
\end{theor}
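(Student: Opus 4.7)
The plan is to follow the AIP/Brasca strategy of constructing an eigenvariety via families of overconvergent automorphic sheaves, the key novelty being that since the ordinary locus of the Shimura variety may be empty we cannot use the classical Lubin--Fargues canonical subgroup theory; instead, the construction takes place on a strict neighborhood of the $\mu$-ordinary (Newton) stratum. I would first construct the weight space $\mathcal W$ as the rigid analytic space of continuous characters of $T(\ZZ_p)$, where $T$ is a maximal torus in a Borel of $G_{\QQ_p}$, noting that its dimension matches the expected dimension of $\mathcal E$.

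The geometric core of the proof is the construction, for every sufficiently small $v>0$, of a strict neighborhood $\mathcal X(v)$ of the $\mu$-ordinary locus inside the rigid analytic Shimura variety over which the universal $p$-divisible group admits a canonical filtration adapted to the PEL datum. Such a filtration can be built using generalized Hasse invariants measuring how far each graded piece sits from the corresponding step of the $\mu$-ordinary Newton polygon; this is also where the exclusion of type $D$ factors is used. Using this filtration one trivialises, up to a controlled error, the Hodge filtration along it, and then for each family of weights $\kappa$ over an affinoid $\mathcal U\subset\mathcal W$ one constructs an overconvergent Banach sheaf $\omega^{\kappa\dag}$ on $\mathcal X(v)$ by locally analytic induction of $\kappa$. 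The space $M_{\mathcal U,v}=H^0(\mathcal X(v),\omega^{\kappa\dag})$ is a projective Banach $\mathcal O(\mathcal U)$-module on which $\mathbb T^{Sp}\otimes\mathcal A(p)$ acts, and one checks that a distinguished $U\in\mathcal A(p)$ acts completely continuously by improving overconvergence, i.e.\ its associated correspondence moves points strictly toward the $\mu$-ordinary stratum.

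With this Banach datum one applies Buzzard's eigenvariety machinery \cite{Buz}, adapted as in \cite{AIP}, to obtain local pieces of $\mathcal E$ that glue as $\mathcal U$ varies into the global eigenvariety with its locally finite weight map $w$; equidimensionality of $\mathcal E$ and $\mathcal W$ of the same dimension is then formal. To exhibit the Zariski dense set $\mathcal Z$ of classical points, I would prove a small-slope classicality theorem in the spirit of Coleman--Kassaei: a locally analytic finite-slope eigenform of algebraic dominant weight $\kappa$ whose $\mathcal A(p)$-slopes are small compared to $\kappa$ is classical. Density of $\mathcal Z$ then follows from the standard accumulation argument on the weight space. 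The hardest step will be the construction of the canonical filtration and the associated Banach sheaves on $\mathcal X(v)$ in the non-ordinary setting, requiring a PEL-type generalization of the Hodge--Newton filtrations and the analytic continuation estimates that, in the AIP ordinary setting, are direct consequences of the existence and overconvergence of the classical canonical subgroup.
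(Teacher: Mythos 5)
Your proposal correctly identifies the AIP/Brasca strategy and the need to replace the ordinary locus by the $\mu$-ordinary one, but it misses the two technical obstructions that this paper was written to overcome, and the steps where you assert that everything works are precisely the steps that fail.

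First, you propose to build the canonical filtration on a strict neighborhood $\mathcal X(v)$ of the $\mu$-ordinary locus \emph{via} generalized Hasse invariants and Hodge--Newton estimates. As the paper explains, this is exactly the strategy of \cite{Her2,Her3}, and it only works when $p$ is large relative to the PEL data: one needs $p$ big enough to compare the $\mu$-ordinary Hasse invariant with the degrees of the subgroups. Since the theorem claims no lower bound on $p$ (indeed the motivating application is $p=2$), this route cannot be followed as stated. The paper instead circumvents the existence problem by \emph{imposing} the flag as part of the moduli problem: it works on integral models $\mathfrak X_0(p^n)^{tor}$ with $\Gamma_0(p^n)$-type level at $p$ (constructed by normalization following \cite{LanSigma}), where the universal subgroups are tautologically given, and then proves (Theorem \ref{thrfiltcan} and Definition \ref{defmucan}) that on a strict neighborhood of the $\mu$-canonical locus these subgroups behave like, and in fact are, the canonical filtration.

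Second, and more seriously, your plan asserts that $M_{\mathcal U,v}=H^0(\mathcal X(v),\omega^{\kappa\dag})$ is simultaneously a projective $\mathcal O(\mathcal U)$-module and carries a compact operator $U$ improving overconvergence. The paper shows that no single family of strict neighborhoods enjoys both properties. On the degree-cut opens $\mathcal X(\underline\eps)$ the Bijakowski dynamics of the $U_p$-correspondences \cite{Bijmu} do give a compact $U$, but projectivity of global sections is unavailable because one cannot prove the needed vanishing of higher coherent cohomology there. On the Hasse-invariant-cut opens $\mathcal X(\ha\le v)$, projectivity holds because the minimal compactification is affinoid in rigid fiber (and Lan's vanishing theorem, Appendix \ref{AppA}, kills higher cohomology), but one cannot even show that $U$ stabilizes such an open, so compactness is out of reach a priori. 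The whole of section \ref{sect9} is devoted to reconciling these: one intertwines the two bases of neighborhoods, represents cohomology by perfect complexes \cite{urb}, takes finite-slope parts, and uses a link argument to transfer Fredholm series between the two families. Only then does one have a legitimate input for the Coleman--Buzzard machine. Without this step, the eigenvariety construction does not get off the ground, and this is the central new content of the theorem.

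A smaller but real point: the classicality step needs two separate ingredients, namely a BGG-type analytic continuation result (Proposition \ref{prop82}, via Jones's resolution) \emph{and} Bijakowski's degree-increasing classicity theorem (Theorem \ref{thr83}); your ``Coleman--Kassaei-type small slope'' claim does not distinguish them, and the second is nontrivial in the $\mu$-ordinary setting.
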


Actually, we can only construct families at unramified primes, but we can weaken a bit the assumptions on $G$ and $p$, by only constructing deformations in the directions of primes above $p$ which are unramified for $G$, see remark \ref{remramif}. 

We now explain how we prove this theorem. A first step in generalising the construction of \cite{AIP} to the case when the ordinary locus is empty is to find a subsitute for the ordinary locus and the canonical subgroup. A good substitute is to consider the $\mu$-ordinary locus (see \cite{Wed1}, \cite{Moo}, and also \cite{Bijmu}), and the canonical filtration, which exists on it, and overconverges on strict neighborhoods (see \cite{Her2}). This strategy has been followed in \cite{Her3} for $U(2,1)$ when $p > 2$. Unfortunately, the results of \cite{Her2} rely on a stronger hypothesis on $p$ : being big enough (always $p \neq 2$ and for general unitary group for example the bound can be very large). In this article we choose another strategy to avoid any hypothesis on $p$, and use (integral) Shimura varieties with higher (Iwahori-like) level at $p$, constructed by normalisation in \cite{LanSigma}. On these Shimura varieties naturally live flags of finite flat subgroups, and if we restrict to strict neighborhoods of the $\mu$-ordinary locus (more precisely what we call the $\mu$-canonical locus, see definition \ref{defmucan}), theses groups behave as the canonical filtration (and actually coincides with it when we know it exists, see Theorem \ref{thrfiltcan}). In particular, we can follow the construction of \cite{AIP} and \cite{Her3} for all $p$ with theses groups, and construct automorphic banach sheaves by introducing level at $p$. All of this rely on the fact that we can find a basis of strict neighborhood $X(deg \geq N- \eps)$ where our subgroups have \textit{high degree}, and thus are well behaved. 

In the setting where the ordinary locus is non empty, by results of Fargues \cite{Far2} we can relate degree and the Hasse invariant. In our situation we also have an Hasse 
invariant (by \cite{GN}; see also \cite{Her1} and Definition \ref{def59}), but we can relate it to the degrees, using \cite{Her2}, only if $p$ is big enough... Thus we chose another 
strategy : we have a second basis of strict neighborhoods $X(\ha \leq v)$ where the (valuation of) Hasse invariant is small enough (it is invertible on the $\mu$-ordinary locus), 
and we use these two basis of neighborhoods. Using the degree function, we can control our (call them canonical) subgroups easily, and thus as it was already remarked in 
\cite{Bijmu}, the action of the Hecke operators. In particular, we can check that we have an operator $U$ which acts as a compact operator on sections of our sheaves over $X(\deg \geq N-\eps)$. 
Unfortunately, we can't prove that the global sections over the opens $X(\deg \geq N-\eps)$ of the automorphic Banach sheaves are projective, thus we can't a priori use Coleman-
Buzzard's construction. On the other basis $(X(\ha \leq v))_{v > 0}$, we can't prove even that our expected-to-be compact operator $U$ (which generalise the operator $U_p$ on the modular curve) will stabilise each neighborhood (and thus worse, that it acts compactly on 
sections on $X(\ha \leq v)$), but using that $X(\ha \leq v)$ is affinoïd in rigid fiber, we can prove that global section of our automorphic Banach sheaves on $X(\ha \leq v)$ are 
projective. Here to be precise we need to work on both the toroïdal and minimal compactifications of \cite{LanSigma}, the toroïdal compactification being needed to construct the 
automorphic sheaves, and the minimal to get the affinoïd result, together with a result of vanishing of higher cohomology due to Lan, see Appendix \ref{AppA}.
Thus we need to relate both these sections on the two basis of neighborhoods. Fortunately we can and do in section \ref{sect9} using complexes computing higher cohomology of our Banach sheaves, the action of the Hecke operators on these complexes, and
that we can always intertwine these opens,
\[ X(deg \geq N - \eps) \supset X(\ha \leq v) \supset X(deg \geq N - \eps') \supset X(\ha \leq v') \supset X^{\mu-can},\]
where $\eps'$ and $v'$ are well chosen small enough. Passing to finite slope parts, and using results of \cite{urb}, we get that $U$ acts as a compact operator on the finite slope part of sections of our Banach automorphic sheaves on any of our strict neighborhoods, and that theses spaces are projective (in some sense). Thus we can apply Coleman-Buzzard's machinery and get the theorem. 

As an application of these results, we can extend the result on the Bloch-Kato conjecture we had in \cite{Her3}, and prove the following. Let $E$ be a quadratic imaginary number field, and 
\[ \chi : \mathbb A_E^\times/E^\times \fleche \CC^\times,\]
which is polarised, meaning that $\chi^\perp:=(\chi^c)^{-1} = \chi |.|^{-1}$. Denote by $L(\chi,s)$ its $L$-function. If $p$ is a prime, denote
\[ \chi_p : \Gal(\overline E/E) \fleche \overline{\QQ_p}^\times,\]
the $p$-adic Galois character associated to $\chi$, and denote $H^1_f(E,\chi_p)$ the Bloch-Kato-Selmer group of $\chi_p$ (see \cite{BC2} chapter 5).
Then we prove

\begin{theor}
Let $p$ be a prime, unramified in $E$. If $L(\chi,0) = 0$ and $\ord_{s=0}L(\chi,s)$ is even, then
\[	H^1_f(E,\chi_p) \neq 0.\]
\end{theor}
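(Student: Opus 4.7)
\medskip

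\noindent\textbf{Proof proposal.}
The plan is to follow the Bellaïche--Chenevier strategy (as already implemented in \cite{BC2} and in my previous work \cite{Her3} under the hypothesis $p>2$), replacing the eigenvariety used there by the one produced by the first theorem of the paper. The crucial gain is that the new eigenvariety is available for every prime $p$ unramified in $E$, which removes the hypothesis $p>2$ from the conclusion.

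More precisely, first I would choose an auxiliary unitary group: a definite unitary group $U(3)$ attached to $E$, or better a group of type $U(2,1)$, so that endoscopic (or CAP) transfer produces a non-tempered hermitian automorphic representation $\pi_\chi$ whose Galois representation is (up to semi-simplification) a direct sum $1 \oplus \chi_p \oplus \chi_p|.|$ (after a twist). The parity assumption $\ord_{s=0}L(\chi,s)$ even is exactly the sign condition ensuring the existence of such a globally coherent $\pi_\chi$ with the prescribed infinity type; the vanishing $L(\chi,0)=0$ then forces the multiplicity at $s=0$ to be odd on the functional-equation side, and hence the existence of a \emph{non-tempered} endoscopic contribution in the relevant cohomology of the Shimura variety for $U(2,1)$ or $U(3)$.

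Second, I would show that $\pi_\chi$ gives a classical point $x_0$ on the eigenvariety $\mathcal E$ of the previous theorem, for a well-chosen refinement (accessible refinement in the sense of \cite{BC2}), and that $x_0$ is not isolated: by the equidimensionality of $\mathcal E$ and by standard weight-space arguments, one finds an irreducible component through $x_0$ of positive dimension containing a Zariski dense set $\mathcal Z$ of classical, tempered points. Along this family, the interpolated Galois pseudocharacter specialises at each point of $\mathcal Z$ to an irreducible Galois representation; at $x_0$ it specialises to the above reducible representation. A Ribet-type/lattice argument on the generic lattice of this family then produces a non-split extension, and filtering by weights, exactly the extension class in $H^1(E,\chi_p)$ that is required.

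The main work, and the place where the local conditions of Bloch--Kato must be checked, is at the prime $p$: one must show that the produced extension is \emph{crystalline} at all primes of $E$ above $p$, and unramified outside a controlled finite set. For places not above $p$ this is a standard argument using that we are interpolating unramified Hecke eigensystems, coming from the ``$\mathbb T^{Sp}$ outside $Sp$'' part in the theorem. For places above $p$, this is the hardest step, and it rests on the trianguline/refined local structure of the Galois representations carried by $\mathcal E$ along the family, together with the fact that the refinement chosen at $\pi_\chi$ is non-critical: one then invokes the Kisin/Bellaïche--Chenevier result that the eigenvariety carries a triangulation which, at the weights involved, forces the extension produced above to be crystalline at $p$. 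Combining these local checks with the global extension yields a non-zero class in $H^1_f(E,\chi_p)$, proving the theorem.
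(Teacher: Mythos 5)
Your overall strategy — deforming a non-tempered endoscopic point (Rogawski's $\pi^n$) on the eigenvariety for $U(2,1)_{E/\QQ}$ and extracting an extension class by the Bella\"iche--Chenevier GMA formalism, with crystallinity at $p$ coming from the trianguline structure of the family — is indeed the approach of the paper, and you correctly identify that the new eigenvariety (available for $p$ inert and $p=2$) is what lets one drop the $p\neq 2$ hypothesis of \cite{Her3} and \cite{Rub}.

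However, there are two concrete gaps. First, the statement you are proving has \emph{no} hypothesis that $p\nmid\Cond(\chi)$, and this is the other half of the improvement the paper makes over \cite{Her3}. Your proposal silently assumes $\chi_p$ is crystalline at $p$ (``forces the extension produced above to be crystalline at $p$''), which fails when $p\mid\Cond(\chi)$. Handling that case is not a routine modification: one needs the point $y$ to appear on the eigenvariety at all, which requires sheaves of overconvergent forms with \emph{nebentypus} at $p$ (built from $\Gamma_1(p^n)$-level structures), the correct finite-order twist $\chi'$ of the $p$-adic weight, a notion of \emph{refinement} for De Rham rather than crystalline $(\varphi,\Gamma)$-modules (matching accessible refinements under a now-ramified local Langlands correspondence), and a replacement for the usual ``$F_1\neq 1$ implies crystalline'' local check --- in the paper this is the content of the ``nebentypus'' subsection, the refinements-of-De-Rham subsection, and Corollary \ref{cor1018}/Theorem \ref{theorB5}, none of which appear in your sketch.

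Second, the step where you propose to deduce non-triviality of the constructed extension from a Zariski-dense set $\mathcal Z$ of classical points ``at which the Galois representation is irreducible'' is not what the paper does, and it is a nontrivial claim to justify for $U(2,1)$ (classical points can themselves be endoscopic). The paper instead shows the total reducibility ideal $I_{tot}\subset\mathcal O_{\mathcal E,y}$ is nonzero by a local, purely $p$-adic argument: along $V(I_{tot})$ the pseudocharacter splits into three characters, their Sen weights are pinned down by the trianguline structure, and hence the differences $\wt(\delta_i)-\wt(\delta_j)$ are constant modulo $I_{tot}$; since these weight differences vary freely on the weight space, $I_{tot}\neq 0$. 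This is Proposition \ref{proppoids}, and the GMA chase in Theorem \ref{corBK} (using $H^1_f(E,\eps)=0$ in addition to the hypothetical vanishing of $\Ext_T(1,\overline\chi)$) then forces $I_{tot}=0$, a contradiction. You should replace the vague ``Ribet-type/lattice argument + generic irreducibility'' step with this weight-variation argument, which both avoids an unproved irreducibility input and interfaces cleanly with the $H^1_f$-membership proof.
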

In particular we remove the hypothesis that $p \neq 2$ when $p$ is inert in $E$ and $p \notdivides \Cond(\chi)$ that were in \cite{Her3}. Also, a version of the previous theorem is well-known to be due to Rubin (\cite{Rub}) but there it is necessary that $p \neq 2$ (and $p \neq 3$ if $E = \QQ(j)$, which we unfortunately also need to assume...). In particular, we get new cases of the Bloch-Kato conjecture when $p = 2$ is unramified in $E$ !

Of course this result relies heavily, as in \cite{Her3}, on works of Bellaïche and Chenevier, \cite{BC1} and \cite{BC2}. As in this last reference, we can even construct independent classes as predicted by the Bloch-Kato conjecture, under some assumption on the eigenvariety $\mathcal E$ for $U(2,1)$. The idea is to consider a specific \textit{Arthur point} $y \in \mathcal E$ (known to exists by results of Rogawski and a calculation of cohomology in \cite{Her3}), see Propositon \ref{propx0}. Denote by $t$ the dimension of the tangent space of $\mathcal E$ at $y$. As $\mathcal E$ is of dimension $3$, we have $t \geq 3$. There exists a subspace
\[ Ext_T(1,\overline{\chi}) \subset H^1_f(E,\chi_p),\]
and denote its dimension by $h$. The assumptions of $\chi$ implies that $\chi_\infty(z) = z^a \overline{z}^{1-a}$, for all $z \in \CC$, with $a \in \ZZ_{\geq1}$ (up to change $\chi$ by its conjugate $\chi^c$\footnote{Sometimes we denote it by $\overline\chi$ instead of $\chi^c$}). We can prove the following,

\begin{theor}
Recall that we assume $L(\chi,0) = 0$ and $\ord_{s=0}L(\chi,s)$ is even. Then, if $a \geq 2$ and $p$ is split,
\[ t \leq h\frac{h+3}{2} + 1.\]
\end{theor}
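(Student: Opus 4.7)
The strategy follows the approach of Bella\"iche--Chenevier in \cite{BC2}, transported to the coherent eigenvariety $\mathcal E$ for $U(2,1)$ constructed in Theorem~1.1 above. Write $A$ for the completed local ring of $\mathcal E$ at $y$, with maximal ideal $\mathfrak m$ and residue field $k = \kappa(y)$, so that $t = \dim_k \mathfrak m/\mathfrak m^2$. Using the Zariski density of the classical points $\mathcal Z$ together with the Galois pseudocharacters carried by classical automorphic forms, one first interpolates over $\mathcal E$ a continuous three-dimensional pseudocharacter $T : G_{E,Sp} \to A$; by the choice of the Arthur point $y$ (Proposition~\ref{propx0}), its specialisation at the closed point of $\Spec A$ is the sum of three pairwise distinct characters, schematically $1 \oplus \overline\chi_p \oplus \chi_p\cdot\epsilon^{-1}$ (with the precise twist dictated by the polarisation $\chi^\perp = \chi$). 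The pairwise distinctness uses $a \geq 2$ and $p$ split, which prevents any collision in Hodge--Tate--Sen weights.

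Once $T$ is residually multiplicity-free in the sense of \cite{BC2}, the GMA machinery provides a representation valued in a generalised matrix algebra of type $(1,1,1)$
\[
\mathcal R \;=\; \begin{pmatrix} A & B_{1,2} & B_{1,3} \\ B_{2,1} & A & B_{2,3} \\ B_{3,1} & B_{3,2} & A \end{pmatrix},
\]
with multiplication maps $B_{i,j}\otimes_A B_{j,k} \to B_{i,k}$, and canonical injections $B_{i,j}/\mathfrak m B_{i,j} \hookrightarrow \Ext^1_{G_{E,Sp}}(\rho_j,\rho_i)$. The key technical step, and the one I expect to be the main obstacle, is to identify these Ext classes with elements of the Bloch--Kato Selmer groups: away from $p$ this is straightforward from the unramifiedness of the representations on $\mathcal E$, while at the primes above $p$ (which, since $p$ is split, are two independent copies of $\QQ_p$) one must show that the trianguline/finite-slope condition inherited from being a point on the eigenvariety, combined with the non-criticality ensured by $a \geq 2$, forces the extensions to be crystalline. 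This ``trianguline implies crystalline'' argument has to be carried out in the coherent overconvergent setting of the present paper rather than in the completed cohomology setting used for the compact-at-infinity case of \cite{BC2}.

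Granted the identification of Step~3, the polarisation $\chi^\perp = \chi$ and its Galois-theoretic translation $\rho \simeq \rho^{c,\vee}\otimes\epsilon^{-2}$ force $B_{1,2}/\mathfrak m B_{1,2}$ and $B_{2,3}/\mathfrak m B_{2,3}$ to be canonically identified with the same subspace $\Ext_T(1,\overline\chi) \subset H^1_f(E,\chi_p)$ of dimension $h$, while the refinement in the direction of the ordinary Arthur parameter forces the lower-triangular entries $B_{2,1}/\mathfrak m$, $B_{3,1}/\mathfrak m$, $B_{3,2}/\mathfrak m$ to vanish (this is the ``one-sided'' aspect of the endoscopic packet that is crucial to get a sharp, as opposed to a quadratic, bound).

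It remains to count. The cotangent space $\mathfrak m/\mathfrak m^2$ is generated by: the $h$ parameters of $B_{1,2}/\mathfrak m B_{1,2}$; the image of $B_{1,3}/\mathfrak m B_{1,3}$, which by the GMA multiplication $B_{1,2}\otimes B_{2,3}\to B_{1,3}$ and the above identification is a quotient of $\Sym^2\Ext_T(1,\overline\chi)$, hence of dimension at most $\binom{h+1}{2} = h(h+1)/2$; and one extra direction coming from the weight map $w : \mathcal E \to \mathcal W$, which after modding out the redundancy imposed by the trace relations of the GMA contributes exactly one independent parameter to the tangent space. Summing these three contributions,
\[
t \;\leq\; h \;+\; \frac{h(h+1)}{2} \;+\; 1 \;=\; \frac{h(h+3)}{2} + 1,
\]
which is the claimed bound. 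Once Step~3 is established, the algebraic count above is formal and follows verbatim the reducibility/dimension lemmas of \cite[Ch.~1--2]{BC2}; the genuinely new input is the use of the coherent eigenvariety of Theorem~1.1 in place of the Chenevier eigenvariety used there.
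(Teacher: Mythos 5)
Your outline follows the Bellaïche--Chenevier GMA strategy the paper also uses, and the final arithmetic $h + \binom{h+1}{2} + 1 = h\frac{h+3}{2}+1$ agrees. But two of your intermediate claims do not match what the paper proves, and one of them is a genuine gap.

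First, the assertion that the lower-triangular GMA blocks vanish is too strong and is in fact inconsistent with the rest of the argument: the total reducibility ideal $I_{tot}$ is generated by the products $A_{i,j}A_{j,i}$, so if one triangle vanished identically then $I_{tot}$ would be zero, i.e.\ $T$ would split on all of $\Spec A$, and the count would degenerate. What the paper shows (in the $h_{i,j}$ lemma) is that these blocks are \emph{small}: $h_{1,\eps}=0$, while $h_{\eps,1}, h_{\overline\chi,1}, h_{\eps,\overline\chi}\leq 1$. The anti-ordinarity of the chosen refinement is indeed what produces the asymmetry you call ``one-sided,'' but its output is an inequality of Ext-dimensions, not vanishing.

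Second, and more seriously, you never justify why $\mathfrak m/\mathfrak m^2$ is bounded by the off-diagonal Ext data at all. The engine of the BC2 count is the identity $I_{tot}=\mathfrak m_A$, which is what makes the GMA products generate the whole maximal ideal, so that the generator count of $A_{1,\overline\chi}A_{\overline\chi,1}$ (which by submultiplicativity is $\leq h+\binom{h+1}{2}$) really does bound the embedding dimension. The paper proves $I_{tot}=\mathfrak m$ only after replacing $\mathcal E$ by the sub-eigenvariety $\mathcal E'$ where $k_2$ is frozen: Proposition~\ref{proppoids} constrains Hodge--Tate weight differences modulo $I_{tot}$, giving $I_{tot}\supset\mathfrak m_\kappa$, and then the vanishing of $H^1_f(E,\QQ_p)$ forces equality on $\mathcal O_{\mathcal E',y}$. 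On $\mathcal E$ itself the central-character direction is invisible to the reducibility locus and $I_{tot}$ is a proper ideal of $\mathfrak m$, so the count does not go through directly there. Your phrase ``one extra direction coming from the weight map, after modding out the redundancy imposed by the trace relations'' gestures at exactly this problem but does not resolve it; the paper's way out is to prove $t'\leq h\frac{h+3}{2}$ on $\mathcal E'$ and then conclude via $t\leq t'+1$.
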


In particular this implies the previous theorem for $a \geq 2$ and $p$ split. Also in this case, if $\mathcal E$ is non regular at $y$, then $h > 1$, and there are thus at least 2 classes in $H^1_f(E,\chi_p)$. Remark that $a \geq 2$ is exactly the assumption so that the Hecke eigenvalues at $y$ actually appears in etale cohomology, when $a = 1$ we can only prove that these eigenvalues appear in the coherent cohomology, the motives becomes irregular and our argument breaks down. Also, the hypothesis that $p$ is split is necessary to choose a good refinement which is \textit{sufficently far} from the ordinary one (we need it to be anti-ordinary see \cite{BC1}) : when $p$ is inert, there is a unique accessible refinement, and it is \textit{not} anti-ordinary.
%

\subsection*{Acknowledgements}

I would like to first thank Stephane Bijakowski for many interesting discussions and suggesting to systematically use the degree function instead of the Hasse invariant. I would also warmly thank Vincent Pilloni for introducing me to this subject and explaining his recent results on higher Hida theory, suggesting that they could be usefull in the case at hand. Also I would like to warmly thank Gaetan Chenevier and Benjamin Schraen for taking time to explain me in details a lot of their knowledge on the subject. I would also like to thank Fabrizio Andreatta, Christophe Breuil, Laurent Fargues, Eyal Goren and Adrian Iovita for interesting discussions.

\section{Algebraic groups, Shimura Datum and weight spaces}
\label{sect2}
Let $p$ be a prime and let $\mathcal D = (B,\star,V, <,> ,\mathcal O_B, \Lambda,h)$ be an integral Shimura-PEL-datum.
Let $G$ be the associated algebraic group over $\QQ$, i.e.,
\[ G(R) = \{ (g,c(g)) \in GL_B(V\otimes R)\times R^\times | <gv,gw> = c(g) <v,w> \forall v,w \in V\otimes R\}.\]
$(G,h)$ defines a Shimura datum. Suppose that the datum is unramified at $p$ (see \cite{KotJams} or \cite{WedVieh}). This means that $B\otimes_\QQ \QQ_p$ is isomorphic to a product of matrix algebras over finite extensions of $\QQ_p$.
We can decompose $B = \prod_{i=1}^r B_i$ as a product of simple algebras and we suppose that no factor is of type D (orthogonal), see \cite{WedVieh} Remark 1.1.
As $p$ is unramified in $\mathcal D$, we can also consider $\mathbb G$ a reductive model at $p$ for $G$ (over $\ZZ_p$).

Every interesting object in this article will decomposed accordingly to the previous decomposition of $B$, and we can thus make our construction for each $B_i$. This simple algebras are classified into 2 types (as we excluded case D), the type A and the type C. In case C, the construction we are interested in is already made in \cite{Brasca} (which also do many cases of type A, but not all), and we thus assume for now on that $B_i$ is of type A.

As $p$ is unramified for $B$ (and thus $B_i$) we can further decompose. Let $F$ be the center of $B_i$, and $F_0 = (F)^{\star = 1}$. As we are in case A, $[F:F_0] = 2$.
Write $p = \pi_1 \dots \pi_{s_i}$ the decomposition of $p$ in primes of $F_0$. For $j \in \{1,\dots,s_i\}$, we say that $j$ (or $\pi_j$ or $(B_i,\pi_j)$ is in case AL is $\pi_j$ splits in $F$, and in case AU otherwise (compare \cite{WedVieh} Remark 1.3).

\begin{rema}
\label{remramif}
Actually we can allow a slightly larger class of Shimura datum than the unramified ones. Suppose that $B \otimes \QQ_p = B_1 \times B_2$ where 
\[ B_1 = \prod_{i=1}^r M_{n_i}(F_i),\]
where and $K/\QQ_p$ is a finite extension, and such that there is no factor of type D appearing in $B_{\QQ_p}$. For all $i$ denote again $F_0 = (F_i)^{\star = 1}$, and denote $\pi_1,\dots,\pi_{s_i}$ the prime over $p$. Let then $S_p^{full}$ be the set of couples $(i,j)$ such that $\pi^j$ is not ramified in $F_0$ and does not ramifies in $F_i$ either.
When $p$ is unramfied in the datum $\mathcal D$, we can take $S_p^{max}$ to be the set of all $(i,j)$. In general, for $S_p \subset S_p^{max}$, we will be able to construct $S_p$-families of automorphic forms for the datum $\mathcal D$, i.e. we are able to let the forms vary (only) along the unramified primes of $\mathcal D$.
\end{rema}

Let $T$ be the center of $\mathbb G_1 = \Ker c \subset \mathbb G$. We can decompose $T$ (over $\ZZ_p$) according to the previous decomposition,
\[ T = \prod_{i=1}^r \prod_{j=1}^{s_i} T_{i,j},\]
(remark that if $B_i$ is of type C, we can also decompose according to primes over $p$).

\begin{defin}
The full weight space associated to the previous PEL datum is the rigid space over $\QQ_p$
\[ \mathcal W^{full} = \Hom_{cont}(T(\ZZ_p),\mathbb G_m^{rig}),\]
which associate to any Banach $\QQ_p$-algebra $R$ the set of continuous characters $\Hom_{cont}(T(\ZZ_p),R^\times)$. It is represented by the Banach algebra $\ZZ_p[[T(\ZZ_p)]]$.

If $S_p$ is a subset of the couples $(i,j)$ (that we see as places over $p$) and if we denote $T_{S_p}$ the torus over $\ZZ_p$;
\[ T_{S_p} = \prod_{(i,j) \in S_p} T_{i,j},\]
we can define the ($S_p$-)weight space \[\mathcal W_{S_p} = \Hom_{cont}(T_{S_p}(\ZZ_p),\mathbb G_m^{rig}).\]
It is also represented by the Banach algebra $\ZZ_p[[T_{S_p}(\ZZ_p]]$, and when $S_p$ contains all couples $(i,j)$, we have $\mathcal W_{S_p} =\mathcal W^{full}$.
\end{defin}

On $\mathcal W_{S_p}$ there is a universal character $\kappa^{univ} : T_{S_p}(\ZZ_p) \fleche \ZZ_p[[T_{S_p}(\ZZ_p)]]$. 
We have the following results,

\begin{prop}
The space $\mathcal W_{_p}S$ is geometrically a finite disjoint union of dimension the rank of $\mathbb G_1$. Moreover there exists a admissible covering by increasing affinoids,
\[ \mathcal W_{S_p} = \bigcup_{w >0} \mathcal W_{S_p}(w),\]
such that $\kappa^{univ}_{|\mathcal W_{S_p}(w)}$ is $w$-analytic. 
\end{prop}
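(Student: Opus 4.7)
The plan is to reduce everything to an explicit description of the $p$-adic Lie group $T_{S_p}(\ZZ_p)$ and then translate $w$-analyticity of $\kappa^{univ}$ into a radius-of-convergence condition for the $p$-adic $\log$ and $\exp$. Since $\mathbb G$ is a reductive model of $G$ over $\ZZ_p$, each factor $T_{i,j}$ is a torus over $\ZZ_p$ and so is $T_{S_p}$; hence $T_{S_p}(\ZZ_p)$ is a topologically finitely generated abelian $p$-adic Lie group. After choosing a splitting of its torsion, one has
\[
T_{S_p}(\ZZ_p) \;\cong\; \Delta \times \ZZ_p^d,
\]
with $\Delta$ a finite abelian group and $d = \dim T_{S_p} = \sum_{(i,j)\in S_p}\dim T_{i,j}$; when $S_p = S_p^{max}$, $d$ equals the rank of $\mathbb G_1$. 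Continuous characters of $\ZZ_p$ into a Banach $\QQ_p$-algebra $R$ are parametrized, via $\chi \mapsto \chi(1)$, by $\{u \in R^\times : |u-1| < 1\}$, while characters of $\Delta$ are parametrized by a finite étale $\QQ_p$-scheme $\widehat \Delta$. Combining these factors exhibits $\mathcal W_{S_p}$ as a finite disjoint union, indexed by $\widehat \Delta$, of $d$-dimensional open polydisks, which gives the first assertion.

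For the admissible covering I would use the standard affinoid exhaustion of the open unit polydisk by nested closed polydisks. Concretely, after fixing coordinates $(u_1,\dots,u_d)$ as above, set
\[
\mathcal W_{S_p}(w) \;:=\; \widehat \Delta \;\times\; \{(u_1,\dots,u_d) : |u_i - 1| \leq p^{-1/w} \text{ for all } i\}.
\]
Each $\mathcal W_{S_p}(w)$ is a disjoint union of closed polydisks, hence affinoid; the family is increasing in $w$; and $\bigcup_{w>0} \mathcal W_{S_p}(w) = \mathcal W_{S_p}$ is admissible by the standard Berthelot-type description of the rigid generic fibre of $\Spf \ZZ_p[[T_{S_p}(\ZZ_p)]]$. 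Writing the pro-$p$ part of the universal character as $(n_1,\dots,n_d) \mapsto \prod_i u_i^{n_i}$, the inequality $|u_i - 1| \leq p^{-1/w}$ ensures (for $w$ sufficiently large) that $\log u_i$ lies in the domain where $\exp$ converges, so $u_i^x = \exp(x \log u_i)$ extends to an analytic function of $x$ on a closed disk of the appropriate radius; this is precisely the meaning of $w$-analyticity used later in the paper. The finite factor $\widehat \Delta$ contributes only a locally constant character and does not affect analyticity.

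The only real subtlety is bookkeeping: matching the numerical parameter $w$ in the definition of $\mathcal W_{S_p}(w)$ with the normalization of $w$-analyticity used in the body of the paper, and, at $p = 2$, absorbing the $\{\pm 1\}$-factor of $\ZZ_2^\times$ into the torsion part $\Delta$ while adjusting the convergence radii of $\log$ and $\exp$. These modifications are cosmetic and do not affect the qualitative statement; in particular, one may without loss shift $w$ to start from a fixed $w_0$ large enough for $\log$ to converge, and then the display above gives the desired admissible covering for all $w \geq w_0$.
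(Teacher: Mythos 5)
Your argument is correct and follows the standard approach: the paper's own proof is simply a pointer to \cite{urb} (Section 3.4.2 and Lemma 3.4.6) and to \cite{AIP} Section 2.2, where precisely this splitting $T_{S_p}(\ZZ_p)\cong\Delta\times\ZZ_p^d$, the identification of $\mathcal W_{S_p}$ with $\widehat\Delta$-many open unit polydisks, and the exhaustion by closed polydisks with the $\log/\exp$ estimate for $w$-analyticity are carried out. The only caveat, which you already flag yourself, is that the precise radius defining $\mathcal W_{S_p}(w)$ must be matched to the paper's normalization (taken from \cite{AIP}), but this is indeed cosmetic and does not affect the statement.
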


\begin{proof}
See \cite{urb} 3.4.2 and Lemma 3.4.6. See \cite{AIP}, section 2.2 for a definition of $\mathcal W_{S_p}(w)$.
\end{proof}

We can decompose $\mathcal W_{S_p} = \prod_{(i,j) \in S_p} \mathcal W_{i,j}$ according to the decomposition of $B$. In the following we will construct families parametrized by $\mathcal W_{S_p}$, as their construction is not more difficult than the case of the full weight space, and following construction can be done on $\mathcal W_{S_p}$ when $p$ ramified at some places of $\mathcal D$, but not at other places. To my knowledge, this is usefull mainly for a trick used by Chenevier (\cite{Chetrick}) to control $p$-adic properties of families of Galois representations.

\section{Classical coherent Automorphic forms}
\label{sect3}
Associated to $(G,h)$ there is a tower of Shimura Varieties of the reflex field $E$. 
Because of the assumption of $p$ in $\mathcal D$, these Shimura varieties have good reduction at $p$ when the level at $p$ is hyperspecial (see \cite{KotJams}). Suppose this is the case in this section (otherwise all we say here remains true after inverting $p$, and we will explain how to extends this integrally in section \ref{sect5}).
We will describe their integral models as moduli space of Abelian varieties.
Let $K^p \subset G(\mathbb A_{\QQ,f}^p)$ be sufficiently small level outside $p$. Denote $X_{K^p}$ the functor,
\[ X_{K^p} : S \in Sch/\Spec(\mathcal O_{E,p}) \fleche \{ (A,i,\lambda,\eta)\}/\sim,\]
that associated the set of quadruple $(A,i,\lambda,\eta)$ modulo equivalence where,
\begin{itemize}
\item $A/S$ is an abelian scheme
\item $i : \mathcal O_B \fleche \End(A) \otimes \ZZ_{(p)}$ is a $\ZZ_{(p)}$-algebra endomorphism.
\item $\lambda$ is a $\ZZ_{(p)}^\times$ equivalence class of $\mathcal O_B$-linear polarisation of order prime to $p$ which identifies Rosatti involution and $\star$ through $i$.
\item $\eta$ is a $K^p$-level structure on $A$ (see \cite{KotJams} section 5, or \cite{Lan}\footnote{Recall that such a level structure includes a (class of) isomorphism $\ZZ/p^N\ZZ \simeq \mu_{p^N}$ for some $N$, see \cite{Lan} Definition 1.3.6.1}).
\end{itemize}
where $i$ is subject to the determinant condition and the equivalence is by prime to $p$ quasi-isogeny (see also \cite{WedVieh} and for all details \cite{Lan}). As $K^p$ is \textit{sufficently small} $X_{K^p}$ is representible by a quasi-projective smooth scheme.

We choose $\nu$ a place of $E$ over $p$, and denote $\mathcal O_{E,\nu}$ the completion of $\mathcal O_E$ through $\nu$ and denote $X = X_{K^p,\nu}$ the base change to 
$\mathcal O_{E,\nu}$.

According to the decomposition of $B$, we can decompose $A = \prod_{i=1}^r A_i$ (and the other datums) as a product of abelian schemes (with additional structure associated to $B_i$). Moreover, we can further decompose the associated $p$-divisible group, writing
 $O_{B_i} \simeq \prod_{j=1}^{s_i} M_{n_i}(\mathcal O_{K_{i,j}})$, and using Morita-equivalence,
\[ A_i[p^\infty] = \prod_{j=1}^{s_i} \mathcal O_{K_{i,j}}^{n_i} \otimes _{O_{K_{i,j}}} A_i[\pi_j^\infty].\]
Moreover for a $(i,j)$ of type AL (i.e. $\pi_j$ splits in $F=F_i$), we can further decompose,
\[ A_i[\pi_j^\infty] = H_{i,j} \times H_{i,j}^D,\]
such that $\lambda$ is given by $(x,y) \mapsto (y,x)$ and $i$ preserve each factor. 

Denote $\omega$ the conormal sheaf of $A$, it is a locally free sheaf on $X$ which decompose as previously, and for all $(i,j)$ we get $\omega_{i,j} = \omega_{A_i[\pi_j^\infty]}$
locally free sheaf of rank $\dim A_i[\pi_j^\infty]$.
Let $P$ be the parabolic in $G_1$ fixing the cocharacter $\mu$ and $M$ the levi of $P$. $T$ can be seen as a torus in $M$ and fix a Borel $B$ of $M$. For $\kappa \in X^+(T)$ a dominant weight for this choice, there exists a locally free sheaf $\omega^\kappa$ on $X$. This sheaf can be described this way. 
Let \[ \mathcal T^\times = \Isom_{X,\mathcal O_B}((\Gamma\otimes_{\ZZ_p} \mathcal O_X)^\vee,\omega) \simeq \Isom_{X,\mathcal O_B}(\Gamma\otimes_{\ZZ_p} \mathcal O_X,Lie(A/Y)),\]
where $\Gamma$ is a $\mathcal O_B$-invariant $\mathcal O_{E,(p)}$-lattice in $V_1$ (where $V = V_0 \oplus V_1$ under the weight decomposition of $\mu$, see \cite{WedVieh} p10)  and $\pi : \mathcal T^\times \fleche X$, 
the space of trivialisations of $\omega$. This is a $M$-torsor.

\begin{defin}
Let $\kappa$ be a dominant algebraic character of $T$ and $\kappa^\vee$ its dual, i.e. $-w_0(\kappa)$ where $w_0$ is the longest element of the Weyl group. We see this characters as characters of B, extending them trivially on the unipotent.
The coherent automorphic sheaf $\omega^\kappa$ is the locally free sheaf over $X$ defined by,
\[ \omega^\kappa = \pi_* \mathcal O_{\mathcal T^\times}[\kappa^\vee].\]
\end{defin}


Let $X^{tor}$ be a toroïdal compactification\footnote{A priori the following definition depends on this choice, however by \cite{Lan} Lemma 7.1.1.3, this is independant of the choice of a \textit{toroïdal} compactification, and in most cases we don't even need to specify any compactification, by Koecher's principle, see \cite{LanKoecher} Theorem 2.3} of $X$ (see \cite{Lan}) and $D$ its boundary.

\begin{defin}
The space of (respectively cuspidal) modular (or coherent automorphic) forms of weight $\kappa$, and level $K^pG(\ZZ_p)$ is the space,
\[ H^0(X^{tor},\omega^\kappa), \quad (\text{respectively} \ H^0(X^{tor},\omega^\kappa(-D)).\]
\end{defin}

\begin{rema}
\label{remkappa}
The goal of this article is to deform $p$-adically the previous spaces of automorphic forms. Unfortunately, we can check that in some cases the duality $\kappa \mapsto \kappa' = \kappa^\vee$ does not extend to $p$-adic weights. This is the case for $U(2,1)_{E/\QQ}$ when $p$ is inert in $E$ where $T = \mathcal O_{E,p}^\times \times \mathcal O_{E,p}^1$. We can see an algebraic weight $(k_1 \geq k_2,k_3)$ as $(x,y) \mapsto \tau(x)^{k_1}\tau(y)^{k_2}\sigma\tau(x)^{k_3}$ and duality sends $(k_1 \geq k_2,k_3)$ on $(-k_2,-k_1,-k_3)$. This does not come from a natural algebraic map on $\mathcal W$.
This is harmless for us as we can work with normalisation of the weight given by $\kappa^\vee$ instead of $\kappa$, and thus we will embed weight $\kappa$ classical autormorphic forms into weight $\kappa^\vee$ overconvergent ones.
\end{rema}

\section{Local models and Jones Induction result}
\label{sect4}
To construct families of automorphic forms, we will first construct families of autormorphic sheaves, i.e. we will construct automorphic sheaves $\omega^{\kappa\dag}$ for $\kappa$ not only a dominant algebraic weight but a $p$-adic one, and theses sheaves will interpolate the coherent sheaves $\omega^\kappa$ (actually to be more precise the sheaves $\omega^{\kappa^\vee}$, see remark \ref{remkappa}).
This has been done previously in analogous settings (see \cite{AIP,AIS,PiFou,Brasca,Her3}), and all these works adapt geometrically constructions that were first developped in the case of compact at infinity groups (see \cite{Buz,Che1,urb}) using interpolations of algebraic representations by locally analytic ones. As our sheaves will be modeled on these construction, let us review the theory. It will be usefull in analysing classicity questions in section \ref{sect8}.

\subsection{Inductions}
Let us fix some notations. We will be interrested in representations of a $p$-adic groups attached to $\mu$. $\mu$ gives rise to a parabolic in $G$, and denote $M$ the Levi 
subgroup of this parabolic. The group $M_{\QQ_p}$ splits over the couples $(i,j)$ introduced before. As explained in the previous section, $(i,j)$ of type (C) are ordinary and thus 
have be treated in \cite{Brasca}, thus we focus on type (A). In this cases, $M_{(i,j)}$ is isomorphic to a Levi of the group $\Res_{F_i^+/\QQ_p}U(n_i)_{F_i/F_i^+}$. 

Denote $\mathcal T^+=\mathcal T^+_{(i,j)}$ the set of embeddings of $F_i^+$ into $\overline{\QQ_p}$ and $\mathcal T$ the corresponding set for $F_i$. $M_{(i,j)}$ is thus up to extending scalars isomorphic to some 
$P = \prod_{\tau \in \mathcal T^+} \GL_{p_\tau} \times \GL_{q_\tau}$ say over $K$ a $p$-adic field.  The integers $p_\tau,q_\tau$ are determined by $\mu$, the co-character associated to the Shimura Datum $(G,h)$, and verify that
\[ p_\tau + q_\tau = n_i, \forall \tau \in \mathcal T^+.\]
 For now on, we drop the index $(i,j)$ in the notations, thus set $n_i = n = h = p_\tau + q_\tau$. Still denote by $P=\prod_{\tau \in \mathcal T^+} \GL_{p_\tau} \times \GL_{q_\tau}$ an integral model over $\mathcal O = \mathcal O_K$.
Let $T$ be the maximal (diagonal) torus of $P$, $B$ the upper Borel, and for each $\kappa \in X^+(T)$, denote the (algebraic, non-normalized) induction,
\[ V_\kappa = \{ f : P \fleche \mathbb A^1 \text{ algebraic } | f(gb) = \kappa(b)f(g) \text{ for all } g,b \in P \times B\}.\]
This is a finite dimensional $K$-vector space endowed with an action of $P(K)$ by $(g .f)(z) = f(g^{-1}z)$.

The algebraic induction is a local model of the automorphic sheaves $\omega^\kappa$ in the sense that etale locally the later is isomorphic to the former.
We will now describe another representation that will \textit{interpolate} the previous ones and which will be local models of the coherent Banach sheaves constructed later in the paper.

Let $I = I_1$ be the Iwahori subgroup of $P$, i.e. $I = \Ker(P(\mathcal O) \fleche P(\mathcal O/p)/B(\mathcal O/p))$. Denote more generally $I_n$ the level-n Iwahori, i.e. 
elements that are upper triangular modulo $p^n$. We have a Iwahori decomposition $I = B(\mathcal O) \times N^0$, and we can identify $N^0$ with
\[ (p\mathcal O)^N \subset \mathbb A_{an}^N, \quad N = \sum_{\tau\in \mathcal T^+} \frac{p_\tau(p_\tau -1) + q_\tau(q_\tau-1)}{2}.\]
For any $\eps \geq 0$, we define $N_0^{\eps}$ as the subspace,
\[ \bigcup_{x \in(p\mathcal O)^N} B(x,p^{-\eps}) \subset \mathbb A_{an}^N\]
and for $L$ a $p$-adic field, denote $\mathcal F^{\eps-an}(N^0,L)$ the function that are restriction to $N^0$ of analytic functions on $N^0_\eps$. 
Now we can define the $\eps$-analytic induction. Let $\kappa \in \mathcal W(L)$ be $\eps$-anaytic,
\[ V_{\kappa,L}^{\eps-an} = \{ f : I \fleche L : f(gb) = \kappa(b)f(g) \forall g,b \in I\times B(\mathcal O), f_{N^0} \in \mathcal F^{\eps-an}(N^0,L)\}.\]
Denote $V_{\kappa,L}^{loc-an} = \bigcup_{\eps >0} V_{\kappa,L}^{\eps-an}$ and $V_{\kappa,L}^{an} = \bigcap_{\eps \geq 0} V_{\kappa,L}^{\eps-an}$.
This spaces won't be local models of our Banach-automorphic sheaves, but they will have the same finite slope eigenvalues. 

Choose an ordering, $\{ p_\sigma : \sigma \in \mathcal T\} = \{p_\tau,q_\tau : \tau \in \mathcal T^+\} = \{p_1 \leq \dots \leq p_{2f}\}$, and let $P^0_{(p_\sigma)} \subset \GL_{p_{2f}}$ be the standard parabolic with (ordered) blocs of size $(p_i - p_{i-1})_{i=1,\dots,2f}$, where $p_0 = 0$. Denote $I_{(p_\sigma)}$ the Iwahori subgroup of $P^0_{(p_\sigma)}(\mathcal O)$ with respect to the standard upper triangular Borel, and $N^0_{(p_\sigma)}$ the opposite unipotent in $P^0_{(p_\sigma)}(\mathcal O)$.
For every $\sigma \in \mathcal T$, every matrix $M \in P^0_{(p_\sigma)}(\mathcal O)$ can be written of the form,
\[ M = 
\left(
\begin{array}{cc}
 A_\sigma & B_\sigma  \\
 0 &  D_\sigma  
\end{array}
\right), \quad A_\sigma \in M_{(p_{2f}-p_\sigma)\times (p_{2f}-p_\sigma)}(\mathcal O), D_\sigma \in M_{p_\sigma \times p_\sigma}(\mathcal O).
\]
In particular, we get for each $\tau \in \mathcal T^+$ a map,
\[ 
\begin{array}{ccc}
 P_{(p_\sigma)} &\fleche& \GL_{p_\tau} \times \GL_{q_\tau},\\
 M  & \longmapsto & (D_\sigma,D_{\overline \sigma})
  \end{array}, \quad \text{for }\] where $\sigma,\overline\sigma$ are the two embeddings over $\tau$ such that $p_\sigma = p_\tau, p_{\overline\sigma} = q_\tau$
and $D_\sigma$ refers to the previous decomposition.

Denote again $P^0_{(p_\sigma)}$ the image of the diagonal embedding $P^0_{(p_\sigma)} \fleche \prod_\tau \GL_{p_\tau} \times \GL_{q_\tau}$ (it is injective, looking at a $\tau$ such that $p_\tau = p_{2f}$ or $q_\tau = p_{2f}$). Thus, we can see $I_{(p_\sigma)}$ as a subgroup of $P$, and consider, for every $\kappa \in W(L)$ which is $\eps$-analytic,
\[ V_{\kappa,L}^{0,\eps-an} = \{ f : I_{(p_\sigma)}B(\mathcal O) \fleche L : f(gb) = \kappa(b)f(g), f_{N^0_{(p_\sigma)}} \in \mathcal F^{\eps-an}(N^0_{(p_\sigma)},L)\}.\]
Everything makes sense as $N^0_{(p_\sigma)}$ can be seen as a subset of $N^0$ and we can define $\eps$-analytic functions on it (using balls in $\mathbb A_{an}^N \supset N^0$ centered on points of 
$N^0_{(p_\sigma)}$). It is slightly complicated, but now $V_{\kappa,L}^{0,\eps-an}$ will be local models of our forthcoming Banach-automorphic sheaves.

\subsection{$U_p$-operator}

Define for all $i \leq \frac{h}{2}$ an integer,
\[ d_i = 
\left(
\begin{array}{ccc}
  p^{-2}I_{\max(p_{2f} - (h-i),0)} & & \\
 &p^{-1}I_{\min(h-2i,p_{2f}-i)}  &   \\
 & &  I_{i}  \\
\end{array}
\right) \in P_{(p_\sigma)}^0(K).
\]
We sometimes see $d_i$ in $\GL_{p_\tau} \times \GL_{q_\tau}$ using the previous embedding. Denote for each $\sigma \in \mathcal T$, $a_\sigma = \max(p_\sigma - (h-i),0), b_\sigma =\max( \min(h-2i,p_\sigma - i),0)$ and $c_\sigma = \min(i,p_\sigma)$ (thus $a_\sigma + b_\sigma+c_\sigma =p_\sigma$). This is respectively the number of $p^{-2},p^{-1},1$ appearing in $D_\sigma$ in the previous decomposition for $d_i$.
We can define an operator $\delta_i$ on $V^{0,\eps-an}_{\kappa,L}$ by $\delta_if(j) = f(d_ind_i^{-1}b)$ where $j = nb$ is the Iwahori decomposition.
\begin{prop}
\label{propdeltarad}
Let $f \in V_{\kappa,L}^{0,\eps-an}$ that we see as a function in $\mathcal F^{\eps-an}(N^0_{(p_\tau)},L)$ of variable 
$(x_{k,l}^\tau,y_{m,n}^\tau)_{1 \leq l < k \leq p_\tau, 1 \leq n < m \leq q_\tau,\tau}$. Then,
\[ \delta_i : 
\begin{array}{ccc}
  \mathcal F^{\eps-an}(N^0_{(p_\tau)},L) & \fleche  & \mathcal F^{\eps-an}(N^0_{(p_\tau)},L)  \\
  f  &  \longmapsto &   ((x_{k,l}^\tau,y_{m,n}^\tau) \mapsto f(p^{v_{k,l}^\tau}x_{k,l}^\tau,p^{w_{m,n}^\tau}y_{m,n}^\tau))
\end{array}
\]
where, if we denote $\tau = \sigma\overline \sigma$ in $F$, with $p_\tau = p_\sigma$, \[
v_{k,l}^\tau = 
\left\{
\begin{array}{cc}
2 & \text{if } k > a_\sigma + b_\sigma \text{ and } l \leq a_\sigma\\
 1 & \text{if }  (b_\sigma + a_\sigma \geq k > a_\sigma  \text{ and } l \leq a_\sigma) \text{ or } (b_\sigma + a_\sigma \geq l > a_\sigma \text{ and } k > a_\sigma + b_\sigma) \\
 0 &  \text{otherwise}  
\end{array}
\right. \]\[ w_{m,n}^\tau = 
\left\{
\begin{array}{cc}
 2 & \text{if } m > a_{\overline\sigma} + b_{\overline\sigma} \text{ and } n \leq a_{\overline\sigma}\\
 1 & \text{if }  (b_{\overline\sigma} + a_{\overline\sigma} \geq m > a_{\overline\sigma}  \text{ and } n \leq a_{\overline\sigma}) \text{ or } (b_{\overline\sigma} + a_{\overline\sigma} \geq n > a_{\overline\sigma} \text{ and } m> a_{\overline\sigma} + b_{\overline\sigma}) \\
 0 &  \text{otherwise}  

\end{array}
\right.
\]
In particular, $\prod_i \delta$ is completely continuous.
\end{prop}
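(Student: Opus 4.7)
The plan is to reduce the statement to a matrix-conjugation computation in $N^0_{(p_\sigma)}$, check that the conjugated element lies back in $N^0_{(p_\sigma)}$, and then derive complete continuity of $\prod_i \delta_i$ from the standard criterion for dilations on spaces of analytic functions. Using the transformation rule $f(gb)=\kappa(b)f(g)$ for $b\in B(\mathcal O)$, it is enough to evaluate $\delta_i f$ on elements $n\in N^0_{(p_\sigma)}$, where by definition $(\delta_i f)(n)=f(d_i n d_i^{-1})$; thus the assertion reduces to describing $d_i n d_i^{-1}$ in the Iwahori coordinates.

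The main step is pure linear algebra. By the definitions of $a_\sigma,b_\sigma,c_\sigma$, the $\sigma$-block $D_\sigma$ of $d_i$ is $\Diag(p^{-2}I_{a_\sigma},p^{-1}I_{b_\sigma},I_{c_\sigma})$; for a lower-unipotent $n_\sigma$ restricted to this block, conjugation by $D_\sigma$ multiplies the $(k,l)$-entry (with $k>l$) by $(D_\sigma)_{k,k}(D_\sigma)_{l,l}^{-1}$. A case analysis on the three possible blocks into which $k$ and $l$ fall produces exactly the exponents $v_{k,l}^\tau\in\{0,1,2\}$ given in the statement, and the same computation for the $\overline\sigma$-block yields $w_{m,n}^\tau$. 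Since all exponents are nonnegative and the original $x_{k,l}^\tau$ already lies in $p\mathcal O$, the conjugated entry $p^{v_{k,l}^\tau}x_{k,l}^\tau$ is again in $p\mathcal O$; hence $d_i n d_i^{-1}\in N^0_{(p_\sigma)}$, no further Iwahori decomposition is needed, and the $\eps$-analyticity is preserved, so that $\delta_i$ genuinely lands in $V^{0,\eps-an}_{\kappa,L}$.

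For the complete continuity of $\prod_i\delta_i$ I would invoke the standard fact that, for $v>0$, the dilation $x\mapsto p^v x$ sends $\mathcal F^{\eps-an}(N^0,L)$ into $\mathcal F^{(\eps-v)-an}(N^0,L)$, and that the tautological inclusion $\mathcal F^{(\eps-v)-an}\hookrightarrow\mathcal F^{\eps-an}$ is completely continuous (cf.\ \cite{urb}). Composing the individual $\delta_i$ scales $x_{k,l}^\tau$ by $p^{V_{k,l}^\tau}$ with $V_{k,l}^\tau=\sum_i v_{k,l}^\tau$, and likewise for the $y$-variables; the problem therefore reduces to the combinatorial claim that $V_{k,l}^\tau\geq 1$ (and analogously $W_{m,n}^\tau\geq 1$) for every $l<k$ in the $\sigma$-block.

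The only real obstacle, though a mild one, is this combinatorial estimate. As $i$ varies over $0,\dots,\lfloor h/2\rfloor$, the two cut-points $a_\sigma(i)$ and $a_\sigma(i)+b_\sigma(i)$ sweep the interval $\{0,1,\dots,p_\sigma\}$; direct inspection of the defining formulas shows that together they hit every value in that set. Consequently any fixed pair $l<k$ is separated by one of these cut-points for some $i$, which forces $v_{k,l}^\tau\geq 1$ at that index, and the identical argument applies in the $\overline\sigma$-block. Putting everything together, $\prod_i\delta_i$ factors through a compact inclusion of analytic function spaces, which is the required complete continuity.
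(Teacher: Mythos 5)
Your proof is correct and fills in precisely the ``direct calculation on matrices of $N^0$'' that the paper's one-sentence proof asserts: the conjugation formula $(D_\sigma n D_\sigma^{-1})_{k,l} = (D_\sigma)_{k,k}(D_\sigma)_{l,l}^{-1} n_{k,l}$ for the block-diagonal $D_\sigma = \Diag(p^{-2}I_{a_\sigma},p^{-1}I_{b_\sigma},I_{c_\sigma})$ immediately produces the stated exponents $v_{k,l}^\tau, w_{m,n}^\tau$, and your observation that the cut-points $a_\sigma(i)$ and $a_\sigma(i)+b_\sigma(i)$ sweep all of $\{1,\dots,p_\sigma-1\}$ as $i$ varies is exactly what makes $\sum_i v_{k,l}^\tau \geq 1$ for every off-diagonal $(k,l)$, hence $\prod_i \delta_i$ a strict dilation and thus completely continuous.
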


\begin{rema}
It is not a mistake that $f$ has "as much variables as entries in $N^0$" instead of $N^0_{(p_{\sigma})}$. The reason is that $f$ is seen as a function (even a locally analytic one) in a neighborhood of the image of $N^0_{(p_\sigma)}$ in the analytic space associated to $N^0$ (and not to $N^0_{(p_\sigma)}$). Indeed, such $f$ can't be defined on $N^0$ a priori, except if we know that it is $1$-analytic (as the neighborhood of $N^0_{(p_\sigma)}$ of radius $\frac{1}{p}$ in $\mathbb A^N = (N^0)^{an}$ is $N^0 = (p\mathcal O)^N$ itself.
\end{rema}

\begin{proof}
This is a direct calculation on matrices of $N^0$.
\end{proof}

\subsection{Jones's BGG and a fiberwise classicity result}

Let $P$ our previous algebraic group $T$ its torus and $B$ its upper Borel that defines $\Delta$ a set of positive roots. Then for every dominant weight $\kappa \in X^+(T)$,
Jones's \cite{Jones} proved the exacteness of the following sequence,
\begin{equation}\label{seqJones}
0 \fleche V_{\kappa,L} \fleche V_{\kappa,L}^{an} \overset{d}{\fleche} \bigoplus_{\alpha \in \Delta} V_{\alpha\bullet\kappa,L}^{an}\end{equation}
where $d$ is an explicite map (see for example \cite{AIP} for $GSp_{2g}$ ($P = \GL_g$) and \cite{Brasca} for a similar case to ours).
Then the following proposition is \cite{Brasca} proposition 6.5

\begin{prop}
Write $\kappa = (k_{\sigma,i}) \in X^+(T)$ according to the decomposition $P = \prod_{\tau\in \mathcal T^+} \GL_{p_\tau}\times\GL_{q_\tau} = \prod_{\sigma\in\mathcal T} \GL_{p_\sigma}$ a dominant weight. Set 
\[ \nu_{i}^\sigma =  \inf \{ k_{\sigma,i} - k_{\sigma,i+1} : i  < p_{\sigma} \}.\]
Then, \[ V_{\kappa,L}^{0,\eps-an, < \underline \nu} \subset V_{\kappa,L}.\]
(The same proposition is true with $V_{\kappa,L}^{\eps-an, < \underline \nu}$).
\end{prop}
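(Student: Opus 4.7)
The plan is to follow the BGG-style argument of \cite{Brasca}, Proposition 6.5, using Jones's exact sequence (\ref{seqJones}) to reduce classicity to a comparison of slopes. The argument has three main steps.

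First, I would identify the differential $d$ explicitly. Write elements of $V_{\kappa,L}^{an}$ as functions on the opposite unipotent $N^0$ in the variables $(x_{k,l}^\tau, y_{m,n}^\tau)$. The simple roots of $P$ split into families indexed by $\sigma \in \mathcal T$: for each $\sigma$ and each $i < p_\sigma$ there is a simple root $\alpha = \alpha_i^\sigma$, and the component
\[ d_{\alpha_i^\sigma} : V_{\kappa,L}^{an} \longrightarrow V_{\alpha_i^\sigma \bullet \kappa, L}^{an}\]
is (up to a nonzero scalar) the differential operator $\bigl(\partial/\partial x_{i+1,i}^\sigma\bigr)^{k_{\sigma,i}-k_{\sigma,i+1}+1}$ acting on the $N^0$-variable. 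This is the same expression that appears in \cite{AIP} for $\GSp_{2g}$ and in \cite{Brasca} in the analogous Levi setting; the verification is purely algebraic (at the level of $U(\mathfrak{g})$ acting on induced representations).

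Second, I would compute how $d_{\alpha_i^\sigma}$ intertwines the Hecke operator $\delta = \prod_i \delta_i$. Proposition \ref{propdeltarad} says that each $\delta_j$ acts on the variable $x_{i+1,i}^\sigma$ by rescaling $x_{i+1,i}^\sigma \mapsto p^{v_{i+1,i}^\sigma} x_{i+1,i}^\sigma$. A direct chain-rule computation then gives, for any integer $n$,
\[ \delta_j \circ \left(\frac{\partial}{\partial x_{i+1,i}^\sigma}\right)^n \;=\; p^{-n v_{i+1,i}^\sigma}\,\left(\frac{\partial}{\partial x_{i+1,i}^\sigma}\right)^n \circ \delta_j,\]
and summing the contributions from the various $\delta_j$ used to define the slope yields a commutation relation of the form $\delta \circ d_{\alpha_i^\sigma} = p^{-(k_{\sigma,i}-k_{\sigma,i+1}+1)\,m_i^\sigma}\,d_{\alpha_i^\sigma}\circ \delta$ with $m_i^\sigma \geq 1$ a positive integer. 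In particular, an eigenvector for $\delta$ of slope $s$ is sent to an eigenvector of slope $s - (k_{\sigma,i}-k_{\sigma,i+1}+1)\,m_i^\sigma$.

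Third, I would conclude by comparing with the bound $\underline\nu$. The operator $\delta$ acts completely continuously on $V_{\alpha_i^\sigma\bullet\kappa,L}^{an}$, and a straightforward analysis on monomials on $N^0$ shows that all nonzero slopes on this target lie above an explicit bound matching the definition of $\nu_i^\sigma$. So for $f$ in $V_{\kappa,L}^{0,\eps-an,<\underline\nu}$, the shift of slopes under $d_{\alpha_i^\sigma}$ pushes $d_{\alpha_i^\sigma}(f)$ below this bound, forcing $d_{\alpha_i^\sigma}(f)=0$. Running this for every simple root and applying the exactness of Jones's sequence (\ref{seqJones}) gives $f \in V_{\kappa,L}$. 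The statement for $V_{\kappa,L}^{\eps-an,<\underline\nu}$ follows by the same argument, since the differential $d$ and the slope decomposition behave identically on the larger Iwahori $I_1$ in place of $I_{(p_\sigma)}$.

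The main obstacle is Step 3: getting the precise numerical match between the slope shift produced by the commutation and the $\nu_i^\sigma$ appearing in the statement, in particular checking that for every simple root $\alpha_i^\sigma$ one actually has $m_i^\sigma \geq 1$ for the relevant $\delta_j$'s. This requires going back to the explicit formulas for $v_{k,l}^\tau$ and $w_{m,n}^\tau$ in Proposition \ref{propdeltarad} and verifying that for each pair $(i,i+1)$ inside a fixed block $\sigma$, there exists at least one value of $j$ in the definition of $\delta$ making the relevant exponent strictly positive. This is a finite, case-by-case combinatorial check on the positions of the $p^{-2}, p^{-1}, 1$ blocks in $d_j$ and is where the definition of $\nu_i^\sigma$ as an infimum over positions enters.
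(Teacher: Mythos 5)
Your approach is the same BGG/Jones argument the paper invokes (via \cite{AIP} Prop.~2.5.1 and \cite{Brasca} \S6.1), but you have skipped the one step the paper makes explicit, and it is a genuine gap rather than a cosmetic omission. Jones's sequence (\ref{seqJones}) is a sequence of maps out of $V_{\kappa,L}^{an}$, i.e.\ out of functions on the \emph{full} Iwahori $I$; the differential $d_{\alpha_i^\sigma}$ is only defined there. Your Step~3 applies $d_{\alpha_i^\sigma}$ directly to $f\in V_{\kappa,L}^{0,\eps-an,<\underline\nu}$, but elements of $V_{\kappa,L}^{0,\eps-an}$ are a priori functions only on the smaller set $I_{(p_\sigma)}B(\mathcal O)$, analytic on the smaller $N^0_{(p_\sigma)}$. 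You have implicitly assumed $f$ lies in $V^{an}_{\kappa,L}$, and that is exactly what needs to be proved first.

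The paper's proof spends its main sentence on this: since $\prod_i\delta_i$ is completely continuous and, by Proposition~\ref{propdeltarad}, improves the analyticity radius, a nonzero finite-slope eigenvector $f$ satisfies a relation of the form $P(\prod_i\delta_i)f=f$ with $P(0)=0$, so after iterating one may take $\eps\leq 1$; and for $\eps\leq 1$ one has $N^0_{\eps}=N^0_{(p_\sigma),\eps}$ (the $\tfrac1p$-ball around $N^0_{(p_\sigma)}$ inside $\mathbb A^N$ is all of $N^0=(p\mathcal O)^N$), so $f$ extends from $N^0_{(p_\sigma)}$ to $N^0$ and hence from $I_{(p_\sigma)}B(\mathcal O)$ to $I$, giving $f\in V^{an}_{\kappa,L}$. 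Only after that does the standard BGG argument apply. Once you insert this reduction your Steps~1--3 are the right shape (and in fact spell out the references more explicitly than the paper, which just cites them). Your closing remark that the $V^{\eps-an}$ case follows by the ``same argument on the larger Iwahori'' has the logic backwards: the $V^{\eps-an}$ case is the one for which Jones's sequence applies directly; the $V^{0,\eps-an}$ case is the one that needs the reduction you omitted.

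Finally, the numerical point you flag as ``the main obstacle'' --- that each adjacent variable $x_{i+1,i}^\sigma$ gets rescaled by $p^{m_i^\sigma}$ with $m_i^\sigma\geq 1$ under some $\delta_j$ --- is indeed a finite check on Proposition~\ref{propdeltarad}, but it should be treated as part of verifying that $\prod_i\delta_i$ is completely continuous (as that proposition asserts), not as an obstacle to the classicity step; once complete continuity is granted, the slope shift is strictly positive on every direction, which is all the BGG comparison needs.
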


\begin{proof}
The first thing to check is that if $f \in V_{\kappa,L}^{0,\eps-an}$ is of non-zero slope, then $f \in V_{\kappa,L}^{an}$ (if $\eps \leq 1, N^0_{\eps} = N^0_{(p_\sigma),\eps}$).
But as $\prod_i \delta_i$ is increasing the analytic radius, by proposition \ref{propdeltarad} we get the claim.
Now, we can use Jones's BGG result as in \cite{AIP} Proposition 2.5.1, or \cite{Brasca} section 6.1, and we get the result.
\end{proof}

\begin{rema}
The previous calculation is made completely explicitely for $G = (G)U(2,1)$ in \cite{Her3}.
\end{rema}

\section{Integral models}
\label{sect5}
\subsection{Isogeny Graphs}

\begin{defin}
Fix $h \in \NN^*$ and $n \in \NN^*$, and denote $\Gamma_n^h$ the subset of $M_{n\times h}(\CC)$ such that $M = (m_{i,j})_{1 \leq i \leq n,1 \leq j \leq h} \in M_{n\times h}(\CC)$ satisfies,
\begin{enumerate}
\item For all $(i,j)$, $m_{i,j}  \in \{0,1\}$,
\item For all $(i,j)$, if $m_{i,j} = 1$, then $m_{i-1,j} =1$ and $m_{i,j-1}=1$ (when defined).
\end{enumerate}
Let $\underline{\Gamma_n^h} = (\Gamma_n^h,v)$ be the graph whose points are $M \in \Gamma_n^h$, and there is an arrow from $M = (m_{i,j})$ to $M' = (m'_{i,j})$ if
\[ \{ (i,j) | m_{i,j} \neq m'_{i,j} \} = \{ (i_0,j_0)\} \quad \text{and} \quad m_{i_0,j_0} = 0, \quad m'_{i_0,j_0} = 1.\]

When $n=0$, define $\Gamma_0^h$ as $\{\star\}$, and the map $\pi_{0,1} : \Gamma_0^h \ni \star \mapsto (0,\dots,0) \in \Gamma_1^h$. When $n \geq 2$, we have a natural map,
\[ \pi_{n-1,n} : 
\begin{array}{ccc}
\underline{\Gamma_{n-1}^h} &\fleche& \underline{\Gamma_{n}^h}\\
M = (m_{i,j})_{1 \leq i \leq h,1 \leq j \leq n-1} & \longmapsto   &   (m'_{i,j}) \in M_{n}(\CC), m'_{i,j} = m_{i,j} \text{ if } j <n, 0 \text{ otherwise}.
\end{array}
\]
This map preserves vertices, it is an embedding of graphs.
\end{defin}

\begin{rema}
If $n=1$, the possible matrices are simply given by 
\[ M_i = (\underbrace{1,\dots,1}_{i \text{ times}},0,\dots,0).\]
They parametrizes the lattices appearing in a periodic lattice chain inside $\GL_h(\ZZ_p)$ as in \cite{RZ}.
\end{rema}

\subsection{Some integral models}
\label{sectNorm}

Let $p$ be a prime, and let $\mathcal D$ be an integral Shimura-PEL-datum as in section \ref{sect2}. 

Denote by $\mathcal P = \{ (i,v) | v \text{ place of } F_i^+\}$ the set of \textit{places} of $G$, where $F_i$ is the center of $B_i$. 
Fix $S_p \subset \{(i,j) | i \in \{1,\dots,r\}, j \in\{1,\dots,s_i\}\}$ a set of unramified places 
over $p$. With our assumptions on $\mathcal D$, for all $v =(i,j) \in S_p$, $v$ is unramified, and 
$B \otimes_{F^+} {F^+_v}$ is split and isomorphic to $M_n(F_v)$. 
Fix $S$ a finite set of places of $\mathcal P$ such that $S \cap S_p = \emptyset$, and $S$ contains all places such that $B$ doesn't split or is ramified.

Fix then a compact $K^{S,S_p}$ outside $SS_p$ such that, $K_v$ is maximal hyperspecial for all $v \notin S \cup S_p$.

For all $(i,j) \in S_p$ we can associate an integer $h_{i,j} = \Ht_{\mathcal O_{i,j}} A[\pi_j]$ in case (AU) and $h_{i,j}= \Ht_{\mathcal O_{j,j}} A[\pi^+_j]$ in case (AL). These integers are defined for example by looking at the characteristic 0 moduli space as explained in section 3 (or could be read directly on $G$, and even defined by the integral moduli space of Kottwitz if $G$ is unramified at $p$). 
We set $\underline\Gamma_n = \prod_{(i,j) \in S_p} \underline \Gamma_n^{h_{i,j}}$.

Fix once and for all a compact subgroup $K_S \subset G(F_S)$ and for all $v \in S_p$, consider $K_v^{Sph}$ the maximal hyperspecial compact open subgroup.
We will study some covering of 
\[ X^{Sph} = X_{G,K_0}, \quad K_0 = K^{S,S_p}K_S\prod_{v \in S_p} K^{Sph}_v.\]
The Shimura variety associated to $X^{Sph}$ has a good integral model $\mathfrak X^{Sph}$ over $\Spec(\mathcal O_K)$, for $K/\QQ_p$ a well chosen finite extension (\cite{Lan} if $K_v$ is hyperspecial for all $v | p$, and \cite{LanSigma} if $p$ is unramified in $\mathcal D$ for example by normalisation of the hyperspecial level. In general, we fix any integral model $\mathfrak X^{Sph}$ given by \cite{LanSigma}.
If  $K_v$ is hyperspecial for all $v | p$, $\mathfrak X^{Sph}$ is smooth).

We will define our base space, and its integral model following \cite{LanSigma}. Let for all $v \in S_p$, $I_v$ be a Iwahori subgroup at $p$ of $G(F_v)$.
Define first its generic fiber,
\[ X = X_{G,K}, K = K^{S,S_p}K_S \prod_{v \in S_p} I_v.\]
This space, over some extension $K$ of $\QQ_p$, classifies quintuples $(A,\iota,\lambda,\eta_S,H_\cdot)$ modulo isomorphisms where $(A,\iota,\lambda,\eta_S)$ is a point of $X^{Sph}$, and $H_\cdot$ is a full flag of $A[v]$, for $v \in S_p$.
Explicitely, for every $(i,j)$ as before, $H_\cdot$ induces,
\begin{enumerate}
\item In case $(AL)$, a filtration 
\[ 0 \subset H_1 \subset \dots \subset H_r = A_i[\pi_j^+],\]
by finite flat $\mathcal O_{i,j}$-group schemes such that $H_k$ is of rank $p^k$.
\item In case $(AU)$, a filtration,
\[ 0 \subset H_1 \subset \dots \subset H_r = A[\pi_j],\]
by finite flat $\mathcal O_{i,j}$-groups schemes such that $H_k$ is of rank $p^k$ and $H_i^\perp = H_{r-i}^{(\sigma)}$.
\end{enumerate}

This Shimura variety with Iwahori level at $S_p$ has a natural integral model over $\Spec(\mathcal O_K)$. When all the prime $v | p$ satisfies that $K_v$ is parahoric (this is only a condition outside $S_p$ here), then this is defined by the lattice chain introduced in \cite{RZ}. See for example \cite{Lan}. In general, this can be seen as explained in \cite{LanSigma}, example 2.4 and 13.12.
The abelian scheme $A$ and the subgroups $H_k^{i,j}$ gives rise to isogenies (precisely, we need to use Zarhin's trick, see remark \ref{remZarhin}),
\[ A \fleche A_k^{i,j} = A/H_k^{i,j}.\]
In particular we get a map,
\[ X \fleche \prod_{\gamma \in \Gamma_1} \mathfrak X^{Sph},\]
sending $(A,\iota,\lambda,\eta,H_\cdot)$ to $(A_k^{i,j},\iota,\eta,\lambda)$ (see remark \ref{remZarhin}). 
Then the integral model $\mathfrak X$ is defined as the normalisation of $\mathfrak X^{Sph}$ in $X$. This is flat over $\Spec(\mathcal O_K)$. 

The same thing applies to compatible choices of toroïdal compactification\footnote{In all the this text, we always assume the rational cone decompositions to be smooth and projective without further comment}, and we get spaces, flat, proper over $\Spec(\mathcal O_K)$ (see \cite{LanSigma} Lemma 7.9),
\[ \mathfrak X^{tor} \quad \text{and} \quad \mathfrak X^{Sph,tor}.\]

\begin{rema}
In the following, we will be interested mainly by $A$ (as opposed to the collection of all the $A_\gamma$) and the subgroups $H_{i,j}^k[p^\ell]$. Thanksfully, there is a "universal semi-abelian scheme" (more precisely, a degenerating family) on $\mathfrak X^{tor}$ and its covers extending $A$ on $X$. If $p$ is unramified in the PEL datum and we are at hyperspecial level this is \cite{Lan} Theorem 6.4.1, in general this is \cite{LanSigma} Theorem 11.2.

But we will need slightly more, as for a semi-abelian scheme $G$, $G[p^n]$ need not to be finite flat. Fortunately, we can find an etale covering $\mathfrak U$ of $\mathfrak X^{Sph,tor}$ such that $G$ is approximated on each open of this covering by a Mumford 1-motive $M$, i.e. $G[p^n] = M[p^n]$ (see \cite{Stroh} section 2.3 (more precisely Proposition 2.3.3.1) and \cite{LanSigma} 
Theorem 11.2). This etale covering is an isomorphism on the boundary (see \cite{Stroh} section 2.4). In particular, there is a semi-abelian scheme of constant rank 
$\widetilde G$ such that $\widetilde G[p^n] \subset G[p^n]$ is finite flat, and such that $\omega_{\widetilde G[p^n]} = \omega_{G[p^n]}$. We can thus by pullback find also an etale covering of $\mathfrak X^{tor}$ on which we have the finite flat group scheme $\widetilde G[p^n]$.
Thus, the ($\mu$-ordinary) Hasse invariant or the degree function extends on this covering of $\mathfrak X^{Sph,tor}$, but we can descend them : see in subsection \ref{subsectBord}.
\end{rema}

We have similarly for any $n$, a Shimura variety with Iwahori level $p^n$, $X^{tor}_0(p^n)$, over $\Spec(K)$, classifying, outside the boundary, $(A,\iota,\lambda,\eta_S,H_\cdot)$, with $H_\cdot$ a full flag of $A[p^n]$. More precisely, we have for all $(i,j)$,
\begin{enumerate}
\item In case $(AL)$, a filtration 
\[ 0 \subset H_1 \subset \dots \subset H_r = A_i[\pi_j^+],\]
by finite flat $\mathcal O_{i,j}$-group schemes such that $H_k$ is of $\mathcal O_{i,j}$-rank $p^{nk}$ with cyclic graded pieces.
\item In case $(AU)$, a filtration,
\[ 0 \subset H_1 \subset \dots \subset H_r = A[\pi_j],\]
by finite flat $\mathcal O_{i,j}$-groups schemes such that $H_k$ is of $\mathcal O_{i,j}$-rank $p^{nk}$ with cyclic graded pieces such that $H_i^\perp = H_{r-i}^{(\sigma)}$.
\end{enumerate}
Once again, by \cite{LanSigma} (here we are in caracteristics zero, so this is easier) there is a natural map\footnote{Cone decomposition must be chosen appropriately, but we suppose so, without further comment, as it is always possible to refine the choices in order to get the compatibility} (again, see remark \ref{remZarhin}),
\[ X_0(p^n)^{tor} \fleche \prod_{\gamma \in \Gamma_n} \mathfrak X^{Sph,tor},\]
sending $(A,\iota,\lambda,\eta_S,H_\cdot)$ to $(A/(H_{i,j}^k[p^\ell]),\iota,\lambda,\eta_S,)$ away from the boundary.

There is moreover a map 
\[X^{tor}_0(p^n) \overset{\pi_{n,n-1}}{\fleche} X^{tor}_0(p^{n-1}),\]
given by sending the flag $(H_{i,j}^k[p^\ell] \subset A_i[\pi_j^\ell])_{\ell \leq n}$ to the flag $(H^k_{(i,j)}[p^{\ell}] \subset A_i[\pi_j^{\ell}])_{\ell \leq n-1}$. In particular, the diagram,
\begin{center}
\begin{tikzpicture}[description/.style={fill=white,inner sep=2pt}] 
\matrix (m) [matrix of math nodes, row sep=3em, column sep=2.5em, text height=1.5ex, text depth=0.25ex] at (0,0)
{ 
X_0(p^n)^{tor} & &\prod_{\gamma \in \Gamma_n} \mathfrak X^{Sph,tor} \\
X_0(p^{n-1})^{tor} & &\prod_{\gamma \in \Gamma_{n-1}} \mathfrak X^{Sph,tor}\\
};
\path[->,font=\scriptsize] 
(m-1-1) edge node[auto] {$$} (m-1-3)
(m-2-1) edge node[auto] {$$} (m-2-3)
(m-1-1) edge node[auto] {$\pi_{n,n-1}$} (m-2-1)
(m-1-3) edge node[auto] {$\Gamma_{n,n-1}$} (m-2-3)
;
\end{tikzpicture}
\end{center}
is commutative.

\begin{defin}
Define $\mathfrak X_0(p^n)^{tor}$ to be the normalisation of $\prod_{\Gamma_n} \mathfrak X^{Sph,tor}$ in $X_0(p^n)^{tor}$. It is proper and flat over $\Spec(\mathcal O_K)$.
By normalisation, the map $\pi_{n,n-1}$ extends as a map,
\[ \pi_{n,n-1} : \mathfrak X_0(p^n)^{tor} \fleche \mathfrak X_0(p^{n-1})^{tor}.\]
\end{defin}

In particular (see also \cite{FC} Chap I Prop. 2.7), over $\mathfrak X_0(p^n)^{tor}$ we have by pullback natural isogeny graphs,
\[ (A_\gamma)_{\gamma \in \Gamma_n},\]
such that the Kernel of $A_i[\pi_j^\infty] \fleche A_{k,m}^{i,j}$, is a finite flat, at least away from the boundary, $\mathcal O_{i,j}$-subgroup of $A_i[\pi_j^n]$ of $\mathcal O_{i,j}$-rank $p^{km}$.
We denote it by $H^{i,j}_{k,m}$, or, if $(i,j)$ is understood, $H_k[p^m]$. This makes sense as $H_{k,m} = H_{k,n}[p^m]$. 

\begin{rema}
\label{remZarhin}
Actually the construction is slightly more evolved as what than been said, as the abelian varieties $A_\gamma = A/H_{i,j}^k[p^\ell]$ appearing in the isogeny graph might not be principally polarized, thus need not to give a map $X_0(p^n) \fleche X^{Sph}$. But as explained in \cite{LanSigma} Proposition 4.12 and Proposition 6.1, we have a map to an auxiliary moduli problem where $A/H_{i,j}^k[p^\ell]$ is modified to be principally polarized by Zarhin's trick, extends to the integral model $\mathfrak X_0(p^n)$ (all this works on the compactifications), and we can then deduce the extension of $A/H_{i,j}^k[p^\ell]$ itself.
\end{rema}


\subsection{Results on the canonical filtration and the Hodge-Tate map}
\begin{theor}
\label{thrfiltcan}
Let $L$ be a valued extension of $\QQ_p$, and $G$ be a truncated level $n$ $p$-divisible group over $\Spec(\mathcal O_L)$ with action of $\mathcal O$ and signature $(p_\tau,q_\tau)$.
\begin{enumerate}
\item \label{can1}Then there exist at most one  sub-$\mathcal O$-module $H_\tau$ of height $np_\tau$ such that,
\[ \deg H_\tau > \sum_{\tau'} \min(np_\tau',np_\tau) - \frac{1}{2}.\]
We call it the canonical subgroup of height $p_\tau$ if it exists.
\item Moreover, if two sub-$\mathcal O$-modules $H_\tau, H_{\tau'}$ of respective height $np_\tau, np_{\tau'}$ as before exists, then,
\[ p_\tau \leq p_{\tau'} \quad \text{if and only if} \quad H_\tau \subset H_{\tau'}.\]
\item If moreover $G$ is polarized, then $H_\tau$ is polarized, i.e. 
\[ H_\tau^\perp := (G/H_\tau)^D \hookrightarrow G^D,\]
is identified with the canonical subgroup of height $q_\tau$ of $G^D$.
\item \label{can4} The group $H_\tau$ verifying the previous hypothesis is a step of the Harder-Narasihman filtration of $G$, it also coincide with the kernel of the Hodge-Tate map,
\[ \alpha_{G,\tau,n-\eps} : G(\mathcal O_L) \fleche \omega_{G^D,\tau,n-\eps},\]
where $\eps = \deg_\tau(G/H_\tau)$.
\item\label{can5} \label{HTbound}  Suppose that $H_\tau$ as in \ref{can1}. exists. The cokernel of the  Hodge-Tate map,
\[ \alpha_{G,\tau}\otimes 1:  G(\mathcal O_L) \otimes \mathcal O_L \fleche \omega_{G^D,\tau},\]
is of degree $p^{\frac{\Deg_\tau(G[p]/H_\tau[p])}{p^f-1}}$. In particular, write $\eps_{\tau'} = n\min(p_{\tau'},p_\tau) - \deg_{\tau'} H_\tau$, then the cokernel of the Hodge-Tate map 
is killed by $p^{\frac{K_\tau(p_\bullet) + \Sum_\tau(\eps_\bullet)}{p^f-1}}$, where
\[ K_\tau( p_\bullet) =  \sum_{i=1}^f p^{f-i} \max(p_{\sigma^i\tau}-p_\tau,0) \quad \text{and} \quad \Sum_\tau(\eps_\bullet) = \sum_{i=1}^f p^{f-i} \eps_{\sigma^i\tau}.\]
\end{enumerate}
\end{theor}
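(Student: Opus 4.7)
The plan is to follow Fargues' strategy for canonical subgroups in the ordinary case, now adapted to the $\mu$-ordinary setting with $\mathcal{O}$-action via the partial degrees $\deg_\tau$. The formal properties I rely on are: additivity of $\deg_\tau$ on short exact sequences of finite flat $\mathcal{O}$-group schemes, compatibility with Cartier duality (which swaps $p_\tau \leftrightarrow q_\tau$), the upper bound $\deg_\tau K \leq n\min(p_\tau, h)$ for any such $K$ of $\mathcal{O}$-height $h$, and $\deg K = \sum_\tau \deg_\tau K$. These are developed in \cite{Her2} or transcribe directly from \cite{Far2} once the $\tau$-refinement is set up.

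For part \ref{can1}, I would apply the classical intersection/sum argument. Given two candidates $H, H'$, the short exact sequence
\[ 0 \fleche H \cap H' \fleche H \oplus H' \fleche H + H' \fleche 0 \]
and additivity yield $\deg(H \cap H') + \deg(H + H') > 2\sum_{\tau'}\min(np_{\tau'}, np_\tau) - 1$. Since $H\cap H'$ has $\mathcal{O}$-height $\leq np_\tau$ and $H + H'$ has $\mathcal{O}$-height $\geq np_\tau$, the partial-degree upper bounds force each summand to saturate the inequality, whence $H = H'$. For part (2), if $p_\tau \leq p_{\tau'}$, then $H_\tau \cap H_{\tau'}$ has $\mathcal{O}$-height $\leq np_\tau$ and additivity combined with the hypothesis on $\deg H_\tau$ and $\deg H_{\tau'}$ shows that $H_\tau \cap H_{\tau'}$ itself satisfies the canonical-subgroup bound for height $np_\tau$; uniqueness from (1) then forces $H_\tau \cap H_{\tau'} = H_\tau$. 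Part (3) follows by applying (1) to $G^D$: Cartier duality gives $\deg_\tau(H_\tau^\perp) = nq_\tau - \deg_\tau(G/H_\tau)$, which places $H_\tau^\perp$ squarely in the canonical-subgroup bound for $G^D$ of height $nq_\tau$. Part \ref{can4} combines two ingredients: subgroups of sufficiently high $\tau$-degree are automatically breaks of the Harder--Narasimhan filtration (Fargues' theorem), and the kernel of $\alpha_{G,\tau,n-\eps}$ is itself characterized by having $\tau$-degree in the canonical range, so the two groups coincide.

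The main obstacle is part \ref{can5}, the explicit cokernel bound. The plan is to relate $\Coker(\alpha_{G,\tau} \otimes 1)$ to the $\tau$-component of the $\mu$-ordinary Hasse invariant $\Ha_\tau(G)$ from \cite{GN} and Definition \ref{def59}, whose normalization involves $p^f - 1$. Via Cartier duality on each $\tau$-component and the Hodge--Tate exact sequence, one obtains that the cokernel is killed by $p^{v(\Ha_\tau(G))/(p^f-1)}$ where $v(p)=1$. The computation of $v(\Ha_\tau(G))$ in terms of partial degrees of $G[p]/H_\tau[p]$ proceeds by tracing Frobenius through the $f$-orbit of $\tau$ in the Dieudonné module description of $\Ha_\tau$: the $i$-th Frobenius iterate contributes the $\sigma^i\tau$-degree, weighted by $p^{f-i}$ because Frobenius rescales valuations by $p$, which after summing produces the geometric coefficient pattern in the statement. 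Decomposing $\deg_{\sigma^i\tau}(G[p]/H_\tau[p])$ into its "worst-case" piece $\max(p_{\sigma^i\tau}-p_\tau, 0)$ and the actual defect $\eps_{\sigma^i\tau} = n\min(p_{\sigma^i\tau}, p_\tau) - \deg_{\sigma^i\tau} H_\tau$ yields exactly $K_\tau(p_\bullet) + \Sum_\tau(\eps_\bullet)$ in the numerator. The hard step here is the Frobenius bookkeeping: it requires a careful choice of integral model for $\Ha_\tau$ on which the Frobenius pullback identity is transparent, which is why I would work at the level of truncated Dieudonné modules rather than directly with the finite flat groups.
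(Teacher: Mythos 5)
Your parts (1)--(3) use a self-contained intersection/sum argument with additivity of degrees and the partial-degree bounds, whereas the paper simply cites Bijakowski's Propositions 1.24, 1.25, 1.30 from \cite{Bijmu}. Your route is sound: in (1) the decisive defect comes at $\tau'=\tau$, where $\min(m,np_\tau)+\min(m',np_\tau)=m+np_\tau\leq 2np_\tau-(np_\tau-m)$ with $m=\Ht_{\mathcal O}(H\cap H')<np_\tau$, so the strict inequality $\deg H+\deg H'>2\sum_{\tau'}\min(np_{\tau'},np_\tau)-1$ forces $np_\tau-m<1$, a contradiction; and (2) follows by the same bookkeeping applied to $H_\tau\cap H_{\tau'}$. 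You might want to make the contribution of $\tau'=\tau$ explicit, since that is where the argument actually closes, but the plan is correct and buys a self-contained proof of uniqueness rather than a citation. Part (4) matches the paper's appeal to the Harder--Narasimhan theory and \cite{Her2}.

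Part (5), however, has a genuine gap. You propose to show that the cokernel of $\alpha_{G,\tau}\otimes 1$ is killed by $p^{v(\Ha_\tau(G))/(p^f-1)}$ and then to compute $v(\Ha_\tau)$ from the partial degrees. That intermediate step --- relating the $\mu$-ordinary Hasse invariant to the degree-theoretic data, or equivalently to the cokernel of the Hodge--Tate map --- is exactly the result from \cite{Her2} that requires $p$ to be large, and the whole point of the present theorem (and of the paper, as stated in the introduction) is to avoid it: for small $p$ such a comparison is not available. The paper instead reduces to $n=1$, sets $E=G[p]/H_\tau[p]$, shows $H_\tau[p]$ inherits the degree bound, and then runs a Frobenius d\'evissage directly on $E$ (as in the proof of \cite{Her2} Theorem 6.1) to compute $\deg\Coker(\alpha_{G,\tau,\eps}\otimes 1)=\Deg_\tau(E)/(p^f-1)$ without ever invoking the Hasse invariant; the $p^f-1$ comes from iterating Frobenius along the $f$-orbit of $\tau$ on $E$, not from $\Ha_\tau$ being a section of $\det\omega^{\otimes(p^f-1)}$. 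To repair your argument you would need to replace the Hasse-invariant step with this direct degree computation on $E$; as written, the bound you invoke is unproven in the generality needed.
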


\begin{proof}
The first three assertions are Bijakowski's result, \cite{Bijmu} Proposition 1.24,1.25, 1.30 (see for example \cite{Her2} Proposition A.2 for something written for the $p^n$-torsion).
Assertion \ref{can4}. is proposition 7.8 and 7.7 of \cite{Her2} (appliying 7.8 we get a step $H'_\tau$ and by 7.7 $H_\tau$ and $H_\tau'$ coincide with the Kernel of the Hodge-Tate map).
It it sufficient to prove \ref{can5}. for $n=1$. Remark that our hypothesis for $H_\tau \subset G$ implies the same for $H_\tau[p] \subset G[p]$.
Indeed denote $\deg_{\tau'} H_\tau = n\min(p_{\tau'},p_\tau) - \eps_{\tau'}$, and write the sequence,
\[ 0 \fleche H_\tau[p] \fleche H_\tau \fleche pH_\tau \fleche 0\]
which is exact in generic fibre, where $pH_\tau$ is the schematic adherence of $pH_\tau(\mathcal O_C)$ in $H_\tau$. Then $pH_\tau \subset G[p^{n-1}]$, and we have,
\[ \deg_{\tau'} H_\tau \leq \deg_{\tau'} H_\tau[p] + \deg_{\tau'} pH_\tau,\]
and because $pH_\tau$ is of height $(n-1)p_\tau$ and inside $G[p^{n-1}]$, $\deg_{\tau'}pH_\tau \leq (n-1)\min(p_\tau,p_{\tau'})$. Thus, 
\begin{equation}\label{eqdeg} \deg_{\tau'} H_\tau[p] \geq \min(p_{\tau'},p_\tau) - {\eps_\tau'}.\end{equation}

Then denote $E = G[p]/H_\tau[p]$. The hypothesis on the degree of $H_\tau$, and thus of $H_\tau[p]$ implies
\[ \omega_{H_\tau^D,\tau,\eps} = 0\]
for all $\eps < 1 - \deg_\tau(H_\tau^D)$, in particular, $\eps < 1/2$. Using the same devissage of $E$ as in \cite{Her2}, proof of theorem 6.1 implies that
 \[\deg \Coker(\alpha_{E,\tau,\eps} \otimes 1) = \deg \Coker(\alpha_{G,\tau,\eps} \otimes 1) = \frac{\Deg_\tau(E)}{p^f-1}.\]
Using the properties of various $\deg_{\tau'}$, and equation (\ref{eqdeg}) we get the result.
\end{proof}

\begin{rema}
\begin{enumerate}
\item The principal difference of the previous theorem with \cite{Her2} is that we don't a priori have the existence of such groups $H_\tau$. In \cite{Her2}, up to taking $p$ big enough to relate the ($\mu$-ordinary) Hasse invariant to the Hodge-Tate map, we have a condition for the existence in terms of the Hasse invariant. In this article, we assure the existence by increasing the level at $p$ in the integral model.
\item The bound given in \ref{HTbound} is interesting in general only when $p$ is big enough compared to $(p_\tau)$. If $p$ is small and $(p_\tau)$ is too big, then it is more interresting to use the bound given by Fargues (\cite{Far2}) which states that (in full generality) the cokernel of the Hodge-Tate map is killed by $p^{\frac{1}{p-1}}$. Note this is because the degree involve taking some determinant.
\end{enumerate}
\end{rema}

\subsection{Degree function, $\mu$-ordinary locus and Hasse invariants.}


As usual, fix a couple $(i,j) \in S_p$, and suppress it from the notation. 
For each $\tau \in \mathcal T$ is associated an integer $p_\tau$, and thus a subgroup of height $np_\tau$ over $\mathfrak X_0(p^n)$, $H_{\tau} \subset G = A[\pi_j^\infty]$, which is finite flat and killed by $p^n$.
We can thus, following \cite{Bijmu}, define for each $\tau$ a real-valued function,
\[ \deg(H_{\tau}) : 
\begin{array}{ccc}
\mathcal X_0(p^n)  & \fleche   & [0,n\sum_{\tau'} \min(p_\tau,p_{\tau'})]  \\
 (A,i,\lambda,\eta,H_\cdot) & \longmapsto  & \deg(H_{\tau})  
\end{array}
\]
and another one, $\prod_\tau \deg(H_{\tau})$. Then we have the following result of Bijakowski \cite{Bijmu} Proposition 1.34,
\begin{prop}
\label{Pro58}
The locus where the previous function is maximal in $\mathcal X_0(p^n)$, i.e.
\[\prod_{j=0}^{f-1} \deg(H_{{\sigma^{j}\tau}})^{-1}(\{n\sum_{\tau'} \min(p_\tau,p_{\tau'})\}\times \dots \times \{n\sum_{\tau'} \min(p_{\sigma^{f-1}}\tau,p_{\tau'})\}),\] 
is included in $\mathcal X_0(p^n)^{full-\mu-ord}$, the $\mu$-ordinary locus of $\mathcal X_0(p^n)$ .
\end{prop}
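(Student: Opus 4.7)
The plan is to show that at every geometric point of the given locus, the $\mathcal O$-module scheme $A_i[\pi_j^n]$ is isomorphic to the truncation of the standard $\mu$-ordinary Barsotti--Tate group attached to the signature $(p_\tau)_\tau$, which exhibits the point as a member of $\mathcal X_0(p^n)^{full-\mu-ord}$.

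First I would invoke Theorem \ref{thrfiltcan}.\ref{can1}: the maximal value $n\sum_{\tau'}\min(p_{\sigma^j\tau},p_{\tau'})$ strictly exceeds the threshold $\sum_{\tau'}\min(np_{\tau'},np_{\sigma^j\tau})-\frac{1}{2}$, so each $H_{\sigma^j\tau}$ is identified as the unique canonical subgroup of $\mathcal O$-height $np_{\sigma^j\tau}$. Part (2) of the same theorem then arranges them into an increasing chain indexed by the ordering of the $p_{\sigma^j\tau}$, producing a canonical $\mathcal O$-filtration of $A_i[\pi_j^n]$.

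Second, I would upgrade the hypothesis from total degrees to partial degrees. Each partial degree $\deg_{\tau'} H_{\sigma^j\tau}$ is bounded above by $n\min(p_{\sigma^j\tau},p_{\tau'})$, and the sum of these upper bounds is exactly the maximal value attained by the total degree. Hence maximality of the total forces $\deg_{\tau'} H_{\sigma^j\tau} = n\min(p_{\sigma^j\tau}, p_{\tau'})$ for every $\tau'$. Passing to successive quotients in the canonical filtration (reindexed by increasing $p_\tau$), each graded piece acquires a completely determined partial-degree profile: in each $\tau'$-slot it is either purely multiplicative or purely étale, depending on whether $p_{\tau'}$ sits above or below the piece.

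Third, a truncated BT group with $\mathcal O$-action whose canonical filtration realises this saturated profile is rigid at each geometric point: each graded piece must match the corresponding slice of the standard $\mu$-ordinary BT group in the sense of \cite{Moo,Wed1}, and the extensions are forced to split in the appropriate sense. This identifies the geometric fiber with the canonical $\mu$-ordinary model and gives the desired inclusion.

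The main obstacle is the rigidity step in the last paragraph: passing from numerical coincidences of partial degrees to an actual isomorphism between the graded pieces and the standard multiplicative/étale $\mathcal O$-module schemes, and controlling the extensions between them. This is the core content of \cite{Bijmu} Proposition 1.34, carried out through an explicit analysis of Dieudonné modules in characteristic $p$.
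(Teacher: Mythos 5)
The paper does not give its own argument for this proposition; it simply cites Bijakowski \cite{Bijmu} Proposition~1.34, which is exactly the reference you defer to for the core rigidity step. Your sketch (canonical subgroups via Theorem~\ref{thrfiltcan}, extracting maximality of partial degrees from maximality of the total, then rigidity of the graded pieces) is a faithful unpacking of what that reference contains, so you are taking essentially the same route as the paper.
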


\begin{rema}
To be precise, as we have fixed the prime $(i,j) \in S_p$, the $\mu$-ordinary locus above, and in the rest of the text (until the conclusion at the end of section \ref{sect9}) if not stated otherwise, is with respect to the prime $(i,j)$.
\end{rema}

\begin{defin}
\label{def59}
On $\mathfrak X^{Sph}$ we can define a $\mu$-ordinary Hasse invariant $\Ha$ (cf. \cite{Her1}, see also \cite{GN,KW}) which is a section of the sheaf 
$\det\omega_G^{\otimes(p^f-1)} \pmod{p}$. This defines a function
\[ v(\Ha) : \mathcal X^{Sph} \fleche [0,1],\]
which sends a $\mathcal O_K$-point to the (truncated by 1) valuation of the $\mu$-ordinary Hasse invariant of the reduction of the corresponding point of $\mathfrak X^{Sph}$.
In particular we can define by pullback an analogous function on $\mathcal X_0(p^n)$, and define
\[ \mathcal X_0(p^n)^{\mu-full}(v) = v(\Ha)^{-1}([0,v]).\]
\end{defin}

\begin{rema}
\begin{enumerate}
\item In the previous definition, the valuation is normalized by $v(p) = 1$, and $\mathcal X_0(p^n)^{\mu-full}(0)$ is the $\mu$-ordinary locus of $\mathcal X_0(p^n)$.
\item Actually by construction we have many maps from $\mathcal X_0(p^n)$ to $\mathcal X^{Sph}$ (and as much for their integral model), namely one for each $\gamma \in \Gamma_n$. The one we consider above is the canonical one corresponding to the zero-matrix $\gamma$ (which sends $A$ on $A$).
\end{enumerate}

\end{rema}

\begin{defin}
\label{defmucan}
Define $\mathcal X_0(p^n)(v)$ as the (union of) connected components of $\mathcal X_0(p^n)^{\mu-full}(v))$ which contains a point of maximal degree for the previous function (equivalently, the components where the subfiltration of $H_\cdot$ of height $np_\tau$ coincide where the canonical filtration in sense of theorem \ref{thrfiltcan}).
We will call $\mathcal X^{}_0(p^n)(0) =: \mathcal X^{can-\mu-ord}_0(p^n)$ the $\mu$-ordinary-canonical locus of $\mathcal X_0(p^n)$. It is an open and closed subset of $\mathcal X_0(p^n)^{\mu-full}(0)$ and coincide with the locus of maximal degree of proposition \ref{Pro58}.
\end{defin}

\begin{rema}
$\mathcal X^{}_0(p^n)(v)$ is the analogue of the strict neighborhoods of the ordinary-multiplicative part of the modular curves of level $\Gamma_0(p)$.
\end{rema}

\begin{defin}
For $(\eps_\tau)_\tau$, define the rigid analytic open,
\[ \mathcal X_0(p^n)((\eps_\tau)_\tau) = \prod_{j=0}^{f-1} \deg(H_{{\sigma^{j}\tau}})^{-1}(\prod_{j=0}^{f-1}[n\sum_{\tau'} \min(p_{\sigma^j\tau},p_{\tau'})-\eps_{\sigma^j\tau},n\sum_{\tau'} \min(p_{\sigma^j\tau},p_{\tau'})]).\]
This is a strict neighborhood of the $\mu$-ordinary-canonical locus $\mathcal X_0(p^n)(0)$ in $\mathcal X_0(p^n)$.
\end{defin}

\begin{rema}
The map $\pi_{n,n-1}$ sends $\mathcal X_0(p^n)(\underline \eps)$ into $\mathcal X_0(p^{n-1})(\underline \eps)$.
Indeed, if $\deg H_\tau > n\sum_{\tau'} \min(p_\tau,p_{\tau'}) - \eps_\tau$, then because of the generic exact sequence,
\[ 0 \fleche H_\tau[p^{n-1}] \fleche H_\tau \fleche K \fleche 0,\]
and the fact that $K$ is killed by $p$, thus $\deg K \leq \sum \min(p_\tau,p_{\tau'})$ 
we have that $\deg H_\tau[p^{n-1}] \geq (n-1)\sum_{\tau'} \min(p_\tau,p_{\tau'}) - \eps$.

\end{rema}

\subsection{Extension to the boundary}
\label{subsectBord}

We want to extend the previous opens to all $\mathcal X_0(p^n)^{tor}$, thus we will need to extends the functions $\deg$ and $^\mu\Ha$. The function $^\mu\Ha$ can be extended to all $\mathfrak X_0(p^n)^{tor}$ (as a section of some $\det (\omega_G)^{\otimes N} \otimes \mathcal (\mathcal O_{\mathfrak X_0(p^n)^{tor}}/p)$) by \cite{LanIMRN} Theorem 8.7. For the functions $\deg$, we can also extend it. The group $H_{p_\tau}$ is the Kernel of an isogeny of semi-abelian schemes
\[ \pi : A \fleche A_\gamma,\]
on $\mathfrak X_0(p^n)^{tor}$. Thus, looking at the corresponding map on conormal sheaves we get
\[ \pi^* : \omega_{A_\gamma} \fleche \omega_A,\]
and taking determinants gives $\det\pi^* \in H^0(\mathfrak X_0(p^n)^{tor},\det\omega_A\otimes \det\omega_{A_\gamma}^{-1})$. Over $\mathfrak X_0(p^n)$, the valuation at every point of $\det\pi^*$, which can be seen as an element of $\RR_+$, coincides with the degree of $H_{p_\tau}$. Thus, we have extended the degree map to,
\[ \deg(H_{\tau}) : \mathcal X_1(p^n)^{tor} \fleche \RR_+.\]
To check that this map is actually bounded by $n\sum_{\tau'}\min(p_\tau,p_\tau')$ as on the open Shimura variety $\mathcal X_0(p^n)^{tor}$, we can do the following.
Let $x \in \mathcal X_0(p^n)^{tor}(K)$, and let $\widetilde G/\mathcal O_K$ be the constant toric rank semi-abelian scheme such that $A_x$ is a quotient by some etale sheaf $Y$ of $\widetilde G$ by Mumford's construction. Then by \cite{FC90} Corollary 3.5.11, we have an exact sequence, and taking schematic adherence $H_n$ of $\widetilde G[n] \otimes K$ in $A_x[n]$, we have that $H_n$ is isomorphic to $\widetilde G[n]$ and whose quotient in $A_x[n]$ is etale. Decompose accordingly $A_{\gamma,x}$ together with the isogeny $\pi$ (see for example \cite{FC90} Corollary III.7.2), and decompose $\pi_H$ as $\widetilde \pi_H$ along $\widetilde G$. Then the degree of $\pi_H$ is the same as $\widetilde \pi_H$ as its quotient is etale. But $\ker \widetilde\pi_H$ (which is now finite flat) is of signature smaller than $(n\min(p_\tau,p_\tau'))_{\tau'}$, thus the assertion on its degree.

In particular we can define $\mathcal X_0(p^n)^{tor}((\eps_\tau)_\tau)$ and  $\mathcal X_0(p^n)^{tor}(v)$ as before.

\subsection{Two collections of strict neighborhoods} 

The previous opens $\mathcal X_0(p^n)^{tor}((\eps_\tau)_\tau)$ and  $\mathcal X_0(p^n)^{tor}(v)$ both define stricts neighborhoods of the $\mu$-ordinary-canonical locus 
$\mathcal X_0(p^n)^{tor}(0)$. Thus we get the following proposition,

\begin{prop}
For all $v >0$ there exists $(\eps_\tau)_\tau >0$ such that,
\[ \mathcal X_0(p^n)^{tor}((\eps_\tau)_\tau) \subset \mathcal X_0(p^n)^{tor}(v),\]
and for all $(\eps_\tau)_\tau >0$ there exists $v >0$ such that,
\[\mathcal X_0(p^n)^{tor}(v) \subset \mathcal X_0(p^n)^{tor}((\eps_\tau)_\tau).\]
\end{prop}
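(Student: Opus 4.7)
Both families $\{\mathcal X_0(p^n)^{tor}(v)\}_{v>0}$ and $\{\mathcal X_0(p^n)^{tor}((\eps_\tau))\}_{(\eps_\tau)>0}$ are decreasing systems of quasi-compact admissible opens in the rigid fiber of the proper formal scheme $\mathfrak X_0(p^n)^{tor}$, each containing the $\mu$-ordinary-canonical locus $\mathcal X_0(p^n)^{tor}(0)$. The idea is to show, by a standard continuity/quasi-compactness argument, that these two bases of strict neighborhoods are cofinal, which is exactly the content of the proposition.

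\textbf{First inclusion.} Fix $v>0$. The continuous functions
\[ \delta_\tau := n\sum_{\tau'} \min(p_\tau,p_{\tau'}) - \deg(H_\tau),\]
extended to the boundary as in \S\ref{subsectBord}, are non-negative on $\mathcal X_0(p^n)^{tor}$. Let $C := \mathcal X_0(p^n)^{tor}\setminus \mathcal X_0(p^n)^{tor}(v)$, a quasi-compact closed analytic subspace (or rather its intersection with the components of interest) on which $v(\Ha) > v$. By Proposition \ref{Pro58}, the $\delta_\tau$'s cannot all vanish at a point of $C$: such a point would belong to the maximal-degree locus and hence, by definition of the relevant connected components, to $\mathcal X_0(p^n)^{tor}(0) \subset \mathcal X_0(p^n)^{tor}(v)$, a contradiction. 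By quasi-compactness of $C$ and continuity of the finitely many $\delta_\tau$, one obtains constants $\eps_\tau > 0$ such that $\max_\tau (\delta_\tau/\eps_\tau) > 1$ on $C$. It then follows that $\mathcal X_0(p^n)^{tor}((\eps_\tau)) \subset \mathcal X_0(p^n)^{tor}(v)$.

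\textbf{Second inclusion.} Symmetric: fix $(\eps_\tau)>0$ and set $C' := \mathcal X_0(p^n)^{tor}\setminus \mathcal X_0(p^n)^{tor}((\eps_\tau))$, again a quasi-compact closed subset (intersected with the components meeting $\mathcal X_0(p^n)^{tor}(0)$). On the $\mu$-ordinary-canonical locus the Hasse invariant is a unit, so $v(\Ha) = 0$ and $v(\Ha)$ cannot vanish on any point of $C'$ (otherwise Proposition \ref{Pro58} would force maximal degrees, contradicting the definition of $C'$). By quasi-compactness, $v(\Ha)$ is bounded below on $C'$ by some $v > 0$; hence $\{v(\Ha) \leq v\}$ (intersected with the components of interest) is contained in $\mathcal X_0(p^n)^{tor}((\eps_\tau))$, giving the second inclusion.

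\textbf{Main obstacle.} The delicate point is the compactness argument at the boundary of the toroidal compactification, which requires the extensions of both $\deg$ and $v(\Ha)$ performed in \S\ref{subsectBord}. An equivalent formulation is via the associated adic space, where one argues by specialization: if no uniform $\eps$ (respectively $v$) worked, quasi-compactness would produce a limit point simultaneously lying in $\mathcal X_0(p^n)^{tor}(0)$ and in the complement of $\mathcal X_0(p^n)^{tor}(v)$ (respectively of $\mathcal X_0(p^n)^{tor}((\eps_\tau))$), contradicting the inclusion $\mathcal X_0(p^n)^{tor}(0) \subset \mathcal X_0(p^n)^{tor}(v)$. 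A minor subtlety is the connected-component selection in the definition of $\mathcal X_0(p^n)^{tor}(v)$, but these are precisely the components meeting $\mathcal X_0(p^n)^{tor}(0)$, and this selection is stable (and compatible with that of the $\deg$-defined neighborhoods) as $v$ decreases.
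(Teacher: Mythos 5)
Your argument takes a genuinely different route from the paper's. The paper's proof is an abstract two-liner: both families $\{\mathcal X_0(p^n)^{tor}(v)\}_v$ and $\{\mathcal X_0(p^n)^{tor}((\eps_\tau))\}_{\eps}$ consist of strict neighborhoods of $\mathcal X_0(p^n)^{tor}(0)$, and since for any strict neighborhood $\mathcal V$ the pair $(\mathcal V, \mathcal X_0(p^n)^{tor}\setminus\mathcal X_0(p^n)^{tor}(0))$ is an admissible covering, $\mathcal V$ must contain some member of each family; applying this with $\mathcal V = \mathcal X(\eps)$ and then with $\mathcal V = \mathcal X(v)$ gives both inclusions at once. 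You instead unwind what this cofinality really rests on, giving a hands-on compactness argument that explicitly invokes Proposition \ref{Pro58} and the continuity of the $\delta_\tau$ and of $v(\Ha)$. This is instructive, because the paper's "strict neighborhood" abstraction hides the fact that the $\eps$-family being a neighborhood basis of $\mathcal X_0(p^n)^{tor}(0)$ at all already depends on Bijakowski's maximal-degree result, which you make explicit.

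That said, your version has a technical gap that the paper's abstraction sidesteps: in classical rigid geometry the complement $C = \mathcal X_0(p^n)^{tor}\setminus\mathcal X_0(p^n)^{tor}(v)$ of an admissible open is not an admissible subspace, so phrases like "quasi-compact closed analytic subspace on which the continuous $\delta_\tau$ achieve a positive infimum" are not literally well-posed — a point you yourself flag in the last paragraph by suggesting the adic (or Berkovich/specialization) reformulation, but do not actually carry out. To make the compactness argument rigorous one would either pass to the adic space as you suggest, or argue via the specialization map to the special fiber of the proper formal model, or restrict to a fixed quasi-compact affinoid $\mathcal X^{*}(v_0)$ where both functions are expressible as truncated valuations of sections of line bundles and the maximum-modulus principle applies. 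A second, minor, issue: in the second inclusion you invoke Proposition \ref{Pro58} (maximal degree $\Rightarrow$ $\mu$-ordinary), but what you actually need is the converse on the selected components, namely that $v(\Ha)=0$ on a canonical component forces maximal degree; this is Definition \ref{defmucan}'s identification of $\mathcal X_0(p^n)^{tor}(0)$ with the maximal-degree locus, not Proposition \ref{Pro58} itself. Neither issue is fatal, but as written the proof is not quite self-contained.
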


\begin{proof}
Fix $\mathcal V$ a strict neighborhood of $\mathcal X^{can-\mu-\ord,tor}_0(p^n) = \mathcal X_0(p^n)^{tor}(0)$. As $(\mathcal V,\mathcal X_0(p^n)^{tor}\backslash \mathcal X_0(p^n)^{tor}(0))$ 
is an admissible covering, $\mathcal V$ contains $\mathcal X_0(p^n)^{tor}(v))$ for some $v >0$. The same applies for $\mathcal X_0(p^n)^{tor}((\eps_\tau)_\tau)$.
\end{proof}

\begin{defin}
We say that $\eps = (\eps_\tau)$ and $v$ are $n$-compatible, and we say that they satisfies $(\eps,v,n)$, if,
$$
\mathcal X_0(p^n)^{tor}(v) \subset \mathcal X_0(p^n)^{tor}(\eps).
$$
\end{defin}

Let us explain quickly why we chose this two collections of strict neighborhoods. Classically, we use the stricts neighborhoods $\mathcal X(v)$ given by the Hasse invariant to construct eigenvarieties because this is the classical definition of Katz, and as the Hasse invariant is a section of an ample line bundle on the minimal compactification, we get that the ordinary (or $\mu$-ordinary) locus and its strict neighborhoods in the minimal compactification are affinoids. This is a crucial part of the construction described in \cite{AIP}. In many case (see \cite{Brasca} or \cite{Her3} in the Picard case, and using \cite{Her2} in all PEL unramified case when $p$ is big enough), we manage to construct on the opens $\mathcal X(v)$
a canonical filtration and control the degree of the subgroups of this filtration explicitely in terms of $v$. Thus the choice of the strict neighborhoods $\mathcal X(v)$ is enough to do all the constructions in these cases. But the classicity results as in \cite{Buz,Kas,PiDuke,PSdep,BPS} and in the $\mu$-ordinary case \cite{Bijmu} relies on the stricts neighborhoods in terms of the degree. So in the unramified PEL case when $p$ is not big enough, it is not clear a priori how to relate the degrees in terms of the Hasse invariant. Nevertheless, the previous proposition will allow us to use the best of both worlds.

We will need to understand the behavior of the strict neighborhoods along etale maps.
\begin{lemma}
\label{lemmastricetale}
Let $\pi : X \fleche Y$ a rigid-etale map. Let $U \subset X$ be a open subset and $V = \pi(U)$ the corresponding open in $Y$.
Let $U_w \subset X$ be a strict neighborhood of $U$, then $\pi(U_w)$ is a strict neighborhood of $V$.
\end{lemma}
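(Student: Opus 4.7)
The plan is to verify the defining property of a strict neighborhood for $\pi(U_w)$ using that rigid-étale morphisms are open on the underlying Grothendieck topology of admissible opens.

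First, since $\pi$ is rigid-étale it is an open map on admissible opens, so $\pi(U_w)$ is an admissible open of $Y$ containing $V = \pi(U)$. This gives the inclusion of opens; it remains to upgrade it to a strict neighborhood.

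Recall that $U_w$ being a strict neighborhood of $U$ in $X$ means, as in the previous proposition's proof, that the pair $(U_w, X \setminus U)$ is an admissible covering of $X$. I want to establish the analogous property for the pair $(\pi(U_w), Y \setminus V)$ on $Y$. Since admissibility of a covering can be checked on any admissible cover of the target, it suffices to verify it on each quasi-compact admissible open $Y_0 \subset Y$. Taking $X_0 := \pi^{-1}(Y_0)$, which is admissible open in $X$, the induced covering $(U_w \cap X_0,\, (X \setminus U) \cap X_0)$ is admissible on $X_0$. Then, using the local structure of étale morphisms of rigid spaces (each point of $X_0$ has an admissible neighborhood on which $\pi$ factors as an open immersion followed by a finite étale morphism), one pushes down this admissible covering to an admissible covering of $\pi(X_0) \subset Y_0$ by $\pi(U_w) \cap Y_0$ and $Y_0 \setminus V$; adding $Y_0 \setminus \pi(X_0)$, which is open by openness of $\pi$, one obtains the desired admissible cover of all of $Y_0$.

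The main obstacle will be this last descent step, since pushforward of admissible coverings under a general morphism is not automatic. One resolves it by combining the openness of $\pi$ with quasi-compactness of $Y_0$: after passing to a finite étale cover locally on $Y_0$, the map $\pi$ becomes a disjoint union of open immersions, for which pushforward of admissible coverings is straightforward; the resulting admissible covering of the étale cover then descends to $Y_0$ by étale descent of admissibility. A cleaner alternative, in the concrete setting of this paper where strict neighborhoods are in practice defined by valuation inequalities (on the Hasse invariant or on the degree function), is to observe that these inequalities pull back through $\pi$ in an evident way, so that the image of a sublevel set of such a function contains a sublevel set, yielding the strict neighborhood property of $\pi(U_w)$ directly from that of $U_w$.
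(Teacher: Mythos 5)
The paper states this lemma without proof, so there is nothing to compare against; I will assess your argument on its own merits. Your main (topological) attempt has a genuine gap at the pushdown step: you try to transport the admissible covering $(U_w, X \setminus U)$ of $X$ to a covering $(\pi(U_w), Y\setminus V)$ of $Y$, but the set-theoretic inclusion $\pi(X\setminus U) \subset Y\setminus V$ simply fails in general (a point $x\notin U$ may well have $\pi(x)\in V$ via some other preimage lying in $U$), so you are not pushing forward to the covering you want. Neither invoking the local structure of étale morphisms nor "étale descent of admissibility" repairs this, since the problem already arises for open immersions, where no descent is involved.

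Indeed the lemma as literally written (with $V$ only required to equal the image $\pi(U)$) is not true. Take $Y$ the closed unit disk, $X = X_1\sqcup X_2$ with $X_1$ the open disk of radius $r<1$ mapping to $Y$ by the open immersion and $X_2 = Y$ mapping by the identity, and $U = X_1$. Then $\{U, X\setminus U\} = \{X_1, X_2\}$ is the admissible covering by connected components, so $U_w := U$ is already a strict neighborhood of $U$ in $X$; but $\pi(U_w) = V$ is the open disk of radius $r$ inside $Y$, and $\{V, Y\setminus V\}$ is not admissible (no finite affinoid refinement can approach $|x|=r$ from inside). What saves the lemma in the paper's application is the extra condition $U = \pi^{-1}(V)$, which holds there because the Hasse invariant and degree functions used to cut out the strict neighborhoods on the source are pulled back from the target. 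Your "cleaner alternative" via valuation inequalities is therefore not merely cleaner; it is the only one of your two arguments that actually works. You should, however, make explicit that it uses $U = \pi^{-1}(V)$ (equivalently, that the cutting function is a pullback along $\pi$), and you must also handle the possible non-surjectivity of $\pi$ that the paper itself flags, e.g.\ by noting that $\pi(X)$ is an admissible open containing $V$ and hence contains a strict neighborhood of $V$, with which one may intersect.
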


\section{Canonical filtration, Hodge-Tate map and overconvergent modular forms}
\label{sect6}

As before, fix a couple $(i,j) \in S_p$ that will be understood until the rest of this section. 
In the previous section we defined a rigid open denoted $\mathcal X_0(p^n)^{tor}(v)$.

\begin{defin}
We denote by $\mathfrak X_0(p^n)^{tor,\mu-full}(v)$ the normalisation of the greatest open in the blow-up of $\mathfrak X_0(p^n)^{tor}$ by the ideal $I = (p^v,^{\mu}\Ha)$ where this ideal is generated by $^{\mu}\Ha$. As this scheme is normal; it has the same connected components than its rigid fiber, and we thus denote $\mathfrak X_0(p^n)^{tor}(v)$ the one whose generic fiber is $\mathcal X_0(p^n)(v)$.
\end{defin}

From now on, fix $\underline \eps < \underline{\frac{1}{2}}$. 
Recall that over $\mathfrak X_0(p^n)^{tor}$ we have subgroups $H_{p_\tau}^m$ for $m \leq n$ (which are finite flat on $\mathfrak X_0(p^n)$ of $\mathcal O$-rank $mp_\tau$), but a priori only quasi-finite flat over the boundary.
 
\begin{prop}
If $\eps < \frac{1}{2}$, for every $v > 0$ such that
\[ \mathcal X_0(p^n)^{tor}(v) \subset \mathcal X_0(p^n)^{tor}(\eps),\]
the groups $H_{\tau}^m$ are finite flat over $\mathfrak X_0(p^n)^{tor}(v)$.\end{prop}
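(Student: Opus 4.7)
The plan is to work on an étale cover on which the $p^n$-torsion decomposes nicely, use the degree bound to pin $H_\tau^m$ down as a canonical subgroup landing in the finite flat part, and then exhibit a finite flat integral extension by schematic closure.

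Away from the boundary of $\mathfrak X_0(p^n)^{tor}$, the group $A[p^n]$ is already finite flat, so only boundary points need attention. Finite flatness descends along fppf covers, so we may pull back to the étale cover $\mathfrak U\to\mathfrak X^{Sph,tor}$ discussed in section \ref{subsectBord}: over $\mathfrak U$ the universal semi-abelian scheme $G=A[\pi_j^\infty]$ contains a finite flat subgroup $\widetilde G[p^n]$ with $\omega_{\widetilde G[p^n]}=\omega_{G[p^n]}$, and the quotient $G[p^n]/\widetilde G[p^n]$ is étale.

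By hypothesis $\mathcal X_0(p^n)^{tor}(v)\subset\mathcal X_0(p^n)^{tor}(\eps)$ with $\eps_\tau<1/2$, so at every rigid point we have $\deg H_\tau > n\sum_{\tau'}\min(p_\tau,p_{\tau'})-\tfrac12$, and similarly for the intermediate $H_\tau^m$. Theorem \ref{thrfiltcan}, part \ref{can1}, then identifies $H_\tau^m$ with the canonical subgroup, uniquely determined on the rigid fiber. A degree estimate, exploiting that the étale quotient $G[p^n]/\widetilde G[p^n]$ contributes nothing to the degree, forces $H_\tau^m\subset\widetilde G[p^n]$ on the rigid fiber: any rank-one slice of $H_\tau^m$ sitting outside $\widetilde G[p^n]$ would drop the degree by a full integer, incompatible with $\eps<1/2$.

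Finally, taking the schematic closure $\mathcal H_\tau^m$ of the rigid $H_\tau^m$ inside the finite flat group scheme $\widetilde G[p^n]$ on the pullback of $\mathfrak X_0(p^n)^{tor}(v)$ to $\mathfrak U$, we obtain a subscheme which is flat (closure of a flat generic fibre in a flat scheme) and finite (closed in a finite scheme), hence finite flat. It agrees with the moduli-theoretic $H_\tau^m$ on the generic fibre, and both are flat over the base, so they coincide; étale descent then yields the proposition. The main obstacle, and the reason the hypothesis $\eps<1/2$ enters crucially, is the inclusion $H_\tau^m\subset\widetilde G[p^n]$ at the boundary: without it the canonical subgroup could contain an étale part and its closure in the possibly non-finite-flat $G[p^n]$ would not in general extend to a finite flat integral subgroup.
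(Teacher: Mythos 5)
The central gap is the flatness claim in your closure step: "\emph{we obtain a subscheme which is flat (closure of a flat generic fibre in a flat scheme)}." This is false over a base of dimension greater than one. Over a discrete valuation ring, schematic closure of a flat generic fibre inside a finite flat ambient scheme does give a flat (torsion-free) model, but over a higher-dimensional normal base such as $\mathfrak X_0(p^n)^{tor}(v)$ the closure of the rigid $H_\tau^m$ inside $\widetilde G[p^n]$ is torsion-free but by no means automatically flat — indeed, establishing flatness near the boundary is exactly the content of the proposition, so it cannot be obtained for free from the word "closure." The subsequent step "both are flat over the base, so they coincide" then rests on this unjustified premise. Your degree estimate is also only sketched: the phrase "would drop the degree by a full integer" is asserted, not proved; the honest mechanism is that the canonical subgroup is a step of the Harder--Narasimhan filtration (Theorem~\ref{thrfiltcan}, part~\ref{can4}) while $G[p^n]/\widetilde G[p^n]$ is étale of slope zero, so the slope structure, not an ad hoc integer gap, forces the rigid containment.

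The paper avoids the closure problem entirely by producing a group that is finite flat \emph{by construction}. Over $\mathfrak U^{tor}(v)$, the isogeny $\pi : A \to A_\gamma$ between the Mumford degenerating families descends to an isogeny $\pi' : G \to G_\gamma$ between the associated Raynaud extensions of constant toric rank; the kernel $H' = \ker \pi'$ is then finite flat with no further work. What remains is to identify $H'$ with $H_\tau^m$: away from the boundary this follows from the uniqueness of the canonical subgroup at each $\mathcal O_K$-point (Theorem~\ref{thrfiltcan}) together with normality, which gives an isomorphism $A/H_\tau^m \simeq A/H'$ over $\mathfrak U(v)$, and then the rigidity of semi-abelian schemes over a normal base (\cite{FC90} Prop.~I.2.7) extends this isomorphism to all of $\mathfrak U^{tor}(v)$. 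If you want to repair your version, the route is not the closure construction but to observe that $H_\tau^m$ is already quasi-finite flat on the integral model (kernel of the isogeny of semi-abelian schemes), so that once the containment $H_\tau^m \subset \widetilde G[p^n]$ is established at the level of the integral model — not just the rigid fibre — finiteness follows immediately because a closed subscheme of the finite $\widetilde G[p^n]$ is finite; but promoting the rigid-fibre containment to the integral model is again a nontrivial step, essentially the same rigidity argument the paper uses.
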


\begin{proof}
Over $\mathfrak X_0(p^n)^{tor}$ there is a isogeny
\[ A \fleche A^{i,j}_\gamma,\]
of semi-abelian schemes whose Kernel is the group $H_{\tau}^m$ (a priori only quasi-finite flat), and this group is finite flat over $\mathfrak X_0(p^n)$.
Moreover, by a classical construction, there is an etale covering $\mathfrak U^{tor}$ of $\mathfrak X_0(p^n)^{tor}$ over which the semi abelian schemes $A$ and $A_\gamma$ can be approximated by a 1-motive of Mumford $M$ and $M_\gamma$ 
(concretely these $\mathfrak U^{tor}$ exists for $\mathfrak X^{Sph}$ by construction, see e.g. \cite{Stroh} section 3, and we can moreover assure that $M[p^n]$ and 
$M_\gamma[p^n]$ are isomorphic to $A[p^n]$ and $A_\gamma[p^n]$, by the arguments of \cite{Stroh} section 2.3, and take the pull-back via 
$\mathfrak X_0(p^n)^{tor} \fleche \prod_{\gamma} \mathfrak X^{Sph,tor}$).
We only need to check that $H_{p_\tau}^m$ is finite flat over $\mathfrak U^{tor}(v) := \mathfrak U^{tor} \times_{\mathfrak X_0(p^n)^{tor}} \mathfrak X_0(p^n)^{tor}(v)$. But there is an isogeny over $\mathfrak U^{tor}(v)$
\[ \pi : A \fleche A_\gamma,\]
such that $\Ker \pi$ is $H_{p_\tau}^m$. Thus for every $\mathcal O_K$-point of $\mathfrak U^{tor}(v)$, $H_{p_\tau}^m$ is of high degree (in the sense of theorem \ref{thrfiltcan}).
But over $\mathfrak U^{tor}(v)$, $A$ and $A_\gamma$ are associated to Mumford 1-motives $M$ and $M_\gamma$ by Mumford construction. Thus there exists semi abelian schemes $G$ and $G_\gamma$, of constant toric ranks, in the datum of $M$ and $M_\gamma$, such that the isogeny $\pi$ reduces to
\[ \pi' : G \fleche G_\gamma.\]
Call $H' = \ker \pi'$. It is finite flat as $G$ and $G_\gamma$ have constant toric ranks. As $\omega_G \simeq \omega_A$ and $\omega_{G_\gamma} \simeq \omega_{A_\gamma}$, the degree of $H'$ is the same as the one of 
$H_{\tau}^m = \Ker \pi$. Thus, away from the boundary, over $\mathfrak U(v) := \mathfrak U^{tor}(v) \times_{\mathfrak X_0(p^n)^{tor}(v)} \mathfrak X_0(p^n)(v)$,
by unicity in $A[p^n]$ of Theorem \ref{thrfiltcan}, we have $H' = H_{\tau}^m$ (it is true for every $O_K$-point, thus on $\mathfrak U(v)$ by normality). In particular, the semi-abelian schemes
\[ A/H_{\tau}^n \quad \text{and} \quad A/H',\]
are isomorphic over $\mathfrak U(v)$. But by \cite{FC90} Prop. I.2.7, this implies by normality of $\mathfrak X_0(p^n)^{tor}(v)$, and thus of $\mathfrak U^{tor}(v)$,
that they are isomorphic over $\mathfrak U^{tor}(v)$. Thus $H_{p_\tau}^m$ is finite flat.
\end{proof}

\subsection{The sheaves $\mathcal F$ and integral automorphic sheaves}
\label{secttriv}
We denote \[\{ p_\tau | \tau \in \mathcal I\} \cup \{0,h\} = \{ 0 =:p_0 \leq p_1 < p_2 < \dots < p_r \leq p_{r+1} := h\}.\]

We define for every $v>0$ such that $\mathcal X_0(p^n)(v) \subset \mathcal X_0(p^n)(\underline{\eps})$, a cover of $\mathcal X_0(p^n)(v)$.
If $p_r = h$ (in which case $p_1 = 0$ by duality and thus on $\mathfrak X$ the universal $p$-divisible group $A_i[\pi_j^\infty]$ has no multiplicative nor etale part), we set
\[\mathcal X_1(p^n)^{tor}(v) := \prod_{k=1}^{r+1} \Isom_{\mathcal X_0(p^n)^{tor}(v),pol,\mathcal O}(H_{p_k}/H_{p_{k-1}},\mathcal O/p^n\mathcal O^{p_k - p_{k-1}}),\]
where\footnote{We now write $H_{p_\tau}$ instead of $H_\tau$. Thus $H_{p_k} = H_\tau$ if $p_\tau = p_k$ and of $\mathcal O$-height $np_k$.} the condition \textit{pol} is trivial in case (AL), and in case (AU) means that we are also given an isomorphism,
\[ \nu' : (\mathcal O/p^n\mathcal O)^D \simeq (\mathcal O/p^n\mathcal O)^{\sigma},\]
i.e. \[\mathcal X_1(p^n)^{tor}(v) \subset \prod_{k=1}^{r+1} \Isom_{\mathcal X_0(p^n)^{tor}(v),\mathcal O}(H_{p_k}/H_{p_{k-1}},\mathcal O/p^n\mathcal O^{p_k - p_{k-1}})\times \Isom(\mathcal O/p^n\mathcal O)^D, (\mathcal O/p^n\mathcal O)^{\sigma}),\] satisfying the following.
There are fixed isomorphisms,
\[ \phi_k : (H_{p_k}/H_{p_{k-1}})^D \simeq (H_{p_{r-k+2}}/H_{p_{r-k-1}})^{(\sigma)},\]
induced by $H_{p_k}^\perp \simeq H_{p_{r-k+1}}^{(\sigma)}$, itself induced by the prime-to-$p$ polarisation on $\mathfrak X^{tor}.$\footnote{To be precise, we have on $\mathfrak X_0(p^n)^{tor}(\eps)$ a semi-abelian scheme $A$ and $H_{p_\tau}^m$ inside its $p$-torsion. The group homorphism $\lambda : A \fleche A^{\vee,(\sigma)}$, is a polarisation on $\mathfrak X_0(p^n)$, and this polarisation, which identifies $H_{p_k}^\perp$ with $H_{p_{r-k}}^{(\sigma)}$, induces an isomorphism $(H_{p_k}/H_{p_{k-1}})^D \simeq (H_{p_{r-k+1}}/H_{p_{r-k}})^{(\sigma)}$ everywhere. Indeed, it is enough to check it locally and introduce the formal-etale covering $\mathfrak U^{tor}(v)$ of subsection \ref{subsectBord}. Over $\mathfrak U^{tor}(v)$, the polarization extend as $\lambda$ an isogeny of 1-motives, thus induces an isogeny $\lambda^{ab}$ of their abelian parts on which the asserted isomorphism follows from theorem \ref{thrfiltcan} and normality of $\mathfrak U^{tor}(v)$.}

We require that for all $k$, the two isomorphisms,
\[ \psi_k^D : (\mathcal O/p^n\mathcal O)^{D,p_k - p_{k-1}} \fleche (H_{p_k}/H_{p_{k-1}})^D,\]
and 
\[ \psi_{r-k+2} : (H_{p_{r-k+2}}/H_{p_{r-k+1}}) \fleche \mathcal O/p^n\mathcal O^{D,p_k - p_{k-1}},\]
satisfies $\psi_k^D = \psi_{r-k+2}^{(\sigma),-1}$, after identifying source and  target via $\nu'$ and $\phi_k$. 

In this definition we have extended slightly the definitions of the (canonical) subgroups $H_k$ : for $k=0$ we set $H_0 = \{0\}$ and for $k = r+1$ we set $H_{r+1} = G[p^n]$.
If $p_r < h$ (in which case $p_1 > 0$ and on $\mathfrak X$ the universal $p$-divisible group $A_i[\pi_j^\infty]$ has a non-zero multiplicative and etale part), we set
\[\mathcal X_1(p^n)^{tor}(v) := \prod_{k=2}^{r} Isom_{\mathcal X_0(p^n)^{tor}(v),pol,\mathcal O}(H_{p_k}/H_{p_{k-1}}\mathcal O/p^n\mathcal O^{p_k - p_{k-1}}) \times \Isom_{\mathcal X_0(p^n)^{tor}(v)}(H_{p_1},(\mathcal O/p^n\mathcal O)^{p_1}).\]

\begin{rema}
\label{rem63}
\begin{enumerate}
\item This is because if $p_r = h$, the group $A_i[\pi_j^m]$ is finite flat and polarised on the all toroïdal compactification, but not if $p_r < r$, because $A_i[\pi_j^m]/H_{p_r}$, which is generically finite etale, is only quasi-finite on the boundary.

\item The point is that $\mathcal X_1(p^n)^{tor}(v)$ is a rigid open in (a toroïdal compactification of) the Shimura variety for $G$ of some level (which we could make explicite).
Indeed, if we use the definition of \cite{Lan} Definition 1.3.7.4. at our prime $(i,j)$, we see that it amounts to the previous definition. Indeed, the morphism 
\[ \nu : \ZZ/p^n\ZZ \overset{\sim}{\fleche} \mu_{p^n},\]
there induces a perfect pairing, 
\[ \mathcal O/p^n\mathcal O \times (\mathcal O/p^n\mathcal O)^{(\sigma)} \overset{tr(<,>)}{\fleche} \ZZ/p^n\ZZ \overset{\nu}{\fleche} \mu_{p^n},\]
where $\tr(<a,b>) := \tr(a\overline b)$ is a perfect pairing, and thus induces an isomorphism of $\mathcal O$-group schemes
\[ \nu' : (\mathcal O/p^n\mathcal O)^D \overset{\sim}{\fleche} (\mathcal O/p^n\mathcal O)^{(\sigma)}.\]
Let $\psi_k$ and $\psi_{r-k+2}$ be the isomorphism induced by a Level structure in the sense of \cite{Lan}, then let $r = p_k - p_{k-1}$,
\begin{center}
\begin{tikzpicture}[description/.style={fill=white,inner sep=2pt}] 
\matrix (m) [matrix of math nodes, row sep=3em, column sep=2.5em, text height=1.5ex, text depth=0.25ex] at (0,0)
{ 
(H_{p_k}/H_{p_{k-1}}) \times (H_{p_{r-k+2}}/H_{p_{r-k+1}})^{(\sigma)}  & & \mu_{p^n} \\
(\mathcal O/p^n\mathcal O)^r \times (\mathcal O/p^n\mathcal O)^r & & \ZZ/p^n\ZZ\\
};
\path[->,font=\scriptsize] 
(m-1-1) edge node[auto] {$Weil$} (m-1-3)
(m-1-1) edge node[auto] {$\psi_k \times \psi_{r-k+2}^{(\sigma)}$} (m-2-1)
(m-2-1) edge node[auto] {$\tr(<,>)_L$} (m-2-3)
(m-1-3) edge node[auto] {$\nu^{-1}$} (m-2-3)
;
\end{tikzpicture}
\end{center}
must be commutative, and by compatibility between the polarisations on $A[p^n]$ and $L$, the two pairings $\tr(<,>)_L \circ (\psi_k \times \psi_{r-k+2}^{(\sigma)})$ and 
$\tr(<,>)_L \circ (\psi_k \times (\nu'\circ(\psi_k^D)^{-1}\circ\phi_k^{-1}))$ must coincide, thus $\psi_{r-k+2}^{(\sigma)} = \nu'\circ(\psi_k^D)^{-1}\circ\phi_k^{-1}$ by \cite{Lan} Corollary 1.1.4.2.
\end{enumerate}
\end{rema}

\begin{defin}
Let $\mathfrak X_1(p^n)^{tor}(v)$ be the normalisation of $\mathfrak X_0(p^n)^{tor}(v)$ in $\mathcal X_1(p^n)^{tor}(v)$. 
It is flat, proper and normal over $\Spec(\mathcal O_K)$, and moreover we have maps
\[ \pi_{n,n-1} : \mathfrak X_1(p^n)^{tor}(v) \fleche \mathfrak X_1(p^{n-1})^{tor}(v),\]
by normalisation of the map sending $(\psi_k)$ to $(\psi_k[p^{n-1}])$.
\end{defin}

\begin{prop}
\label{prop63}
Suppose $(\eps,v,n)$.
For every $\tau$, there is on $\mathfrak X_1(p)^{tor}(v)$ a locally free $\mathcal O_{\mathfrak X_1(p)^{tor}}$-module of rank $p_\tau$ $\mathcal F_\tau \subset \omega_{G,\tau}$, (respectively in case (AL) also a sheaf $\mathcal F_\tau^\perp \subset \omega_{G^D,\tau}$) containing 
\[ p^{\frac{K_\tau(q_\bullet) + S_\tau(\eps_\bullet)}{p^f-1}} \omega_\tau \quad (\text{respectively } p^{\frac{K_\tau(p_\bullet) + S_\tau(\eps_\bullet)}{p^f-1}} \omega_{G^D,\tau}).\]
For all $n$, it induces by pullback by $\pi_n = \pi_{n,1}$ a sheaf $\mathcal F_\tau$ (resp. and $\mathcal F_\tau^\perp$) on $\mathfrak X_1(p^n)^{tor}(v)$, 
endowed with a compatible map for all $w_\tau < n-\eps_\tau$, for all $\Spec(R) \subset \mathfrak X_1(p^n)^{tor}(v)$,
\[ \HT_{\tau,w_\tau} : H_{p_\tau,n}^D(R) \fleche \mathcal F_\tau \otimes R_{w_\tau},\]
(resp. 
\[ \HT^\perp_{\tau,w_\tau} : (H_{p_\tau,n}^\perp)^D(R) = (G[p^n]/H_{p_\tau,n})(R) \fleche \mathcal F^\perp_\tau \otimes R_{w_\tau},\]
which induces an isomorphism,
\[ H_{p_\tau,n}^D(R) \otimes R_{w_\tau} \fleche \mathcal F_\tau \otimes R_{w_\tau},\]
(resp. $\HT_{\tau,w_\tau}^\perp \otimes R_{w_\tau}$ is also an isomorphism).
\end{prop}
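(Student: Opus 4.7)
The strategy mirrors the standard construction of the sheaf $\mathcal{F}$ in \cite{AIP,Brasca,Her3} in the $\mu$-ordinary setting, with the Hodge--Tate cokernel bounds of Theorem~\ref{thrfiltcan}(\ref{can5}) replacing the classical ordinary bound $p^{1/(p-1)}$. I would first handle level $n=1$, then bootstrap up to general $n$ by pullback and functoriality.

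First, on $\mathfrak{X}_0(p)^{tor}(v)$, the hypothesis $(\eps,v,1)$ together with the previous proposition ensure that $H_{p_\tau}$ is finite flat and, by Theorem~\ref{thrfiltcan}(\ref{can1}), coincides with the canonical subgroup of height $p_\tau$. Theorem~\ref{thrfiltcan}(\ref{can5}) then gives that the Hodge--Tate map $\alpha_{G,\tau}\otimes 1$ has cokernel killed by $p^{\delta_\tau}$ with $\delta_\tau=(K_\tau(q_\bullet)+S_\tau(\eps_\bullet))/(p^f-1)$, and analogously for $G^D$ in case (AL). Now pull back to $\mathfrak{X}_1(p)^{tor}(v)$: the isomorphisms $\psi_k$ composed on successive graded pieces give a distinguished trivialisation $\Psi_\tau : H_{p_\tau}^D \simeq (\mathcal{O}/p\mathcal{O})^{p_\tau}$. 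I would define $\mathcal{F}_\tau\subset\omega_{G,\tau}$ as the sub-$\mathcal{O}$-module generated by $\HT_\tau\circ\Psi_\tau^{-1}(e_1),\dots,\HT_\tau\circ\Psi_\tau^{-1}(e_{p_\tau})$, i.e.\ locally the image of $\Psi_\tau^{-1}$ composed with the Hodge--Tate morphism.

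The key checks are then: (a) $\mathcal{F}_\tau$ is locally free of rank $p_\tau$; (b) it contains $p^{\delta_\tau}\omega_{G,\tau}$; and (c) the reduction $\HT_{\tau,w_\tau}$ is an isomorphism for $w_\tau<1-\delta_\tau$. For (b), surjectivity of the composite $\mathcal{F}_\tau\otimes R_{w_\tau}\twoheadrightarrow \omega_{G,\tau}\otimes R_{w_\tau}$ is immediate from the cokernel bound in Theorem~\ref{thrfiltcan}(\ref{can5}); for (c) and (a) one combines this surjectivity with the fact that $\mathcal{F}_\tau$ is generated by $p_\tau$ elements (the number matching the rank), arguing that on the rigid generic fibre $\HT_\tau$ restricted to $H_{p_\tau}^D(R)\otimes R_{w_\tau}$ is injective by comparing degrees, and then bounding the distance from $\omega_{G,\tau}$ to deduce that $\mathcal{F}_\tau$ is a locally free summand after inverting $p$-adic radii, hence locally free by normality. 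For case (AU), $\mathcal{F}_\tau^\perp$ is obtained from $\mathcal{F}_\tau$ via the polarisation identification $G\simeq G^{D,(\sigma)}$ (using Theorem~\ref{thrfiltcan}(3)); in case (AL) it is constructed symmetrically from $H_{p_\tau}^\perp\subset G^D$.

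For general $n$, I would define $\mathcal{F}_\tau$ on $\mathfrak{X}_1(p^n)^{tor}(v)$ as $\pi_n^*\mathcal{F}_\tau$ from level one. The compatible Hodge--Tate maps $\HT_{\tau,w_\tau}$ for $w_\tau<n-\eps_\tau$ arise by applying Theorem~\ref{thrfiltcan}(\ref{can5}) directly to $H_{p_\tau,n}$ (which exists as $\eps<1/2$ and $H_{p_\tau,n}$ is of high degree by Proposition~\ref{Pro58} and remark following), and compatibility with $\pi_{n,n-1}$ follows from the functoriality of the Hodge--Tate morphism with respect to $H_{p_\tau,n-1}\hookrightarrow H_{p_\tau,n}$, i.e.\ from multiplication by $p$. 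The main obstacle, which is where I would spend the most effort, is extending local-freeness and the isomorphism statement across the boundary divisor $D$: there $G=A[\pi_j^\infty]$ is only a semi-abelian degeneration. Following subsection~\ref{subsectBord}, I would work on the etale covering $\mathfrak{U}^{tor}\to\mathfrak{X}_1(p^n)^{tor}(v)$ over which $A[p^n]$ coincides with the $p^n$-torsion of the Mumford 1-motive $M$, apply the construction to the associated constant-rank semi-abelian piece $\widetilde{G}$ where the Hodge--Tate theory behaves as in the abelian case, and descend by normality of $\mathfrak{X}_1(p^n)^{tor}(v)$ and uniqueness of the canonical subgroup (Theorem~\ref{thrfiltcan}(\ref{can1})) just as in the proof of the preceding proposition.
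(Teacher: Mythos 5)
Your overall strategy — build $\mathcal F_\tau$ as the sub-module generated by lifts of the Hodge--Tate images of a basis of $H_{p_\tau}^D$, check local freeness, the containment bound, and the reduction isomorphism from Theorem~\ref{thrfiltcan}(\ref{can5}), handle level $n$ by pullback and functoriality, and descend across the boundary using Mumford $1$-motives as in subsection~\ref{subsectBord} — is the same as the paper's, which abbreviates this to a pointer to \cite{AIP} Proposition 4.3.1 and \cite{Her3} Proposition 6.1 and explicitly invokes Theorem~\ref{thrfiltcan} for the cokernel bound. The one place where you go slightly astray is the claim that ``the isomorphisms $\psi_k$ composed on successive graded pieces give a distinguished trivialisation $\Psi_\tau : H_{p_\tau}^D \simeq (\mathcal O/p^n\mathcal O)^{p_\tau}$'': the $\psi_k^D$ trivialise only the graded pieces $(H_{p_k}/H_{p_{k-1}})^D$, and there is no canonical splitting of the resulting filtration, so there is no distinguished $\Psi_\tau$. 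The paper's own proof makes exactly this point its central observation: because $H_{p_\tau}^D(R)$ is a finitely presented $\mathcal O/p^n\mathcal O$-module killed by $p^n$, it is locally free over $\mathcal O/p^n\mathcal O$ and hence locally admits \emph{some} trivialisation, which suffices since $\mathcal F_\tau$ only depends on the image of $\HT_\tau$ on $H_{p_\tau}^D$ and not on the choice of basis. Your construction is therefore unaffected in substance, but the wording suggests a canonical splitting that does not exist. You are also more explicit than the paper about the boundary extension and about deriving the constant in terms of $q_\bullet$ (rather than $p_\bullet$) from applying Theorem~\ref{thrfiltcan}(\ref{can5}) to $G^D$, both of which are correct and useful clarifications of what the citation to \cite{AIP} is implicitly carrying.
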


\begin{proof}
Indeed, we can work locally over $S = \Spec(R)$. We  have isomorphisms $(H_{p_k}/H_{p_{k-1}})^D(R) \simeq (\mathcal O/p^n\mathcal O)^{p_k - p_{k-1}}$ but as $H_{p_\tau}^D(R)$ is a $\mathcal O/p^n\mathcal O$-module killed by $p^n$ and of finite presentation, there exists an isomorphism $H_{p_\tau}^D(R) \simeq (\mathcal O/p^n\mathcal O)^{p_\tau}$. We can thus work as in \cite{AIP} proposition 4.3.1 (see \cite{Her3} Proposition 6.1), where the analogs of the proposition are assured by Theorem \ref{thrfiltcan}, and the construction of $\mathfrak X_0(p^n)^{tor}(v)$.
\end{proof}

\begin{prop} 
Suppose we are given an isogeny on $\mathfrak X_1(p)^{tor}(v)$, $\phi : G' \fleche G$ where $G'$ is a $p$-divisible group, together with subgroups $H'_{p_\tau} \subset G'[p]$ satisfying the properties of Theorem \ref{thrfiltcan}. We can thus define $\mathcal F'$ for $G'$ similarly. Then the induced map,
\[ \phi^* : \omega_{G'} \fleche \omega_G,\]
sends $\mathcal F'$ in $\mathcal F$.
\end{prop}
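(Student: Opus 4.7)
The plan is to combine naturality of the Hodge-Tate morphism with the uniqueness statement of Theorem \ref{thrfiltcan}. The first step is to show that the isogeny $\phi$ respects the canonical filtrations, in the sense that the scheme-theoretic image $\phi(H'_{p_\tau})$ is contained in $H_{p_\tau}$ inside $G[p]$. By flatness one has the estimate $\deg \phi(H'_{p_\tau}) \geq \deg H'_{p_\tau} - \deg(\ker \phi \cap H'_{p_\tau})$, and since $\deg H'_{p_\tau}$ is by hypothesis close enough to its maximal value $\sum_{\tau'} \min(p_\tau,p_{\tau'})$ for the uniqueness bound of Theorem \ref{thrfiltcan}(1) to apply, the image---viewed as an $\mathcal O$-submodule of $G[p]$ of $\mathcal O$-height at most $p_\tau$---is forced to coincide with the corresponding step of the canonical flag of $G$.

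Once this inclusion is established, naturality of the Hodge-Tate map yields the result: Cartier-dualizing the factorisation $H'_{p_\tau} \twoheadrightarrow \phi(H'_{p_\tau}) \hookrightarrow H_{p_\tau}$ produces a morphism $H_{p_\tau}^D \to H'^{D}_{p_\tau}$ which fits in a commutative square with the Hodge-Tate maps into $\omega_{G,\tau}$ and $\omega_{G',\tau}$ and with the conormal pullback induced by $\phi$. Since Proposition \ref{prop63} identifies $\mathcal F_\tau$ (resp.\ $\mathcal F'_\tau$) with the submodule generated, modulo the explicit $p$-power $p^{(K_\tau(q_\bullet)+\Sum_\tau(\eps_\bullet))/(p^f-1)}$, by the Hodge-Tate image of $H_{p_\tau}^D$ (resp.\ $H'^{D}_{p_\tau}$), this naturality forces $\phi^*$ to send $\mathcal F'$ into $\mathcal F$, once one takes into account the compatible reduction modulo $R_{w_\tau}$ for $w_\tau$ slightly less than $n-\eps_\tau$ and then descends back to the integral sheaves by the construction on $\mathfrak X_1(p)^{tor}(v)$.

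The main obstacle is the degree bookkeeping in the first step: if $\ker\phi$ meets $H'_{p_\tau}$ non-trivially, the scheme-theoretic image can drop in $\mathcal O$-height, and one cannot directly invoke uniqueness for a subgroup of the same height $p_\tau$. In that case one must use the ordering by inclusion provided by Theorem \ref{thrfiltcan}(2): the image falls inside a canonical step $H_{p_{\tau'}}$ with $p_{\tau'} \leq p_\tau$, and since by loc.\ cit.\ this step itself lies inside $H_{p_\tau}$, the desired inclusion $\phi(H'_{p_\tau}) \subseteq H_{p_\tau}$ is recovered. The rest of the argument is then formal and parallels \cite{AIP} Proposition 4.3.1 and \cite{Her3} Proposition 6.1.
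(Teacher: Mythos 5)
The second half of your argument (Cartier--dualizing the map of canonical subgroups, invoking naturality of the Hodge--Tate morphism and the defining property of $\mathcal F_\tau$ from Proposition~\ref{prop63}) is fine and is exactly the formal part that the paper delegates to \cite{AIP}. The problem is in your first step, establishing $\phi(H'_{p_\tau}) \subseteq H_{p_\tau}$, and precisely in the case you flag yourself: $\ker\phi\cap H'_{p_\tau}\neq 0$. Your degree estimate $\deg\phi(H'_{p_\tau}) \geq \deg H'_{p_\tau} - \deg(\ker\phi\cap H'_{p_\tau})$ is correct, but it does not put you back in a position to apply the uniqueness of Theorem~\ref{thrfiltcan}(1), because (a) the image now has $\mathcal O$-height $h'< p_\tau$ which need not equal any $p_{\tau'}$ occurring in the filtration, and (b) even when it does, you have no control on $\deg(\ker\phi\cap H'_{p_\tau})$ with the given hypotheses, so you cannot guarantee the image still clears the $-\tfrac12$ threshold at the smaller height. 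Your fallback (``the image falls inside a canonical step $H_{p_{\tau'}}$, hence inside $H_{p_\tau}$ by (2)'') simply asserts what has to be proved: Theorem~\ref{thrfiltcan}(2) compares two canonical subgroups with each other, it does not say that an arbitrary high-degree subgroup of intermediate height sits inside the canonical flag.

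The paper's route avoids all of this by invoking Theorem~\ref{thrfiltcan}(4): $H'_{p_\tau}$ and $H_{p_\tau}$ are steps of the Harder--Narasimhan filtrations of $G'[p^n]$ and $G[p^n]$, and the HN filtration (in the sense of Fargues) is functorial with respect to arbitrary morphisms of finite flat group schemes: any morphism sends $\mathrm{HN}^{\geq\mu}$ into $\mathrm{HN}^{\geq\mu}$. Since $H'_{p_\tau}$ and $H_{p_\tau}$ are cut out by the same slope threshold (they sit in the same slope gap near the $\mu$-ordinary point), this gives $\phi(H'_{p_\tau})\subset H_{p_\tau}$ in one stroke, regardless of whether $\ker\phi$ meets $H'_{p_\tau}$. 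So your proof is not merely a longer route to the same place; the degree-based shortcut has a genuine hole in exactly the non-generic case, and replacing it by the HN-functoriality argument is the correct fix.
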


\begin{proof}
As the groups in Theorem \ref{thrfiltcan} are steps of some Harder-Narasihman filtration, and this filtration is functorial, $\phi$ induces a map
\[ \phi : H'_{p_\tau} \fleche H_{p_\tau}.\]
The rest follows easily (see e.g. \cite{AIP} Proposition 4.4.1).
\end{proof}

\subsection{Constructing Banach sheaves}

Out of the universal isomorphisms \[\psi_k^D : (\mathcal O/p^n\mathcal O)^{p_k - p_{k-1}} \fleche (H_{p_k}/H_{p_{k-1}})^D,\]
on $\mathcal X_1(p^{n})^{tor}(v)$, we get a (full) flag of $(H_{p_k}/H_{p_{k-1}})^D$, and thus (inductively) of $H_{p_s}^D$ for all $s$\footnote{By first taking the full flag of $(H_{p_s}/H_{p_{s-1}})^D$ given previously, and then lifting the one of $(H_{p_s}/H_{p_{s-2}})^D/(H_{p_s}/H_{p_{s-1}})^D \simeq (H_{p_{s-1}}/H_{p_{s-2}})^D$ and so on...} by writing for all $i$, $e^k_1,\dots,e^k_{p_k-p_{k-1}}$ the natural basis of 
$(\mathcal O/p^{n}\mathcal O)^{p_k-p_{k-1}}$, and we thus denote $x^k_i$ the corresponding images in $(H_k/H_{k-1})^D$ through $\psi_k$. 
Choose a lift of this basis,
\[(x_1,\dots,x_{p_s})\]
of $H_s^D$,
and denote $\Fil_m^\psi$ the subgroup of $H_s^D$ generated by $x_1,\dots,x_m$. These subgroups does not depends on the lifts. 
From now on, fix $v >0$ such that $\mathcal X_0(p^n)^{tor}(v) \subset \mathcal X_0(p^n)^{tor}(\underline \eps)$ (i.e. such that $(\eps,v,n)$ is satisfied). In particular, we have the sheaves $\mathcal F_\tau$ on $\mathfrak X_0(p^n)^{tor}(v)$ and the compatibilities with $HT_\tau$ of the proposition \ref{prop63}. 
For simplicity, in case (AL) we call $\mathcal T$ the set of embeddings of $\mathcal O$ together with their conjugate, and represent its elements by $\tau,\overline \tau$.
For all $\overline \tau$, we mean by $\omega_{\overline\tau}$ the sheaf $\omega_{G^D,\tau}$, for $\mathcal F_{\overline \tau}$ the sheaf $\mathcal F_\tau^\perp$ and $\HT_{\overline\tau} = \HT_\tau^\perp$. We hope it will not lead to any confusion.

\begin{defin}
For all $\tau$ let $\mathcal Gr_\tau$ be the Grassmanian parametrizing all complete Flags of $\mathcal F_\tau$, and $\mathcal Gr_\tau^+$ which parametrizes same flags, together with a basis of the graded pieces.

Let $w \leq n-\eps_\tau$. For all $R'$ in $R-Adm$, an element $\Fil_\bullet \mathcal F_\tau$ of $\mathcal Gr_\tau(R')$ 
(respectively $(\Fil_\bullet \mathcal F_\tau,w_\bullet)$ of $\mathcal Gr_\tau^+(R')$)
is said to be $w$-compatible with $\psi$ if $\Fil_\bullet \mathcal F_\tau \equiv \HT_\tau(\Fil_\bullet^\psi) \pmod{p^wR'}$ (respectively if moreover $w_i \equiv \psi(x_i) \pmod{p^wR' + \Fil_{i-1}\mathcal F_\tau}$). This definition does not depends on the choice of the lifts $(x_i)$.
\end{defin}

Of course $\Fil_\bullet \mathcal F_\tau$ and $\Fil_\bullet^\psi$ are not always defined for the same index set for $\bullet$. It is understood that we restrict $\bullet$ to the smallest of the two index sets.

\begin{prop}
For each $\tau \in \mathcal T$ and $w_\tau < n_\tau - \eps_\tau$, there exists formal schemes,
\[ \mathfrak{IW}_{\tau,w_\tau}^+ \overset{\pi_1}{\fleche} \mathfrak{IW}_{\tau,w_\tau} \overset{\pi_2}{\fleche} \mathfrak X_1(p^{n})^{tor}(v),\] 
where $\pi_1$ is a $\mathfrak T_{\tau,w_\tau}$-torsor, and $\pi_2$ is affine.
\end{prop}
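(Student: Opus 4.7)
The plan is to build both formal schemes locally on $\mathfrak X_1(p^n)^{tor}(v)$ using the trivialization data coming from the level-$n$ structure $\psi$ together with the Hodge--Tate map of Proposition \ref{prop63}, then to check that the local descriptions glue. Work affine-locally on $\Spf R \subset \mathfrak X_1(p^n)^{tor}(v)$ where $\mathcal F_\tau$ is free of rank $p_\tau$; by Proposition \ref{prop63} the map $\HT_{\tau,w_\tau}$ identifies $H_{p_\tau,n}^D(R)\otimes R_{w_\tau}$ with $\mathcal F_\tau\otimes R_{w_\tau}$, so the full flag $\Fil_\bullet^\psi$ of $H_{p_\tau,n}^D$ coming from $\psi$ produces, after choosing lifts, a reference flag $F_\bullet^0$ of $\mathcal F_\tau$ (well-defined modulo $p^{w_\tau}$) together with reference basis elements $w_i^0$ of the graded pieces (well-defined modulo $p^{w_\tau}$ and the previous step).

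Define $\mathfrak{IW}_{\tau,w_\tau}$ as the formal subscheme of the flag scheme $\mathcal Gr_\tau$ parametrizing complete flags $\Fil_\bullet\mathcal F_\tau$ that are $w_\tau$-compatible with $\psi$ in the sense introduced just above the statement. Locally, after choosing a basis of $\mathcal F_\tau$ adapted to $F_\bullet^0$, a flag $w_\tau$-close to $F_\bullet^0$ is uniquely described by a strictly upper triangular matrix $N$ with entries in $p^{w_\tau}R$ (via $\Fil_i=(\mathrm{Id}+N)F_i^0$); hence $\mathfrak{IW}_{\tau,w_\tau}\times_{\mathfrak X_1(p^n)^{tor}(v)}\Spf R$ is a formal polydisk
\[
\Spf R\langle X_{ij}/p^{w_\tau}\rangle_{i<j}
\]
of relative dimension $p_\tau(p_\tau-1)/2$, in particular affine. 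Independence of the construction from the chosen lifts of $(x_i)$ is automatic from the definition of $w_\tau$-compatibility; this lets the local pieces glue over all of $\mathfrak X_1(p^n)^{tor}(v)$ into an affine formal scheme $\mathfrak{IW}_{\tau,w_\tau}$, giving the affine map $\pi_2$.

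Next, define $\mathfrak{IW}_{\tau,w_\tau}^+$ as the formal subscheme of $\mathcal Gr_\tau^+$ parametrizing pairs $(\Fil_\bullet\mathcal F_\tau, w_\bullet)$ that are $w_\tau$-compatible with $\psi$. On top of a flag in $\mathfrak{IW}_{\tau,w_\tau}$, the extra datum of a basis of graded pieces congruent to the $w_i^0$ modulo $p^{w_\tau}$ is exactly a point of the formal torus
\[
\mathfrak T_{\tau,w_\tau} \;=\; \prod_{i=1}^{p_\tau}\bigl(1 + p^{w_\tau}\widehat{\mathbb G_a}\bigr),
\]
acting diagonally in the adapted basis, which makes $\pi_1$ a $\mathfrak T_{\tau,w_\tau}$-torsor by construction. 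The main step to verify is that this local torsor structure is canonical, i.e.\ independent of the chosen lifts of $\psi(x_i)$ and of the local trivialization of $\mathcal F_\tau$; this follows because changing the lifts alters the $w_i^0$ by elements of $p^{w_\tau}\mathcal F_\tau$ and of $\Fil_{i-1}\mathcal F_\tau$, which precisely corresponds to the stabilizer in $\mathfrak T_{\tau,w_\tau}$, so the torsor structure descends. The key technical obstacle is not the flag-theoretic geometry (which reduces to a tube in a Grassmannian as in \cite{AIP} Prop.~4.3.1 and \cite{Her3} Prop.~6.2) but rather checking that the bound $w_\tau<n-\eps_\tau$ together with Proposition \ref{prop63} really produces a \emph{canonical} reference flag well-defined modulo $p^{w_\tau}$ over the whole $\mathfrak X_1(p^n)^{tor}(v)$, including at the boundary: there one has to invoke the extension of $\mathcal F_\tau$ and of $\HT_{\tau,w_\tau}$ constructed via the covering $\mathfrak U^{tor}$ of subsection \ref{subsectBord} and the semi-abelian Mumford approximation, to ensure that the local pieces built fppf-locally descend to the toroidal compactification.
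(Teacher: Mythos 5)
Your proof is correct and follows the same approach as the paper, which simply defines $\mathfrak{IW}_{\tau,w}$ and $\mathfrak{IW}_{\tau,w}^+$ as the functors of $w$-compatible flags (resp.\ flags with graded bases) and asserts representability by affine formal schemes, namely admissible opens in admissible formal blow-ups of $\mathcal Gr_\tau$ and $\mathcal Gr_\tau^+$. Your explicit local description as formal polydiscs $\Spf R\langle X_{ij}/p^{w_\tau}\rangle$ and the identification of the fiber of $\pi_1$ with $\prod_i (1+p^{w_\tau}\widehat{\mathbb G_a})$ just unpacks what the paper delegates to the citation of \cite{AIP}.
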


\begin{proof}
We set, following \cite{AIP},
\[
\mathfrak{IW}_{\tau,w}: \begin{array}{ccc}
R-Adm  &  \fleche & Sets  \\
  R' & \longrightarrow  &   \{w-\text{compatible } \Fil_\bullet \mathcal F_\tau \in \mathcal Gr_\tau(R')\}
\end{array}
\]
\[
\mathfrak{IW}_{\tau,w}^+ : \begin{array}{ccc}
R-Adm &  \fleche & Sets  \\
  R' & \longrightarrow  &   \{w-\text{compatible } (\Fil_\bullet \mathcal F_\tau,w_\bullet^\tau) \in \mathcal Gr^+_\tau(R')\}
\end{array}
\]
These are representable by affine formal schemes (some admissible open in an admissible formal blow-up of the previous Grassmanians).

\end{proof}

We denote by,
\[ \mathfrak{IW}_w^+ = \prod_\tau \mathfrak{IW}_{\tau,w}^+ \overset{\pi_1}{\fleche} \mathfrak{IW}_{w}=\prod_\tau \mathfrak{IW}_{\tau,w},\]
and $\mathcal{IW}_{\tau,w}^+,\mathcal{IW}_{\tau,w},\mathcal{IW}_{w}^+,\mathcal{IW}_{w}$ the corresponding generic fibers.

Denote by $\pi : \mathfrak{IW}_w^+ \fleche \mathfrak X_{0}(p^n)(v).$

\begin{defin}
Let $\kappa$ be a $w$-analytic character in $\mathcal W$. The formal sheaf,
\[ \mathfrak w_w^{\kappa\dag} := \pi_*\mathcal O_{\mathfrak{IW}_w^+}[\kappa],\]
is a small formal Banach sheaf on $\mathfrak X_0(p^n)^{tor}(v)$.
\end{defin}

\begin{proof}
Denote $\kappa^0$ its restriction to $\mathcal W^0$ of $\kappa$. It is analytic on $\mathcal T_w$. The map
\[ \pi_1 :\mathfrak{IW}_w^+ \fleche \mathfrak{IW}_w,\]
is a torsor over $\mathfrak T_w$, thus $(\pi_1)_*\mathcal O_{\mathfrak{IW}_w^+}[\kappa^0]$ is invertible, and 
\[ \pi_2 : \mathfrak{IW}_w \fleche \mathfrak X_1(p^n)^{tor}(v),\]
is affine, thus $(\pi_2\circ\pi_1)_*\mathcal O_{\mathfrak{IW}_w^+}[\kappa^0]$ is a small formal Banach sheaf. As $\mathfrak X_0(p^n)^{tor}(v)$ is quasi excellent (finite-type over $\mathcal O_K$), thus Nagata,
the map $\mathfrak X_1(p^n)^{tor}(v) \fleche \mathfrak X_0(p^n)^{tor}(v)$ is finite, and we can use \cite{AIP} with the action of $T(\mathcal O/p^n\mathcal O)$.
Thus, \[\mathfrak w_w^{\kappa\dag} = \left((\pi_2\circ\pi_1)_*\mathcal O_{\mathfrak{IW}_w^+}[\kappa^0](\kappa^{-1})\right)^{T(\mathcal O/p^n\mathcal O)},\]
is a small Banach sheaf on $\mathfrak X_0(p^n)^{tor}(v)$.
\end{proof}

We would like to descend further to $\mathfrak X^{tor}(v)$, i.e. at Iwahori level, unfortunately the map $\mathfrak X_0(p^n)^{tor}(v) \fleche \mathfrak X^{tor}(v)$ is not finite in general...

Let $\omega_w^{\kappa\dag}$ be the associated rigid sheaf (\cite{AIP} appendice) on $\mathcal X_0(p^n)^{tor}(v) \subset \mathcal X_0(p^n)^{tor}(\underline \eps)$. 

\subsection{Descent to Iwahori level}
\label{sect63}

In order to get an action of Hecke operators at $p$, which are defined at Iwahori level, we will need to descend our construction at this level. Fortunately, this is possible in rigid 
fiber.

We haven't really defined $\mathcal X_1(p^n)^{tor}$ (but see remark \ref{rem63}), and it will not be usefull for us, but in general 
$\mathcal X_1(p^n)^{tor} \fleche \mathcal X_0(p)^{tor}$ is a torsor over the group\footnote{This is for $p_r = h$, there is an analogous description when $p_r < h$.}
\[ I(p^n) := 
\left\{
\left(
\begin{array}{cccc}
A_1  &  \star & \star & \star  \\
  & A_2  & \star & \star  \\
  &   &  \ddots & \star\\
 p\mathcal O/p^n & & & A_{r+1} 
\end{array}
\right) :  \begin{array}{c}
A_i \in I_p(\mathcal O/p^n\mathcal O) \subset \GL_{p_i-p_{i-1}}(\mathcal O/p^n)  \\
A_i^t = A_{r-i+2}^{(\sigma)}
\end{array}
\right\} \pmod{U_P},
\]

where we chose an ordering $\{ p_\tau,q_\tau | \tau \in \mathcal T (= \mathcal T_{(i,j)})\} \cup \{0,h\} = \{ 0 \leq p_1 < p_2 < \dots < p_r \leq p_{r+1} = h\}$, and $h = h_{(i,j)}$ is the 
$\mathcal O_{(i,j)}$-height of $A_i[\pi_j^\infty]$, $I_p$ denote the standard Iwahori subgroup, 
and $U_P$ is the standard upper-bloc-diagonal unipotent associated to $p_1 \leq p_2 \dots \leq p_{r+1} = h$ 
(remember that we fixed a couple $(i,j)$ at this point so here everything is related to the group $G = G_{(i,j)}$ at the place $(i,j)$). Of course, here we chose a specific pairing so that this parabolic is upper-triangular.

The group $I(n)$ does not preserve $\mathcal X_1(p^n)^{tor}(v)$ : the reason is that the condition on $\mathcal X_1(p^n)^{tor}(v)$ to be "close to the $\mu$-ordinary canonical locus" (i.e. that the group of height $p_\tau$ have big enough degree) fixes the group of height $np_\tau$ to be equal to the canonical (and thus unique) corresponding one.
In particular $\mathcal X_1(p^n)^{tor}(v) \fleche \mathcal X_0(p)^{tor}(v)$ is a torsor over,
\[ I^0(n) :=\im\left(\left\{
\left(
\begin{array}{cccc}
A_1  &  \star & \star & \star  \\
  & A_2  & \star & \star  \\
  &   &  \ddots & \star\\
 0 & & & A_r 
\end{array}
\right) :   \begin{array}{c}
A_i \in I_p(\mathcal O/p^n\mathcal O) \subset \GL_{p_i-p_{i-1}}(\mathcal O/p^n)  \\
A_i^t = A_{r-i+2}^{(\sigma)}
\end{array}
 \right\} \fleche I(n) \right),
\]

\begin{rema}
The group $I^0(n)$ is related to the group $P^0_{(p_\tau)}$ of section \ref{sect4}
\end{rema}

Denote by $U^0(n)$ the (diagonal, not just bloc-diagonal) subgroup,

\[
\left(
\begin{array}{cccc}
1  &  \star &  & \star \\
  & 1  & & \vdots  \\
  &   &  \ddots &\star \\ 
    &   &   & 1 \\ 
\end{array}
\right) \subset I^0(n)
\] 
Define $\mathcal X_0^+(p^n)^{tor}(v)$ as the quotient of $\mathcal X_1(p^n)^{tor}(v)$ by $U^0(m)$, which doesn't parametrizes trivialisations of the groups $H_k/H_{k-1}$ but only full flags of subgroups of this quotients, together with a basis of the graded pieces. Actually we can also define the same way $\mathcal F_\tau$ over $\mathcal X_0(p^n)^+(v)^{tor}$ (i.e. the sheaves $\mathcal F_\tau$ descend). As the action of $U^0(m)$ on $\mathcal X_1(p^n)^{tor}(v)$ lifts to $\mathcal{IW}_w^+$, denote also by
$\mathcal{IW}_w^{0,+}$ the quotient of by $U^0(m)$. 
As $\mathcal F_\tau \simeq \omega_\tau$ over $\mathcal X_0^+(p^n)^{tor}(v)$ (i.e. after inverting $p$), we thus have an injection, \[\mathcal{IW}_w^{0,+} \subset (\mathcal T/U)_{\mathcal X_0^+(p^n)^{tor}(v)}.\]

\begin{prop}
\label{propdescente}
If 
$n - \eps_\tau > w > n-1$, then the composite,
\[ \mathcal{IW}_w^{0,+} \hookrightarrow (\mathcal T/U)_{\mathcal X_0^+(p^n)^{tor}(v)} \fleche (\mathcal T/U)_{\mathcal X^{tor}(v)},\]
is an open immersion.
\end{prop}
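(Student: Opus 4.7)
The plan is to factor the map through $(\mathcal T/U)_{\mathcal X_0^+(p^n)^{tor}(v)}$. The first arrow
\[ \mathcal{IW}_w^{0,+} \hookrightarrow (\mathcal T/U)_{\mathcal X_0^+(p^n)^{tor}(v)} \]
is an open immersion by the very construction of $\mathcal{IW}_w^{0,+}$ as the subfunctor of flags-with-bases that are $w$-close to the ones coming from $\psi$ and $\HT_\tau$. It therefore suffices to show that the projection
\[ \pi : (\mathcal T/U)_{\mathcal X_0^+(p^n)^{tor}(v)} \fleche (\mathcal T/U)_{\mathcal X^{tor}(v)}, \]
deduced by base change from $\mathcal X_0^+(p^n)^{tor}(v) \to \mathcal X^{tor}(v)$, restricts to an open immersion on $\mathcal{IW}_w^{0,+}$. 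By the discussion of subsection \ref{sect63}, the latter base map is a finite étale torsor under the finite group $H := I^0(n)/U^0(n)$ (the diagonal torus of $I^0(n)$, i.e.\ a product of $(\mathcal O/p^n)^\times$'s), hence so is $\pi$. Since an étale morphism is an open immersion as soon as it is universally injective, it remains to check that $\pi$ is injective on points of $\mathcal{IW}_w^{0,+}$.

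Suppose $P_1, P_2 \in \mathcal{IW}_w^{0,+}(R')$ satisfy $\pi(P_1) = \pi(P_2)$. Then $P_2 = t \cdot P_1$ for some $t \in H$, and by the torsor structure of subsection \ref{secttriv}, $t$ acts on the trivialisations $\psi_k \pmod{U^0}$ by rescaling each basis vector $x_i$ of the canonical flag of $H_{p_\tau,n}^D$ by a scalar $t_i \in (\mathcal O/p^n)^\times$. By the $\mathcal O$-linearity of the Hodge-Tate map (proposition \ref{prop63}), the associated basis $(w_i)$ of the graded pieces in $\mathcal F_\tau \otimes R'/p^wR'$ is scaled by the same $t_i$. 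Writing down the $w$-compatibility of $P_1$ and $P_2 = t \cdot P_1$ in each line bundle $\Fil_i \mathcal F_\tau / \Fil_{i-1}$, we obtain
\[ (t_i - 1) w_i \in p^w \bigl(\Fil_i/\Fil_{i-1}\bigr). \]
Since $w_i$ is a local generator of this locally free rank one module and $t_i - 1 \in \mathcal O/p^n\mathcal O$ has integer valuation, this requires $v(t_i - 1) \geq \lceil w \rceil$. Under the hypothesis $w > n - 1$ (combined with $w < n - \eps_\tau < n$) one has $\lceil w \rceil = n$, so $v(t_i - 1) \geq n$, i.e.\ $t_i \equiv 1 \pmod{p^n}$. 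Hence $t = 1$ in $H$, and $P_1 = P_2$, as needed.

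The main obstacle is the middle step: unraveling how an $H$-translation of the $\psi_k$'s corresponds, through $\HT_\tau$ and modulo $p^w$, to genuine scalar multiplication by the $t_i$ on the bases $(w_i)$ of the graded pieces of $\mathcal F_\tau$. Once this linearity statement is in place, the inequality $w > n-1$ does exactly what is needed: it forces every nontrivial element of $(\mathcal O/p^n)^\times$ to be detected modulo $p^w$, and so to break $w$-compatibility, making $\pi$ injective on $\mathcal{IW}_w^{0,+}$.
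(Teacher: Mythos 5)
Your proposal is correct in substance and isolates the same arithmetic fact that drives the paper's proof: since $w > n-1$, any $t_i\in(\mathcal O/p^n\mathcal O)^\times$ with $v(t_i-1)\geq w$ must equal $1$. The packaging, however, differs from the paper's. The paper reduces to a suitable affinoid $U$, writes the composite $h$ explicitly as a map from $\coprod_{\tau}\coprod_{\gamma\in S} M_\tau\,(w\text{-ball})\,\gamma$ with $S$ a set of representatives of $I^0(n)/U^0(n)$ and $M_\tau$ the Hodge--Tate comparison matrix, and then invokes a quasi-inverse $M^*_\tau$ with $M^*_\tau M_\tau = p^c\,\mathrm{Id}$ to check injectivity directly on matrices. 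You instead decouple the geometry (étale plus universally injective implies open immersion) from the arithmetic ($\mathcal O$-linearity of $\HT_\tau$ transports the $H$-rescaling to the graded pieces of $\mathcal F_\tau$); this avoids $M^*_\tau$ and the choice of local coordinates, at the cost of having to be careful about what exactly is acted on. Two phrasings should be tightened, although neither is a genuine gap. First, $\mathcal X_0^+(p^n)^{tor}(v)\to\mathcal X^{tor}(v)$ is an $H$-torsor only \emph{onto its image}: the paper explicitly restricts to $V=\pi_n(\mathcal X_0^+(p^n)^{tor}(v))$, and you should do the same before invoking the base-change/étale-descent argument. Second, ``$P_2 = t\cdot P_1$'' is not a literal action of $H$ on $\mathcal{IW}_w^{0,+}$, since the $w$-compatibility condition relative to $\psi$ is not $H$-stable. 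What your argument really uses is that $P_1$ and $P_2$ carry the \emph{same} flag-with-graded-basis $(\Fil_\bullet\mathcal F_\tau,w_\bullet)$ --- recovered from their common image in $(\mathcal T/U)_{\mathcal X^{tor}(v)}$ because $\mathcal F_\tau\hookrightarrow\omega_\tau$ --- while their underlying $\mathcal X_0^+(p^n)^{tor}(v)$-points differ by some $t\in H$; the shared $(\Fil_\bullet,w_\bullet)$ must then be $w$-compatible with both $\psi$ and $t\psi$, so that $(t_i-1)\,\HT_\tau(\psi(x_i))\in p^w\mathcal F_\tau+\Fil_{i-1}\mathcal F_\tau$. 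Since $\HT_\tau(\psi(x_i))$ generates $\Fil_i\mathcal F_\tau/\Fil_{i-1}\mathcal F_\tau$ modulo $p^w$ (by the $w$-compatibility of $P_1$), this yields $v(t_i-1)\geq w > n-1$ and hence $t_i=1$, as you conclude.
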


\begin{proof}
Denote by $V \subset \mathcal X^{tor}(v) = \mathcal X_0(p)^{tor}(v)$ the image by $\pi_n$ of $\mathcal X_0^+(p^n)^{tor}(v)$. Up to reducing to a suitable affinoide $U \subset \mathcal X_0^+(p^n)^{tor}(v)$, the previous composite map 
$h$ is given by, 
\[ h : \coprod_\tau \coprod_{\gamma \in S} 
M_\tau\left(
\begin{array}{cccc}
 1+p^wB(0,1) &   &  & \\
 p^wB(0,1) &  1+p^wB(0,1)  & &  \\
  &   &   \ddots & \\
  & & &  1+p^wB(0,1)
\end{array}
\right) \gamma
 \fleche \coprod_\tau (\GL_{p_\tau}/U)_{\pi_n(U)},\]
 where $S$ is a set of representative of $I^0(n)/U^0(n)$ in $I(\mathcal O) \subset \GL_{p_\tau}(\mathcal O)$, and $M_\tau$ is the matrix relating the basis of $H_{p_\tau}^D$ to the fixed one of $\omega_{\tau}$, which is thus related to the Hodge-Tate map (or equivalently relating a fixed basis of $\mathcal F_\tau$ to a fixed one of $\omega_\tau$).
 In particular there exists $M^*_\tau$ such that, $M^*_\tau M_\tau = p^c Id_{p_\tau}$ for some $c$ (which we could bound in terms of the Hasse invariant or $\frac{1}{p-1}$, but it is not even necessary). From this, we deduce that $M^* \circ h$ is injective, thus the same thing for $h$.
\end{proof}


Thus we have a map $g_n : \mathcal{IW}_w^{0,+} \fleche \mathcal X^{tor}(v)$. It is not clear that the map,
\[ \pi_n : \mathcal X_0^+(p^n)^{tor}(v) \fleche \mathcal X^{tor}(v) := \mathcal X_0(p)^{tor,\mu-can}(v),\]
is surjective. But still, having $n$ fixed, $\pi_n(\mathcal X_0^+(p^n)^{tor}(v))$ 
describe a basis of strict neighborhoods of $\mathcal X^{tor,\mu-can}$ by lemma \ref{lemmastricetale}.

%

\begin{defin}
Suppose $(\eps,n,v)$ is satisfied. The open $\pi_n(\mathcal X_0^+(p^n)^{tor}(v))$ is a strict neighborhood of the ($\mu$-canonical) ordinary locus $\mathcal X^{tor}(0) =\mathcal X^{tor}(0)^{\mu-can}$ included in $\mathcal X^{tor}(\underline \eps)$.
On $\pi_n(\mathcal X_0^+(p^n)^{tor}(v))$, if $w \in ]n-1,n-\eps_\tau[$ for all $\tau$, for all $\kappa$ $w$-analytic,
we define the following sheaf,
\[ \omega^{\kappa\dag}_{w} = ((g_n)_*\mathcal O_{\mathcal{IW}_w^{0,+}})[\kappa].\]
It is called the sheaf of overconvergent, $w$-analytic modular forms of weight $\kappa$. For every $v' > 0$ small enough such that $\mathcal X^{tor}(v') \subset \pi_n(\mathcal X_0^+(p^n)^{tor}(v))$, the module
\[ M_w^{\kappa\dag}(v') = H^0(\mathcal X^{tor}(v'),\omega_w^{\kappa\dag}), \]
(respectively $M_{cusp,w}^{\kappa\dag}(v') = H^0(\mathcal X^{tor}(v'),\omega_w^{\kappa\dag}(-D))$) is called the module of $v'$-overconvergent, $w$-analytic (respectively cuspidal) modular forms of weight $\kappa$.
\end{defin}

\begin{rema}
In the previous compatibilities, if $(\eps,n,v)$ is satisfied, $(\eps,n,v')$ is for all $v' < v$. Also, because of the compatibility between $w$ and $n$, $n$ is uniquely defined (and is thus suppressed form the notation of $\omega_w^{\kappa\dag}$).
Thus, we can choose $v$ arbitrarily close to $0$ in the previous definition. Also, for every $w$ and $\kappa$, there exists $n_0$ such that for all $n \geq n_0$, there is $w' > w$, and $\kappa$ is $w'$-analytic with $n-1< w' < n-\eps_\tau$ for all $\tau$. In particular, there exists constants $v_0,w_0$ such that $M_w^{\kappa\dag}(v)$ is defined for all $v < v_0$ 
and $w > w_0$ such that $w \in ]n-1,n-\eps[$ (for some integer $n$ large enough).
\end{rema}

Suppose $n'-\eps_\tau > w' > w$ with $w \in ]n -1 ,n - \eps[$ and $n \leq n'$. As a flag with graded basis which is $(n',w')$-compatible is also $(n,w)$-compatible, there is an injective map,
\[ \mathcal{IW}^+_{n',w'} \hookrightarrow \mathcal{IW}^+_{n,w} \times_{\mathcal X_1(p^n)^{tor}(v)} \mathcal X_1(p^{n'})^{tor}(v).\]
In particular, we have a natural map, for every $w'$-analytic $\kappa$,
\[ \omega_w^{\kappa\dag} \hookrightarrow \omega_{w'}^{\kappa\dag},\]
over $\pi_{n'}(\mathcal X_0^+(p^{n'})^{tor}(v))\subset \pi_n(\mathcal X_0^+(p^n)^{tor}(v))$.

\begin{defin}
For $w >0$, the module,
\[ M^{\kappa\dag} = \varinjlim_{v\rightarrow 0,w\rightarrow \infty} M_w^{\kappa\dag}(v) \quad (\text{respectively } M_{cusp}^{\kappa\dag} = \varinjlim_{v\rightarrow 0,w\rightarrow \infty} M_{cusp,w}^{\kappa\dag}(v))\]
is the module of overconvergent locally analytic (respectively cuspidal) modular forms of weight $\kappa$.
\end{defin}

\begin{rema}
In the previous definition, it is understood that the limit is taken on $v,w$ such that $w \in ]n-1,n-\eps[$ for some $n = n_w$,  $(\eps,n,v_0)$ is satisfied for some $v_0$ and $\mathcal X^{tor}(v) \subset \pi_n(\mathcal X_0^+(p^n)^{tor}(v_0)) \subset \mathcal X^{tor}(\eps)$. Thus in particular $(\eps,n,v_0)$ is satisfied and $v \leq v_0$. 
\end{rema}

\subsection{Some complexes}

For compatibilities reasons with Hecke operators and to control the structure on the previous modules, we will need to define complexes overconvergent sections.
Recall that on $\mathcal X^{tor} = \mathcal X_0(p)^{tor}$, our rigid toroïdal Shimura variety with fixed Iwahori level, we have defined two basis of strict neighborhoods of $\mathcal X^{tor}(0)$ (the \textit{canonical} $\mu$-ordinary locus whose points have maximal degree), one given by
\[ \mathcal X^{tor}(\underline \eps),\]
for $\underline\eps = (\eps_\tau)_\tau$ (of points with degrees bigger than the maximal one minus $\eps_\tau$), and
\[ \mathcal X^{tor}(v),\]
for $v \in (0,1]$ (describing the connected components containing $\mathcal X^{tor}(0)$ of points with Hasse invariant of valuation smaller than $v$). Because we will need to let the neighborhood described in terms of the degree vary, we from now on call $\underline \eps_0$ a number fixed to be able to define the sheaves $\omega_w^{\kappa\dag}$, and we will always consider small enough opens $\mathcal X(v)$ and $\mathcal X(\underline \eps)$ so that there exists the sheaves $\omega_w^{\kappa\dag}$ on them. In particular, once $w$ is fixed this just implies that $v$ or $\eps$ are small enough (depending on $w$).

Ultimately we are interested by (finite slope) overconvergent cuspidal modular forms, that is, (finite slope) elements of
\[ \varinjlim_{v \fleche 0,w \fleche \infty} H^0(\mathcal X^{tor}(v),\omega^{\kappa\dag}_w(-D)) = \varinjlim_{\underline\eps \fleche 0,w \fleche \infty} H^0(\mathcal X^{tor}(\underline \eps),\omega^{\kappa\dag}_w(-D)).\]
We temporarily introduce the following complexes,

\begin{defin}
Let $w >0$, $\mathcal U = \Spm(A) \subset \mathcal W$ an affinoid such that $\kappa_\mathcal U$ is $w$-analytic, and define for $v,\underline \eps$ small enough \footnote{such that the sheaves $\omega_w^{\kappa_{\mathcal U}\dag}$ is defined on $\mathcal X^{tor}(v)\times \mathcal U$ resp. $\mathcal X^{tor}(\underline\eps)\times \mathcal U$.},
\[C_{cusp}(v,w,\kappa_U) = R\Gamma(\mathcal X^{tor}(v)\times \mathcal U,\omega^{\kappa_\mathcal U\dag}_w(-D)),\]
and
\[ C_{cusp}(\underline \eps,w,\kappa_U) = R\Gamma(\mathcal X^{tor}(\underline \eps)\times \mathcal U,\omega^{\kappa_\mathcal U\dag}_w(-D)).\]
We can analogously define the non-cuspidal versions of these complexes.

We also define
\[ H^i_{cusp,\dag}(\kappa_U) = \lim_{v \fleche 0,w\fleche \infty}H^i(\mathcal X^{tor}(v)\times \mathcal U,\omega^{\kappa_\mathcal U\dag}_w(-D)),\]
and \[H^{i'}_{cusp,\dag}(\kappa_U) = \lim_{\underline\eps \fleche 0,w\fleche \infty}H^i(\mathcal X^{tor}(\eps)\times \mathcal U,\omega^{\kappa_\mathcal U\dag}_w(-D)).\]
\end{defin}

In particular $H^0_{cusp,\dag}(\kappa) = H^{0'}_{cusp,\dag}(\kappa)$ is simply the space of overconvergent locally analytic cuspidal modular forms of weight $\kappa$, and we will see that the higher cohomology groups vanishes (their finite slope part at least).

\begin{prop}
\label{pro617}
The previous complexes are represented by bounded complexes of projective $A[1/p]$-modules (i.e. perfect complexes in the sense of \cite{urb}).
\end{prop}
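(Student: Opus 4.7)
The plan is to represent each complex by a finite \v{C}ech complex whose terms are projective $A[1/p]$-Banach modules, following the strategy of \cite{AIP} (see in particular their appendix on small formal Banach sheaves), adapted to the Iwahori descent performed in subsection \ref{sect63}.

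First, observe that the toroidal compactification $\mathcal X^{tor}$ is proper, hence the rigid opens $\mathcal X^{tor}(v)\times\mathcal U$ and $\mathcal X^{tor}(\underline\eps)\times\mathcal U$ are quasi-compact. Choose once and for all a finite admissible affinoid cover $\{V_i\}$ of each. Up to shrinking we may assume that on each $V_i$ we can trivialise the sheaf $\omega_w^{\kappa_{\mathcal U}\dag}(-D)$ ``as much as possible'', in particular arrange that on each non-empty intersection $V_{i_0\cdots i_p}$ we have an affine formal model and the map from $\mathfrak X_0^+(p^n)^{tor}(v)$ is finite. The \v{C}ech complex $\check{\mathcal C}^\bullet(\{V_i\},\omega_w^{\kappa_{\mathcal U}\dag}(-D))$ is then bounded (of length equal to the cardinality of the cover) and, since our sheaves have no higher cohomology on affinoids of this form (sections of a rigid Banach sheaf of the type built from a small formal Banach sheaf are acyclic on the affinoids used in its formal construction, by \cite{AIP} Appendix A), computes $C_{cusp}(v,w,\kappa_{\mathcal U})$, respectively $C_{cusp}(\underline\eps,w,\kappa_{\mathcal U})$.

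The central step is then to show that the sections on each $V_{i_0\cdots i_p}$ of $\omega_w^{\kappa_{\mathcal U}\dag}(-D)\,\widehat\otimes\, A$ form a projective Banach $A[1/p]$-module in the sense of \cite{urb}. For this I would pull back along $\pi_n\colon \mathcal X_0^+(p^n)^{tor}(v)\to \mathcal X^{tor}(v)$ (respectively to $\mathcal X^{tor}(\underline\eps)$), where by the construction of subsection \ref{secttriv} the sheaf is the generic fibre of the small formal Banach sheaf $\mathfrak w_w^{\kappa\dag}$ on $\mathfrak X_0^+(p^n)^{tor}(v)$. Over any affine formal open of this higher level model, sections of $\mathfrak w_w^{\kappa\dag}$ are, by construction as $\pi_*\mathcal O_{\mathfrak{IW}_w^+}[\kappa]$, direct summands of a space of analytic functions on a polydisc with values in $A$, hence orthonormalisable Banach $A$-modules; inverting $p$ and descending via the finite map $\pi_n$ (taking invariants under the finite quotient group $I^0(n)/U^0(n)$ acting on $\mathcal X_0^+(p^n)^{tor}(v)\to\mathcal X^{tor}(v)$, as made possible by Proposition \ref{propdescente}) preserves projectivity, since one is taking a direct summand of an orthonormalisable module. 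Thus each term of the \v{C}ech complex is a projective $A[1/p]$-Banach module.

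The main obstacle is precisely the last step: carrying out the descent from the higher level Shimura variety $\mathcal X_0^+(p^n)^{tor}(v)$, where we have an honest small formal Banach sheaf, down to Iwahori level, while preserving Banach-projectivity of sections on affinoids. Away from the boundary this is a finite \'etale quotient, and the argument of \cite{AIP} appendix applies verbatim; the delicate point is to control the behaviour at the toroidal boundary, for which one uses the formal-\'etale covering $\mathfrak U^{tor}$ introduced in subsection \ref{subsectBord} (Mumford's construction) to reduce sections near the boundary to the abelian part, where the higher-level sheaf genuinely is a small formal Banach sheaf. Granting this, the resulting bounded \v{C}ech complex of projective Banach $A[1/p]$-modules is a perfect complex in the sense of \cite{urb} and represents both $C_{cusp}(v,w,\kappa_{\mathcal U})$ and $C_{cusp}(\underline\eps,w,\kappa_{\mathcal U})$, proving the proposition.
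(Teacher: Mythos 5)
Your proposal and the paper's proof share the central mechanism — a finite \v{C}ech complex whose terms are projective Banach $A[1/p]$-modules, together with the observation that in characteristic $0$ taking invariants under a finite group produces a direct summand and hence preserves perfectness — but they differ in where the \v{C}ech cover is placed, and that difference is exactly what the paper exploits to avoid the obstacle you flag at the end.

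The paper does \emph{not} build the \v{C}ech complex on $\mathcal X^{tor}(v)\times\mathcal U$ and then try to prove, term by term, that sections of $\omega_w^{\kappa\dag}(-D)$ on affinoids are projective. Instead it first rewrites
$R\Gamma(\mathcal X^{tor}(v)\times\mathcal U,\omega_w^{\kappa\dag}(-D))$ as the derived invariants $R\Gamma\bigl(I^0(n),\,R\Gamma(\mathcal{IW}_w\times\mathcal U,\mathcal L^{\kappa_{\mathcal U}}(-D))(-\kappa)\bigr)$, using that $\mathcal{IW}_w\to\mathcal X_1(p^n)^{tor}(v)$ is affinoid and $\mathcal L^{\kappa_{\mathcal U}}=(\pi_{1*}\mathcal O_{\mathcal{IW}_w^+})[\kappa^0_{\mathcal U}]$ is a \emph{line bundle} on $\mathcal{IW}_w\times\mathcal U$. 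An $I^0(n)$-stable finite affinoid cover of $\mathcal{IW}_w\times\mathcal U$ then produces a \v{C}ech complex whose terms are sections of a line bundle on affinoids that are relatively polydiscs over $\mathcal X_1(p^n)^{tor}(v)$ — manifestly orthonormalisable Banach $A$-modules — so the complex is perfect \emph{before} any descent; the $I^0(n)$-invariant part is a direct summand, hence still perfect. This kills at one stroke the issue of how to control the Banach-module structure of sections after descending to Iwahori level, and makes the boundary question moot because all the work happens above $\mathcal X_1(p^n)^{tor}(v)$, where the construction is already in place.

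Your version, by contrast, keeps the cover downstairs and tries to push each term through the quotient $\pi_n$, and you correctly observe that you then have to justify that projectivity survives the descent at the boundary; that step is the genuine gap, and the paper simply never faces it because of the order in which it performs the operations. Two smaller imprecisions to fix: the small formal Banach sheaf $\mathfrak w_w^{\kappa\dag}$ is constructed on $\mathfrak X_0(p^n)^{tor}(v)$, not on a putative $\mathfrak X_0^+(p^n)^{tor}(v)$ (that formal model is not built in the paper; $\mathcal X_0^+(p^n)^{tor}(v)$ is introduced only as a rigid quotient of $\mathcal X_1(p^n)^{tor}(v)$ by $U^0(n)$); and one should remember that $\pi_n$ need not be surjective onto $\mathcal X^{tor}(v)$, so the relevant space is $\pi_n(\mathcal X_0^+(p^n)^{tor}(v))$, which the paper's formulation via $R\Gamma(I^0(n),-)$ handles automatically.
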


\begin{proof}
This is the same proof as \cite{PilHigher}. We have maps,
\[ \mathcal{IW}_w^+\times_{\mathcal X(v)}\mathcal U \overset{\pi_1}{\fleche} \mathcal {IW}_w\times_{\mathcal X(v)}\mathcal U \fleche \mathcal X_1(p^n)^{tor}(v)\times \mathcal U,\]
and sheaves $\mathcal L^{\kappa_U} = (\pi_{1*}\mathcal O_{\mathcal{IW}^+_w})[\kappa_{\mathcal U}^0]$ (for the action of ${\mathcal T_w}$), this is a line bundle on $\mathcal{IW}_w\times_{\mathcal X(v)}\mathcal U$), and $\omega^{\kappa_U^0\dag}_w = \pi_{2*}\mathcal L^{\kappa_U}$.
Moreover
\begin{eqnarray*} R\Gamma(\mathcal X^{tor}(v)\times U,\omega^{\kappa_U\dag}_w(-D)) &=& R\Gamma(I^0(n),R\Gamma(\mathcal X_1(p^n)^{tor}(v)\times U,\omega^{\kappa_U^0\dag}_w(-D))(-\kappa)) \\
&=& R\Gamma(I^0(n),R\Gamma(\mathcal{IW}_w \times U,\mathcal L^{\kappa_U}(-D))(-\kappa)).\end{eqnarray*}

The last equality is because $\mathcal{IW}_w \fleche \mathcal X_1(p^n)^{tor}(v)$ is locally affinoid.
Now if you choose $\mathfrak U$ a covering of $\mathcal{IW}_w\times\mathcal U$ which is $I^0(n)$-stable (by adding all translates by $I^0(n)$ if necessary), then the Cech complex of this covering is perfect and calculates $R\Gamma(\mathcal{IW}_w \times U,\mathcal L^{\kappa_U}(-D))$, and twisting the action of $I^0(n)$, and looking at the direct factor of invariants by $I^0(n)$ (we are in characteristic 0), this is still perfect and calculates $R\Gamma(\mathcal X^{tor}(v)\times U,\omega^{\kappa_U\dag}_w(-D))$.
The same remains true with $\mathcal X^{tor}(\underline \eps)$ (for $\eps$ small enough) instead of $\mathcal X^{tor}(v)$.
\end{proof}

\section{Hecke Operators}
\label{sect7}
In this section we will construct Hecke operators, both at $p$ and outside $p$. As noted in \cite{AIP,Brasca}, it is not true that the Hecke correspondances will extend to a fixed choice of a toroidal compactification, nevertheless we can adapt the choice of toroïdal compactifications and use results of Harris (\cite{HarCor} Proposition 2.2).

\begin{lemma}[Harris]
Let $\Sigma,\Sigma'$ be two smooth projective polyhedral cone decomposition, and $\mathcal X_1(p^n)^{tor}_\Sigma,\mathcal X_1(p^n)^{tor}_{\Sigma'}$ the associated toroïdal compactifications.
Then there is a canonical isomorphism 
$H^*(\mathcal X_1(p^n)^{tor}_\Sigma,\mathcal O_{\mathcal{IW}^+}) \simeq H^*(\mathcal X_1(p^n)^{tor}_{\Sigma'},\mathcal O_{\mathcal {IW}^+})$.
\end{lemma}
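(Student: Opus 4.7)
The plan is to reduce to the case where one cone decomposition refines the other and then apply the Leray spectral sequence to the resulting proper birational map. By standard results on polyhedral cone decompositions one can find a smooth projective $\Sigma''$ simultaneously refining both $\Sigma$ and $\Sigma'$, producing proper morphisms
\[
f: \mathcal{X}_1(p^n)^{tor}_{\Sigma''} \fleche \mathcal{X}_1(p^n)^{tor}_\Sigma, \qquad f': \mathcal{X}_1(p^n)^{tor}_{\Sigma''} \fleche \mathcal{X}_1(p^n)^{tor}_{\Sigma'},
\]
both of which are isomorphisms over the open Shimura variety $\mathcal X_1(p^n)$ (which does not depend on $\Sigma$). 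Granting an isomorphism on cohomology for $f$ and for $f'$ separately, the claimed isomorphism is obtained by composing through $\mathcal{X}_1(p^n)^{tor}_{\Sigma''}$, and is canonical since any two common refinements are dominated by a third.

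Now lift $f$ to $\tilde f : \mathcal{IW}^+_{\Sigma''} \fleche \mathcal{IW}^+_{\Sigma}$. This uses that $\mathcal{IW}^+$ is built from data — the canonical subgroups $H_{p_\tau,n}$, the sheaves $\mathcal F_\tau$, the filtrations $\Fil_\bullet^\psi$ and the Hodge–Tate maps — which depend only on the semi-abelian scheme and its quasi-finite flat $p$-power subgroups, hence are pulled back along $f$. Since $\mathcal{IW}^+ \fleche \mathcal X_1(p^n)^{tor}$ is affine (composition of an affine morphism with a $\mathfrak T_w$-torsor), one has $H^*(\mathcal X_1(p^n)^{tor}_\Sigma, \mathcal O_{\mathcal{IW}^+_\Sigma}) = H^*(\mathcal{IW}^+_\Sigma, \mathcal O_{\mathcal{IW}^+_\Sigma})$, and similarly for $\Sigma''$. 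Thus by the Leray spectral sequence it suffices to prove $R\tilde f_* \mathcal O_{\mathcal{IW}^+_{\Sigma''}} = \mathcal O_{\mathcal{IW}^+_\Sigma}$.

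This in turn reduces to the analogous downstairs statement $Rf_* \mathcal O_{\mathcal X_1(p^n)^{tor}_{\Sigma''}} = \mathcal O_{\mathcal X_1(p^n)^{tor}_\Sigma}$: indeed, if $\pi : \mathcal{IW}^+ \fleche \mathcal X_1(p^n)^{tor}$ denotes the projection, the quasi-coherent algebra $\pi_*\mathcal O_{\mathcal{IW}^+}$ decomposes under the action of the torus into weight pieces, each of which is a coherent sheaf built from $\det \mathcal F_\tau$ and canonical line bundles; applying the projection formula weight-by-weight and reassembling yields the upstairs statement from the downstairs one. The downstairs statement is exactly the content of the toroidal refinement lemma of \cite{HarCor} Proposition 2.2, itself resting on the fact that the boundary of a smooth toroidal compactification has rational singularities and that refinement corresponds to a toric resolution.

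The main obstacle is the verification that $\tilde f$ exists and that it identifies $\mathcal O_{\mathcal{IW}^+_{\Sigma''}}$ with $f^*\mathcal O_{\mathcal{IW}^+_\Sigma}$ in the sense needed for the projection formula, i.e. the functoriality of the entire auxiliary package $(H_{p_\tau,n}, \mathcal F_\tau, \HT_\tau, \Fil_\bullet^\psi)$ with respect to the toroidal refinement. This should follow from the construction of $\mathfrak X_1(p^n)^{tor}$ by normalization in the étale covering $\mathfrak U^{tor}$ of subsection \ref{subsectBord}, together with the functoriality of Mumford's construction (as recorded in \cite{LanSigma}) which ensures that the semi-abelian scheme and its $p^n$-torsion are compatibly pulled back from the auxiliary moduli $\mathfrak X^{Sph,tor}$ to each of the refined toroidal compactifications.
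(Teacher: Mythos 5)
Your argument is correct and rests on the same two pillars as the paper's proof: Harris's vanishing $R\pi_*\mathcal O_{X_{\Sigma'}} = \mathcal O_{X_\Sigma}$ along a toroidal refinement, and the compatibility of the auxiliary data $(H_{p_\tau}, \mathcal F_\tau, \HT_\tau, \Fil_\bullet^\psi)$ with refinement of the cone decomposition. Where you differ is the base-change step. You push down $\mathcal O_{\mathcal{IW}^+}$ to the toroidal compactification, decompose the resulting quasi-coherent algebra into torus weight pieces, and apply the projection formula to each piece before reassembling. The paper takes a more direct route: it observes that the square
\[
\begin{array}{ccc}
\mathcal{IW}^+_{\Sigma'} & \longrightarrow & X_{\Sigma'} \\
\downarrow & & \downarrow \\
\mathcal{IW}^+_{\Sigma} & \longrightarrow & X_{\Sigma}
\end{array}
\]
is \emph{cartesian} (precisely because the defining data of $\mathcal{IW}^+$ pull back along the refinement), that the horizontal maps are flat and the vertical ones proper, and then invokes flat base change in one stroke to conclude $R\pi'_*\mathcal O_{\mathcal{IW}^+_{\Sigma'}} \simeq \mathcal O_{\mathcal{IW}^+_{\Sigma}}$. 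This avoids the need to identify the weight pieces as coherent sheaves (they are Banach sheaves, not coherent, so your parenthetical "each of which is a coherent sheaf built from $\det\mathcal F_\tau$" is imprecise, though immaterial: the projection formula applies to quasi-coherent modules and $Rf_*$ commutes with the relevant colimits). Your "main obstacle" paragraph correctly isolates exactly what the paper also must verify, namely that the $\Sigma$-dependence of the auxiliary data factors through the normalization construction of Section \ref{sectNorm}; the paper handles this by choosing $\Sigma'$ small enough to refine the auxiliary cone datum, so that the $H_k$ on $\mathfrak X_1(p^n)^{tor}_{\Sigma'}$ are literal pullbacks. So the substance is the same; the paper's cartesian-square formulation is marginally cleaner, while your weight-by-weight projection formula makes the role of the torus action explicit.
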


\begin{proof}
To simplify notation, denote $X_{\Sigma} = \mathcal X_1(p^n)^{tor}_\Sigma$. Up to choosing a common refinement of $\Sigma$ and $\Sigma'$, we can suppose that $\Sigma'$ refines $\Sigma$ and look at the map
\[ \pi : X_{\Sigma'} \fleche X_\Sigma.\]
By results of Harris we have $\pi^*\omega_G = \omega_G'$. Moreover, we can take $\Sigma'$ small enough (which we do) such that it corresponds to a refinement of the auxiliary datum we chose in section \ref{sectNorm}. In particular, on the integral model $\mathfrak X_1(p^n)^{tor}_{\Sigma'}$, the groups $H_k$ are given by pullback of those on $\mathfrak X_1(p^n)^{tor}_\Sigma$. Thus we have a carthesian square,
\begin{center}
\begin{tikzpicture}[description/.style={fill=white,inner sep=2pt}] 
\matrix (m) [matrix of math nodes, row sep=3em, column sep=2.5em, text height=1.5ex, text depth=0.25ex] at (0,0)
{ 
\mathcal IW^+_{\Sigma'} & & \mathfrak X_{\Sigma'} \\
\mathcal IW^+_\Sigma& &\mathfrak X_{\Sigma}\\
};
\path[->,font=\scriptsize] 
(m-1-1) edge node[auto] {$i'$} (m-1-3)
(m-2-1) edge node[auto] {$i$} (m-2-3)
(m-1-1) edge node[auto] {$\pi'$} (m-2-1)
(m-1-3) edge node[auto] {$\pi$} (m-2-3)
;
\end{tikzpicture}
\end{center}
Also, by results of Harris (\cite{Har} (2.4.3)-(2.4.6)), we have quasi-isomorphisms $\pi_*\mathcal O_{X_{\Sigma'}} \overset{\sim}{\fleche} R\pi_*\mathcal O_{X_{\Sigma'}} = \mathcal O_{X_{\Sigma}}$. As $\mathcal IW^+ \fleche X$ is flat and $\pi$ is proper, we have thus by base change (see e.g. \cite{Stacks} 29.5.2)
\[ R\pi'_*\mathcal O_{IW^+_{\Sigma'}} \simeq \mathcal O_{IW^+_{\Sigma}}.\]

\end{proof}

\subsection{Hecke Operator outside $p$}

Let $\lambda$ be a place where our fixed level $K^p$ is hyperspecial, and fix $\gamma \in G(F^+_\lambda)$. Denote $C_\gamma$ the (analytic space associated to the) 
moduli space classifiying tuples
$(A_k,\iota_k,\lambda_k,\eta_k)$, $k=1,2$, of the type $G$, together with an isogeny $f: A_1 \fleche A_2$ which respects the addictionnal structure. It is endowed with two maps,
\[ C_\gamma \overset{p_1}{\underset{p_2}{\rightrightarrows}} \mathcal Y_{Iw(p)}(v),\]
where $p_k(f : A_1 \fleche A_2) = A_k$. Denote $C_\gamma(p^n) = C_\gamma \times_{\mathcal Y_{Iw(p)}(v)} \mathcal Y_{1}(p^n)(v)$.
But we can find choices of smooth projective polyhedral cone decompositions (see \cite{Lan} proposition 6.4.3.4) $\Sigma$ and $\Sigma'$ and associated toroïdal compactifications $X_{Iw(p),\Sigma}, C_{\gamma,\Sigma}, X_{Iw(p),\Sigma'}$ and maps $p_1 : C_{\gamma,\Sigma} \fleche X_{Iw(p),\Sigma}, p_2 : C_{\gamma,\Sigma} \fleche X_{Iw(p),\Sigma'}$ which extends the previous ones. As $v$ is away from $p$, this correspondance preserves $\mathcal X_{Iw(p)}(v)$, and the universal isogeny induces an isomorphism,
\[ f^* : p_2^*\mathcal F_{\mathcal X_1(p^n)_{\Sigma'}(v)} \overset{\sim}{\fleche} p_1^*\mathcal F_{\mathcal X_1(p^n)_\Sigma(v)},\]
and we can thus construct,
\begin{eqnarray*}
H^0(\mathcal X_1(p^n)_{\Sigma'}(v),\mathcal O_{\mathcal{IW}^+}) \overset{p_2^*}{\fleche} H^0(C_{\gamma,\Sigma}(p^n),p_2^*\mathcal O_{\mathcal{IW}^+}) \overset{f^*}{\fleche} \\
H^0(C_{\gamma,\Sigma}(p^n),p_1^*\mathcal O_{\mathcal{IW}^+}) \overset{\Tr p_1^*}{\fleche}H^0(\mathcal X_1(p^n)_\Sigma(v),\mathcal O_{\mathcal{IW}^+})
\end{eqnarray*}
As by the previous lemma, $H^0(\mathcal X_1(p^n)_{\Sigma'}(v),\mathcal O_{\mathcal{IW}^+})=H^0(\mathcal X_1(p^n)_{\Sigma}(v),\mathcal O_{\mathcal{IW}^+})$, we get an operator $T_\gamma$ on $H^0(\mathcal X_1(p^n)_{\Sigma}(v),\mathcal O_{\mathcal{IW}^+})$.
Similarly $T_\gamma$ acts on $C_{cusp}(v,w,\kappa_U)$ and $C_{cusp}(\underline \eps,w,\kappa_U)$ (as the isogeny is outside $p$) and their non-cuspidal analogues. We can thus forget about the choice of $\Sigma$ in the notations.

 \begin{rema}
 Here we made a slight abuse, as we used the notations with a fixed $(i,j)$. Of course, taking tensor products over the $(i,j)$ of the correspondings $\mathcal{IW}^+$ (which depends of the choice of $(i,j)$) solves this abuse of notation.
 \end{rema}

\begin{defin}
Let $\kappa \in \mathcal W(w)(L)$ with $ w \in]n-1+c,n-\eps_\tau[$. Restricting the previous operator to homegenous functions on $\mathcal X_{Iw(p)}(v)$  for $\kappa$, we get the Hecke operator,
\[ T_\gamma : M^{\kappa\dag}_w \fleche M^{\kappa\dag}_w.\]
Working over $\mathcal X_{Iw(p)}(v) \times \mathcal W(w)$, considering $\kappa = \kappa^{univ}$, we get an operator
\[ T_\gamma^{univ} : M^{\kappa^{univ}\dag}_w \fleche M^{\kappa^{univ}\dag}_w,\]
which is $\mathcal O_{\mathcal W(w)}$-linear, and an operator on $C_{cusp}(v,w,\kappa_{\mathcal W(w)})$ and $C_{cusp}(\underline\eps,w,\kappa_{\mathcal W(w)})$.
\end{defin}

Denote by $\mathcal H_{K^p}$ the spherical Hecke algebra of level $K^p$, the previous construction endow for each $w$ the modules $M^{\kappa^{univ}\dag}_w$ (respectively the module $M^{\kappa\dag}_w$ with $\kappa \in \mathcal W_w$) with an action of 
$\mathcal H_{K^p}$.

\subsection{Hecke operators at $p$}

At $p$, the construction of Hecke Operators is much more subtle than outside $p$, and even more subtle than in the ordinary case, as already remarked in \cite{Her3}.
Indeed, when the ordinary locus is non empty, only one operator, $U_{p,g}$ in \cite{AIP}, is compact on classical forms (it improves the "Hodge"-radius, i.e. the Hasse invariant), 
but does not improves the analycity radius for overconvergent forms, whereas the other operators, $U_{p,i}, i < g$ in \cite{AIP}, improves only the analycity radius (a priori).
Already for $U(2,1)$ with $p$ inert in the quadratic imaginary field the situation is different. Indeed, there is only one interesting operator, $U_p$, that improves at the same time both the Hodge-radius and the analicity radius.

Following \cite{Bijmu}, we define operators at $p$.

\subsubsection{Linear case} This is actually easier than the unitary case, and can be adapted from \cite{Bijmu} on $\omega^\kappa$ to $\omega^{\kappa\dag}_w$. But as this case can be recovered from the general Unitary case (considering $G \times G^D$ with canonical polarisation instead of $G$), we just write the details in the unitary case.

\subsubsection{Unitary case}
Fix as before $(i,j)$ a prime, that we supress from now on from the notations, and we can thus use $i$ as a variable. Let $G$ be the associated $p$-divisible group. Let $1 \leq i \leq \frac{h}{2}$ an integer, and define $C_i$ the moduli space $(A,\iota,\lambda,\eta,H_\bullet,L)$ where $(A,\iota,\lambda,\eta,H_\bullet) \in \mathcal Y_{Iw(p)}(v)$
and $L \subset G[p^2]$ be a totally isotropic subgroup such that $H_i \oplus L[p] = G[p]$ and $H_i^\perp \oplus pL = G[p]$, and denote the two maps,
 \[ C_i \overset{p_1}{\underset{p_2}{\rightrightarrows}} \mathcal Y_{Iw(p)}(v),\]
where $p_1(A,L) = A$ and $p_2(A,L) = A/L$.
Denote $C_i(p^n) = C_i \times_{\mathcal Y_{Iw}(v)}\mathcal Y_1(p^n)(v)$, and denote $f : A\fleche A/L$ the universal isogeny. As we are in characteristic zero, we can find smooth projective polyhedral cone decompositions $\Sigma,\Sigma',\Sigma''$ such that the previous correspondance extends to $p_1 : C_{i,\Sigma'} \fleche X_{Iw(p),\Sigma}, p_2 : C_{i,\Sigma'} \fleche X_{Iw(p),\Sigma''}$.
In \cite{Bijmu} Proposition 2.11, Bijakowski verifies that the previous correspondance stabilizes the open $\mathcal Y(\eps)$. More precisely, he verifies that the Hecke correspondance $C_i$ satisfies $\deg H_j' \geq \deg H_j$ with equality for $i = j$ if and only if $\deg H_i$ is an integer. Its proof extend to the case of a 1-motive in case of bad reduction, and thus extend to the boundary.

In particular, if $\eps_\tau <1$, by quasi-compacity of 
\[ \mathcal X_{Iw}(\forall \tau, \deg(H_\tau) \in [\lambda_\tau,\nu_\tau]), \]with\[ \sum_{\tau'} \min(p_{\tau'},p_\tau) - 1 < \lambda_\tau < \nu_\tau < \sum_{\tau'} \min(p_{\tau'},p_\tau), \lambda_\tau,\nu_\tau \in \QQ,\]
we can thus prove the following,

\begin{prop}
\label{propepsqc}
For all $w>0$, for all $\underline\eps >0$ sufficiently small, there exists $\underline\eps' < \underline\eps$ such that the Hecke correspondance $\prod(U_i)$ sends 
$\mathcal X_{Iw,\Sigma}(\underline\eps)$ into $\mathcal X_{Iw,\Sigma''}(\underline\eps')$.
Also, for all $\underline\eps >0$, and all $0 < \underline\eps' < \underline\eps$, there exists $N >0$ such that $\prod_i U_i^N$ send $\mathcal X_{Iw,\Sigma}(\underline\eps)$ in $\mathcal X_{Iw,\Sigma''}(\underline\eps')$.
\end{prop}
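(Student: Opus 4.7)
The plan is to leverage Bijakowski's key inequality recalled just above: under the correspondence $C_i$, one has $\deg H_j' \geq \deg H_j$ for every $j$, with equality at $j=i$ if and only if $\deg H_i \in \ZZ$. Writing $M_\tau := \sum_{\tau'}\min(p_\tau,p_{\tau'})$, the hypothesis $\eps_\tau < 1$ ensures that the only integer lying in $[M_\tau - \eps_\tau, M_\tau]$ is $M_\tau$ itself. Consequently, at a point of $\mathcal X_{Iw,\Sigma}(\underline\eps)$ lying outside the $\mu$-ordinary canonical locus, at least one $\deg H_\tau$ is strictly smaller than $M_\tau$ and hence non-integral, so applying the corresponding $U_\tau$ produces a strict improvement.

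For the first assertion, I would use quasi-compactness to upgrade this pointwise strict inequality to a uniform one. Fix $\underline\eps'' = (\eps''_\tau)$ with $0 < \eps''_\tau < \eps_\tau$ and consider the closed (hence quasi-compact) subset $B = \mathcal X_{Iw,\Sigma}(\underline\eps) \cap \{\exists\tau:\ \deg H_\tau \leq M_\tau - \eps''_\tau\}$; cover $B$ by finitely many shells $\mathcal X_{Iw}(\forall\tau,\ \deg H_\tau \in [\lambda_\tau,\nu_\tau])$ with rational $\lambda_\tau < \nu_\tau$ and $\nu_\tau < M_\tau$, as introduced right before the proposition. On each shell, Bijakowski's strict inequality together with continuity of the degree function gives a uniform lower bound $\eta > 0$ for $\sum_\tau(\deg H'_\tau - \deg H_\tau)$ under $\prod_i U_i$. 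Outside $B$, the degrees are already $\geq M_\tau - \eps''_\tau$ and can only increase. Combining both cases yields a single $\underline\eps' < \underline\eps$ such that $\prod_i U_i$ sends $\mathcal X_{Iw,\Sigma}(\underline\eps)$ into $\mathcal X_{Iw,\Sigma''}(\underline\eps')$.

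For the second assertion, iterate: monotonicity $\deg H'_\tau \geq \deg H_\tau$ ensures that successive applications never leave $\mathcal X_{Iw,\Sigma}(\underline\eps)$. Given any $\underline\eps' < \underline\eps$, the "bad" closed locus $B' = \mathcal X_{Iw,\Sigma}(\underline\eps) \cap \{\exists\tau:\ \deg H_\tau \leq M_\tau - \eps'_\tau\}$ is again quasi-compact, and by the same shell argument each application of $\prod_i U_i$ strictly increases $\sum_\tau \deg H_\tau$ by at least a fixed $\eta > 0$ as long as the orbit remains in $B'$. Hence for any $N \geq (\sum_\tau \eps_\tau)/\eta$ the $N$-th iterate of every point in $\mathcal X_{Iw,\Sigma}(\underline\eps)$ has escaped $B'$, i.e. lies in $\mathcal X_{Iw,\Sigma''}(\underline\eps')$.

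The main obstacle I anticipate is extending Bijakowski's degree inequality to the toroidal boundary: \cite{Bijmu} Proposition 2.11 is formulated for abelian schemes, but by the Mumford $1$-motive formalism already used in subsection \ref{subsectBord} to extend the degree function, both the subgroups $H_\tau$ and the correspondence $C_i$ restrict along the constant-toric-rank semi-abelian part, where Bijakowski's proof applies verbatim. The bookkeeping for the cone decompositions $\Sigma, \Sigma', \Sigma''$ is handled by Harris's lemma recalled above (it allows us to replace one toroidal compactification by another at will for cohomology and for degree comparisons).
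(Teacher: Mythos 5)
Your strategy---Bijakowski's degree inequality upgraded to a uniform bound via quasi-compactness, then iteration---is precisely the mechanism the paper alludes to in the paragraph preceding the proposition, so the approach is the same; the boundary/$1$-motive extension you flag at the end is also exactly what the paper invokes.

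There is, however, a technical imprecision in the first paragraph worth flagging. The shells $\mathcal X_{Iw}(\forall\tau,\ \deg H_\tau \in [\lambda_\tau,\nu_\tau])$ with $\nu_\tau < M_\tau$ for \emph{every} $\tau$ cannot cover your set $B$, because a point of $B$ may have $\deg H_{\tau'} = M_{\tau'}$ for some index $\tau'$. Moreover a uniform lower bound $\eta$ on the \emph{sum} $\sum_\tau(\deg H'_\tau - \deg H_\tau)$ over a shell does not, by itself, control each individual $\deg H'_\tau$, which is what is needed to land in $\mathcal X_{Iw,\Sigma''}(\underline\eps')$: the gain could concentrate on a single $\tau$ while another stays put. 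Both issues disappear if you argue one embedding $\tau$ at a time: for a rational $\eps''_\tau < \eps_\tau$, the locus $C_\tau = \mathcal X_{Iw,\Sigma}(\underline\eps)\cap\{\deg H_\tau \leq M_\tau - \eps''_\tau\}$ is a quasi-compact Laurent domain on which $\deg H_\tau$ is bounded away from the sole integer $M_\tau$ in $(M_\tau-1,M_\tau]$; since the projection $p_1$ of the correspondence is proper, the continuous function $\deg H'_\tau\circ p_2 - \deg H_\tau\circ p_1$ is strictly positive on the quasi-compact set $p_1^{-1}(C_\tau)$, so it attains a positive infimum $\delta_\tau$. Setting $\eps'_\tau = \max(\eps_\tau - \delta_\tau,\ \eps''_\tau) < \eps_\tau$ gives the first assertion, and your iteration goes through verbatim with $\eta = \min_\tau \delta_\tau$ replacing the shell-wise sum bound.
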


The universal isogeny $f$ induces a map,
\[ f^* : p_2^*\mathcal T_{an} \fleche p_1^*\mathcal T_{an},\]
which is an isomorphism, and denote $\widetilde{f}^*  = \bigoplus_\sigma \widetilde{f}_\sigma^*$ (using the decomposition $\omega = \bigoplus_\sigma \omega_{G,\sigma}$)
such that $\widetilde{f}_\sigma^*$ sends a basis $w_1',\dots,w_{p_\sigma}'$ of $\omega_{A/L,\sigma}$ to 
\[p^{-2}f^*w_1',\dots,p^{-2}f^*w_{p_\sigma - h + i}',p^{-1}f^*w'_{p_\sigma - h + i + 1},\dots,p^{-1}f^*w'_{p_\sigma-i},f^*w'_{p_\sigma - i +1},\dots,f^*w'_{p_\sigma},\]
(being understood that the terms on the left with $p^{-2}$ only appears if $p_\sigma > h-i$ and terms with $p^{-1}$ only if $p_\sigma > i$).
Another way to write it is to set, $a_\sigma = \max(p_\sigma - (h-i),0), b_\sigma =\max( \min(h-2i,p_\sigma - i),0)$ and $c_\sigma = \min(i,p_\sigma)$ (thus $a_\sigma + b_\sigma+c_\sigma =1$). Then $\widetilde{f}_\sigma^*$ sends  $w_1',\dots,w_{p_\sigma}'$ to,
\[p^{-2}f^*w_1',\dots,p^{-2}f^*w_{a_\sigma}',p^{-1}f^*w'_{a_\sigma+ 1},\dots,p^{-1}f^*w'_{a_\sigma+b_\sigma},f^*w'_{a_\sigma+b_\sigma+1},\dots,f^*w'_{p_\sigma},\]

\begin{rema}
This normalisation is made in order to make the operator $U_i$ vary in a family (it corrects the multiplication by $p$ that appears on $\omega$ if we do the quotient by $L$). 
It coincides with normalisation of \cite{Bijmu} for classical sheaves.
\end{rema}

%

\begin{defin}
Let $\underline w = (w^\tau_{i,j})_\tau$, such that for all $(i,j,\tau)$, $w_{i,j}^\tau \in ]c;n-\eps_\tau[$
and define $\mathcal{IW}^{0,+}_{\underline w}$ to be the open subspace of $\mathcal T^\times/U$ over $\pi_n(\mathcal X_0^+(p^n)^{tor}(v))$ such that its $L$-points, for all $L$ over $K$, is the datum of a $\mathcal O_L$-point of $\pi_n(\mathcal X_0^+(p^n)^{tor}(v))(\mathcal O_L)$, thus in particular an abelian scheme $A/\Spec(\mathcal O_L)$ for which $G[p^n]$ has (canonical by Theorem \ref{thrfiltcan}) filtration by subgroups $H_\tau$, together with a flag $\Fil_\bullet \mathcal F_\tau$ for all $\tau$ with graded pieces $w^\tau_\bullet$, such that there exists a (polarized) trivialization $\psi$ (as in section \ref{secttriv}), such that
\[ \omega_i^\tau \pmod{\Fil_{i-1}\mathcal F_\tau + p^{w_0^\tau}\mathcal F_\tau} = \sum_{j \geq i} a_{j,i} \HT_{\tau,w_0}(e_j),\]
where $w_0^\tau = n - \eps_\tau$ and with $a_{j,i} \in \mathcal O_L$ such that, $v(a_{j,i}) \geq w^\tau_{j,i}$ if $j > i$ and $v(a_{i,i} -1)  \geq w_{i,i}^\tau$. We then define as before $\omega^{\kappa\dag}_{\underline w}$ on $\pi_n(\mathcal X_0^+(p^n)^{tor}(v))$.
\end{defin}

\begin{rema}
In the previous definition, if we take $n' \geq n$, $w_{i,j}^\tau \in ]c,n-\eps_\tau[$ and we make the previous construction over $\pi_{n'}(\mathcal X_0^+(p^n)(v))$ for $w_0 = n-\eps_\tau$ or $w_0 = n' - \eps_\tau$, we get the same space. Thus, up to reducing the strict neighborhood, we suppress $n$ from the notation. When $\underline w$ is parallel and $0 < w < 1 - \eps_\tau$, then $\omega^{\kappa\dag}_{\underline w} = \omega^{\kappa\dag}_{ w}$.
\end{rema}

Suppose $\underline w$ satisfies 
\[ 
\left\{
\begin{array}{cc}
0 < w_{k,l}^\tau < w_0 - 2 & \text{if } a_\tau \neq 0  \\
0 < w_{k,l}^\tau < w_0 - 1 &\text{if } a_\tau = 0 \text{ and } b_\tau \neq 0  \\
0 < w_{k,l}^\tau < w_0  & \text{otherwise}
\end{array}
\right.
\]

\begin{prop}
\label{prop78}
Let $f$ the the universal isogeny over $C_i$. Then
\[ (\widetilde{f}^*)^{-1}p_1^*\mathcal{IW}_{\underline w}^{0,+} \subset p_2^*\mathcal{IW}_{\underline w'}^{0,+},\]
with 
\[ \underline w^{'\sigma}_{k,l} = 
\left\{
\begin{array}{cc}
w_{k,l}^\sigma +2 & \text{if } k > a_\sigma + b_\sigma \text{ and } l \leq a_\sigma  \\
w_{k,l}^\sigma + 1 & \text{if } (b_\sigma + a_\sigma\geq k > a_\sigma  \text{ and }l \leq a_\sigma), \text{ or } (b_\sigma+a_\sigma \geq l > a_\sigma  \text{ and } k > a_\sigma + b_\sigma)\\
w_{k,l}^\sigma  & \text{otherwise}
\end{array}
\right.
\]
\end{prop}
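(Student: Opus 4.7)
The plan is to observe that this proposition is the geometric avatar of Proposition \ref{propdeltarad}: the normalization $\widetilde f^*$ is precisely designed so that it acts on local trivializations of $\mathcal F_\tau$ as the element $d_i \in P^0_{(p_\sigma)}(K)$ introduced in Section \ref{sect4}, and the stated increments $0,1,2$ of the $w_{k,l}^\sigma$ match exactly the integers $v_{k,l}^\tau$ computed there.

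I would first reduce the statement to a local calculation: the inclusion $(\widetilde f^*)^{-1}p_1^*\mathcal{IW}_{\underline w}^{0,+}\subset p_2^*\mathcal{IW}_{\underline w'}^{0,+}$ can be checked on $L$-points for $L/K$ finite. Given such a point of $C_i$, I pick a trivialization $\psi'$ of the $H_\bullet$-flag on $A/L$ defining a point of $p_2^*\mathcal{IW}_{\underline w'}^{0,+}$, transport it via the universal isogeny to a trivialization $\psi$ on $A$ compatible with the Hecke correspondence (this compatibility is exactly the condition that $L\cap G[p]$ and the flag $H_\bullet$ interact according to the combinatorics of $a_\sigma,b_\sigma,c_\sigma$ built into the definition of $C_i$), and then ask how the graded basis $(\omega_i^\tau)$ behaves.

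The key point is then a direct matrix manipulation. The isogeny $f^*:\omega_{A/L,\sigma}\to\omega_{A,\sigma}$ composed with the block-diagonal rescaling by $(p^{-2},p^{-1},1)$ of sizes $(a_\sigma,b_\sigma,c_\sigma)$ gives $\widetilde f^*_\sigma$; this rescaling is conjugation by $d_i$ on the Levi $\prod_\sigma \GL_{p_\sigma}$ and induces on upper-triangular-unipotent parts the same multiplication on matrix entries $(x_{k,l}^\tau)$ as in Proposition \ref{propdeltarad}. Unwinding the Iwahori condition $\omega_i^\tau\equiv\sum_{j\ge i}a_{j,i}\HT_{\tau,w_0}(e_j)\pmod{\Fil_{i-1}+p^{w_0^\tau}\mathcal F_\tau}$ with $v(a_{j,i})\ge w_{j,i}^\tau$ (resp.\ $v(a_{i,i}-1)\ge w_{i,i}^\tau$), and comparing with the rescaled trivialization on the $A$ side, one finds that the entry in position $(k,l)$ is multiplied by $p^{v_{k,l}^\tau}$, giving the new bound $w_{k,l}^\sigma+v_{k,l}^\tau=w'^{\sigma}_{k,l}$.

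The main obstacle is bookkeeping rather than conceptual: one has to check that the lift of $\psi'$ to $\psi$ is well-defined modulo the relevant powers of $p$ (so that the Hodge--Tate map $\HT_{\tau,w_0}$ remains meaningful after rescaling by $p^{-2}$), and that the inequality on $\underline w$ preceding the proposition ($w_{k,l}^\tau<w_0-2$ when $a_\tau\neq 0$, etc.) is exactly what ensures the transformed entries still live in the range where $\mathcal{IW}^{0,+}_{\underline w'}$ is defined. Once this is granted, the identification with the calculation of Proposition \ref{propdeltarad} finishes the argument; the fact that the polarization constraints on the blocks associated to $\tau$ and $\overline\tau$ match is automatic from the compatibility of $f$ with $\lambda$ and the description of $\widetilde f^*$ on the dual side.
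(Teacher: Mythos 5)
Your proposal is correct and follows essentially the same route as the paper: the paper's proof consists precisely of observing that in the $\mathcal X_0^+(p^n)$-trivialization basis the dual map $f^D:(H'_{p_\tau})^D\to H_{p_\tau}^D$ is $\Diag(p^2,\dots,p^2,p,\dots,p,1,\dots,1)$ with multiplicities $a_\tau,b_\tau,c_\tau$, then transporting this via the Hodge--Tate map and following \cite{AIP} Proposition 6.2.2.2 — which is the same $d_i$-conjugation calculation of Proposition \ref{propdeltarad} that you invoke. Your extra remark that the hypothesis on $\underline w$ preceding the proposition is what keeps the rescaled entries in range is also exactly the point the paper is implicitly using, so the arguments match.
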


\begin{proof}
This is similar to \ref{propdeltarad} and \cite{AIP} Proposition 6.2.2.2. Indeed, in the basis given by the "trivialisation" of $(H_{p_k}/H_{p_{k-1}})^D$ on $\mathcal X_0^+(p^n)(\eps)$, the dual of the morphism $H_{p_\tau} \fleche H_{p_\tau}'$ induced by $f$,
\[  f^D : (H_{p_\tau}')^D \fleche H_{p_\tau}^D,\]
is given by $\Diag(p^2,\dots,p^2,p,\dots,p,1,\dots1)$, where $p^2$ appears $a_\tau$-times, $p$ appears $b_\tau$-times and 1 $c_\tau$-times.
The rest follows exactly as in \cite{AIP} Proposition 6.2.2.2, as $\pi^*\mathcal F'_\tau \supset p\mathcal F_\tau$ is $a_\tau = 0$ and $b_\tau \neq 0$, $\pi^*\mathcal F'_\tau \supset p^2 \mathcal F_\tau$ if $a_\tau \neq 0$ and $\mathcal F_\tau = \pi^*\mathcal F'_\tau$ otherwise.
\end{proof}

We can thus define the operator $U_i^0$, 
\begin{eqnarray*} 
H^0(\mathcal X_{\Sigma''}(\underline\eps),\omega_{\underline w'}^{\kappa\dag}) \overset{p_2^*}{\fleche} H^0(C_i,p_2^*\omega_{\underline w'}^{\kappa\dag}) 
\overset{\widetilde{f}^{-1*}}{\fleche} H^0(C_i,p_1^*\omega_{\underline w}^{\kappa\dag}) \overset{\frac{1}{p^{n_i}}\Tr_{p_1}}{\fleche} H^0(\mathcal X_{\Sigma}(\underline\eps),\omega_{w}^{\kappa\dag})
\end{eqnarray*}
and also
\begin{eqnarray*} 
C_{cusp}(\underline\eps,\underline w',\kappa_{\mathcal W(w)}) \overset{p_2^*}{\fleche} R\Gamma(C_i\times \mathcal W(w),p_2^*\omega_{\underline w'}^{\kappa^{univ}\dag}) 
\overset{\widetilde{f}^{-1*}}{\fleche} R\Gamma(C_i\times \mathcal W(w),p_1^*\omega_{\underline w}^{\kappa^{univ}\dag}) \\\overset{\frac{1}{p^{n_i}}\Tr_{p_1}}{\fleche} R\Gamma(\mathcal X_{\Sigma}(\underline\eps)\times W(w),\omega_{w}^{\kappa^{univ}\dag}) = C_{cusp}(\underline\eps,w,\kappa_{\mathcal W(w)}),
\end{eqnarray*}
where $n_i$ is an integer defined in \cite{Bijmu} section 2.3 for example\footnote{For us, this integer will not be important as it is usefull to normalize the Hecke operators, and as our Hecke eigensystems are constructed on spaces where $p$ is inverted, this normalisation could be changed (we should change Theorem \ref{thr83} accordingly)}. It is related to the inseparability degree of the projection $p_1$.

\begin{rema}
\begin{enumerate}
\item Unfortunately it is not clear how to define the Hecke operator $U_i^0$ on the neighborhoods $\mathcal X(v)$ as we don't know how the Hasse invariants behaves with quotients... But we will solve this in the end of the paper.
\item 
We can define similarly $U_i^0$ over $\mathcal X(\underline\eps)\times \mathcal W_w$ with $\kappa^{un\dag}$.
\item Thus we can use the different operators $U_i^0$ to improve the radius of convergence in all directions $w_{k,l}^\tau$ with $k > l$. 
\end{enumerate}

\end{rema}

\subsection{A compact operator}

Using the previous construction, we can define a compact operator. Fix $w >0$ and $n$ sufficiently big such that $n-2-\eps > w$. 
Fix also $v$ sufficiently small such that $(\eps,n,v)$ is satisfied. 

%

Define $w' = (w^{'\sigma}_{k,l})_{\sigma, k > l}$ by,
\[ 
w^{'\sigma}_{k,l} = \left\{
\begin{array}{cc}
 w & \text{if } k=l   \\
w +1& \text{ otherwise } 
\end{array}
\right.
\]

\begin{rema}
We could be more precise about the precise values of $w'$ we can choose for what follows (summing over all $i$'s the previous proposition), but the previous will be sufficient.
\end{rema}

Denote by $\underline\eps' < \underline\eps$ the tuple given by Proposition \ref{propepsqc}. Then we have for each $\kappa \in \mathcal W_w$, the following operator,
\[ \prod_i U_i^0 : H^0(\mathcal X^{tor}(\underline\eps'),\omega_{\underline w'}^{\kappa\dag}) \fleche H^0(\mathcal X^{tor}(\underline\eps),\omega_{\underline w}^{\kappa\dag}),\]
and thus the operator,
\[ \prod_i U_i : H^0(\mathcal X^{tor}(\underline\eps),\omega_{\underline w}^{\kappa\dag}) \fleche H^0(\mathcal X^{tor}(\underline\eps'),\omega_{\underline w'}^{\kappa\dag}) \overset{\prod_i U_i^0}{\fleche} H^0(\mathcal X^{tor}(\underline\eps),\omega_{\underline w}^{\kappa\dag}),\]
is compact, as the first map is. Similarly, denote $U_i$ by precomposing $U_i^0$ by the map $H^0(\mathcal X^{tor}(\underline\eps),\omega_{{\underline w}}^{\kappa\dag})
 \subset H^0(\mathcal X^{tor}(\underline\eps),\omega_{\underline w'}^{\kappa\dag})$ of the previous subsection.
The same construction works also over $\mathcal X^{tor}(\underline\eps) \times \mathcal W_w$ with $\kappa^{univ}$.

\section{Classicity results}
\label{sect8}
Let $w$ and $\underline \eps$ such that  $0 < w < 1 - \underline\eps$. Up to reduce $\underline \eps$ this is always possible for some $w<1$. Then the map $\mathcal{IW}_w^{0} \fleche \mathcal X^{tor}(\underline\eps)$ has connected fibers. 

\begin{prop}
\label{prop81}
Let $\kappa$ be a classical weight. We have an exact sequences of sheaves over $\mathcal X_{}^{tor}(\underline\eps)$,
\[ 0 \fleche \omega^{\kappa'} \fleche \omega_w^{\kappa\dag} \overset{d_1}{\fleche} \bigoplus_{\alpha \in \Delta} \omega^{\alpha \cdot \kappa\dag}_w\]
which etale locally is isomorphic to the exact sequence (\ref{seqJones}).
\end{prop}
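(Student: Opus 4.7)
The plan is to descend Jones's BGG exact sequence \eqref{seqJones} to the Shimura variety via an etale cover trivialising the relevant torsors. The key preliminary observation, already made just before the proposition, is that $w < 1 - \underline\eps$ forces $\mathcal{IW}_w^0 \to \mathcal X^{tor}(\underline\eps)$ to have connected fibres. In particular, any $\kappa$-homogeneous algebraic function on $\mathcal T^\times/U$ extends uniquely to a $w$-analytic function on $\mathcal{IW}_w^{0,+}$, and this is exactly what provides the inclusion $\omega^{\kappa'} \hookrightarrow \omega_w^{\kappa\dag}$.

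The core step is a local identification. I would pick an etale cover $U \to \mathcal X^{tor}(\underline\eps)$ small enough that the $M$-torsor $\mathcal T^\times$ trivialises and that, via the Hodge--Tate map, the filtration $\HT_\tau(\Fil_\bullet^\psi)$ is given by a standard basis of each $\mathcal F_\tau$. Over such a $U$, the sheaf $\omega^{\kappa'}|_U$ is identified with $V_{\kappa,L}\otimes \mathcal O_U$, directly from the definition $\omega^{\kappa'} = \pi_*\mathcal O_{\mathcal T^\times}[\kappa^\vee]$, while $\omega_w^{\kappa\dag}|_U$ is identified with $V_{\kappa,L}^{0,w-an}\otimes \mathcal O_U$: the $w$-compatibility condition in the definition of $\mathcal{IW}_w^{0,+}$ cuts out precisely the rigid neighbourhood of $B(\mathcal O)$ of radius $p^{-w}$ inside the flag variety, and the assumption $w < 1 - \underline\eps$ guarantees $N^0_w = N^0_{(p_\sigma),w}$, so that the $0,w$-analytic and $w$-analytic inductions coincide on the relevant neighbourhood.

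Once these identifications are in place, Jones's differential $d$ produces a $P$-equivariant map of sheaves on $U$, natural in both the $M$-torsor and the flag datum, hence it glues to a global morphism $d_1 : \omega_w^{\kappa\dag} \to \bigoplus_{\alpha\in\Delta}\omega_w^{\alpha \cdot \kappa\dag}$ extending the inclusion of $\omega^{\kappa'}$. Exactness of the resulting sequence is a local property, so it reduces directly to the exactness of \eqref{seqJones} proved by Jones. The main obstacle in this argument is the second step: one must carefully align the Iwahori-level trivialisations, the parabolic $P^0_{(p_\sigma)}$ introduced in Section \ref{sect4}, and the neighbourhood structure on the flag variety used to build $\mathcal{IW}_w^{0,+}$ with the algebraic induction side, so that the abstract BGG data truly corresponds to the concrete sheaf-theoretic map $d_1$; the remaining cocycle and descent verifications are then formal.
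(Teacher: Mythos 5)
Your proposal follows essentially the same route as the paper: both construct $d_1$ from Jones's BGG differential, identify $\omega^{\kappa'}$ and $\omega_w^{\kappa\dag}$ etale-locally with $V_{\kappa'}\hat\otimes\mathcal O$ and $V^{0,w-an}_{\kappa}\hat\otimes\mathcal O$, and invoke the hypothesis $w < 1-\underline\eps$ to make the unipotent neighbourhood a ball (your $N^0_w = N^0_{(p_\sigma),w}$, the paper's ``$N_1 \simeq \mathbb B$'') so that Jones's exactness applies; the only thing to be slightly careful about, which you gesture at but do not state, is that the hypothesis is needed not merely to glue but precisely to transport Jones's exactness from $V^{an}$ to the $w$-analytic model.
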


\begin{proof}
We construct the map $d_1$ as in \cite{AIP} (we don't need the hypothesis on $w$ here). Then we have a sequence,
\[ 0 \fleche V_{\kappa'}\hat{\otimes}\mathcal O_{\mathcal X^{tor}(\underline\eps)} \fleche V^{0,w-an}_{\kappa,L}\hat{\otimes}\mathcal O_{\mathcal X^{tor}(\underline\eps)} \overset{d_1}{\fleche} \bigoplus_{\alpha\in\Delta} V^{0,w-an}_{\alpha\cdot\kappa,L}\hat{\otimes}\mathcal O_{\mathcal X^{tor}(\underline\eps)},\]
which is exact by hypothesis on $w$ by Jones result (hypothesis implies that $N_1 \simeq \mathbb B$ as in \cite{Jones} section 8). Then as this sequence is etale locally isomorphic to the one of the proposition, we get the result.
\end{proof}

\begin{prop}
\label{prop82}
Let $\kappa = (k_{\sigma,j})_{\sigma,1\leq j \leq p_\sigma}$ be a classical weight. The submodule of $M_w^{\kappa\dag}(\mathcal X^{tor}(\underline\eps))$ on which each $U_i$ acts with slope strictly less than 
$\inf\{k_{\sigma,i} - k_{\sigma,i+1} : i < p_\sigma\}$ is contained in $H^0(\mathcal X^{tor}(\underline\eps),\omega^{\kappa'})$.
\end{prop}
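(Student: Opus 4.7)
The plan is to combine the global BGG-type exact sequence of Proposition \ref{prop81} with the fiberwise classicity recalled in Section \ref{sect4}. From Proposition \ref{prop81} we have on $\mathcal X^{tor}(\underline\eps)$ the short exact sequence
\[ 0 \fleche \omega^{\kappa'} \fleche \omega_w^{\kappa\dag} \overset{d_1}{\fleche} \bigoplus_{\alpha \in \Delta} \omega_w^{\alpha\cdot\kappa\dag}, \]
which is étale-locally isomorphic to the Jones sequence (\ref{seqJones}); moreover, the Hecke operator $U_i$ of Section \ref{sect7} corresponds étale-locally, up to the universal normalisation $1/p^{n_i}$, to the rescaling operator $\delta_i$ of Proposition \ref{propdeltarad}. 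Taking $H^0$ yields the left-exact sequence
\[ 0 \fleche H^0(\mathcal X^{tor}(\underline\eps), \omega^{\kappa'}) \fleche M_w^{\kappa\dag}(\mathcal X^{tor}(\underline\eps)) \overset{d_1}{\fleche} \bigoplus_\alpha M_w^{\alpha\cdot\kappa\dag}(\mathcal X^{tor}(\underline\eps)), \]
so it is enough to prove that $d_1(f)=0$ for every $f$ in the low-slope submodule.

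The heart of the argument is the translation of the fiberwise statement $V_{\kappa,L}^{0,\eps-an,<\underline\nu} \subset V_{\kappa,L}$ from Section \ref{sect4} into a global statement about the sheaves. Locally on an étale cover of $\mathcal X^{tor}(\underline\eps)$, a section of $\omega_w^{\kappa\dag}$ is a function on a suitable Iwahori model taking values in $V_{\kappa,L}^{0,\eps-an}$, and the compatibility between $U_i$ and $\delta_i$ means that the $U_i$-slope of a global section controls, at each geometric point, the $\delta_i$-slope of its local trivialisation. Thus, if $f \in M_w^{\kappa\dag}(\mathcal X^{tor}(\underline\eps))$ has $U_i$-slope strictly less than $\inf_\sigma\{k_{\sigma,i}-k_{\sigma,i+1}\}$ for every $i$, the fiberwise inclusion forces $f$ to factor étale-locally through $V_{\kappa,L}$.

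It follows that $d_1(f) = 0$ locally, and since $d_1$ is a morphism of sheaves, $d_1(f) = 0$ globally, so that $f \in H^0(\mathcal X^{tor}(\underline\eps), \omega^{\kappa'})$.

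The main obstacle is making precise the compatibility between the global operator $U_i$ and the local operator $\delta_i$ of Proposition \ref{propdeltarad}. One must carefully track the effect of the étale trivialisation of $\omega_w^{\kappa\dag}$ by $\mathcal {IW}_w^{0,+}$ on the Hecke correspondence defining $U_i$, including the integer normalisation $1/p^{n_i}$ from Section \ref{sect7}, so that the fiberwise slope statement transfers unambiguously to a statement about $U_i$-slopes of global sections, and extends across the boundary strata of the toroidal compactification.
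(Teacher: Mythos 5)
Your overall strategy is the right one---BGG exact sequence plus the slope bound to force $d_1 f = 0$---and this is exactly what the paper does, following \cite{AIP} Proposition 7.3.1. But there are two genuine gaps.

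First, you never reduce the analyticity radius. Proposition \ref{prop81} (the globalized Jones sequence) is only stated, and only holds, under the hypothesis $0 < w < 1 - \underline\eps$ (this is what makes the fibres of $\mathcal{IW}_w^0 \fleche \mathcal X^{tor}(\underline\eps)$ connected, so that Jones's sequence \eqref{seqJones} can be globalized). Your $f$ lives in $M_w^{\kappa\dag}$ for some possibly large $w$, so you cannot apply Proposition \ref{prop81} directly. The missing step, and the first thing the paper does, is to invoke Proposition \ref{prop78} together with the hypothesis that $f$ is of finite slope for the $U_i$: since $U_i$ strictly improves the analyticity radius and $f = \lambda^{-1} U_i f$ up to the finite-slope data, one may replace $f$ by a section defined for $w$ small, and in particular with $\underline w < 1 - \underline\eps$. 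Only after this reduction does the BGG sequence become available.

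Second, the ``fiberwise'' transfer of the slope condition is not justified and, as stated, is not how the argument works. The operator $U_i$ is a Hecke correspondence: it is $\frac{1}{p^{n_i}}\Tr_{p_1} \circ \widetilde f^{-1*} \circ p_2^*$, which involves a trace over the fibres of $p_1$ in addition to the rescaling $\widetilde f^{-1*}$ corresponding to $\delta_i$. A low $U_i$-slope for a global section does \emph{not} imply anything ``at each geometric point'' about the $\delta_i$-slope of a local trivialisation---the trace averages over fibres of the correspondence, and the local data at distinct points are not eigen-objects for $\delta_i$ individually. What replaces the fiberwise argument is a global intertwining relation: $d_1$ (the BGG map) intertwines $U_i$ on $\omega_w^{\kappa\dag}$ with $U_i$ on the target $\bigoplus_\alpha \omega_w^{\alpha\cdot\kappa\dag}$, up to an explicit power of $p$ governed by $\alpha\cdot\kappa - \kappa$. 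The slope bound then forces $d_1 f$ to have a slope incompatible with the boundedness of $U_i$ on the target, hence $d_1 f = 0$. This is the computation ``as in \cite{AIP}'' that the paper references; it is a global statement about $d_1$ and $U_i$, not a fiberwise one. Your concluding remark correctly flags that the $U_i$/$\delta_i$ compatibility is the obstacle, but resolving it requires abandoning the fiberwise viewpoint in favor of this intertwining.
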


\begin{proof}
By the previous proposition, and Proposition \ref{propepsqc} and Proposition \ref{prop78}, the proof is identical to \cite{AIP} Proposition 7.3.1. Indeed, let $f \in M_w^{\kappa\dag}(\mathcal X_{Iw}(\underline\eps))$ on which the $U_i$ acts with the said slope. Using Proposition \ref{prop78}, and that $f$ is finite slope for $U_i$, we can suppose that $\underline w < 1 - \underline\eps$. Thus, by proposition \ref{prop81}, because of the slope, we calculate as in \cite{AIP} that $d_1f = 0$. Thus $f \in H^0(\mathcal X^{tor}(\underline\eps),\omega^{\kappa'})$.
\end{proof}

The second result we need for classicity  is Bijakowski's result, \cite{Bijmu}. For each $\tau \in \mathcal T^+$, denote $A_\tau = \min(p_\tau,q_\tau)$.
\begin{theor}[Bijakowski]
\label{thr83}
Let $\kappa = (k_{\tau,j},\lambda_{\tau,l})_{\tau \in \mathcal T^+,1\leq j \leq p_\tau, 1 \leq l \leq q_\tau}$ be a classical weight and let 
$f \in H^0(\mathcal X^{tor}(\underline\eps),\omega^{\kappa})$. Suppose that $f$ is an eigenvector for the Hecke operators $U_{A_\tau}$ of eigenvalue $\alpha_\tau$ such that,
\[ n_{A_\tau} + v(\alpha_\tau) < \inf_{\tau}(k_{\tau,p_\tau} + \lambda_{\tau,q_\tau}),\]
for each $\tau \in \mathcal T^+$ verifying $A_\tau \neq 0$. Then $f$ is classical.
\end{theor}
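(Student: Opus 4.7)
This is Bijakowski's theorem \cite{Bijmu}, and the plan is to extend $f$ by analytic continuation from $\mathcal X^{tor}(\underline\eps)$ to a global section over $\mathcal X^{tor}$; rigid GAGA then forces this extension to be classical.

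The first step would be to exploit the dynamics of each $U_{A_\tau}$ on the degree function. By Proposition \ref{propepsqc} (an extension to the boundary of \cite{Bijmu} Proposition~2.11), the correspondence $U_{A_\tau}$ is non-decreasing on $\deg H_{p_{A_\tau}}$, and strictly increases it off the locus where this degree is already maximal. Iterating the eigenvalue equation as $f = \alpha_\tau^{-n} U_{A_\tau}^n f$ would then extend $f$ to arbitrarily large strict neighborhoods of the $\mu$-ordinary-canonical locus in the direction controlled by $\tau$. Running this for every $\tau \in \mathcal T^+$ with $A_\tau \neq 0$ simultaneously, $f$ would extend to a common strict neighborhood of the $\mu$-ordinary locus in all directions.

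Next, I would mimic Kassaei's gluing (\cite{Kas}, as adapted in \cite{Bijmu,PiDuke}) to reach all of $\mathcal X^{tor}$. For each relative position $w$ of the Iwahori flag with respect to the canonical filtration, there is a locally closed stratum $V_w$; on $V_w$ the Hecke decomposition of $U_{A_\tau}^n$ isolates a preferred summand landing in the already-controlled neighborhood, and the remaining summands assemble into a series whose convergence would be forced by the slope bound $n_{A_\tau} + v(\alpha_\tau) < \inf_\tau(k_{\tau,p_\tau} + \lambda_{\tau,q_\tau})$. The right-hand side is exactly the weight contribution gained $p$-adically per iteration of $U_{A_\tau}$ (through the Hodge-Tate-type normalization of the Hecke operator, compare Theorem \ref{thrfiltcan}.\ref{HTbound}), so each term of the series is small enough to beat the pole $\alpha_\tau^{-n}$.

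The hardest part will be checking that these local series-extensions on the different strata $V_w$ agree on overlaps so as to glue into a single global section of $\omega^\kappa$ on $\mathcal X^{tor}$. This amounts to a combinatorial analysis of the Bruhat-like decomposition of the Hecke correspondence defining $U_{A_\tau}$, and crucially uses that we have one independent operator per $\tau \in \mathcal T^+$, giving enough independent directions to cover every stratum. I would invoke \cite{Bijmu} directly for this combinatorial step, which is precisely where his $\mu$-ordinary setting goes beyond the earlier ordinary cases treated in \cite{Kas,PiDuke}.
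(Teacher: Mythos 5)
The paper does not prove this theorem: it is labelled \emph{Bijakowski} and cited directly from \cite{Bijmu}, the only contribution of the surrounding text being the earlier verification (Section~\ref{sect7}, especially Proposition~\ref{propepsqc}) that the Hecke correspondences $U_i$ extend to the toroidal boundary so that Bijakowski's theorem applies in the compactified, $\mu$-ordinary setting. Your sketch therefore cannot be compared to an argument in the paper, but it does correctly identify the shape of Bijakowski's proof: analytic continuation by iterating $f = \alpha_\tau^{-n}U_{A_\tau}^n f$ using the degree-increasing property of $U_{A_\tau}$, then a Kassaei-style series and gluing over strata in which the slope bound forces convergence, then rigid GAGA on the proper space $\mathcal X^{tor}$. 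Like the paper, however, you ultimately defer the decisive combinatorial gluing step to \cite{Bijmu}, so this is a consistent outline rather than a proof. One small inaccuracy worth flagging: Theorem~\ref{thrfiltcan}.\ref{can5} bounds the cokernel of the Hodge--Tate map and is not literally the source of the weight term $\inf_\tau(k_{\tau,p_\tau}+\lambda_{\tau,q_\tau})$ in the slope hypothesis; that term arises from the power of $p$ that the normalization $\widetilde f^*$ of the correspondence extracts from the flag positions acting on $\omega^\kappa$, as worked out in \cite{Bijmu}~Section~2, so the cross-reference you chose is thematically suggestive but not the ingredient actually doing the estimate.
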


\section{Projectiveness of the modules of overconvergent forms}
\label{sect9}
Recall that we work over $\mathcal X$, our Shimura variety with Iwahori level at $p$, and as explained in section \ref{sect63} we have defined for all $n$ and $w \in ]n-1,n-\eps_{0,\tau}[$ (where $\underline\eps_0$ was small enough), a sheaf $\omega_w^{\kappa\dag}$ for all $w$-analytic $\kappa \in \mathcal W$, and a universal one $\omega_w^{\kappa^{univ}\dag}$, both defined on sufficiently small strict neighborhoods of $\mathcal X^{\mu-can} = \mathcal X(\eps = 0)$, the $\mu$-ordinary canonical locus. We have two families of strict neighborhoods of this locus, each having their advantages. In this section, we prove that essentially we have all the advantages (action and compactness of $U = U_p = \prod_i U_i$, the operator of section \ref{sect7}, and vanishing of higher cohomology) on the finite slope part on both kind of strict neighborhoods.
In this section, we suppose that on the strict neighborhoods we consider we have the sheaves $\omega_w^{\kappa^{univ}\dag}$, which means concretely that $\underline \eps$ and $v$ are small enough (smaller than a constant which depends on $w$). Let $\mathcal U \subset \mathcal W(w) \subset \mathcal W$ an open affinoïd such that the universal character $\kappa_{\mathcal U}$ is $w$-analytic.

\begin{prop}
Let $w \leq w'$ and $\underline\eps \geq \underline \eps'$. Then the restriction maps,
\[ C_{cusp}(\underline \eps,w,\kappa_{\mathcal U}) \fleche C_{cusp}(\underline \eps',w',\kappa_{\mathcal U}),\]
are isomorphisms on the finite slope part for $U= \prod_i U_i$. In particular, the finite slope part for $U$ of $C_{cusp}(v,w,\kappa_{\mathcal U})$ and $C_{cusp}(\eps,w,\kappa_{\mathcal U})$ are the same, and thus are their cohomology groups.
\end{prop}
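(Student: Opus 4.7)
My plan is to apply the standard ``potency argument'' of Coleman-Buzzard-Urban: the key fact is that $U = \prod_i U_i$ simultaneously improves both the degree parameter $\underline\eps$ (Proposition \ref{propepsqc}) and the analyticity parameter $w$ (Proposition \ref{prop78}), and therefore factors through any restriction map between nested strict neighborhoods. This will make $r$ invertible on the finite slope parts.

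First I would construct the intertwining operator. By the second part of Proposition \ref{propepsqc}, given $\underline\eps \geq \underline\eps'$, there is an integer $N_1$ such that the Hecke correspondence underlying $U^{N_1}$ sends $\mathcal X^{tor}(\underline\eps)$ into $\mathcal X^{tor}(\underline\eps')$; by iterating Proposition \ref{prop78}, there is an integer $N_2$ such that the analogous construction at the level of sheaves sends $\omega_{w'}^{\kappa_\mathcal U\dag}$ into $\omega_w^{\kappa_\mathcal U\dag}$. Choose $N \geq \max(N_1,N_2)$. Running through the construction of Section \ref{sect7} (using Harris's Lemma to compare the toroidal compactifications needed at each step of the correspondence) yields an intertwining map of complexes
\[
V \colon C_{cusp}(\underline\eps', w', \kappa_\mathcal U) \fleche C_{cusp}(\underline\eps, w, \kappa_\mathcal U),
\]
such that, writing $r$ for the restriction,
\[
V \circ r = U^N \quad \text{on } C_{cusp}(\underline\eps, w, \kappa_\mathcal U), \qquad r \circ V = U^N \quad \text{on } C_{cusp}(\underline\eps', w', \kappa_\mathcal U).
\]
In particular $r$ is $U$-equivariant.

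Next I would invoke the slope decomposition formalism. By Proposition \ref{pro617}, the complexes in sight are perfect complexes of $A[1/p]$-modules, and from the identities above one reads off that $U$ acts compactly on each (it factors through the restriction from the larger to the smaller analytic domain). Hence by \cite{urb} the finite slope part is well defined, and on it $U^N$ acts invertibly. The identities $U^N = V \circ r = r \circ V$ force $r$ to be a quasi-isomorphism on finite slope parts, with inverse $U^{-N} \circ V = V \circ U^{-N}$, which then induces isomorphisms on every cohomology group $H^i$. For the ``in particular'' statement comparing the $v$-family and the $\underline\eps$-family, I would use the interlacing proposition of subsection ``Two collections of strict neighborhoods'' of Section \ref{sect5}: any $\mathcal X^{tor}(v)$ contains some $\mathcal X^{tor}(\underline\eps')$ and is contained in some $\mathcal X^{tor}(\underline\eps)$. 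Threading a finite chain of such inclusions and applying the first part to each link gives a canonical identification of the finite slope parts.

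The main obstacle is implementing the factorization $U^N = V \circ r = r \circ V$ at the derived level, not just on $H^0$. This requires choosing compatible toroidal compactifications for each step of the correspondence (via Harris's Lemma as in the beginning of Section \ref{sect7}) and verifying that the Hecke correspondence acts compatibly on the Cech complexes produced in the proof of Proposition \ref{pro617}; the latter uses the $I^0(n)$-equivariance of the covering and direct factorization through the invariants. Once this geometric bookkeeping is done, the remaining argument is a purely formal consequence of the slope decomposition of perfect complexes.
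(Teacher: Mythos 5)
Your proof is correct and takes essentially the same approach as the paper: both use the Coleman--Buzzard link/potency argument, factoring $U$ (resp.\ $U^N$) through the restriction map and concluding that restriction is an isomorphism on the finite-slope part because $U$ is invertible there. The paper reduces at once to the single-step case ($\eps'$ from Proposition~\ref{propepsqc}, $w'=w-1$) and extends a finite-slope class $f$ by hand via the relation $P(U)f=f$ with $P(0)=0$, rather than invoking the full slope-decomposition formalism for perfect complexes, but the underlying mechanism and the appeal to Propositions~\ref{propepsqc} and~\ref{prop78} are the same.
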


As explained in section \ref{sect7}, it is not clear that $C_{cusp}(v,w,\kappa_{\mathcal U})$ or any of its cohomology group is preserved by $U$. But by proposition \ref{propepsqc}, there exists $N > 0$ an integer (which depends on $v$ à priori) such that $U^N$ preserves $C_{cusp}(v,w,\kappa_{\mathcal U})$. We can see that when $U$ acts on a module $M$, the finite slope part for $U$ of $U^N$ are the same (see for example proof of proposition \ref{lemma93}). We thus define the finite slope part of $C_{cusp}(v,w,\kappa_{\mathcal U})$ for $U$ as the one for $U^N$. It is then a consequence of the equality that the finite slope part of $C_{cusp}(v,w,\kappa_{\mathcal U})$ is actually stable by $U$.

\begin{proof}
Indeed, it is enough to do it for $\eps'$ given by proposition \ref{propepsqc}, $w' = w-1$. We have the factorisation, 
\[H^i(C_{cusp}(\underline \eps',w',\kappa_{\mathcal U})) \overset{\widetilde U}{\fleche} H^i(C_{cusp}(\underline \eps,w,\kappa_{\mathcal U})) \overset{res}{\fleche}H^i(C_{cusp}(\underline \eps',w',\kappa_{\mathcal U}))\] 
Now for a finite slope section $f \in H^i(C_{cusp}(\underline \eps',w',\kappa_{\mathcal U}))$, by definition there exists a non zero polynomial $P$ with $P(0) = 0$ and $P(U)f = f$.
We can extend $f$ to $H^i(C_{cusp}(\underline \eps,w,\kappa_{\mathcal U}))$ by $P(\widetilde U)f$.
In particular, we can find for all $v$, an $\eps$ and $\eps' \leq \eps$ such that,
\[ \mathcal X(\eps') \subset \mathcal X(v) \subset\mathcal X(\eps),\]
and the composed restriction map,
\[ C(\eps,w,\kappa_{\mathcal U}) \fleche C(v,w,\kappa_{\mathcal U}) \fleche C(\eps',w,\kappa_{\mathcal U}),\]
is an isomorphism on the finite slope part, in particular, $C(v,w,\kappa_{\mathcal U})^{fs} = C(\underline \eps,w,\kappa_{\mathcal U})^{fs}$ and thus these spaces are stable by $U$.
\end{proof}

In particular, we get 
\begin{prop}
$C_{cusp}(v,w,\kappa_{\mathcal U})$ has cohomology concentrated in degree zero, and the finite slope part of the cohomology of $C_{cusp}(\eps,w,\kappa_{\mathcal U})$ is concentrated in degree zero.
\end{prop}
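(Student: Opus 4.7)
The plan is to prove the first assertion (full vanishing of higher cohomology on the Hasse-type neighborhood $\mathcal X^{tor}(v)$) directly from the affinoidness of the minimal compactification, and then to deduce the second assertion immediately from the previous proposition.

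For the first assertion, I would exploit the minimal compactification. By Definition \ref{def59} and \cite{LanIMRN}, the $\mu$-ordinary Hasse invariant $^\mu\Ha$ is (mod $p$) a section of the ample line bundle $(\det\omega)^{\otimes(p^f-1)}$ on $\mathfrak X^{\min,tor}$; as a consequence the open $\mathcal X^{\min}(v) := \{v(^\mu\Ha) \leq v\}$ is affinoid in the minimal compactification, and the canonical map $\pi : \mathcal X^{tor}(v) \fleche \mathcal X^{\min}(v)$ is proper. The Leray spectral sequence yields
\[ R\Gamma\bigl(\mathcal X^{tor}(v)\times\mathcal U,\,\omega^{\kappa_\mathcal U\dag}_w(-D)\bigr) \simeq R\Gamma\bigl(\mathcal X^{\min}(v)\times\mathcal U,\, R\pi_* \omega^{\kappa_\mathcal U\dag}_w(-D)\bigr). \]
Since $\mathcal X^{\min}(v)\times\mathcal U$ is affinoid, it suffices to show $R^i\pi_* \omega^{\kappa_\mathcal U\dag}_w(-D) = 0$ for $i > 0$. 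This is precisely what Lan's vanishing theorem (Appendix \ref{AppA}) delivers for the small formal Banach sheaves constructed in Section \ref{sect6}: recall that $\mathfrak w^{\kappa_\mathcal U\dag}_w$ is extracted, as a direct factor, from the pushforward of a line bundle along the affine map $\mathfrak{IW}^+_w \fleche \mathfrak X_1(p^n)^{tor}(v)$ followed by the finite map $\mathfrak X_1(p^n)^{tor}(v) \fleche \mathfrak X_0(p^n)^{tor}(v)$, so that the higher direct images of $\omega^{\kappa_\mathcal U\dag}_w(-D)$ to the minimal compactification are controlled by Lan's vanishing applied to each coherent graded piece modulo $p^n$.

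For the second assertion, the immediately preceding proposition tells us that for $\underline\eps$ and $v$ chosen so that $\mathcal X^{tor}(v) \subset \mathcal X^{tor}(\underline\eps)$ compatibly, the restriction maps
\[ H^i\bigl(C_{cusp}(\underline\eps,w,\kappa_\mathcal U)\bigr)^{fs} \xrightarrow{\sim} H^i\bigl(C_{cusp}(v,w,\kappa_\mathcal U)\bigr)^{fs} \]
are isomorphisms in every degree. By the first assertion the right-hand side is zero for $i > 0$, hence so is the left-hand side; this gives exactly the claim.

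The principal difficulty will be the passage from Lan's vanishing, phrased at the level of classical coherent sheaves (or their formal-integral counterparts) on the toroidal compactification, to the Banach sheaf $\omega^{\kappa_\mathcal U\dag}_w(-D)$ we care about. This sheaf is a filtered colimit of coherent pieces obtained by decomposing along the $T$-action on $\mathfrak{IW}_w^+$, and one needs to commute $R^i\pi_*$ and sections on the affinoid base $\mathcal X^{\min}(v)\times\mathcal U$ with this limit. The projectivity/perfectness granted by Proposition \ref{pro617}, together with a Mittag-Leffler argument on the affinoid $\mathcal X^{\min}(v)\times\mathcal U$, should handle this; this is the only non-formal step in the argument.
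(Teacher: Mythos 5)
Your proposal is correct and follows essentially the same route as the paper. For the first assertion, the paper simply cites Theorem \ref{thrvanishing} of Appendix~\ref{AppA}; what you have sketched is precisely the content of that appendix — affinoidness of $\mathcal X^{*}(v)$ via the ampleness of $\det\omega$ on the minimal compactification, vanishing of $R^q\pi(v)_*\mathcal O(-D(v))$ by Lan's theorem, and the passage to the small formal Banach sheaf $\mathfrak w_w^{\kappa\dag}(-D)$ via its filtered description, resolved by [AIP] Theorem~A.1.2.2 (which is the "Mittag-Leffler"-type step you flagged as non-formal; it is in fact already in the literature and the paper invokes it directly). The only inaccuracies are cosmetic: the ample line bundle lives on the minimal compactification $\mathfrak X^*$, not on ``$\mathfrak X^{\min,tor}$'', and the paper actually proves the vanishing at level $\Gamma_1(p^n)$ on $\mathfrak X_1(p^n)^*(v)$ before descending to Iwahori level. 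For the second assertion, your argument — read off the conclusion of the preceding proposition to identify the finite slope cohomology groups of $C_{cusp}(\underline\eps,\ldots)$ and $C_{cusp}(v,\ldots)$, then use the vanishing of the latter — is exactly what the paper does (phrased there via the explicit intertwining $\mathcal X(\underline\eps')\subset\mathcal X(v)\subset\mathcal X(\underline\eps)$ factoring through the vanishing middle term).
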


\begin{proof}
The first part is appendix Theorem \ref{thrvanishing}. Fix $i$, then we have restriction maps
\[ H^i(C_{cusp}(\underline\eps,w,\kappa_{\mathcal U})) \fleche H^i(C_{cusp}(v,w,\kappa_{\mathcal U})) \fleche H^i(C_{cusp}(\underline \eps',w,\kappa_{\mathcal U})),\]
(for well chosen $\underline\eps,\underline\eps',v$) whose composite is an isomorphism on finite slope parts, and the middle module vanishes for $i >0$.
\end{proof}

According to \cite{urb} section 2.3.10, we can form the alternated Fredholm determinant,
\[ \det(1-XU|C(\eps,w,\kappa_{\mathcal U})).\]
But, because of the results of the previous section, this alternated determinant should actually only be the one in degree 0. Moreover, we will be able to restrict (locally) to the classical construction on an eigenvariety as in \cite{ColBanach,Buz,AIP}. 

For this, fix $\eps,v,w$ and $\mathcal U$ accordingly. By Proposition \ref{pro617}, recall that $C_{cusp}(\eps,w,\kappa_{\mathcal U})$ and $C_{cusp}(v,w,\kappa_{\mathcal U})$ are perfect complexes (in the sense of Urban \cite{urb}), and the latter one can be represented by the projective (in the sense of Buzzard \cite{Buz} or \cite{urb}) 
module in degree 0 
$H^0(\mathcal X(v)\times\mathcal U,\omega_w^{\kappa_{\mathcal U}^{univ\dag}}(-D))$.
The compact operator $U$ acts on $C_{cusp}(\eps,w,\kappa_{\mathcal U})$, but not a priori on $C_{cusp}(v,w,\kappa_{\mathcal U})$, but by Proposition \ref{propepsqc}, there exists an integer $N$, which we fix, such that we have $\eps' < \eps$, inclusions,
\[ \mathcal X(\eps) \subset \mathcal X(v) \subset \mathcal X(\eps'),\]
and $U^N(\mathcal X(\eps)) \subset \mathcal X(\eps')$. In particular $U^N(\mathcal X(v)) \subset \mathcal X(\eps') \subset \mathcal X(v)$.
Thus, $U^N$ is a compact operator on both $C_{cusp}(\eps,w,\kappa_{\mathcal U})$ and $C_{cusp}(v,w,\kappa_{\mathcal U})$. We now need to explain how to construct the Eigenvariety. First, we have three Fredohlm series over $\mathcal O_{\mathcal U}$ $F_{U,\eps}$ and $F_{U^N,\eps}$ of $U$ and $U^N$ acting on $C_{cusp}(\eps,w,\kappa_{\mathcal U})$, and $F_{U^N,v}$ of $U^N$ acting on
$C_{cusp}(v,w,\kappa_{\mathcal U}) =  H^0(\mathcal X(v)\times\mathcal U,\omega_w^{\kappa_{\mathcal U}^{univ\dag}}(-D))$.

First, we need, as in the classical construction, to do things on a specific cover, so choose a slope covering covering for $\mathcal U$ and $F_{U,\eps}$, $(V,h)$ in the sense of definition 2.3.1. of \cite{JN} (this exist, see \cite{JN} Theorem 2.3.2 for example). Over $(V,h)$, we can thus decompose the Fredholm series,\[ F_{U,\eps} = GS, \] 
where $G \in \mathcal O_V[T]$ is a slope $\leq h$ polynomial and $S \in 1+T\mathcal O_V{{T}}$ is an entire series of slopes $> h$.
Accordingly, by \cite{ColBanach} Theorem A4.3/5 (or \cite{JN} Theorem 2.2.2), we have slope decompositions for $U$ of complexe,
 \[ C_{cusp}(\eps,w,\kappa_{\mathcal U}) = C_{cusp}(\eps,w,\kappa_{\mathcal U})_{U,\leq h} \oplus C_{cusp}(\eps,w,\kappa_{\mathcal U})_{U,> h}.\]
\todo{Est-ce possible de trouver un $h$ pour chaqu'un des $M^q$ ?}

\begin{lemma}
\label{lemma93}
This decomposition is a slope $Nh$ decomposition for $U^N$ acting on this module, and it induces a slope $Nh$ factorisation of 
\[ F_{U^N,\eps} = G'S'.\]
\end{lemma}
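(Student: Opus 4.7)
The plan is to check directly, from the definition of a slope decomposition, that the splitting
\[ C_{cusp}(\eps,w,\kappa_{\mathcal U}) = C_{cusp}(\eps,w,\kappa_{\mathcal U})_{U,\leq h} \oplus C_{cusp}(\eps,w,\kappa_{\mathcal U})_{U,> h} \]
is also a slope $\leq Nh$ decomposition for the operator $U^N$, and then to deduce the Fredholm factorisation from the general machinery of \cite{ColBanach} or \cite{JN}. The guiding principle throughout is that taking $N$-th powers multiplies slopes by $N$.

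First I would verify the slope-$\leq Nh$ condition on the ``small slope'' piece. By assumption, $U$ satisfies a monic polynomial $Q(T) \in \mathcal O_V[T]$ on $C_{cusp}(\eps,w,\kappa_{\mathcal U})_{U,\leq h}$ all of whose roots $\alpha_i$ have slope $\leq h$. Setting
\[ \widetilde{Q}(T) = \prod_i (T - \alpha_i^N), \]
the coefficients of $\widetilde{Q}$ are symmetric functions in the $\alpha_i^N$, hence lie in $\mathcal O_V$, and $\widetilde{Q}(U^N) = 0$ on this piece. Its roots $\alpha_i^N$ all have slope $\leq Nh$, which realises the small-slope condition for $U^N$.

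Next I would check that $U^N$ has slopes strictly $>Nh$ on the complementary factor. The characterisation is: for any monic polynomial $P(T) \in \mathcal O_V[T]$ whose roots $\beta_j$ have slope $\leq Nh$, the operator $P(U^N)$ must act invertibly on $C_{cusp}(\eps,w,\kappa_{\mathcal U})_{U,> h}$. Passing to a faithfully flat extension which contains an $N$-th root $\gamma_j$ of each $\beta_j$, and choosing a primitive $N$-th root of unity $\zeta$, one has the factorisation
\[ U^N - \beta_j = \prod_{k=0}^{N-1}(U - \zeta^k \gamma_j). \]
Each $\zeta^k\gamma_j$ has slope $v(\gamma_j) \leq h$, so each factor is invertible on the slope $>h$ piece by hypothesis. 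Therefore $P(U^N)$ is invertible after base change, and hence invertible on the original module by descent. This step is the main obstacle, since it requires the careful manipulation of roots and descent of invertibility, but no deep input.

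Finally, once the slope $\leq Nh$ decomposition for $U^N$ is established, the Fredholm factorisation is formal. On the finite projective module $C_{cusp}(\eps,w,\kappa_{\mathcal U})_{U,\leq h}$, put
\[ G'(T) = \det\!\bigl(1 - TU^N \mid C_{cusp}(\eps,w,\kappa_{\mathcal U})_{U,\leq h}\bigr), \]
a polynomial whose reciprocal roots are the $\alpha_i^N$, hence of slope $\leq Nh$. Let $S'$ be the Fredholm series of $U^N$ acting on $C_{cusp}(\eps,w,\kappa_{\mathcal U})_{U,> h}$; by the previous paragraph its reciprocal roots have slope $>Nh$. The multiplicativity of the Fredholm determinant under direct sum decompositions (\cite{ColBanach} A4, or \cite{urb} 2.3.10) then yields $F_{U^N,\eps} = G' S'$, which is exactly the claimed slope $Nh$ factorisation.
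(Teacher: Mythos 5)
Your guiding idea --- that taking $N$-th powers multiplies slopes by $N$ --- is the same one the paper uses, but you implement it differently on both halves. For the small-slope piece, your direct polynomial identity (factor $\prod_i(U^N - \alpha_i^N)$ through $Q^*(U) = \prod_i(U-\alpha_i)$, with the cofactor symmetric in the $\alpha_i$ and hence in $\mathcal O_V[T]$, and with $\widetilde Q(T) = \prod_i(1-\alpha_i^N T)$ again in $\mathcal O_V[T]$) gives the slope-$\leq Nh$ annihilator cleanly. The paper instead passes to fibers at points $x\in V$, restricts to a cyclic $U$-submodule, upper-triangularizes $U$ over an extension of $k(x)$, applies Cayley--Hamilton to $U^N$, and concludes by Nakayama. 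Your route here is correct and, by avoiding the localization, arguably cleaner than the one in the text.

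The gap is on the large-slope piece. You must show that for every multiplicative polynomial $P(T)\in\mathcal O_V[T]$ of slopes $\leq Nh$, $P^*(U^N)$ is invertible on $C_{cusp}(\eps,w,\kappa_{\mathcal U})_{U,>h}$. You pass to a faithfully flat extension $A'$ of $\mathcal O_V$, factor $P^*(U^N) = \prod_{j,k}(U-\zeta^k\gamma_j)$, and then invoke the slope-$>h$ condition on each factor $1-\zeta^k\gamma_j T$. But that condition, as built into the slope decomposition over $V$, quantifies only over polynomials with coefficients in $\mathcal O_V$; your factors have coefficients in $A'$. Before you can use it you would need to show that the slope $\leq h$ decomposition base-changes along $\mathcal O_V\to A'$ (true for a finite flat morphism of affinoid algebras, see \cite{JN} Section 2.2, but it needs an argument you do not give; the ``descent'' you mention only handles the return from $A'$ to $\mathcal O_V$, not this step). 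The detour is moreover unnecessary: set $R(T):=P(T^N)\in\mathcal O_V[T]$. A degree count gives $R^*(U)=P^*(U^N)$, and the reciprocal roots of $R$ are exactly the $\zeta^k\gamma_j$, of valuation $v(\beta_j)/N\leq h$, so $R$ has slopes $\leq h$ with coefficients already in $\mathcal O_V$; the slope-$>h$ condition then applies directly, with no extension of scalars and no descent. This is in substance the paper's one-line argument for this half.
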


\begin{proof}
We can work on a single module, say $M$ and denote the associated decompositions associated to the slope decomposition of $F_{U,\eps} = GS$,
\[ M = M_{U,\leq h} \oplus M_{U,> h}.\]
As if $Q(T)$ is a slope $> Nh$ polynomial, the polynomial $Q(T^N)$ is of slope $> h$, we get
that $U^N$ is invertible on $M_{u,>h}$. Now let \[N = \{ m \in M | \exists Q \text { of slopes } \leq Nh, \text{ such that } Q^*(U^N)m = 0\}.\]
This is clearly a submodule of $M$, and has if $Q(T)$ has slopes $\leq Nh$, $Q(T^N)$ has slopes $\leq h$, we have $P \subset M_{U,\leq h}$.
We claim that $P = M_{U,\leq h}$. Denote the $\mathcal O_V$-module
\[ Q = M_{U,\leq h}/P.\]
Fix $x$ a point of $V$, and let $v \in (M_{U,\leq h})_x = (M_x)_{U,\leq h}$ (which is easily seen to be true, or see e.g. \cite{JN} Theorem 2.2.13), thus if we denote $N(v)$ the sub-$k(x)$-vector 
space generated by $v$ and its images by $u$ and its powers, $N(v)$ is finite dimensional (say of dimension $r$).
Denote $\mu_{u,v}$ and $\chi_{u,v}$ the minimal and characteristic polynomials of $u$ on $N(v)$. As there exists $Q$ of slopes $\leq h$ such that $Q^*(u)$ kills $u$, $\mu_{u,v}^*$, and thus $\chi_{u,v}^*$ have $\leq h$ slopes. Up to extending scalars, there is a basis of $N(v)$ such that the matrix of $u$ on $N(v)$ is given by
\[
\left(
\begin{array}{cccc}
\lambda_1  &   &  &\star \\
  &  \lambda_2 &&   \\
  &   &   \ddots & \\
0  &	&	& \lambda_r
\end{array}
\right),
\]
and we can thus calculate characteristic polynomial of $U^N$ : it is of slope $\leq Nh$. By the theorem of Cayley-Hamilton $v \in P$. Thus $Q\widehat{\otimes} k(x)$ is zero, and by Nakayama, $Q=0$. In particular we have that $M_{U,\leq h} \oplus M_{U,> h}$ is a slope $Nh$ decomposition for $U^N$, which is functorial with respect to localisations $\Spm(k(x))\fleche V$ as it comes from the slope decomposition of $F_{U^N,h}$, thus by \cite{JN} Theorem 2.2.13 it induces a slope $Nh$ decomposition
\[ F_{U^N,\eps} = G'S'.\]
\end{proof}

\begin{lemma}
The restriction map
\[ res : C_{cusp}(\eps,w,\kappa_{\mathcal U}) \fleche C_{cusp}(v,w,\kappa_{\mathcal U}),\]
induces an equality
\[ F_{U^N,\eps} = F_{U^N,v}.\] 
In particular, over $(V,h)$ we have a decomposition
\[C_{cusp}(v,w,\kappa_{\mathcal U}) = C_{cusp}(v,w,\kappa_{\mathcal U})_{U^N,\leq Nh} \oplus C_{cusp}(v,w,\kappa_{\mathcal U}))_{U,> Nh},\]
such that, res induces an isomorphism over $V$,
\[C_{cusp}(\eps,w,\kappa_{\mathcal U})_{U,\leq h} = C_{cusp}(v,w,\kappa_{\mathcal U})_{U^N,\leq Nh}.\]
\end{lemma}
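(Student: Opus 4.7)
The plan is to apply the classical ``Serre trick'' for Fredholm determinants of compact operators to the restriction morphism $\mathrm{res} : C_{cusp}(\eps,w,\kappa_{\mathcal U}) \to C_{cusp}(v,w,\kappa_{\mathcal U})$, together with a ``Hecke extension'' morphism in the opposite direction, and then to use the previous lemma in order to transport the slope decomposition.

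First, I will use the nesting $\mathcal X(\eps') \subset \mathcal X(v) \subset \mathcal X(\eps)$ together with the fact that the Hecke correspondence sends $\mathcal X(\eps)$ (and thus $\mathcal X(v)$) into $\mathcal X(\eps')$ to factor $U^N$ on both complexes. Denote by $r_1 : C_{cusp}(\eps,w,\kappa_{\mathcal U}) \to C_{cusp}(v,w,\kappa_{\mathcal U})$ and $r_2 : C_{cusp}(v,w,\kappa_{\mathcal U}) \to C_{cusp}(\eps',w,\kappa_{\mathcal U})$ the two natural restrictions, so that $\mathrm{res} = r_1$. The standard construction of the Hecke operator via $\tfrac{1}{p^{n_iN}}\mathrm{Tr}_{p_1}\circ \widetilde f^{-1,*}\circ p_2^*$ defines a morphism
\[ T : C_{cusp}(\eps',w,\kappa_{\mathcal U}) \longrightarrow C_{cusp}(\eps,w,\kappa_{\mathcal U}), \]
and setting $\widetilde{U^N} := T\circ r_2 : C_{cusp}(v,w,\kappa_{\mathcal U}) \to C_{cusp}(\eps,w,\kappa_{\mathcal U})$ one gets by construction the two factorizations
\[ U^N|_{C_{cusp}(\eps,w,\kappa_{\mathcal U})} = \widetilde{U^N}\circ r_1,\qquad U^N|_{C_{cusp}(v,w,\kappa_{\mathcal U})} = r_1\circ \widetilde{U^N}. \]
Realizing $T$ and $\widetilde{U^N}$ at the level of perfect complexes (not merely of cohomology) is the main technical step: it requires choosing a \v{C}ech resolution of $\mathcal{IW}_w$ that is simultaneously compatible with the Hecke correspondence and with the restrictions, in the spirit of the proof of Proposition~\ref{pro617}.

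Second, I will invoke Serre's identity: for morphisms $f : C_1 \to C_2$ and $g : C_2 \to C_1$ of perfect complexes of Banach $\mathcal O_{\mathcal U}$-modules such that $gf$ and $fg$ are compact, one has
\[ \det(1-Tgf\mid C_1) = \det(1-Tfg\mid C_2) \]
in $\mathcal O_{\mathcal U}\{\{T\}\}$. In Urban's framework (\cite{urb} section 2.3), where the alternating Fredholm determinant is invariant under quasi-isomorphism of perfect complexes equipped with compact operators, this is a standard dévissage. Applied with $f = r_1$ and $g = \widetilde{U^N}$ it yields the equality $F_{U^N,\eps} = F_{U^N,v}$ of the lemma.

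Finally, the decomposition and the asserted isomorphism follow formally. The previous lemma provides a slope-$\leq Nh$ factorization $F_{U^N,\eps} = G'S'$ over $V$; by the equality just established this is also a slope-$\leq Nh$ factorization of $F_{U^N,v}$, and \cite{ColBanach} Theorem~A4.3/5 (equivalently \cite{JN} Theorem~2.2.2) then yields the desired decomposition of $C_{cusp}(v,w,\kappa_{\mathcal U})$. The morphism $r_1$ commutes with $U^N$, hence preserves slope decompositions; since Fredholm series start at $1$ one has $G'(0)=1$, so $U^N$ is invertible on both slope-$\leq Nh$ summands. Using the two factorizations, the map $y \mapsto [(U^N)^{-1}\widetilde{U^N}(y)]_{\leq Nh}$ provides a two-sided inverse to $r_1$ on the slope-$\leq Nh$ parts, which combined with the identification $C_{cusp}(\eps,w,\kappa_{\mathcal U})_{U^N,\leq Nh} = C_{cusp}(\eps,w,\kappa_{\mathcal U})_{U,\leq h}$ from the previous lemma finishes the proof.
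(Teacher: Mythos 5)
Your proof is correct and follows the same strategy as the paper: the factorization of $U^N$ through the restriction maps makes $\mathrm{res}$ a link in Buzzard's sense (which is precisely the Serre/Coleman determinant identity you invoke), giving $F_{U^N,\eps}=F_{U^N,v}$, and the decomposition and isomorphism on the slope-$\leq Nh$ parts then follow from the previous lemma. You spell out explicitly the two-sided inverse $y\mapsto[(U^N)^{-1}\widetilde{U^N}(y)]_{\leq Nh}$ and flag the need to realize the Hecke correspondence on compatibly chosen \v{C}ech resolutions of $\mathcal{IW}_w$, both of which the paper leaves implicit in the phrase ``The rest follows by lemma \ref{lemma93}.''
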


\begin{proof}
The first part is because we a diagram
\begin{center}
\begin{tikzpicture}[description/.style={fill=white,inner sep=2pt}] 
\matrix (m) [matrix of math nodes, row sep=3em, column sep=2.5em, text height=1.5ex, text depth=0.25ex] at (0,0)
{ 
C_{cusp}(\eps) & C_{cusp}(v) 
\\
C_{cusp}(\eps) & C_{cusp}(v) 
\\
};
\path[->,font=\scriptsize] 
(m-1-1) edge node[auto] {$res$} (m-1-2)
(m-2-1) edge node[auto] {$res$} (m-2-2)

(m-1-1) edge node[auto] {$U^N$} (m-2-1)
(m-1-2) edge node[auto] {$U^N$} (m-2-2)
(m-1-2) edge node[auto] {$U^N$} (m-2-1)
(m-1-2) edge node[auto] {$U^N$} (m-2-1)
;
\end{tikzpicture}
\end{center}
Where the shortened notations speak for themselves, and thus $U^N : C_{cusp}(v) \fleche C_{cusp}(\eps)$ is a link in the sense of \cite{Buz}. Thus the two power series are equal.
\todo{Maybe represent $U^N$ by a map of complex on each $M^q$}
The rest follows by lemma \ref{lemma93}
\end{proof}

In particular, for  each $(w,\underline\eps)$, for $w$ big enough and $\underline\eps$ small enough such that for all $\tau$, $w \in ]n-1,n-\eps_\tau[$  (which determines a unique integer $n$), ($v$ doesn't play a role and can always be chosen so that $X(\eps) \supset X(v)$, which we do here), we can construct an Eigenvariety for the tuple
\[ (\mathcal O_{\mathcal W(w)},C_{cusp}(\eps,w,\kappa_{\mathcal W(w)}),\mathcal H^N\otimes\mathcal A(p),\prod_{\pi,i}U_{\pi,i}),\]
as if $C_{cusp}(\eps,w,\kappa_{\mathcal W(w)})$ were one projective module. Indeed, locally this can be replace $\pi(v)_*\omega_w^{\kappa_{\mathcal W(w)}\dag}(-D)$ where
\[\pi(v): X(v) \times \mathcal W(w) \fleche \mathcal W(w),\]
and this $\mathcal O_{\mathcal W(w)}$-module is indeed projective, and its finite slope part inherits the action of $U = \prod_{\pi,i}U_{\pi,i}$, and this constructions glue together.
Moreover, we have natural maps between them when $(w,\underline\eps),(w',\underline\eps')$ satisfies $w' \geq w$ and $\underline\eps' \leq \underline\eps$.

%
%

This is the main ingredient in all the constructions of Eigenvarieties. In particular, we get,

\begin{theor}
\label{thrHecke}
Let $p$ be a prime. Fix $S_p$ a set of primes over $p$ (see section 2) unramified in $\mathcal D$ and $(K_J,J)$ a type\footnote{Here by type we only mean, as in \cite{Her3}, a compact open subgroup $K_J$ of $G(\mathbb A_f^p)$ together with a finite dimensional representation} 
outside $S_p$, $K \subset \Ker J$, and $S^p$ the set of places away from $S_p$ where $K$ is not maximal. 
There exists an equidimensionnal rigid analytic space $\mathcal E_{S_p}$, together with a locally finite map,
\[ \mathcal E_{S_p} \overset{w}{\fleche} \mathcal W_{S_p},\]
and a Zariski dense subset $\mathcal Z$, such that for any $\kappa \in \mathcal W(L)$, $w^{-1}(\kappa)$ is in bijection with the eigensystems for $\mathcal H^S\otimes \mathcal A(p)$
acting on the space of overconvergent, locally analytic, modular forms of weight $\kappa$, type $(K_J,J)$, and finite slope for $\mathcal A(p)$.
Moreover, $w(\mathcal Z)$ consists of classical weights and $z \in \mathcal Z$ is an Hecke eigensystem for a \textit{classical} modular form of weight $w(z)$.
\end{theor}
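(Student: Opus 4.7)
The strategy is to assemble the eigenvariety by Coleman--Buzzard--Urban's machinery, applied locally on the weight space and then glued. First I would cover $\mathcal W_{S_p}$ by increasing affinoids $\mathcal W(w)$ such that $\kappa^{univ}_{|\mathcal W(w)}$ is $w$-analytic, and further refine each $\mathcal W(w)$ by admissible affinoid opens $\mathcal U = \Spm(A)$. Over such a $\mathcal U$, the key data is the pair of complexes $C_{cusp}(v,w,\kappa_{\mathcal U})$ and $C_{cusp}(\underline\eps,w,\kappa_{\mathcal U})$ together with the operators $U, U^N$ constructed in section \ref{sect7}. By the previous two propositions and Appendix Theorem \ref{thrvanishing}, both complexes have their finite-slope part concentrated in degree zero, the restriction map is an isomorphism of these parts, and the degree-zero term on the $v$-side, $H^0(\mathcal X^{tor}(v)\times \mathcal U,\omega_w^{\kappa_{\mathcal U}^{univ}\dag}(-D))$, is a projective Banach $A$-module. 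Hence on each slope-adapted piece $(V,h)$ of $\mathcal U$ I would work with this projective model.

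Next, on $(V,h)$ I would form the Fredholm determinant $F_{U,\underline\eps}(T) \in \mathcal O_V\{\!\{T\}\!\}$ of $U$ acting on the perfect complex $C_{cusp}(\underline\eps,w,\kappa_{\mathcal U})$ via Urban's alternating-sum construction (\cite{urb} 2.3.10), and analogously $F_{U^N,v}$. By Lemma \ref{lemma93} and the lemma following it, these two series determine the \emph{same} slope $\leq h$ factorisation up to the substitution $T \mapsto T^N$, and this factorisation induces a slope decomposition of the projective module $H^0(\mathcal X^{tor}(v)\times \mathcal U,\omega_w^{\kappa_{\mathcal U}^{univ}\dag}(-D))$, compatible with the Hecke action of $\mathbb T = \mathbb T^{S,S_p} \otimes \mathcal A(p)$. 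I would then invoke Buzzard's eigenvariety machine (\cite{Buz}, generalising \cite{ColBanach}) applied to the eigenvariety datum $(A, M_{U,\leq h}, \mathbb T, U)$ to construct a local piece $\mathcal E_{\mathcal U,h}$, finite over $V$, whose points over $\kappa \in V$ are the systems of Hecke eigenvalues on $M_{U,\leq h,\kappa}$.

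The pieces $\mathcal E_{\mathcal U,h}$ glue: as $\mathcal U, h$ vary, the compatible slope decompositions and the independence of the Fredholm series from the choice of $(v,w,\underline\eps)$ (shown above) guarantee that the loci $\mathcal E_{\mathcal U,h}$ glue along overlaps to produce a rigid analytic space $\mathcal E_{S_p}$ locally finite over $\mathcal W_{S_p}$. Equidimensionality of $\mathcal E_{S_p}$, with the same dimension as $\mathcal W_{S_p}$, follows from the general principle (see \cite{ColBanach,Che1,urb}) that each connected component of the spectral variety defined by $F_{U,\underline\eps}$ is equidimensional of the same dimension as the weight space, and the eigenvariety is finite over the spectral variety. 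The compatibility of local Fredholm series with restriction to different $(\underline\eps,v,w)$ and the isomorphism on finite slope parts shows that $\mathcal E_{S_p}$ does parametrise, over $\kappa \in \mathcal W_{S_p}(L)$, exactly the finite slope Hecke eigensystems on the space $\varinjlim_{v,w} H^0(\mathcal X^{tor}(v),\omega_w^{\kappa\dag}(-D))$ of overconvergent locally analytic cuspidal forms of weight $\kappa$ and type $(K_J, J)$.

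Finally, for Zariski density of classical points, I would combine Proposition \ref{prop82} and Theorem \ref{thr83}: for $\kappa = (k_{\tau,j},\lambda_{\tau,l})$ a classical dominant weight and $f$ a finite-slope $U$-eigensystem whose $U_{A_\tau}$-slopes $v(\alpha_\tau)$ satisfy both the Jones-type bound $v(\alpha_\tau) < \inf_i (k_{\sigma,i} - k_{\sigma,i+1})$ and Bijakowski's bound $n_{A_\tau} + v(\alpha_\tau) < \inf_\tau (k_{\tau,p_\tau}+\lambda_{\tau,q_\tau})$, the form $f$ is classical. Setting $\mathcal Z \subset \mathcal E_{S_p}$ to be the locus of classical weights satisfying such an inequality at every prime of $S_p$, standard arguments (\cite{Che1}, \cite{urb} 5.3.3) show that the intersection of $\mathcal Z$ with each $\mathcal E_{\mathcal U,h}$ accumulates at every classical weight in $V$, hence $\mathcal Z$ is Zariski dense in $\mathcal E_{S_p}$. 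The hardest point in executing this plan is not the formal eigenvariety construction itself, but the justification made earlier in the section that the finite-slope part is independent of the choice of $(v, \underline\eps)$ and lives in a projective module, which requires intertwining the two families of strict neighborhoods via $U^N$ — and this is precisely what was established by the preceding lemmas.
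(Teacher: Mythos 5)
Your proposal is correct and takes essentially the same approach as the paper's proof, which is terse precisely because all the hard work was offloaded to Section~\ref{sect9} and Appendix~\ref{AppA} (the intertwining of the two families of strict neighborhoods, projectivity, and vanishing of higher cohomology), which you correctly identify as the crux. The only ingredient you leave implicit that the paper calls out is the extraction of the type-$(K_J,J)$ isotypic component, which is possible because we are in characteristic zero (see \cite{Her3} Proposition 9.13).
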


\begin{proof}
The construction is classical as soon as we have the previous datum, see \cite{ColBanach} and \cite{Buz}. Just remark that cutting in the datum the piece of type $(K_J,J)$ is possible as we are in characteristic zero (see \cite{Her3}, Proposition 9.13). The equidimensionnality results follows from the fact that we locally reduce to a single projective module $H^0(\mathcal X(\underline \eps),\omega^{\kappa_U\dag}_w(-D))_{U^N,\leq h}$ and \cite{Che1} Lemme 6.2.10. The set $\mathcal Z$ is the set of points of $\mathcal E$ which map to a point in $\mathcal W$ satisfiying the hypotheses of proposition \ref{prop82} and theorem \ref{thr83}. This is (Zariski) dense by \cite{Che1} Corollaire 6.4.4. and using that every open of $\mathcal W$ contains a point satisfying the previous hypothesis. 
\end{proof}

\begin{rema}
We will always consider the space $\mathcal E_{S_p}$ with its reduced structure (see \cite{CheJL} section 3.6). But in turns out that $\mathcal E_{S_p}$ is almost always automatically reduced with the structure given by $\mathcal H \otimes \mathcal A(p)$. For the eigencurve this is \cite{CM}, Proposition 7.4.5., in the quaternionic case see \cite{CheJL} Proposition 4.8, and \cite{BC2}, section 7.3.6 for a unitary group, compact at infinity. In the next section, we will prove that in the case of $U(2,1)$ this is also true.
\end{rema}

\section{Some complements for Picard modular forms (especially when $p=2$)}
\label{sect10}

In a previous article (see \cite{Her3}), we constructed the Eigenvariety $\mathcal E$ for $U(2,1)_{E/\QQ}$ where $E$ is a quadratic imaginary field, under the hypothesis that $p$ was inert (if $p$ splits see \cite{Brasca}) so that the ordinary locus is empty, but also that $p \neq 2$, so that we can apply the main theorem of \cite{Her2} on the canonical filtration.
Theorem \ref{thrHecke} extends this construction also for $p=2$, and for $E/F$ a general CM-extension (but we only consider $F= \QQ$ in this section). Classical points on $\mathcal E$ correspond to classical forms for $(G)U(2,1)$ with \textit{classical} weights given by $\kappa = (k_1 \geq k_2,k_3) \in \ZZ^3$. The corresponding character in $\mathcal W$ is given by
\[ (x,y) \in \mathcal O^\times \times \mathcal O^1 \longmapsto \tau(x)^{k_1}\tau(y)^{k_2}\sigma\tau(x)^{k_3}.\]

\begin{prop}
\label{propred}
The curve $\mathcal E$ is reduced. This remains true if we had fixed the second weight to $k_2 \in \ZZ$ on the weight space.
\end{prop}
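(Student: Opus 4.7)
The strategy follows the now classical method of Chenevier (cf.\ \cite{CheJL} Proposition 4.8 and \cite{BC2} \S 7.3.6): to show that a locally finite map $\mathcal E \to \mathcal W$ has reduced source, it is enough to exhibit a Zariski dense subset $Z \subset \mathcal E$ such that (i) locally near the affinoid containing the image of $Z$, $\mathcal E$ is cut out on a torsion-free module by a Hecke algebra, and (ii) at each $z \in Z$, the Hecke algebra acts semisimply (i.e.\ through a product of fields) on the corresponding fibre.

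First I would set up the local picture. By the construction of Theorem \ref{thrHecke} (or rather its proof), over a suitable slope--adapted affinoid $V \subset \mathcal W(w)$ the eigenvariety is, locally, $\Spm$ of the image of $\mathcal H^S \otimes \mathcal A(p)$ acting on the $U$-slope $\leq h$ part $M = H^0(\mathcal X(v)\times V,\omega_w^{\kappa^{univ}\dag}(-D))_{U,\leq h}$, which is a finite projective $\mathcal O(V)$-module. In particular the corresponding Hecke ring embeds into $\End_{\mathcal O(V)}(M)$ and therefore has no embedded components over the (reduced, indeed normal) base $V$. Next, I would take for $Z$ the set of classical points of $\mathcal E$ whose weight $\kappa = (k_1\geq k_2, k_3)$ lies in $V$ and satisfies simultaneously the fiberwise classicity bound of Proposition \ref{prop82} and the slope bound of Theorem \ref{thr83}; by the same density argument used at the end of the proof of Theorem \ref{thrHecke}, such a $Z$ is Zariski dense in $\mathcal E$.

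At each point $z \in Z$ the corresponding eigensystem comes from a genuine classical cuspidal automorphic form for $U(2,1)$ of Iwahori level at $p$. By Rogawski's multiplicity one for $U(2,1)$, each automorphic representation $\pi$ contributes a one--dimensional eigenspace for the spherical Hecke algebra outside $p$; at Iwahori level the contribution is indexed by the $\mathcal A(p)$--refinements of $\pi_p$, and two distinct refinements give distinct eigenvalues for the operators $U_{\pi,i}$ (which is precisely why $\mathcal A(p)$, rather than a single $U$, is used in the Hecke algebra). Consequently the image of $\mathcal H^S \otimes \mathcal A(p)$ in $\End(M_z)$ is a product of fields, i.e.\ reduced. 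Combined with the torsion--freeness in step one, the standard lemma (e.g.\ \cite{CM} 7.2.7, or the avatar used in \cite{BC2} Lemma 7.3.5) then forces the local ring of $\mathcal E$ at $z$, and by density the whole of $\mathcal E$, to be reduced.

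For the second assertion one fixes $k_2 \in \ZZ$ and considers the corresponding $2$--dimensional slice $\mathcal W' \subset \mathcal W$, together with the pullback eigenvariety $\mathcal E' = \mathcal E \times_{\mathcal W} \mathcal W'$. All arguments above carry over verbatim, as long as the set of classical weights $(k_1,k_3)$ (with $k_2$ fixed) satisfying both classicity hypotheses remains Zariski dense in $\mathcal W'$; this is immediate since the bounds in Proposition \ref{prop82} and Theorem \ref{thr83} only involve the differences $k_{\tau,i}-k_{\tau,i+1}$ and a slope condition, which can be met by taking $k_1$ sufficiently large compared to $k_2$ and $k_3$ sufficiently negative. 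The one point to check carefully --- and which I expect to be the only genuinely delicate step --- is that after fixing $k_2$, the projective module $M$ remains locally free over $\mathcal O(V')$: this follows because slope--decompositions commute with base change along the closed immersion $V' \hookrightarrow V$, so that the restriction to $V'$ of a projective Banach module carrying a compact operator remains projective in the sense of \cite{Buz}. Granting this, the reducedness of $\mathcal E'$ follows by the same multiplicity--one argument.
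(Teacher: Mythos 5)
Your overall framework is the right one (Chenevier's criterion: reduce to exhibiting a Zariski-dense set of classical points at which the Hecke algebra acts semisimply on the fibre), and the step you flag as "the only genuinely delicate" one (base change of slope decompositions along $V' \hookrightarrow V$) is indeed fine. But the actual gap is elsewhere, in the sentence

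\begin{quote}
\emph{``at Iwahori level the contribution is indexed by the $\mathcal A(p)$--refinements of $\pi_p$, and two distinct refinements give distinct eigenvalues for the operators $U_{\pi,i}$.''}
\end{quote}

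This is precisely the point that requires an argument, and it is false in general. Semisimplicity of the $\mathcal A(p)$-action on $\pi_p^{I}$ is \emph{not} automatic: it is equivalent (for an unramified principal series) to the regularity of the Satake parameter, i.e.\ to the Frobenius eigenvalues being pairwise distinct. If two of them coincide, the Iwahori--Hecke module $\pi_p^{I}$ acquires a Jordan block for the $U_{\pi,i}$'s, which is exactly a failure of hypothesis (SSG). Your argument silently assumes this regularity, but for arbitrary classical $\pi$ nothing guarantees it.

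What the paper does instead is exploit the explicit shape of the accessible Frobenius eigenvalues in terms of the functions $F_i \in \mathcal O(\mathcal E)$ and the weight: they are of the form $p^{-k_1-k_3}F_1$, $1$, $p^{k_1-k_3}F_1^{-1}$ (inert case), resp.\ $p^{1-k_3}F_1$, $p^{k_2-1}F_2$, $p^{k_1}F_3$ (split case). Since the slopes $v(F_i)$ are locally constant on $\mathcal E$, one can shrink to an open $V\ni k$ in $\mathcal W$ on which the $v(F_i)$ attached to all classical fibres are the same as at $k$; then for any $k'\in V$ with weight sufficiently \emph{regular}, the three eigenvalues have pairwise distinct $p$-adic valuations, hence are pairwise distinct. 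These $k'$ are still Zariski-dense, so this produces the semi-simple classical points required by (SSG). That regularity argument is the crux and must be included; without it, the proof does not go through. The remark about Rogawski multiplicity one handles the spherical part correctly (and is also what the paper says, via self-adjointness of spherical operators), but by itself it does not control the Iwahori--Hecke action at $p$.

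One more minor point: the paper invokes \cite{CheJL} Proposition 3.9 (not 4.8, which is the quaternionic application) and verifies (SSG) directly; the module-theoretic setup you sketch (torsion-free image of the Hecke algebra in $\End_{\mathcal O(V)}(M)$) is what that criterion encapsulates, so on that front you and the paper agree.
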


\begin{proof}
We will use \cite{CheJL} Proposition 3.9, and we only need to check assumption (SSG) there.
Thus, we need to find sufficiently many classical points $k \in \mathcal W$ for which the module $M_k^{class} \cap M_k^{\dag,\leq \alpha}$ is semi-simple as an 
$\mathcal H^N\otimes \mathcal A(p)$-module. We know already that the space of \textit{cuspidal} forms for a group $G$ is semi-simple for the action of $\mathcal H^N$ (spherical Hecke operators being auto-adjoint). Thus, we need to treat the action of $\mathcal A(p)$. But the action on an autormorphic form $\pi$ of $\mathcal A(p)$ determines its refinements. Thus, we only need to prove that we can assure that these refinements are distincts, leading that the action of $\mathcal A(p)$ on $\pi_p^{I}$ will be semi-simple ($I$ is an Iwahori subgroup). Let $k = (k_1,k_2,k_3) \in \mathcal W$ be a classical weight. As the space $H^0(X,\omega^k(-D))$ is finite dimensionnal, there is a finite number of classical points $f$ in $\mathcal E$ mapping to $k$ (and for varying $k$ these are strongly Zariski-dense in $\mathcal E$).
But the slopes of Hecke operators at $p$ are locally constant, thus for each of these points we can find an open $U_f$ (intersecting every component of $\mathcal E$ at $f$) on which the slope is actually constant.
Taking the intersection of the image of $U_f$ by $\pi$ in $\mathcal W$, we can find an open $V \ni k$ in $\mathcal W$ and for which every classical point $k' \in V$ and every classical $f'$ in the fiber of $k'$ has slopes equal the same as the one of some classical $f$ in the fiber of $k$. But the refinements are given in terms of eigenvalues of Frobenius by (see \cite{Her3} section 10.6 and proofs of propositions 11.1 and 11.2)
\[ (p^{-k_1-k_3}F_1,1,p^{k_1-k_3}F_1^{-1}) \quad \text{if $p$ is inert},\]
\[ (p^{1-k_3}F_1, p^{k_2-1} F_2, p^{k_1}F_3) \quad \text{if $p$ splits},\]
where $F_i \in \mathcal O(\mathcal E)$ corresponds to some operators in $\mathcal A(p)$. In particular, as the slopes of $F_i$ are constant on $V \ni k$, for any $k' \in V$ with sufficiently regular weights, the three Frobenius eigenvalues are distincts, thus as are the possible refinements. In particular for those $k'$ (which are Zariski dense in $\mathcal W$) the action of $\mathcal H\otimes \mathcal A(p)$ is semi-simple on classical forms in $M_{k'}^{class}$. The same proof works if $k_2$ is fixed.
\end{proof}

\begin{rema}
\label{remaCheJL}
\begin{enumerate}
\item In particular, by \cite{CheJL} this proves that for a classical $k_2 \in \ZZ$, $\mathcal E'$ given by the full eigencurve $\mathcal E$, base changed over 
\[ \mathcal W_{\mathcal O^\times} \hookrightarrow \mathcal W, \quad (k_1,k_3) \mapsto (k_1,k_2,k_3),\]
and the curve constructed as in the previous section, over $\mathcal W_{\mathcal O^\times}$ with a fixed value for $k_2$ coincide and are reduced.
\item Obviously, the same result where we would suppose $k_1 = k_3$ would not be true anymore as it could be that there isn't enough classical semi-simple points.
\end{enumerate}

\end{rema}

In \cite{Her3} (Theorem 1.3), we proved the following theorem,
\begin{theor}
\label{thrBK}
Let $E/\QQ$ be a quadratic extension, and 
\[ \chi : \mathbb A_E^\times/E^\times \fleche \CC^\times,\]
a algebraic Hecke character. We suppose $\chi$ polarized (i.e. $\chi^\perp := (\chi \circ c)^{-1} = \chi |.|^{-1}$
where $c$ is the complex conjugation on $E$). Let $p$ be a prime such that $p$ is unramified in $E$ and $p \notdivides \Cond(\chi)$, and $p\neq 2$ if $p$ is inert in $E$. Let
\[ \chi_p : G_E \fleche \overline{\QQ_p}^\times,\]
be its $p$-adic realisation. Then, if
$\ord_{s=0} L(\chi,s)$
is even and non-zero, the Bloch-Kato Selmer group $H^1_f(E,\chi_p)$ is non-zero.
\end{theor}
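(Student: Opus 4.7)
The plan is to adapt the Bellaïche--Chenevier method (\cite{BC1}, \cite{BC2}) to the eigenvariety $\mathcal{E}$ for $U(2,1)_{E/\QQ}$ produced by Theorem \ref{thrHecke}: one locates a non-tempered (endoscopic) Arthur point $y \in \mathcal{E}$ whose associated Galois pseudorepresentation is the reducible $1 + \chi_p + \chi_p^\perp$, and then uses the positive-dimensional infinitesimal structure of $\mathcal{E}$ at $y$, together with a triangulation of the $p$-adic family, to extract a class in $\Ext^1_{G_E}(1, \chi_p)$ that is crystalline at the primes above $p$ and unramified outside a controlled set.

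First I would produce the Arthur point. Rogawski's classification of the discrete automorphic spectrum of $U(2,1)_{E/\QQ}$ attaches to the polarized Hecke character $\chi$ a non-tempered $A$-packet; the assumption that $\ord_{s=0} L(\chi,s)$ is even and nonzero is precisely the condition (via the analogue of the Labesse--Langlands multiplicity formula used in \cite{Her3}) that guarantees this packet contains an automorphic representation $\pi$ appearing in the discrete spectrum. A coherent cohomology calculation, carried out in \cite{Her3}, shows that $\pi$ contributes to $H^0(X^{tor}, \omega^\kappa(-D))$ for a sufficiently regular weight $\kappa$; picking a refinement of $\pi_p$ that is accessible to the overconvergent theory then yields a classical point $y \in \mathcal{E}$ whose Galois pseudorepresentation is exactly $1 + \chi_p + \chi_p^\perp$.

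Next I would exploit the infinitesimal structure of $\mathcal{E}$ at $y$. Since $\mathcal{E}$ is equidimensional of dimension three and classical points are Zariski dense (Theorem \ref{thrHecke}), the universal pseudorepresentation $T : G_E \fleche \mathcal{O}(\mathcal{E})$ is generically the trace of an irreducible representation. The formalism of reducibility ideals from \cite{BC2} Chapter~1 then converts non-trivial tangent vectors at $y$ into non-trivial classes in $\Ext^1_{G_E}(1, \chi_p)$ and $\Ext^1_{G_E}(\chi_p^\perp, 1)$. The fixed tame level of $\mathcal{E}$ (the compact $K^p$ being hyperspecial outside a fixed finite set of primes $S$) ensures these classes are unramified outside $S \cup \{p\}$ and of the prescribed local shape at places in $S$, so the only nontrivial local condition left to control is crystallinity at places of $E$ above $p$.

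The main obstacle is this last local condition. The key input is a triangulation of the $(\phi,\Gamma)$-module attached to $T$ over an affinoid neighborhood of $y$ in $\mathcal{E}$, compatible with the chosen refinement; such a triangulation is provided by results of Kedlaya--Pottharst--Xiao (or Liu) applied to the family of Galois representations on $\mathcal{E}$. Combined with the Bellaïche--Chenevier criterion (\cite{BC1}; \cite{BC2} Chapter~2), this forces the extension classes at $p$ to be crystalline \emph{provided} the refinement of $y$ is \emph{anti-ordinary} (equivalently, suitably non-critical). The restrictions on $p$ in the hypothesis are exactly what is needed to guarantee that such a refinement exists and is admissible: in the inert case, $p \neq 2$ is required to construct the canonical filtration of \cite{Her2} used in the overconvergent theory of \cite{Her3}, while $p \nmid \Cond(\chi)$ and $p$ unramified in $E$ ensure that the refinement of $y$ is anti-ordinary. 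The resulting class is then non-zero in $H^1_f(E, \chi_p)$, as required.
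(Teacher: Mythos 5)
Your overall strategy---produce the Arthur point $y$ from Rogawski's endoscopic classification, put it on the eigenvariety $\mathcal E$, and apply the Bella\"iche--Chenevier reducibility-ideal machinery together with a $(\varphi,\Gamma)$-module triangulation to extract a crystalline class in $\Ext^1(1,\chi_p)$---is the right one, and is essentially what the paper does (or rather recalls from \cite{Her3}, and then redoes more carefully for Theorem \ref{corBK}). But two points need correcting.

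First, you claim the argument requires the refinement of $y$ to be \emph{anti-ordinary}, and that the hypotheses $p\nmid\Cond(\chi)$ and $p$ unramified in $E$ are what guarantee anti-ordinariness. This is not what those hypotheses do, and anti-ordinariness is \emph{not} needed for Theorem \ref{thrBK}. Indeed, when $p$ is inert there is a unique accessible refinement of $\pi^n_p$ and it is not anti-ordinary, yet Theorem \ref{thrBK} covers inert primes; the paper is explicit that anti-ordinariness (hence $p$ split) is only invoked for the finer tangent-space bound of Theorem \ref{thrGEO}. For mere non-vanishing of $H^1_f$ one only needs that the total reducibility ideal $I_{tot}$ is non-zero, and this follows from a \emph{partial} set of Hodge--Tate--Sen weight relations (Proposition \ref{proppoids}), available in both the inert and split cases. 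The hypothesis $p\nmid\Cond(\chi)$ in \cite{Her3} instead serves to make $\pi^n_p$ unramified so that its eigensystem appears at Iwahori level on the eigenvariety used there; in the present paper that hypothesis is removed by working with a nebentypus (Proposition \ref{chiclass}, \ref{propkappa}).

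Second, the closing step of your argument is elided. Saying that the reducibility formalism ``converts non-trivial tangent vectors into non-trivial classes'' does not by itself show that the extension group is non-zero: one must argue by contradiction using the GMA structure. Concretely, if $\Ext_T(1,\bar\chi)=0$ one deduces $A_{\bar\chi,1}=A_{\bar\chi,\eps}A_{\eps,1}$; combining this with the vanishing of $H^1_f(E,\eps)$ (so that $A_{\eps,1}=A_{\eps,\bar\chi}A_{\bar\chi,1}$) forces $I_{tot}\subset\mathfrak m I_{tot}$, hence $I_{tot}=0$ by Nakayama, contradicting the weight relation. You should make this contradiction explicit and also record the step showing $\Ext_T(1,\bar\chi)\subset H^1_f(E,\bar\chi)$ (via the triangulation and control of ramification away from $p$), as in Corollary \ref{cor1018}.
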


This results (actually a more general version of it) was almost entirely already proved by Rubin (\cite{Rub}) at least for CM elliptic curves when $p \neq 2$ (and $p \neq 3$ for $E = \QQ(i\sqrt{3})$).
In particular as $2$ is inert in $\QQ(i\sqrt{3})$ (and $3$ is ramified in this case), it does not prove anything new for inert primes (only for the split ones).

Fortunately, with Theorem \ref{thrHecke}, we will be able to remove the hypothesis $p \neq 2$ if inert. Moreover, we can also remove the hypothesis $p \notdivides \Cond(\chi)$
(as long as $p$ stays unramified in $E$). To do this we focus from now on to the case of $U(2,1)_{E/\QQ}$, and we will define sheaves of forms with nebentypus.

\subsection{A remark on $p = 2$}

To construct an integral model for the Picard modular surface, it is needed to choose a lattice for the group $(G)U(2,1)$, as it appeared in $\mathcal D$ in section 3. 
We do as we did in \cite{Her3} and choose the lattice 
$L = \mathcal O_E^3 \subset E^3$, stable for the form of matrix (used to define $(G)U(2,1)$) in the canonical basis given by,
\[
\psi = \left(
\begin{array}{ccc}
  &   &  1 \\
  & 1  &   \\
1  &   &   
\end{array}
\right).
\]
There is another natural choice, which would be the same lattice but the form
\[
\psi' = \left(
\begin{array}{ccc}
1  &   &   \\
  & 1  &   \\
  &   &   -1
\end{array}
\right).\]
These two forms are isomorphic over $\ZZ[1/2]$ but not modulo $2$. Moreover, see \cite{Bellthese} Section 3.1, any abelian scheme of type (2,1) $A/S$ will have a polarized Tate module $(T_\ell(A),q)$, together with the Weil pairing induced by the polarisation isomorphic either to
$(\mathcal O^3,J)$ or $(\mathcal O^3,J')$. Any of these form would give an integral model for the Picard modular surface, not isomorphic modulo $2$, and we choose $\psi$, the first one, to define $U(2,1)_{E/\QQ,\psi}$ over $\ZZ$ as in section \ref{sect3}. Apart to construct the Eigenvariety, this choice (for which the construction of the Eigenvariety can be checked to be independant afterwards, even if we don't need this result) will not appear in this section as we work in characteristic zero.

\subsection{Removing the hypothesis $p \notdivides \Cond(\chi)$.}
Recall that in \cite{Her3} Section 10, following \cite{BC1}, we introduced a type $(K_J,J)$ for $J = \Cond(\chi)$. Fix an auxiliary level $K^p \subset (\Ker J)^p$, and consider $\mathcal X_0(p^n)^{tor}/\Spm(K)$ the (rigid and compactified) Picard variety of Iwahori level $p^n$, over some $p$ adic field $K$. It is the analytic space of $X_0(p^n)^{tor}$ which away from the boundary its $S$-points parametrizes
\[ (A,\iota, \lambda,\eta, H_1 \subset H_2),\]
where \begin{itemize}
\item $A \fleche S$ is an abelian scheme of genus 3
\item $\iota : \mathcal O_E \hookrightarrow End_S(A)$ is a CM-structure of signature (2,1), i.e.
\[ \omega_A = \omega_{A,\tau} \oplus \omega_{A,\overline\tau}, \quad \omega_{A,\sigma} = \{ w \in \omega_A | \iota(x) w = \tau(x) w,\]
with $\omega_{A,\tau}$ and $\omega_{A,\overline\tau}$ are respectivelly locally free of rank $2$ and $1$, and where $\tau : E \fleche S$ is the canonical morphism and $\overline\tau$ its conjugate.
\item $\lambda : A \fleche {^t}A$ is a polarisation for which the rosatti involution on $\iota(x)$ coincides with $\iota(\overline x)$
\item $\eta$ is a level-$K^p$-structure,
\item $H_1 \subset H_2 \subset A[p^n]$ is a filtration by cyclic $\mathcal O_E\otimes \ZZ_p/p^n\ZZ_p$-modules such that $H_2^\perp = H_1$.
\end{itemize}
This is exactly the rigid space introduced in section \ref{sect5}.

The subgroups $H_1,H_2$ extends to $\mathcal X_0(p^n)^{tor}$, and we can also extend the polarisation of $H_2/H_1$ to the boundary. We will distinguish the cases $p$ inert (AU) and $p$ split (AL) in $E$.

In case (AL), i.e. $p = v\overline v$ is split, then $A[p^n] \simeq G^+ \times G^-$ (with $G^- = (G^+)^D$ and $\lambda$ exchange the two factors), and we can suppose that $G = G^+$, say,  is of dimension 2 and height $p^{3n}$. Under this decomposition, $H_i = H_i^+ \times H_i^-$ and $H_i^+$ is a cyclic rank $p^{in}$-subgroup of $G^+$ and $H_1^- = (H_2^+)^\perp = (G^+/H_2^+)^D \subset G^-$.
In this case \[\mathcal X_0^+(p^n) = \Isom(H_1^+,\ZZ/p^n\ZZ) \times \Isom(H_2^+/H_1^+,\ZZ/p^n\ZZ)\times \Isom(G^+/H_2^+,\ZZ/p^n\ZZ).\]
It is a $T_n = ((\ZZ/p^n\ZZ)^\times)^3$-etale torsor. Remark that $H_2^+$ is the canonical subgroup in this case.
In case (AL) we can also introduce a second space. Using the previous notation, denote by $\mathcal X_{P_n}^{tor}$ the analytic space associated to a toroïdal compactification of the 
following moduli space $X_{P_n}^{tor}$ over $\Spec(K)$. A $S$-point of $X_{P_n}^{tor}$ is a tuple $(A,\iota,\lambda,\eta,H_1^p,H_2^p,H)$ such that 
$(A,\iota,\lambda,\eta,H_1^p,H_2^p)$ is a $S$-point of $X_0(p)$ (Iwahori level, i.e. $H_1^p \subset H_2^p \subset G^+[p]$) together with a subgroup $H \subset G[p^n]$ locally isomorphic to $(\ZZ/p^n\ZZ)^2$ and $H[p] = H_2^p$. It is the Shimura variety of level $P_n \cap I(p)$ where $I(p)$ is the Iwahori subgroup of $GL_3(\ZZ_v)$ and $P_n$ is the subgroup of matrices of the form
\[ 
\left(
\begin{array}{ccc}
\star  & \star  & \star  \\
 \star & \star  & \star  \\
  &   &   \star
\end{array}
\right) \pmod{p^n}.
\]
In particular we have a map $\mathcal X_0(p^n) \fleche \mathcal X_{P_n}$.

In case (AU), i.e. $p$ inert, denote
\[ \mathcal X_0^+(p^n) = \Isom(H_1,\mathcal O/p^n\mathcal O) \times \Isom_{pol}(H_2/H_1,\mathcal O/p^n\mathcal O).\]
This is a $T_n = (\mathcal O/p^n\mathcal O)^\times \times (\mathcal O/p^n\mathcal O)^1$-etale torsor.

In both cases, if $\pi : \mathcal X_0^+(p^n)^{tor} \fleche \mathcal X_0(p^n)^{tor}$ and $\chi : T_n \fleche K^\times$ is a character, we can consider $\mathcal O_{\mathcal X_0(p^n)^{tor}}(\chi)$ to be the subsheaf of $\pi_*\mathcal O_{\mathcal X_0^+(p^n)^{tor}}$ of sections which vary like $\chi$. This is an invertible sheaf on $\mathcal X_0(p^n)^{tor}$. 

\begin{defin}
For all classical weight $\kappa$, we can consider the sheaf,
\[ \omega^\kappa(\chi) := \omega^\kappa\otimes_{\mathcal O_{\mathcal X_0(p^n)^{tor}}} \mathcal O_{\mathcal X_0(p^n)^{tor}}(\chi),\]
which is a locally free sheaf on $\mathcal X_0(p^n)^{tor}$, whose global sections are (classical) Picard modular form of weight $\kappa$ and nebentypus $\chi$.
Similarly,
\[ H^0(\mathcal X_0(p^n)^{tor},\omega^\kappa(\chi)(-D)),\]
is the set of cuspidal ones.
\end{defin}

\begin{prop}
\label{chiclass}
There is a natural injection \[\omega^\kappa(\chi) \hookrightarrow \omega^{\kappa'\chi\dag}_w,\]
for all $w \in ]n-1,n-\eps_\tau[$ and $\kappa\chi$ the product of the character $\kappa$ with the character 
\[ T(\ZZ_p) \fleche T_n \overset{\chi}{\fleche} K^\times,\]
which we still denote $\chi$.
\end{prop}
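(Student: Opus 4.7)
The plan is to absorb the nebentypus $\chi$, which on the classical side is a $T_n$-equivariance condition for sections on $\mathcal X_0^+(p^n)^{tor}$, into the $T(\ZZ_p)$-weight on the overconvergent side. This is possible because the overconvergent sheaf $\omega_w^{\kappa'\chi\dag}$ is built from the $\kappa'\chi$-isotypic functions on $\mathcal{IW}_w^{0,+}$ under the full $T(\ZZ_p)$-action, and $\chi$ composed with the surjection $T(\ZZ_p) \twoheadrightarrow T_n$ still yields a $w$-analytic character $\kappa'\chi$ under the hypothesis $w \in {]}n-1, n-\eps_\tau[$.

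First, I would pull back both sheaves to the common covering $\mathcal X_0^+(p^n)^{tor}(v) \to \mathcal X_0(p^n)^{tor}(v)$, which is (modulo the unipotent $U^0(n)$) a $T_n$-torsor. Over this cover, the classical sheaf $\omega^\kappa$ is $\pi_* \mathcal O_{\mathcal T^\times/U}[\kappa']$, where $\mathcal T^\times/U$ is the classical torsor of trivialisations of $\omega$ modulo the upper-triangular unipotent, and $\omega^\kappa(\chi)$ is exactly the $\chi$-isotypic part for the induced $T_n/U^0(n)$-action.

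Second, using the universal trivialisations $\psi_k$ together with the Hodge--Tate morphism of Proposition \ref{prop63} and the identification $\mathcal F_\tau \otimes K = \omega_\tau$, I would realise $\mathcal{IW}_w^{0,+}$ over $\mathcal X_0^+(p^n)^{tor}(v)$ as an open subspace of $\mathcal T^\times/U$; this is precisely the content underlying Proposition \ref{propdescente} before the descent to Iwahori level. Restriction of functions through this open immersion then yields an injective morphism of sheaves
\[ \omega^{\kappa'} \hookrightarrow (\pi')_* \mathcal O_{\mathcal{IW}_w^{0,+}}[\kappa'] \]
on $\mathcal X_0^+(p^n)^{tor}(v)$, with injectivity following from the fact that restriction of sections from a smooth rigid space to a nonempty open subspace is injective.

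Third, I would take $\chi$-isotypic parts for the $T_n/U^0(n)$-action. The left-hand side gives $\omega^\kappa(\chi)$ on $\mathcal X_0(p^n)^{tor}(v)$ by definition. On the right-hand side, the $T_n$-action on $\mathcal{IW}_w^{0,+}$, which translates $\psi$ and hence through $\HT$ translates the lifted basis of $\mathcal F_\tau$, factors through the full torus torsor structure used to define $\omega_w^{\kappa'\dag}$; combining $\kappa'$-equivariance for this structure with $\chi$-equivariance under the embedded $T_n \subset T(\ZZ_p)$ is exactly equivariance for the character $\kappa'\chi$ of $T(\ZZ_p)$. Hence, after the further descent of Proposition \ref{propdescente}, the right-hand side becomes $\omega_w^{\kappa'\chi\dag}$ on the strict neighborhood $\pi_n(\mathcal X_0^+(p^n)^{tor}(v)) \subset \mathcal X^{tor}$, yielding the desired injection.

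The main obstacle is the bookkeeping of the two torus actions, in particular checking that the embedding $T_n \hookrightarrow T(\ZZ_p)$ produced via $\HT$ is compatible with $\kappa'\chi$ being $w$-analytic. This relies on both inequalities in $w \in {]}n-1, n-\eps_\tau[$: the bound $w < n - \eps_\tau$ guarantees that $\chi$ is already visible inside the $w$-analytic tube (so that $\kappa'\chi$ makes sense as a $w$-analytic character), and $w > n-1$ is exactly the hypothesis needed for the open immersion of Proposition \ref{propdescente} to hold.
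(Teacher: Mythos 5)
Your proposal is correct and follows essentially the same route as the paper: both regard a section of $\omega^\kappa(\chi)$ as a $T_n\times T$-equivariant law on triples $(A,x,w)$, restrict it along the open immersion $\mathcal{IW}_w^{0,+} \subset \mathcal T^\times/U$ given by the Hodge--Tate trivialisation, and observe that the two equivariances collapse under the induced single $T(\mathcal O)$-action into $\kappa'\chi$-equivariance. The paper simply phrases this directly in terms of the transformation law $f(A,tx,zw)=\chi(t)\kappa'(z)f(A,x,w)$ becoming $g(ti)=\chi(\overline t)\kappa'(t)g(i)$, without unpacking the isotypic-decomposition bookkeeping you spell out.
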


\begin{proof}
Indeed, a section $f$ of $\omega^\kappa(\chi)$ is a law which associate to $(A,x,w)$ where $A \in \mathcal X^{tor}(K)$, $x$ is a level $X_0^+(p^n)$-structure and $w$ an isomorphism
\[ St_{\mathcal O_E} \otimes \mathcal O_K \simeq \omega_A,\]
an element $f(A,x,w) \in \mathbb A^1(K)$, which moreover satisfies,
\[ f(A,tx,zw) = \chi(t)\kappa'(z)f(A,x,w).\]
In particular, this defines by restriction a section $g$ of $\mathcal{IW}_w^{0,+}$ which satisfies, for the induced action of $T(\mathcal O)$ on $\mathcal{IW}_w^{0,+}$ which sends $(A,x,w)$ to $(A,\overline t x, t w)$, such that $g(t i) = \chi(\overline t)\kappa'(t)g(i)$. Thus $g$ is a section of $\omega^{\kappa'\chi\dag}$.
\end{proof}

Over $\mathcal X_{P_n}^{tor}$ we also have a $I_{GL_2}(p)(\ZZ/p^n\ZZ)\times(\ZZ/p^n\ZZ)^\times$-torsor (where $I_{GL_2}(p)$ is the Iwahori subgroup of $GL_2(\ZZ_p)$), given by
\[ \Isom_{mod p}(H,(\ZZ/p^n\ZZ)^2) \times \Isom(G^+/H,\ZZ/p^n\ZZ),\]
where $mod p$ means that an isomorphism $\phi$ induces an isomorphism of $H^1_p$ inside $p^{n-1}\ZZ/p^n\ZZ e_1$.
Thus, for $\chi'$ a character of $I_{GL_2}(p)(\ZZ/p^n\ZZ)\times(\ZZ/p^n\ZZ)^\times$, i.e. of the form $(\chi_1 \circ \det,\chi_3)$, we have an invertible sheaf $\mathcal O(\chi')$ on $\mathcal X_{P_n}^{tor}$ and thus a sheaf $\omega^\kappa(\chi')$. The sheaf $\omega^\kappa(\chi)$ on $\mathcal X_0(p^n)^{tor}$ descend to $\mathcal X_{P_n}^{tor}$ if and only if $\chi = (\chi_1,\chi_1,\chi_3)$ and coincide with $\omega^\kappa(\chi')$ with $\chi' = (\chi_1\circ\det,\chi_3)$.

\begin{prop}
In the split case, if $v < \frac{1}{2p^{n-1}}$, the canonical subgroup induces an isomorphism,
\[ \mathcal X_0(p)^{tor}(v) \fleche \mathcal X_{P_n}^{tor}(v).\]
Here we really mean the $\mu$-canonical locus (and not the full $\mu$-ordinary locus).
\end{prop}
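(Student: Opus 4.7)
The plan is to construct mutually inverse morphisms between the two loci. In the forward direction, denote by $\phi$ the map sending an Iwahori point $(A, \iota, \lambda, \eta, H_1^p, H_2^p) \in \mathcal X_0(p)^{tor}(v)$ to $(A, \iota, \lambda, \eta, H_1^p, H_2^p, C_n)$, where $C_n \subset G^+[p^n]$ is the canonical subgroup of level $n$; the candidate inverse $\psi$ is the forgetful map $(A, H_1^p, H_2^p, H) \mapsto (A, H_1^p, H_2^p)$.

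First I would invoke Theorem \ref{thrfiltcan} to produce $C_n$. In the split case, $G^+$ carries a trivial $\mathcal O_v = \ZZ_p$-action and has signature $(p_\tau) = (2)$ with a single embedding, so the bound $v < 1/(2p^{n-1})$ is exactly the classical Katz--Lubin--Fargues bound guaranteeing $\deg C_n > 2n - 1/2$, hence the existence and uniqueness of $C_n$. The level-$1$ part of Theorem \ref{thrfiltcan} then identifies $C_n[p]$ with the canonical subgroup of level $1$, which on the $\mu$-canonical locus coincides with $H_2^p$ by Definition \ref{defmucan}. Next I would check that $\phi$ lands in $\mathcal X_{P_n}^{tor}(v)$: the Hasse invariant depends only on $A$, and on the $\mu$-ordinary points $C_n$ equals the multiplicative part of $G^+[p^n]$, so the image lies in the correct connected component of the $\mu$-canonical locus.

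The hard part will be verifying $\phi \circ \psi = \mathrm{id}$, i.e.\ that every $(A, H_1^p, H_2^p, H) \in \mathcal X_{P_n}^{tor}(v)$ satisfies $H = C_n$. My plan is to argue via the $\mu$-canonical characterisation (Definition \ref{defmucan}): this locus is by construction the connected component of $\mathcal X_{P_n}^{tor,\mu-full}(v)$ meeting the $\mu$-ordinary points, where $H$ must equal the multiplicative part of $G^+[p^n]$ and so has maximal degree $2n$. Since $v < 1/(2p^{n-1})$, a continuity argument for the degree function on this connected component gives $\deg H > 2n - 1/2$ throughout, and the uniqueness in Theorem \ref{thrfiltcan} then forces $H = C_n$. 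The opposite composition $\psi \circ \phi = \mathrm{id}$ is immediate from the construction of $\phi$.
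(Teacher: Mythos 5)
Your construction of $\phi$ and $\psi$ is the right skeleton, but the verification $\phi\circ\psi=\mathrm{id}$ contains a genuine gap. The ``continuity argument'' claiming $\deg H > 2n - \tfrac12$ throughout the $\mu$-canonical component is not valid: the degree function is continuous and equals $2n$ at the $\mu$-ordinary locus, but nothing forces it to stay above $2n-\tfrac12$ on the full component cut out by the condition $v(\ha)<v$; continuity is not a quantitative estimate. You try to supply the quantitative input by asserting that $v<\tfrac{1}{2p^{n-1}}$ is ``exactly'' the bound guaranteeing $\deg C_n>2n-\tfrac12$, but this is false for $n\geq 2$. Fargues' hypothesis $\ha<\tfrac{1}{2p^{n-1}}$ is the existence bound for the level-$n$ canonical subgroup, and the resulting degree estimate for $G^+[p^n]/C_n$ (a geometric sum of the form $\tfrac{p^n-1}{p-1}\ha$) can exceed $\tfrac12$: for $p=2,n=2$ one only gets $\deg C_n > 2n-\tfrac34$, which is below $2n-\tfrac12$. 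So the uniqueness criterion of Theorem \ref{thrfiltcan}(1) cannot be invoked this way, and $H$ need not a priori satisfy the degree inequality at all. Also note Theorem \ref{thrfiltcan} gives uniqueness only; existence of $C_n$ is Fargues' theorem, which your phrasing conflates with it.

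The fix is to drop the degree criterion entirely. Over the rigid generic fibre the group $G^+[p^n]$ is finite \'etale, so the scheme of such subgroups $H$ is finite \'etale over the base and the forgetful map $\mathcal X_{P_n}^{tor}\to\mathcal X_0(p)^{tor}$ is \'etale. Both the tautological $H$ and the section $C_n$ coming from $\phi$ are then sections of a finite \'etale map over the $\mu$-canonical locus; the locus where two such sections agree is open and closed, and it contains the $\mu$-ordinary points by Proposition \ref{Pro58} and Definition \ref{defmucan}, hence is all of each connected component. The same rigidity argument gives $C_n[p]=C_1=H_2^p$ on $\mathcal X_0(p)^{tor}(v)$, which is what makes $\phi$ land in $\mathcal X_{P_n}^{tor}$ in the first place. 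The bound $v<\tfrac{1}{2p^{n-1}}$ is used only to guarantee Fargues' existence of $C_n$ over the whole locus, not to force a degree inequality.
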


\begin{rema}
If $p > 2$, is inert, the results of \cite{Her2} give, for $v < \frac{1}{4p^{n-1}}$, an isomorphism $\mathcal X(v) \fleche \mathcal X_0(p^n)(v)$.
\end{rema}

Recall the following result of Rogawski (see \cite{BC2} section 6.9.6 and \cite{Bellnofam} section 2.7). Fix first a Hecke character $\mu$ as in \cite{BC2} Lemma 6.9.2(iii).

\begin{theor}[Rogawski]
Suppose $\ord_{s=0} L(\chi,s)$ is even and non zero. Then there exists a representation $\pi^n$, automorphic for $U(2,1)$ and cuspidal such that for every prime $x$ split in $E$,
\[ L(\pi^n_x) = \mu|.|^{-\frac{1}{2}}(L(\overline\chi\oplus 1 \oplus |.|).\]
\end{theor}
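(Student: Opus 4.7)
The plan is to realize $\pi^n$ as a member of a non-tempered global Arthur packet on $U(2,1)$ attached to $\chi$, appealing to Rogawski's endoscopic classification of the automorphic spectrum of $U(2,1)$ via stable base change to $GL_3/E$. Concretely, $\pi^n$ will be a CAP (cuspidal associated to parabolic) representation whose existence is controlled by the parity of $\mathrm{ord}_{s=0} L(\chi,s)$.

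First I would form, on the $GL_3$-side, the isobaric automorphic representation
\[ \Pi \;=\; \mu \cdot \bigl(\overline{\chi}|.|^{-1/2} \boxplus |.|^{-1/2} \boxplus |.|^{1/2}\bigr) \]
of $GL_3(\mathbb{A}_E)$. The polarization hypothesis $\chi^\perp = \chi |.|^{-1}$, together with the analogous property defining $\mu$, ensures that $\Pi$ is conjugate self-dual of the correct parity, so its Langlands parameter factors through the $L$-group of the quasi-split $U(2,1)$ as a non-tempered (Saito--Kurokawa-type) $A$-parameter $\psi$, with component group $\mathcal{S}_\psi \simeq \mathbb{Z}/2\mathbb{Z}$. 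Rogawski's trace-formula comparison then attaches to $\psi$ a global $A$-packet $\Pi(\psi) = {\otimes}'_x \Pi(\psi_x)$ whose stable base change is $\Pi$. The local packets $\Pi(\psi_x)$ are singletons at every split and every unramified place, consisting of the Langlands quotient whose parameter is $\mu|.|^{-1/2}(\overline\chi \oplus 1 \oplus |.|)$; this singleton structure at split $x$ is precisely what produces the $L$-factor formula asserted in the theorem. At the remaining finitely many places the packet has two elements, one non-tempered induction and one discrete-series representation.

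The heart of the matter is to choose local components so that the resulting $\pi^n = \bigotimes \pi^n_x \in \Pi(\psi)$ lies in the \emph{cuspidal} (as opposed to residual) spectrum. Rogawski's multiplicity formula asserts that such a $\pi^n$ occurs cuspidally with multiplicity one precisely when a global sign $\prod_x \langle \pi^n_x, s_\psi \rangle$ equals $+1$, where $s_\psi$ generates $\mathcal{S}_\psi$ and $\langle \cdot, s_\psi \rangle$ is Rogawski's local character on $\Pi(\psi_x)$. A computation of these local characters identifies the global sign with the root number $\varepsilon(1/2,\chi) = (-1)^{\mathrm{ord}_{s=0} L(\chi,s)}$. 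The hypothesis that this order is \emph{even} therefore makes the sign equal to $+1$, which can be matched by a suitable choice at the finite ``two-element'' set of places, producing a member of the packet in the discrete spectrum. The hypothesis that the order is \emph{nonzero} rules out the possibility that this member is the residue of an Eisenstein series (the residual case corresponds to $L(\chi,0) \neq 0$), so $\pi^n$ is forced into the cuspidal spectrum.

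The main obstacle is establishing the multiplicity formula itself and carrying out the local-sign calculation; both are heavy inputs drawn from Chapters 13--14 of Rogawski's monograph and from the companion papers of Gelbart--Rogawski, and they are already used as a black box in \cite{BC1,BC2,Her3}. In practice one would quote Rogawski rather than reprove these ingredients here.
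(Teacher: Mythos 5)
The paper does not prove this theorem at all: it is quoted as a known result of Rogawski, with pointers to \cite{BC2} section 6.9.6 and \cite{Bellnofam} section 2.7, and is used as a black box throughout section \ref{sect10}. So there is no internal proof to compare your sketch against — you have attempted to reconstruct the content of the references.

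Your reconstruction is broadly in the right shape (Arthur/Rogawski $A$-packet, component group $\ZZ/2\ZZ$, singleton packets at split places giving the asserted local $L$-factor, a global multiplicity condition keyed to $\eps(1/2,\chi)$, and the nonvanishing-of-order hypothesis killing the residual possibility). A few wrinkles are worth flagging. First, the multiplicity formula you want is of the shape $m(\pi) = \frac{1}{2}\bigl(1 + \eps(\chi)\prod_x\langle\pi_x,s_\psi\rangle\bigr)$, so the criterion for $\pi$ to be discrete-automorphic is $\prod_x\langle\pi_x,s_\psi\rangle = \eps(\chi)$, not "$=+1$" per se; the value $+1$ enters because the hypothesis of even vanishing order forces $\eps(\chi)=+1$. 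Second, there is no separate "computation of local characters that identifies the global sign with the root number" — the identification \emph{is} the multiplicity formula. Third, since the theorem fixes $\pi^n_x$ to be the Langlands quotient at every split $x$ (that is what the asserted $L$-factor formula says), and since $\langle\cdot,s_\psi\rangle \equiv +1$ on the singleton packets, you cannot really "choose" much; what you actually do, once $\eps(\chi)=+1$, is take the nontempered member of the packet at every remaining place, getting $\prod = +1 = \eps(\chi)$, hence a discrete member; $L(\chi,0)=0$ then kills the residual representation (which would be exactly this choice when $L(\chi,0)\neq 0$), so the member is cuspidal. These are minor corrections to a sketch whose heavy inputs — the endoscopic classification and the multiplicity formula — are, as you say, drawn verbatim from Rogawski and are used as a black box in the paper and its predecessors; the paper simply cites them rather than reproving anything.
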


\begin{rema}
This representation $\pi^n$ is slightly different from the one of \cite{BC1} or \cite{Her3}, it is a twist of the latter by $L(\overline\chi)\mu|.|^{-\frac{1}{2}}$.
\end{rema}

\begin{prop}
\label{propkappa}
Suppose $\ord_{s=0} L(\chi,s)$ is even and non zero. Denote $n_0 = v_p(\Cond(\chi))$. 
Denote by $\chi'$ if $p$ is split the character $(1,1,\chi_p)$ and if $p$ is inert the character $(\chi_p,1)$ of 
$(\mathcal O/p^n\mathcal O)^\times \times (\mathcal O/p^n\mathcal O)^1$. Denote also $\kappa$ the classical weight corresponding to
\[ (a-1,0,1) \in \ZZ^3_{dom}. \]
Then the Hecke eigensystem (away from $p\Cond(\chi)$) of $\pi^n$ appears in $H^0(\mathcal X(\eps),\omega^{\kappa'\chi'\dag}_w(-D))$ for all $n \geq n_0$ and $w \in ]n-1,n-\eps[$ for $\eps$ small 
enough.
\end{prop}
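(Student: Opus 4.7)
The plan is to realize the Hecke eigensystem of Rogawski's $\pi^n$ as the system of eigenvalues of a \emph{classical} Picard modular form of weight $\kappa$ and nebentypus $\chi'$, and then invoke Proposition \ref{chiclass} to pass to the overconvergent module. The archimedean and finite-prime analyses are essentially independent and can be done separately.

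For the archimedean part, the assumption $\chi_\infty(z) = z^a\overline{z}^{1-a}$ with $a \geq 1$ and that $\ord_{s=0}L(\chi,s)$ is even and non-zero pins down $\pi^n_\infty$ as a holomorphic discrete series whose Harish-Chandra parameter matches the weight $\kappa = (a-1,0,1) \in \ZZ^3_{dom}$. This is exactly the analysis of \cite{Her3}, Propositions 11.1 and 11.2, which does not see the prime $p$ and in particular does not require $p \nmid \Cond(\chi)$; so $\pi^n$ contributes to the $(\mathfrak g, K_\infty)$-cohomology in weight $\kappa$ and hence, after descent, to coherent cohomology of weight $\kappa$ on $X_0(p^n)^{tor}$.

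For the local analysis at $p$, the representation $\pi^n_p$ is a (unitarily induced) principal series built from $\chi_p$, $1$ and $|\cdot|$ in the split case, and its inert analogue otherwise. Since $v_p(\Cond \chi_p) = n_0$, for any $n \geq n_0$ the space $\pi^n_p$ has a non-zero vector fixed by the level-$p^n$ subgroup of the Iwahori, on which the torus $T_n$ (which parametrizes the $T_n$-torsor $\mathcal X_0^+(p^n) \to \mathcal X_0(p^n)$) acts by a character, and one identifies this character with $\chi'$. Concretely, in the split case $T_n = ((\ZZ/p^n\ZZ)^\times)^3$ acts on the three refinement eigenvalues $p^{1-k_3}F_1, p^{k_2-1}F_2, p^{k_1}F_3$ (cf.\ \cite{Her3} \S10.6), and matching the $\chi_p$-component of $\pi^n_p$ with the third Frobenius eigenvalue gives $\chi' = (1,1,\chi_p)$; in the inert case $T_n = (\mathcal O/p^n\mathcal O)^\times \times (\mathcal O/p^n\mathcal O)^1$ and the same calculation yields $\chi' = (\chi_p, 1)$. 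Combining with the archimedean part produces a non-zero classical form $f \in H^0(X_0(p^n)^{tor}, \omega^\kappa(\chi')(-D))$ realizing the Hecke eigensystem of $\pi^n$ away from $p\Cond(\chi)$.

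Finally, restrict $f$ to the strict neighborhood $\mathcal X(\eps)$ and apply Proposition \ref{chiclass}: for $w \in ]n-1, n-\eps_\tau[$ there is a natural injection $\omega^\kappa(\chi') \hookrightarrow \omega^{\kappa'\chi'\dag}_w$, and the image of $f$ provides the required element of $H^0(\mathcal X(\eps), \omega^{\kappa'\chi'\dag}_w(-D))$. The main obstacle is the second step: keeping track of which component of $T_n$ carries $\chi_p$ (and splitting into the split and inert cases), i.e.\ extending the local matching of refinements done in \cite{Her3} under the hypothesis $p \nmid \Cond(\chi)$ to the ramified case $n_0 > 0$; once this identification of nebentypi is in hand, the rest of the argument follows the template of \cite{Her3}.
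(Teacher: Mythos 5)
Your proposal has the right three-part skeleton (archimedean analysis $\Rightarrow$ classical form of weight $\kappa$ with nebentypus $\chi'$ $\Rightarrow$ restrict and apply Proposition \ref{chiclass}), but the route you take for the key step --- identifying the nebentypus at $p$ --- is genuinely different from the paper's, and this is where your argument is left incomplete.

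The paper does not recompute the local nebentypus from scratch. Instead it observes that $\pi^n$ is the twist of the representation $\pi^n(\chi)$ of \cite{Her3} by the algebraic Hecke character $\overline\chi\,\mu|\cdot|^{-1/2}$. Writing $\overline\chi = \chi_0^{-1}|\cdot|^{1/2}$, the twisting character equals $\chi_0^{-1}\mu$, so $\pi^n = \bigl(\pi^n(\chi)\otimes\chi_0^{-1}\bigr)\otimes\mu$. Bellaïche--Chenevier (\cite{BC1} Proposition 4.2) already determine the type $(K_J,J)$ of $\pi^n(\chi)\otimes\chi_0^{-1}$ at every prime ramified for $\chi$, and since $\mu$ is unramified at $p$, the type of $\pi^n$ at $p$ is \emph{identical} --- no local computation required. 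The algebraic weight $\kappa$ is extracted the same way: it is the one of $\pi^n(\chi)$ shifted by the infinity-type of the algebraic twisting character, which, together with \cite{Her3} Proposition D.2 for the contribution to coherent cohomology, gives $\kappa$ directly. This is why the paper's proof is short.

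By contrast, you propose to inspect $\pi^n_p$ directly as a (quotient of a) principal series in $\chi_p, 1, |\cdot|$, locate a vector fixed by the level-$p^n$ Iwahori, and read off the $T_n$-character from the ordering of refinement eigenvalues. This \emph{should} give the same answer, but it is precisely the part you flag as "the main obstacle" and do not carry out. Two specific issues remain open in your sketch: (i) you conflate the character by which $T_n$ acts on the torsor $\mathcal X_0^+(p^n)\to\mathcal X_0(p^n)$ (which sees the restriction of the inducing characters to $\mathcal O^\times$) with the Frobenius eigenvalues $p^{1-k_3}F_1, p^{k_2-1}F_2, p^{k_1}F_3$ (which see their values at $p$); a Casselman-style Iwahori decomposition is needed to put these in bijection, and you don't write it out. (ii) Your archimedean references (\cite{Her3} Props.\ 11.1, 11.2) concern refinement/Frobenius relations, not the Harish-Chandra parameter computation; the relevant reference for the coherent-cohomology contribution is \cite{Her3} Prop.\ D.2. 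So: the structure is right, but the step you call the obstacle is the actual content of the proposition, and the paper's twist trick is what dissolves it; if you want to keep your route you must supply the Iwahori-level computation you are currently deferring.
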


\begin{proof}
Indeed we checked that $\pi^n$ contributes to the coherent first cohomology group in \cite{Her3} Proposition D.2. More precisely we checked that its restriction to $SU(2,1)$ appears with $K$-type corresponding to $\kappa$ restricted to $SU(2,1)$. As $\pi^n$ is a twist of the representation denoted $\pi^n(\chi)$ in \cite{Her3} by 
$\overline\chi \mu|.|^{-1/2}$, which is algebraic, we can calculate its algebraic weight $\kappa$ and check that $\kappa = (a-1,0,-1)$\footnote{We could also argue directly as in \cite{Her3} relating $\kappa$ to the Hodge-Tate weights of $\rho_{\pi^n}$ on the Eigenvariety $\mathcal E$}.
Moreover Bellaiche-Chenevier (\cite{BC1} Proposition 4.2) proved that $\pi^n(\chi)\otimes \chi_0^{-1}$ was of a certain type $(K_J,J)$  at ramified primes for $\chi$. As 
$\overline{\chi} = \overline{\chi_0}|.|^{\frac{1}{2}}=\chi_0^{-1}|.|^{\frac{1}{2}}$ and both $|.|$ and $\mu$ are unramified at $p$, we deduce that the twist $\pi^n$ is of the same type as $\pi^n(\chi)\otimes \chi_0^{-1}$ (which is obviously trivial if $\chi$ is unramified at $p$). Thus $\pi^n$ is of nebentype $\chi'$, and we deduce the previous result from proposition \ref{chiclass}.
\end{proof}

We need to take care of the action at $p$ of the Iwahori algebra $\mathcal A(p)$. This is well known in the case of $GL_2$ (see \cite{ColHigh}). Denote the higher-Iwahori subgroup
\[ I_n^+ = 
\left(
\begin{array}{ccc}
1+p^n\mathcal O  & \star  & \star  \\
p^n\mathcal O  & 1+p^n\mathcal O  &\star   \\
p^n\mathcal O  &p^n\mathcal O   &   1+p^n\mathcal O
\end{array}
\right) \cap G(\QQ_p),
\]
where $G(\QQ_p) = \GL_3(\QQ_p)$ if $p$ is split, and $U(2,1)(\QQ_p) = U(3)(\QQ_p)$ otherwise. We 
could do everything for $GU(2,1)$ or $\GL_3 \times \GL_1$ (if $p$ splits) but it doesn't change anything.
$I_n^+$ has a natural Iwahori decomposition $I_n^+ = \overline{N}_n \times T_n^+ \times N_n$ (and $N_n = N$), and thus if we denote $\Sigma^+$ the elements of the form 
\[
\left(
\begin{array}{ccc}
 p^{a_1} &   &   \\
  & p^{a_2}  &   \\
  &   &   p^{a_3}
\end{array}
\right) \quad \text{with } a_1 \geq a_2 \geq a_3,
\]
if $p$ splits, and 
\[
\left(
\begin{array}{ccc}
 p^{a_1} &   &   \\
  & p^{a_2}  &   \\
  &   &   p^{-a_1}
\end{array}
\right) \quad \text{with } a_1 \geq a_2,
\] if $p$ is inert. Denote by $\Sigma$ the group generated by $\Sigma^+$ and their inverse. \todo{Doit-on ajouter tout $T/T_n^+$? En même temps quel interet vraiment...? On sait que $\chi(p)$ est different de $1$ et $p^{-1}$}

\begin{prop} Denote by $\mathcal A_n^{+,0}(p)$ the sub-algebra of $\mathcal H(G(\QQ_p)//I_n^+)$ generated by the double class caracteristic functions
\[ 1_{I_n^+aI_n^+}, \quad a \in \Sigma^+.\]
$\mathcal A_n^{+,0}(p)$ is commutative. Denote by $\mathcal A_n^+(p)$ the algebra generated over $\QQ_p$ by $\mathcal A_n^{+,0}(p)$ and the inverse of the elements $1_{I_n^+aI_n^+}$. It is canonically isomorphic to $\Sigma$ and thus to $\mathcal A(p)$.
\end{prop}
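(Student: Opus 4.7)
The plan is to prove the standard Bernstein presentation of the ``Iwahori--Hecke algebra of a contracting monoid'', transcribed here to the higher Iwahori level $I_n^+$. The argument breaks into three steps.

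Step 1 is to verify the \emph{contracting property} of $\Sigma^+$: for every $a \in \Sigma^+$ I would check that $a^{-1} N_n a \subset N_n$ and $a \overline{N}_n a^{-1} \subset \overline{N}_n$, and that $a$ centralises $T_n^+$. In the split case, writing $a = \Diag(p^{a_1}, p^{a_2}, p^{a_3})$ with $a_1 \geq a_2 \geq a_3$, conjugation multiplies the $(i,j)$-entry of a unipotent by $p^{a_j - a_i}$, and this integer is non-negative on $N_n$ (the upper unipotent) and non-positive on $\overline N_n$, so the $p^n$-congruences defining $N_n$ and $\overline N_n$ are preserved. The inert case reduces to the same computation after spelling out the cocharacters of the diagonal torus of $U(3)$; the condition $a_1 \geq a_2$ combined with the third diagonal coefficient $p^{-a_1}$ gives the needed sign pattern.

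Step 2 is the key calculation: using the Iwahori decomposition $I_n^+ = \overline N_n \cdot T_n^+ \cdot N_n$ together with Step 1, I would establish the convolution identity
$$1_{I_n^+ a I_n^+} \cdot 1_{I_n^+ b I_n^+} \;=\; 1_{I_n^+ a b I_n^+} \qquad (a,b \in \Sigma^+).$$
Indeed, set-theoretically $(I_n^+ a I_n^+)(I_n^+ b I_n^+) = I_n^+ a b I_n^+$ by the same conjugation computation, and the fibres of the multiplication map on single cosets are singletons because the factorisation $\overline N_n \cdot ab \cdot N_n \cdot T_n^+$ of a given element of $I_n^+ ab I_n^+$ is unique (this is where $a^{-1} N_n a \subset N_n$ is used to absorb the middle terms). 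Since $\Sigma^+$ is abelian this identity shows that $\mathcal A_n^{+,0}(p)$ is commutative, and the map $a \mapsto 1_{I_n^+ a I_n^+}$ extends $\QQ_p$-linearly to a surjective monoid-algebra homomorphism $\QQ_p[\Sigma^+] \twoheadrightarrow \mathcal A_n^{+,0}(p)$. It is also injective, since distinct elements of $\Sigma^+$ have disjoint double cosets in $G(\QQ_p)$ by the Cartan decomposition.

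Step 3 is to invert. Inverting the elements $1_{I_n^+ a I_n^+}$ in $\mathcal A_n^+(p)$ corresponds, under the isomorphism of Step 2, to localising $\QQ_p[\Sigma^+]$ at the image of $\Sigma^+$, which yields the group algebra $\QQ_p[\Sigma]$. This gives the canonical identification $\mathcal A_n^+(p) \simeq \QQ_p[\Sigma]$. Applying the same argument with $n=1$ identifies $\mathcal A(p)$ with $\QQ_p[\Sigma]$, and composing these two identifications produces the canonical isomorphism $\mathcal A_n^+(p) \simeq \mathcal A(p)$ sending $1_{I_n^+ a I_n^+}$ to $1_{I a I}$. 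The only mildly delicate point is the multiplicity statement in Step 2, which is really just bookkeeping once the contracting property is in hand; the higher level $n \geq 1$ changes nothing essential because $\Sigma^+$ was chosen precisely so that its conjugation action stabilises the level-$n$ congruence subgroups $N_n$ and $\overline N_n$.
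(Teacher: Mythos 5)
Your proposal and the paper's proof take genuinely different routes. The paper proves commutativity by a one-line citation of Casselman's ``Introduction to the theory of admissible representations,'' Lemma 4.1.5, which is precisely the Bernstein/Iwahori--Matsumoto convolution identity you re-derive; the description of $\mathcal A_n^+(p)$ as a localisation of the monoid algebra is then left implicit. You instead re-prove everything from the contracting property of $\Sigma^+$ and the Iwahori decomposition of $I_n^+$. That is a legitimate, self-contained variant; its advantage is that it makes visible that the only inputs are the Iwahori decomposition of $I_n^+$ and its stability under $\Sigma^+$-conjugation, so the argument works uniformly in $n$ with no extra work.

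However, the sign in your Step 1 is backwards, and the numerical check offered to support it is inconsistent with the claimed inclusion. For $a = \Diag(p^{a_1}, p^{a_2}, p^{a_3})$ with $a_1 \geq a_2 \geq a_3$ one has $(a^{-1} u a)_{ij} = p^{a_j - a_i} u_{ij}$; for the upper unipotent ($i < j$) the exponent $a_j - a_i$ is $\leq 0$, not $\geq 0$, so $a^{-1} N_n a \supset N_n$ rather than $\subset$. The correct contracting relations for $a \in \Sigma^+$ are $a N_n a^{-1} \subset N_n$ and $a^{-1}\overline N_n a \subset \overline N_n$, i.e.\ the monoid $\Sigma^+$ contracts $N_n$ and dilates $\overline N_n$ under conjugation by $a$, which is what the convolution calculation in Step 2 actually needs (one passes $N_n$ through $a$ to the right and $\overline N_n$ through $a$ to the left when computing $I_n^+ a I_n^+ \cdot I_n^+ b I_n^+$). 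With the signs corrected, the double-coset multiplicity-one claim in Step 2 is the standard computation, and Step 3 (localising $\QQ_p[\Sigma^+]$ at the image of $\Sigma^+$ to obtain the group algebra $\QQ_p[\Sigma]$) is fine as written.
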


\begin{proof}
$\mathcal A_n^+(p)$ is commutative by \cite{Casselman} Lemma 4.1.5.
\end{proof}

\begin{rema}
The canonical isomorphism $\Sigma \fleche \mathcal A_n^+(p)$ sends $a \in \Sigma^+$ to the corresponding double class, but this is not true for all $a \in \Sigma$, just like the case of $\mathcal A(p)$. The double class are not invertible in general (if $n > 1$ at least, see \cite{OggEv} Lemma 2 for (new) modular forms, but this is true if $n=1$, \cite{Vig}).
\end{rema}

There is thus an Hecke operator acting on $\mathcal X_0^+(p^n)$ corresponding to the double class $1_{I_n^+aI_n^+}$ where in the inert case 
\[
a = \left(
\begin{array}{ccc}
 p &   &   \\
  & 1  &   \\
  &   &  p^{-1}
\end{array}
\right)
\] 
and in the split case,
\[
a = \left(
\begin{array}{ccc}
 p &   &   \\
  & 1  &   \\
  &   &  1
\end{array}
\right) \quad \text{or} \quad a = \left(
\begin{array}{ccc}
 p &   &   \\
  & p  &   \\
  &   &  1
\end{array}
\right).
\]
We call respectively $U^n_p$, $U^n_{p,1},U^n_{p,2}$ the corresponding operators. These operators can be defined on the moduli problem $\mathcal X_0^+(p^n)$ and commutes with their counterparts on $\mathcal X_0(p)=:\mathcal X$ (see for example \cite{PS1} section 8.2), in the sense that for one of these operators, say $g$, if we denote the correspondance $C$ and $C_n = C \times_{\mathcal X} \mathcal X_0^+(p^n)$, with $\pi_g^n$ and $\pi_g$ the universal isogeny on $C_n$ and $C$, we thus have commutatives diagrams,
\begin{center}
\begin{tikzpicture}[description/.style={fill=white,inner sep=2pt}] 
\matrix (m) [matrix of math nodes, row sep=3em, column sep=2.5em, text height=1.5ex, text depth=0.25ex] at (0,0)
{ 
 &  C_n  & \\
 \mathcal X_0^+(p^n) & & \mathcal X_0^+(p^n)\\
 & C & \\
 \mathcal X & & \mathcal X\\
};
\path[->,font=\scriptsize] 
(m-1-2) edge node[auto] {$p_2$} (m-2-3)
(m-1-2) edge node[auto] {$p_1$} (m-2-1)
(m-1-2) edge node[auto] {$\pi$} (m-3-2)
(m-3-2) edge node[auto] {$p_1$} (m-4-1)
(m-3-2) edge node[auto] {$p_2$} (m-4-3)
(m-2-1) edge node[auto] {$\pi$} (m-4-1)
(m-2-3) edge node[auto] {$\pi$} (m-4-3)
;
\end{tikzpicture}
\end{center}
and a commutative diagram,
\begin{center}
\begin{tikzpicture}[description/.style={fill=white,inner sep=2pt}] 
\matrix (m) [matrix of math nodes, row sep=3em, column sep=2.5em, text height=1.5ex, text depth=0.25ex] at (0,0)
{ 
\mathcal IW^+_{\mathcal X_0^+(p^n)} \times C_n & & \mathcal IW^+_{\mathcal X_0^+(p^n)} \times C_n\\
\mathcal IW^+_{\mathcal X}\times C & &\mathcal IW^+_{\mathcal X}\times C \\
};
\path[->,font=\scriptsize] 
(m-1-1) edge node[auto] {$\pi_g^n$} (m-1-3)
(m-2-1) edge node[auto] {$\pi_g$} (m-2-3)
(m-1-1) edge node[auto] {$p$} (m-2-1)
(m-1-3) edge node[auto] {$p$} (m-2-3)
;
\end{tikzpicture}
\end{center}

The normalisation of the maps $\pi^n_g$ and $\pi_g$ can be done the same way, and we thus deduce that the operators $U_{p,\star}$ in level $\mathcal X$ and $U_{p,\star}^n$ 
in level $ \mathcal X_0^+(p^n)$ commutes with the pullback by $\pi$ (i.e. $U_{p,\star}^n(\pi^*f) = \pi^*(U_{p,\star}f)$). Thus, these operators defined on $\omega_w^{\kappa\dag}$ 
for any $\kappa \in \mathcal W$ $w$-analytic (with $w \in ]n-1,n-\eps_0[$) are the same once we identify (invariant by $T_n$) sections on some small neighborhood $\mathcal 
X_0^+(p^n)(v)$ of $\omega_w^{\kappa^0\dag}(-\kappa_{|T_n})$ with sections of $\omega_w^{\kappa\dag}$ on some small neighborhood $\mathcal X(v)$.

In particular to understand the action of $\mathcal A(p)$ on the forms corresponding to $\pi^n$ which appears in $H^0(\mathcal X(v),\omega_w^{\kappa\chi'\dag})$ for $v$ 
small enough, we need to understand the action of $\mathcal A_n^+(p)$ on $\pi^n_p$. 

\begin{defin}
If $\pi$ is a representation of $G(\QQ_p)$, denote by
\[ (\pi^{I_n^+})^{fs} := 1_{I_n^+aI_n^+}(\pi^{I_n^+}),\]
where $a$ is the diagonal element corresponding to $U_p^n$ if $p$ is inert, and $U_{p,1}^nU_{p,2}^n$ if $p$ is split (in other words $a$ is the double class corresponding to the compact operator $U_p$ in the text). This coincides with the space $V_{A^-}^{K_0}$ of \cite{Casselman}, Proposition 4.1.6.
\end{defin}

By \cite{Casselman} Lemma 4.1.7, this space $(\pi^{I_n^+})^{fs}$ is endowed with an action of $\mathcal A^+_n(p)$. 

\begin{prop}
Let $\pi$ be a representation of $G(\QQ_p)$. Write $I_n^+ = \overline{N}_nT_n^+N_n$ its Iwahori decomposition.
Then, as $\Sigma = \mathcal A_n^+(p)$-module,
\[ (\pi^{I_n^+})^{fs} = (\pi_{N_n})^{T_n^+} \otimes  \delta_B^{-1},\]
\end{prop}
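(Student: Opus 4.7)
The plan is to follow Casselman's classical theory of Jacquet modules and Iwahori invariants (\cite{Casselman} \S 4). I will construct a natural $\Sigma^+$-equivariant map from $\pi^{I_n^+}$ to $(\pi_{N_n})^{T_n^+} \otimes \delta_B^{-1}$ by restricting the Jacquet projection to Iwahori-fixed vectors, and then invoke Casselman's canonical lift to show that this map becomes an isomorphism after restriction to the finite slope part.

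First I construct the map. The inclusion $\pi^{I_n^+} \subset \pi$ composed with the Jacquet projection $\pi \twoheadrightarrow \pi_{N_n}$ yields a linear map
\[ p_{N_n} : \pi^{I_n^+} \longrightarrow \pi_{N_n}. \]
Since $T_n^+ \subset I_n^+$ acts trivially on the source and $p_{N_n}$ is $T$-equivariant, the image lies automatically in $(\pi_{N_n})^{T_n^+}$.

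Next I compute how the Hecke operators act. Using the Iwahori decomposition $I_n^+ = \overline{N}_n T_n^+ N_n$ together with the fact that $a \in \Sigma^+$ contracts $\overline{N}_n$ while expanding $N_n$, one obtains the standard coset decomposition
\[ I_n^+ a I_n^+ = \bigsqcup_i n_i \, a \, I_n^+, \]
where $\{n_i\}$ runs over a set of representatives of $N_n / (a N_n a^{-1})$, and a direct computation gives $[N_n : a N_n a^{-1}] = \delta_B(a)^{-1}$. For $v \in \pi^{I_n^+}$, using that $N_n$-translates project to zero in $\pi_{N_n}$,
\[ p_{N_n}\bigl(1_{I_n^+ a I_n^+}\cdot v\bigr) \;=\; \sum_i p_{N_n}(n_i a v) \;=\; [N_n : a N_n a^{-1}]\cdot a\cdot p_{N_n}(v) \;=\; \delta_B^{-1}(a)\cdot a\cdot p_{N_n}(v), \]
which is exactly the action of $a$ on the target $(\pi_{N_n})^{T_n^+} \otimes \delta_B^{-1}$. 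Thus $p_{N_n}$ is a morphism of $\Sigma^+$-modules.

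The main step will be to show that the restriction $p_{N_n}|_{(\pi^{I_n^+})^{fs}}$ is an isomorphism onto $(\pi_{N_n})^{T_n^+}$. This is Casselman's canonical lift, in the spirit of \cite{Casselman} Proposition 4.1.6 already quoted above: for $a \in \Sigma^+$ strictly dominant, the operator $1_{I_n^+ a I_n^+}$ is topologically nilpotent on $\ker(p_{N_n}) \cap \pi^{I_n^+}$, while every $\overline v \in (\pi_{N_n})^{T_n^+}$ admits a unique lift to $\pi^{I_n^+}$ on which $1_{I_n^+ a I_n^+}$ acts with the prescribed nonzero eigenvalue. Hence upon restricting to the finite slope part the kernel vanishes and the map becomes surjective; the $\Sigma^+$-action then extends uniquely to a $\Sigma$-action as $1_{I_n^+ a I_n^+}$ is invertible on both sides, yielding the asserted identification of $\mathcal A_n^+(p)$-modules. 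This third step is the main obstacle, but it is entirely classical.
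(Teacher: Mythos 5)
Your proof follows the same route as the paper's (which references \cite{Casselman} Proposition 4.1.4 via \cite{BC2} Proposition 6.4.3): define the map by the Jacquet projection on Iwahori invariants, compute the $\delta_B^{-1}$-twisted Hecke equivariance by a coset decomposition, and invoke Casselman's canonical lift for the isomorphism on finite-slope parts. The structure and the conclusion are correct, but two of the intermediate justifications are stated backwards. With $B$ the upper Borel, a dominant $a\in\Sigma^+$ \emph{contracts} $N_n$ (so $aN_na^{-1}\subset N_n$, which is exactly what makes $N_n/aN_na^{-1}$ a finite quotient indexing the coset representatives) and \emph{expands} $\overline{N}_n$; you asserted the opposite, which, taken literally, would make the coset space $N_n/aN_na^{-1}$ ill-defined. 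Likewise, the Jacquet projection does not send $N_n$-translates to zero; it is $N_n$-invariant, $p_{N_n}(nw)=p_{N_n}(w)$, which is what actually gives $\sum_i p_{N_n}(n_iav)=[N_n:aN_na^{-1}]\,a\,p_{N_n}(v)$. Neither slip propagates: the decomposition and the final equivariance formula you write down are the correct ones, and the remaining appeal to Casselman's Propositions 4.1.4--4.1.6 is exactly what the paper's proof invokes.
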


\begin{proof}
As in \cite{BC2} Proposition 6.4.3, this is due to \cite{Casselman} Proposition 4.1.4. using the Iwahori decomposition.
\end{proof}

\begin{rema}
We could also extends a bit the previous isomorphism by adding the action of (the split part of) $T/T_n^+$ as in \cite{Casselman}.
\end{rema}

Moreover, as $\pi^n$ is a quotient of an induction (or the induction from a parabolic subgroup in the split case), we will use the same geometric lemma as \cite{BC2} Proposition 6.4.4. In particular we only need to calculate the admissible refinement using this lemma, and as this does not assume $\chi$ to be unramified, we find exactly the same (automorphic) refinements as if $p \notdivides \Cond(\chi)$ in $(((\pi_p^n)^{I_n^+})^{fs})^{ss}$. 

\begin{defin}
\label{raf}
Let $\sigma$ be the refinement corresponding to the one when $p \notdivides \Cond(\chi)$ used in \cite{Her3} when $p$ is inert (in which case it is unique, see \cite{Her3} Proposition 10.7), and to $(\mu|.|^{-1/2})(\overline\chi(p),1,p^{-1})$, see \cite{BC2} Lemma 8.2.1 when $p$ is split \footnote{For theorem \ref{corBK} when $p$ is split, we could also use the same refinement as in \cite{Her3}, i.e. $\mu|.|^{-1/2}(1,\overline\chi(p),p^{-1})$, but for theorem \ref{thrGEO}, we need to be at the point corresponding to $\sigma$. The reason is that the refinement need to be \textit{anti-ordinary}, as shown in proposition \ref{proppoids}. In particular, we choose a priori a different refinement as \cite{BC1,BC2}, but this is merely superficial : in our case as in \cite{BC1,BC2} our point (thus our choice of refinement) is anti-ordinary compared to the ordered Hodge-Tate weights, the reason is that when $\ord_{s=0}L(\chi,s)$ is even, the order of the Hodge-Tate weights at our point is not the same as in \cite{BC1,BC2}.}. More precisely, it corresponds to 
\[ 
(\mu|.|^{-1/2})(\overline\chi(p),1,p^{-1}) : \begin{array}{ccc}
T/T_n^+  & \fleche  & \CC^\times  \\
(a,b,c)  & \longmapsto  & (\mu|.|^{-1/2})(abc)\overline{\chi}(a)|c|
\end{array}
\]
in the case where $p$ splits, and to
\[ 
(\mu|.|^{-1/2})(1,\overline\chi(p),p^{-1}) : \begin{array}{ccc}
T/T_n^+ & \fleche  & \CC^\times  \\
(a,e)  & \longmapsto  & (\mu|.|^{-1/2})(a\overline{a}^{-1}e)\overline{\chi}(e)|a|
\end{array}
\]when $p$ is inert. Recall that $T \simeq (\mathcal O[1/p])^\times\times(\mathcal O[1/p])^1$ in this case.
\end{defin}

\subsection{Refinements of De Rham representations}

In this subsection, we slightly generalise the well-known notion of refinements (see e.g. \cite{BC2} section 2.4) to non-necessarily crystalline representations. This is especially usefull for us when $p | \Cond(\chi)$.

\begin{defin}
Let $V$ a $n$-dimensionnal, continuous $L$-representation of $G_K$, where $K$ is a $p$-adic field. Assume that $V$ is De Rham, and denote $WD(V)$ the Weil-Deligne representation associated to $V$ (see \cite{FonWD,BLGGTII}).
Assume that $L$ is big enough so that all eigenvalues of the Frobenius $\varphi$ on $WD(V)$ are defined on $L$. 
A \textit{Refinement} of $V$ is the datum $(\mathcal F_i)_{i=1,\dots,n}$ of a filtration 
\[ 0 \subsetneq \mathcal F_1 \subsetneq \dots \subsetneq \mathcal F_n = WD(V),\]
by Weil-Deligne representations.

Just as in the crystalline case, the previous definition more generally applies to a general De Rham $(\varphi,\Gamma)$-module $D$, to $WD(D)$ (see \cite{BergerAst}).
\end{defin}

\begin{rema}
Obviously when $V$ is crystalline, this definition coïncides with the one of \cite{BC2}.
\end{rema}

Let $D$ be a De Rham $(\varphi,\Gamma)$-module. Let $(\mathcal F_i)$ be a refinement of $D$, i.e. a filtration of $WD(D)$. Then we can associate to $(\mathcal F_i)$ a filtration of $D$ by
\[ \Fil_i(D) = (R[1/t]\mathcal F_i)\cap D.\]
This filtration is saturated, and thus defines a triangulation of $D$ (see \cite{BC2}, section 2.3).

\begin{prop}\label{prop1013}
The previous map $(\mathcal F_i) \mapsto (\Fil_i D)$ induces a bijection between the set of refinements of $D$ and the set of triangulations of $D$.
\end{prop}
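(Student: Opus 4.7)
The forward map $\Phi:(\mathcal F_i)_i\mapsto(\Fil_i D)_i$, $\Fil_i D:=(R[1/t]\mathcal F_i)\cap D$, is already given; the first task is to check it lands in the set of triangulations. Since $\mathcal F_i$ is stable under the Weil group and under the Frobenius and monodromy operator of $WD(D)$, and since $R[1/t]$ is stable under $\varphi$ and $\Gamma$, the submodule $R[1/t]\mathcal F_i\subset D[1/t]$ is a sub-$(\varphi,\Gamma)$-module of $D[1/t]$; intersecting with $D$ inside $D[1/t]$ yields a $(\varphi,\Gamma)$-stable submodule which is saturated by construction. The rank equality $\operatorname{rk}\Fil_i D=\dim_{\overline{K_0}}\mathcal F_i=i$ comes from the De Rham hypothesis via Berger's identification $D[1/t]\simeq(R[1/t]\otimes_{\overline{K_0}}WD(D))^{G_K\text{-fixed via }\Gamma}$ applied over a finite extension $K'/K$ where $V|_{G_{K'}}$ becomes semistable, so that the ranks match after descending from $R[1/t]$ to $D$. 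The rank-$1$ condition on the graded pieces $\Fil_i D/\Fil_{i-1}D$ follows from the corresponding condition on $\mathcal F_i/\mathcal F_{i-1}$ by the same identification.

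\textbf{The inverse $\Psi$.} Given a triangulation $(\Fil_\bullet D)$, set $\mathcal F_i:=WD(\Fil_i D)\subset WD(D)$, using that the functor $WD=\varinjlim_{K'/K}(\cdot[1/t])^{\Gamma_{K'}}$ is left exact and sends saturated inclusions of $(\varphi,\Gamma)$-modules to inclusions of Weil-Deligne representations. The key point to check is that $\dim\mathcal F_i=i$, equivalently that each step $\Fil_i D$ is itself De Rham. For this, I would argue by induction on $i$ using the rank-$1$ quotients: each graded piece has integer Sen weights (being Sen weights of the ambient De Rham $D$) and is a rank-$1$ $(\varphi,\Gamma)$-module, hence is De Rham; and an extension of De Rham modules inside the De Rham module $D$ is De Rham because the Hodge filtration of $D$ (viewed over $B_{dR}$) restricts to a full filtration on each step. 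Since the Hodge filtration on $WD(D)\otimes_{\overline{K_0}}B_{dR}$ is what detects De Rhamness, and the inclusion $\Fil_i D\subset D$ is saturated, the induced filtration on $WD(\Fil_i D)$ has the right jumps, giving De Rhamness and thus $\dim\mathcal F_i=i$.

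\textbf{Mutual inverses.} Both composites are the identity: $\Psi\circ\Phi$ because $WD((R[1/t]\mathcal F_i)\cap D)=\mathcal F_i$ (taking $\Gamma_{K'}$-invariants of $R[1/t]\mathcal F_i$ just recovers $\mathcal F_i$, since $(R[1/t])^{\Gamma_{K'}}=K'_0$); and $\Phi\circ\Psi$ because for a De Rham saturated sub-$(\varphi,\Gamma)$-module $D_1\subset D$ one has $(R[1/t]\cdot WD(D_1))\cap D=D_1$, by applying Berger's reconstruction of a De Rham $(\varphi,\Gamma)$-module from the filtered Weil-Deligne data twice, once for $D_1$ and once for $D$, and noting that the Hodge filtration on $WD(D_1)$ is the one induced from $WD(D)$.

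\textbf{Main obstacle.} The central difficulty is showing that the steps of a triangulation of a De Rham $(\varphi,\Gamma)$-module are automatically De Rham; once this is in hand, the bijection is a formal consequence of Berger's equivalence between De Rham $(\varphi,\Gamma)$-modules and filtered $(\varphi,N,G_K)$-modules. I expect this to reduce to the standard fact that saturated subobjects and quotients in short exact sequences of De Rham $(\varphi,\Gamma)$-modules stay De Rham whenever the Hodge filtration passes to them coherently, which is automatic here since each graded piece is a rank-$1$ $(\varphi,\Gamma)$-module with integer Sen weight.
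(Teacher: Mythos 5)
The paper's proof of this proposition is a one-line citation to \cite{BergerAst}, namely Th\'eor\`eme A (Berger's equivalence between De Rham $(\varphi,\Gamma)$-modules over $\mathcal R_K$ and filtered $(\varphi,N,G_K)$-modules) and Corollaire III.2.5. Your proposal re-derives the bijection from these same ingredients rather than quoting them, which is a legitimate but more laborious presentation, and you have correctly isolated the crux: showing that each step $\Fil_i D$ of a triangulation of a De Rham $D$ is itself De Rham, so that $\dim WD(\Fil_i D) = i$.

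However, your argument for that crux has a gap. After observing (correctly) that the rank-$1$ graded pieces have integer Sen weights and hence are De Rham, you assert that the extension $\Fil_i D$ of $\Fil_i D/\Fil_{i-1} D$ by $\Fil_{i-1} D$ ``inside the De Rham module $D$'' is De Rham ``because the Hodge filtration of $D$ restricts to a full filtration on each step.'' This is not an argument: an extension of De Rham $(\varphi,\Gamma)$-modules by De Rham ones need not be De Rham, and the invocation of the Hodge filtration is not made precise. In fact both the induction and the rank-$1$ detour are unnecessary. The clean argument is a direct dimension count applied once to $\Fil_i D \subset D$: for a short exact sequence $0 \to D' \to D \to D'' \to 0$ of $(\varphi,\Gamma)$-modules with $D$ De Rham, left exactness of $D_{dR}$ gives the exact sequence $0 \to D_{dR}(D') \to D_{dR}(D) \to D_{dR}(D'')$, and combined with the general bound $\dim_L D_{dR}(\cdot) \le \operatorname{rk}(\cdot)$ and the hypothesis $\dim_L D_{dR}(D) = \operatorname{rk} D$, this forces $\dim_L D_{dR}(D') = \operatorname{rk} D'$ and $\dim_L D_{dR}(D'') = \operatorname{rk} D''$. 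Thus every saturated sub and every quotient of a De Rham $(\varphi,\Gamma)$-module is De Rham, with no need for an induction on the flag. With this in hand, your construction of $\Psi$ via $WD = D_{\mathrm{pst}}$ and the verification that $\Phi$ and $\Psi$ are mutually inverse form the correct skeleton; but note that each of those steps is itself an unpacking of Berger's equivalence, which is exactly what the paper's citation invokes in one stroke.
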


\begin{proof}
This is \cite{BergerAst} Théorème A and Corollaire III.2.5.
\end{proof}

In the particular case of an automorphic representation $\pi$ of our unitary group $G$, with associated Galois representation $\rho_\pi$ (for example $\rho = 1 \oplus \chi_p^c \oplus \eps$ associated to the automorphic representation $\pi^n$ of the previous subsection), we have distinguished – we call them \textit{accessible}, (galois) refinements for $\rho_{\pi,v}$ which correspond to the (autormorphic) refinements for the action of $\mathcal A_n^+(p)$ on $\pi^n_v$ (for $v | p$). Such refinements exist only if $(\pi_v)^{I_n^+} \neq 0$ for some $n$. The association is explained in \cite{BC2} (when $G$ is split at $v$) for unramified representations, and for $U(3)(\QQ_p)$ (when $v$ is inert) in \cite{Her3} section 10.5. This can be generalized for non-necessarily unramified $\pi_v$, verbatim when there is no monodromy\todo{otherwise we need to check that such a refinement is stabilized by $N$ : probably true using the normalisation of LL and Bernstein-Zelevinsky (paying attention to the normalisation)}.
For example, to the refinement $\sigma$ of definition \ref{raf}, is associated the following refinement of $\rho = 1 \oplus \chi_p^c \oplus \eps$ :
\begin{equation}
\label{raftwist} 
\left\{\begin{array}{cc}
0 \subsetneq LL(1) \subsetneq LL(1) \oplus LL(\chi_p^c) \subsetneq WD(\rho_{G_p}) & \text{when $p$ is inert}. \\
0 \subsetneq LL(\chi_v^c) \subsetneq LL(1) \oplus LL(\chi_v^c) \subsetneq WD(\rho_{G_v}) &  \text{when $p$ is split}.
\end{array}
\right.
\end{equation}

Here $1$ is the trivial representation of $E_v^\times$, and $LL$ denotes the Local-Langlands correspondance. 

\subsection{Constructing the extension}

We thus take a prime $p$ unramified in $E$, which can be $2$ or not, and which can divide $\Cond(\chi)$ or not.
Let $\mathcal E$ be the eigenvariety of level $N = \Cond(\chi)^p$ (the prime-to-$p$-part of the conductor) associated to $U(2,1)_E$ and $p$ by Theorem \ref{thrHecke}.
It is equipped with a map $w : \mathcal E \fleche \mathcal W$, and there is a point $y \in \mathcal E$ which coincides with the representation $\pi^n$ together with its 
refinement $\sigma$ by definition \ref{raf} and proposition \ref{propkappa}. For all $\mathcal Z \subset \mathcal E$, we have associated to the automorphic form corresponding to $z$ a Galois representation
\[ \rho_z : G_E \fleche \GL_3(\overline{\QQ_p}),\]
which is moreover polarised in the following way :
\[ \rho_z^\perp \simeq \rho_z(-1) := \rho_z \eps^{-1},\]
where $\eps$ denote the cyclotomic character.
Let us be more precise : we will change a bit the convention used in \cite{Her3} to stick with the one of \cite{BC2} (this will make things easier to treat the case $p | \Cond(\chi)$). 
Denote for an automorphic representation $\pi$ of $U(2,1)$ of regular weight $\rho_\pi'$ the associated $p$-adic Galois representation by \cite{BC2} Conjecture 6.8.1, which is know to exists, see Remark 6.8.3, (vi) of \cite{BC2}. For $z \in \mathcal Z$ associated to a modular form $f_z$, denote by $\Pi$ any irreducible constituent of the representation of (the restriction to) $U(2,1)(\mathbb A)$ generated by $f_z$. Then we set
\[ \rho_z = \rho_{\pi}'\nu,\]
where $\nu$ is defined in \cite{BC2} Lemma 8.2.3, and is associated by class field theory to $\mu^{-1}|.|^{3/2}$\footnote{Carreful to the normalisation of the Local Langlands correspondance in \cite{BC2}
}. In particular it satisfies $\nu^\perp = \nu(-3)$.
Thus $\rho_z^\perp = \rho_z(-1)$. Moreover for $z \in \mathcal Z$ of classical (automorphic) weight $(k_1\geq k_2,k_3)$, the Hodge-Tate weights of $\rho_z$ are given by
\[ 
((1-k_3,k_2-1,k_1),(-1-k_1,-k_2,k_3-2)) = \left\{
\begin{array}{cc}
(\HT_\tau,\HT_{\overline\tau})(\rho_z)  & \text{ if  $p$ is inert } \\
(\HT_v,\HT_{\overline v})(\rho_z)  & \text{ if  $p$ splits}
 \end{array}
\right.
\]


\begin{prop}
There exists a pseudo character on $\mathcal E$,
\[ T : G_E \fleche \mathcal O(\mathcal E),\]
such that for all $z \in \mathcal Z$, $T_z$ is the trace of $\rho_z$. Moreover, $T^\perp = T(-1)$.
\end{prop}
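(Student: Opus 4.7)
The plan is to construct $T$ by interpolating the classical traces $z \mapsto \tr(\rho_z)$ on the Zariski dense subset $\mathcal Z \subset \mathcal E$ provided by Theorem \ref{thrHecke}. At each $z \in \mathcal Z$ the underlying automorphic representation is cohomological of regular weight (on the dense locus to which Proposition \ref{prop82} and Theorem \ref{thr83} apply), so the attached three-dimensional $p$-adic Galois representation $\rho_z : G_E \to \GL_3(\overline{\QQ_p})$ exists and, with the normalisation $\rho_z = \rho'_{\pi_z}\nu$ fixed above, satisfies $\rho_z^\perp \simeq \rho_z(-1)$.

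First I would treat Frobenius elements. For every prime $\lambda$ of $E$ over a rational prime $\ell \notin S \cup \{p\}$, local–global compatibility at unramified primes expresses $\det(X - \rho_z(\Frob_\lambda))$ as a polynomial in the spherical Hecke eigenvalues at $z$. These eigenvalues are specialisations of global functions in $\mathcal O(\mathcal E)$ coming from the universal Hecke map $\mathbb T^{Sp} \to \mathcal O(\mathcal E)$, and they are power-bounded on every admissible affinoid $U \subset \mathcal E$ because the spherical Hecke operators preserve an integral structure on the coherent cohomology used to build $\mathcal E$. This produces elements $T(\Frob_\lambda) \in \mathcal O(\mathcal E)^{0}$ (the power-bounded subring) for all such $\lambda$, with the correct specialisation $\tr(\rho_z(\Frob_\lambda))$ at every $z \in \mathcal Z$.

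Next, to extend from Frobenius elements to all of $G_E$, I would argue locally on an admissible affinoid cover $(U_i)$ of $\mathcal E$ and apply the standard rigid-analytic interpolation formalism for pseudocharacters on eigenvarieties (see \cite{CheJL}, and the parallel construction in \cite{BC2} Section 7.5 in the compact at infinity case). The uniform boundedness of Frobenius traces in $\mathcal O(U_i)^{0}$, combined with Chebotarev density and continuity in the profinite topology on $G_E$, yields a continuous extension $T_i : G_E \to \mathcal O(U_i)^{0}$. The identities defining a dimension-$3$ pseudocharacter, evaluated on any fixed tuple of elements of $G_E$, are closed conditions on $U_i$; they hold on $\mathcal Z \cap U_i$ because $T_i|_z = \tr(\rho_z)$, hence hold on $U_i$ by Zariski density. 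By unicity of the continuous extensions the $T_i$ glue to a global pseudocharacter $T : G_E \to \mathcal O(\mathcal E)$.

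Finally, the polarisation $T^\perp = T(-1)$ follows by the same density argument: for each $g \in G_E$, the equality $T(g^{-1,c}) = \eps(g)^{-1} T(g)$ is a closed condition on $\mathcal E$ which is satisfied at every $z \in \mathcal Z$ by $\rho_z^\perp \simeq \rho_z(-1)$. The main technical obstacle is the extension beyond Frobenius elements with uniform control in $g$: it is here that the power-boundedness of Frobenius traces, coming from the integral Hecke structure on coherent cohomology, is indispensable, and where the proof diverges from the purely pointwise information supplied by classical points.
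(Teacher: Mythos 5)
The paper's proof is a single citation to \cite{Che1} Proposition 7.1; your proposal unpacks precisely that interpolation argument (build $T$ on Frobenius elements from the universal Hecke map $\mathbb T^{Sp} \to \mathcal O(\mathcal E)$, show the resulting functions are power-bounded on affinoids, extend to $G_{E,S}$ by Chebotarev density and $p$-adic completeness of $\mathcal O(U)^0$, then propagate the pseudo-character identities and the polarization $T^\perp = T(-1)$ from the Zariski-dense set $\mathcal Z$). This is the same approach, just spelled out rather than cited.
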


\begin{proof}
This is \cite{Che1} Proposition 7.1.
\end{proof}

We need a particular point on $\mathcal E$. 

\begin{prop}\label{propx0}
Suppose $\ord_{s=0}L(\chi,s)$ is even and $L(\chi,0) = 0$. 
There exists a point $y \in\mathcal E$ corresponding to a (non-tempered) automorphic representation $\pi^n$.
The point $y$ is non-classical if $p |  \Cond(\chi)$\footnote{More precisely, it is non classical without level at $p$ as its system of Hecke eigenvalues doesn't appear in $H^0(\mathcal X,\omega^\kappa)$, but appears in $H^0(\mathcal X_0^+(p^n),\omega^{\kappa}(\chi))$.} but is classical otherwise. Moreover its $p$-adic weight $w(\chi)$ is of the form $w(\chi)^{alg}w(\chi)^{sm}$ where $w(\chi)^{sm}$ is the smooth (finite order) character of Proposition \ref{propkappa}, and $w(\chi)^{alg}$ is the algebraic character
\[ 
\begin{array}{ccc}
\mathcal O^\times \times \mathcal O^1  &  \fleche & \overline{\QQ_p}^\times  \\
 (x,y) & \mapsto  & \tau(y)^{1-a}\sigma\tau(x)^{-1}
\end{array} \quad \text{if $p$ is inert}
\]
\[ 
\begin{array}{ccc}
 \ZZ_p^3 & \fleche  & \overline{\QQ_p}^\times  \\
  (x,y,z) & \longmapsto  & y^{1-a}z^{-1}  
\end{array}
\quad \text{if $p$ is split}
\]
At the point $y$, the evaluation $T_{y}$ is given by the trace of $1 \oplus \eps \oplus \chi^c$ and the refinement is given by $\sigma$ of definition \ref{raf}, i.e. it is the refinement (\ref{raftwist}).
\end{prop}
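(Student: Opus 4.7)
The plan is to realize $y$ as the point of $\mathcal{E}$ cut out by the Hecke eigensystem of $\pi^n$ together with its refinement $\sigma$ of Definition \ref{raf}, and to then read off its weight, trace and classicity from the explicit construction.

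First I would combine Rogawski's theorem (which provides $\pi^n$ under the hypotheses $L(\chi,0)=0$ and $\ord_{s=0}L(\chi,s)$ even) with Proposition \ref{propkappa}: this gives a nonzero $\pi^n_p$-isotypic component in $H^0(\mathcal{X}(\eps),\omega^{\kappa'\chi'\dagger}_w(-D))$ for all $n\geq n_0 = v_p(\Cond(\chi))$ and all $w\in]n-1,n-\eps[$. Here $\kappa = (a-1,0,1)$, so $\kappa' = \kappa^\vee = (0,1-a,-1)$ by Remark \ref{remkappa}, and this is precisely the algebraic character $w(\chi)^{alg}$ of the statement in both the split and inert cases. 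The nebentype character $\chi'$ of Proposition \ref{propkappa} is, by construction, the smooth part $w(\chi)^{sm}$. Thus the full $p$-adic weight of the corresponding form is $\kappa'\chi' = w(\chi)^{alg}w(\chi)^{sm}$.

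Next I need to cut the refinement $\sigma$ out of $(\pi^n_p)^{I_n^+}$ to land on $\mathcal{E}$. By the discussion preceding Definition \ref{raf}, combined with Casselman's formula $(\pi^{I_n^+})^{fs} = (\pi_{N_n})^{T_n^+}\otimes\delta_B^{-1}$, the automorphic refinements of $\pi^n_p$ are computed from a geometric-lemma analysis of the induction that yields $\pi^n_p$; this computation is insensitive to whether $\chi_p$ is ramified and produces exactly the same refinements as in the unramified situation treated in \cite{Her3, BC2}. In particular $\sigma$ is an accessible refinement, so it corresponds to a nonzero eigenvector in $(\pi^n_p)^{I_n^+}$ which is in addition of finite slope for the compact operator $U = \prod_i U_i$; the pair (Hecke eigensystem, refinement) therefore determines a point $y\in\mathcal{E}$ by Theorem \ref{thrHecke}, living over the weight $\kappa'\chi'\in\mathcal{W}$.

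For the trace: by \cite{Che1} Proposition 7.1 a pseudocharacter $T$ exists on $\mathcal{E}$ interpolating the $\rho_z$ at classical points. By the local-global compatibility recalled before, $\rho_{y} = \rho'_{\pi^n}\nu$, and Rogawski's description of $L(\pi^n_x)$ at split primes, together with the normalization $\nu\leftrightarrow \mu^{-1}|.|^{3/2}$, identifies $\rho_{y}$ with $1\oplus\chi^c\oplus\eps$ (the extra twist by $\mu|.|^{-1/2}$ in $\pi^n$ compared with $\pi^n(\chi)$ in \cite{Her3} is cancelled by the twist by $\nu$). Finally the identification of $\sigma$ with the Galois refinement (\ref{raftwist}) is the content of Proposition \ref{prop1013} together with the correspondence between accessible refinements and triangulations explained in the previous subsection. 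Classicity is automatic when $\chi'=1$ (i.e.\ $p\nmid\Cond(\chi)$) since we have exhibited $y$ as the system of eigenvalues of an element of $H^0(\mathcal{X}(\eps),\omega^{\kappa}(-D))^{fs}$ at algebraic weight; when $p\mid\Cond(\chi)$ the smooth part $\chi'\ne 1$ forces $w(y)$ to lie outside the algebraic locus, so $y$ is non-classical in the usual sense, even though it does arise from a classical form with nebentypus at $p$. The main subtlety in this argument is the local computation of refinements in the ramified case and the bookkeeping of the twists $\mu$, $\nu$, $|.|^{1/2}$ entering $\rho_y$; once those are carried out carefully all the assertions fall into place.
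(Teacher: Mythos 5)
Your proposal is correct and takes essentially the same route as the paper, which compresses the entire argument into the single sentence "This is a translation of Proposition \ref{propkappa} with the normalisation of $T$." You have spelled out the translation: Rogawski plus Proposition \ref{propkappa} furnish the overconvergent form of weight $\kappa'\chi'$, the duality $\kappa\mapsto\kappa^\vee=(0,1-a,-1)$ from Remark \ref{remkappa} matches $w(\chi)^{alg}$ in both the inert and split cases, the nebentype $\chi'$ is $w(\chi)^{sm}$, the refinement $\sigma$ cuts out a finite-slope eigenvector, and the twist bookkeeping ($\rho_z=\rho'_\pi\nu$ with $\nu\leftrightarrow\mu^{-1}|.|^{3/2}$ cancelling the $\mu|.|^{-1/2}$ in Rogawski's $\pi^n$) gives $T_y=\mathrm{tr}(1\oplus\eps\oplus\chi^c)$. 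The only slip is attributing the identification of $\sigma$ with the Galois refinement (\ref{raftwist}) to Proposition \ref{prop1013}; that proposition gives the bijection between refinements and triangulations of a De Rham $(\varphi,\Gamma)$-module, whereas the matching of accessible automorphic refinements with Galois refinements is the dictionary recalled just before equation (\ref{raftwist}), following \cite{BC2} and \cite{Her3} section 10.5.
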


\begin{proof}
This is a translation of Proposition \ref{propkappa} with the normalisation of $T$.
\end{proof}

We freely use the notation of \cite{KPX} concerning $\varphi,\Gamma$-modules. Denote $\delta_i$ for $i = 1,2,3$ the character,
\[ \delta_i : K^\times \fleche \mathcal O(\mathcal E)^\times,\]
such that $\delta_i(p) = F_i$\footnote{These $F_i \in \mathcal O(\mathcal E)^\times$ already appeared in proof of proposition \ref{propred}, see \cite{Her3} Proposition 11.1 and 11.2. These are the functions given by a basis of the Hecke operator in $\mathcal A(p)$.  }
 and, in the inert case, recall that we have on $\mathcal W$ two universal morphisms,
\[ \kappa_1 : x \in \mathcal O^\times \longmapsto \kappa_1(x) \in \mathcal O(\mathcal W)^\times, \quad \text{and} \quad \kappa_2 : y \in \mathcal O^1 \longmapsto \kappa_2(y) \in \mathcal O(\mathcal W)^\times,\]
such that at classical points $\kappa = (k_1,k_2,k_3) \in \ZZ^3$, we have
\[ \kappa_{1|\kappa}(x) = \tau(x)^{k_1}\overline\tau(x)^{k_3} \quad \text{and}\quad \kappa_{2|\kappa}(y)=\tau(y)^{k_2}.\]
We set
 \[\delta_{1|\mathcal O_K^\times} = (\overline \kappa_1)x_\tau^{-1} x_{\overline\tau},\]
\[ \delta_{2|\mathcal O_K^\times} : y \in \mathcal O_K^\times \longmapsto \kappa_2(\overline y/y) \tau(y),\]
\[ \delta_{3|\mathcal O_K^\times} = (\kappa_1)^{-1}x_{\overline\tau}^2 = (\overline{\delta_{1|\mathcal O_K^\times}})^{-1}x_\tau x_{\overline\tau}.\]
In particular we have $\delta_3 = \overline{\delta_1}^{-1}x$.
In the split case, we set
\[\delta_i : \QQ_p^\times \fleche \mathcal O(\mathcal E)^\times,\]
with $\delta_i(p) = F_i$ and as we have universal characters on $\mathcal W$,
\[ \kappa_i : \ZZ_p^\times \fleche \mathcal O(W)^\times,\]
such that for classical weights $(k_1,k_2,k_3) \in \ZZ^3$,
\[ \kappa_1(x) = x^{k_1}, \quad \kappa_2(x) = x^{k_2}, \quad \kappa_3(x) = x^{k_3},\]
We set \[ \delta_{1|\ZZ_p^\times} = \kappa_3x^{-1}, \quad \delta_{2|\ZZ_p^\times} = \kappa_2^{-1}x, \quad \delta_{3|\ZZ_p^\times} = \kappa_1^{-1}.\]
In particular, we can define $\wt_i := -\wt(\delta_i) \in \mathcal O(\mathcal W)^\Sigma$ the opposite of the derivative at 1 of $\delta_i$ (see \cite{KPX} Definition 6.1.6). In particular $\mathcal E, \mathcal Z$ and the functions $\delta_i$ satisfies the hypothesis of Corollary 6.3.10 of \cite{KPX} (excepts possibly the irreducibility condition).

Denote by $A = \mathcal O_{y}$ the rigid analytic local ring of $\mathcal E$ at $y$, and $K$ its total fraction ring. The pseudo character $T$ on $\mathcal E$ induces one on $A$, and denote by $I_{tot} \subset A$ its total reducibility ideal (see \cite{BC2} Proposition 1.5.1, Definition 1.5.2.) In particular for any $J \supset I_{tot}$ on $A/J$ we can write 
\[ T \otimes A/J = T_1 + T_{\overline\chi} + T_\eps.\]

\begin{prop}
\label{proppoids}
The reducibilty locus $\Spec(A/I_{tot})$ is a proper closed sub-scheme of $\Spec(A)$, i.e. $I_{tot} \neq \{0\}$. More precisely, if $a \geq 2$ and $p$ is split, we have that for all $i,j \in \{1,2,3\}$,
\[ 
\wt(\delta_i) - \wt(\delta_j) \equiv \wt(\delta_i)(y) - \wt(\delta_j)(y) \pmod{I_{tot}}
.\]
If $p$ is inert, \[ \wt_\tau(\delta_2) - \wt_\tau(\delta_3) \equiv \wt_\tau(\delta_2)(y) - \wt_{\tau}(\delta_3)(y) \pmod{I_{tot}},\]
and \[ \wt_{\overline\tau}(\delta_2) -\wt_{\overline\tau}(\delta_1) \equiv \wt_{\overline\tau}(\delta_2)(y) - \wt_{\overline\tau}(\delta_1)(y) \pmod{I_{tot}},\]
and similarly (with $\tau,\overline\tau$ changed by $v,\overline v$) is $p$ splits and $a=1$.
\end{prop}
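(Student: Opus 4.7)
The plan is in two main steps, treated separately.

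For the first assertion, $I_{tot}\neq\{0\}$, I would argue by Zariski density. Among classical points of $\mathcal Z$ lying on any irreducible component of $\mathcal E$ through $y$, there are (Zariski densely) sufficiently regular cohomological tempered cuspidal representations. Their associated $3$-dimensional $p$-adic Galois representations are irreducible, for instance by base change to $\GL_3/E$ and known results on automorphic Galois representations for unitary groups. The pseudo-character $T$ is therefore irreducible at such points, so the irreducibility locus is Zariski open and nonempty in each component through $y$; hence the reducibility locus $V(I_{tot})$ is a proper closed subscheme of $\Spec A$, giving $I_{tot}\neq\{0\}$.

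For the weight congruences, my plan is to combine the global triangulation theorem of Kedlaya--Pottharst--Xiao (Corollary 6.3.10) with the decomposition of the pseudo-character on $A/I_{tot}$. As noted in the paragraph preceding the statement, the hypotheses of this corollary hold for the datum $(\mathcal E, \mathcal Z, \delta_1,\delta_2,\delta_3)$: the requisite non-criticality at $\mathcal Z$ and at $y$ itself follows from the anti-ordinariness of the refinement $\sigma$ (Definition \ref{raf}), which is the reason one must assume $p$ splits when $a\geq 2$ for the full statement, while the inert case yields only the two weight differences singled out in the statement. After shrinking to an affinoid neighborhood of $y$, one obtains for each $v\mid p$ a triangulation of the family of $(\varphi,\Gamma)$-modules attached to $T|_{G_{E_v}}$ whose graded pieces are $\mathcal R(\delta_i)$, with Sen weight $-\wt(\delta_i)$ at the corresponding embedding.

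I would then specialize this triangulation to $\Spec(A/I_{tot})$. On the reducibility locus, $T$ splits as $T_1 + T_{\overline{\chi}} + T_\eps$, and standard arguments on reducible pseudo-characters (cf.\ Bellaïche--Chenevier) show that the $(\varphi,\Gamma)$-module over $A/I_{tot}$ splits accordingly into three rank-one pieces; comparing with the triangulation, the refinement $\sigma$ (displayed in (\ref{raftwist})) determines a specific bijection between $\{\delta_1,\delta_2,\delta_3\}$ and $\{T_1|_{G_{E_v}},T_{\overline{\chi}}|_{G_{E_v}},T_\eps|_{G_{E_v}}\}$. The crux is then to match Sen weights: each $T_\bullet$ is a continuous character of $G_E$ with values in $(A/I_{tot})^\times$ deforming the fixed algebraic Hecke character of CM type at $y$, and a class field theory argument for the imaginary quadratic field $E$ (together with the polarization $T^\perp=T(-1)$, which swaps $T_1$ and $T_\eps$ up to the cyclotomic twist and constrains $T_{\overline\chi}$ to be conjugate self-dual) pins down the Sen weights of $T_\bullet|_{G_{E_v}}$ modulo $I_{tot}$ to their values at $y$. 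Equating with $-\wt(\delta_i)$ on $A/I_{tot}$ and taking differences then yields the stated congruences.

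The main obstacle is the last rigidity statement for Sen weights of the constituent characters on the reducibility locus: this is precisely where the hypotheses on $p$ and $a$ enter, both to ensure that $\sigma$ is anti-ordinary (so that the matching of triangulation graded pieces with pseudo-character constituents is unambiguous), and to exploit the CM nature of $1,\chi^c,\eps$ (so that their deformations along $A/I_{tot}$ are rigid in the required sense). In the inert case, the Frobenius of $E/\QQ$ exchanges the two embeddings $\tau,\overline\tau$, which accounts for the restriction to only two weight differences in the conclusion.
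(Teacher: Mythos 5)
Your two assertions are handled differently in the paper, and there is a genuine gap in the second part.

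For $I_{tot}\neq 0$ you argue by density of irreducible classical points; the paper does not argue this directly at all, since the weight congruence already forces $I_{tot}\neq 0$ (if $I_{tot}=0$ then $\wt(\delta_i)-\wt(\delta_j)$ would be constant on $\Spec A$, contradicting the fact that the weight map is finite). Your alternative route is plausible but would need a justification that irreducible tempered points accumulate at the non-tempered point $y$ along \emph{every} component of $\mathcal E$ through $y$, which is nontrivial.

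The main problem is in the matching of Sen weights. You assert that a class field theory argument pins down the Sen weights of $T_\bullet$ modulo $I_{tot}$ to their values at $y$, i.e.\ that they are \emph{constant} on the reducibility locus. This is false and also not what the proposition claims: the individual weights $\wt(\delta_i)$ certainly vary on $\Spec(A/I_{tot})$ (they pull back from $\mathcal W$), and the proposition is about \emph{differences}. What the paper actually does is (i) show via Lemma \ref{lemmaB2} that the Sen operator of each $T_j$ is killed by $\prod_i(T-\wt_i)$, so the Sen weight of $T_j$ is one of the varying functions $\wt_i$, identified via distinctness of the weights at $y$ when $a\geq 2$ and $p$ split (and only partially in the other cases, whence the weaker conclusions there); (ii) use Lemma \ref{lemmaB4} to produce an embedding $R(\delta_i)\hookrightarrow D_{rig}(T_i)$ of rank-one $(\varphi,\Gamma)$-modules; and (iii) apply Lemma \ref{lemmaB5}, which says the cokernel of such an embedding twists the character by a power of $x_\sigma$ with \emph{integer} exponents, and these integers are therefore locally constant, hence equal to their values at $y$. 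That integrality is precisely what converts ``one of the $\wt_j$'' into the asserted congruence of differences, and it is the step missing from your sketch. You also cannot invoke \cite{KPX} Corollary 6.3.10 off the shelf, since $y$ is a reducible point (the paper explicitly flags that the irreducibility hypothesis there is not met) and there is only a pseudo-character, not an actual family of representations, near $y$; the paper replaces this by working with GMA-modules through Lemmas \ref{lemmaB2}, \ref{lemmaB4}, \ref{lemmaB5}.
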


\begin{proof}
Let $I \supset I_{tot}$ a finite length ideal of $\mathcal O_{y_0} = A$. We thus have for $j = \{1,\overline{\chi},\eps\}$,
\[ T_j : G_{E,S} \fleche A/I,\]
a (continuous) character, such that $T_j \pmod{\mathfrak m_A} = j$.

As $T_j$ is a character, by \cite{KPX} Theorem 6.2.14, there exists a character
\[ \delta_j' : K^\times \fleche (A/I)^{\times},\]
such that the $\varphi,\Gamma$-module associated to $T_j$, $D_{rig}(T_j)$ is isomorphic to $R_{A/I}(\pi_K)(\delta_j')$. From now on we just write this last space $R_{A/I}(\delta_j')$. We will determine $\delta_j'$. Recall that $j \in \{1,\overline\chi,\eps\}$ and $i \in \{1,2,3\}$. We choose the bijection between these two spaces, which 
corresponds to the refinement \ref{raf}, more precisely,
\[
\begin{array}{ccc}
1  & \mapsto  & \overline\chi  \\
2  &  \mapsto & 1  \\
 3 &  \mapsto &  \eps 
\end{array}
\text{if $p$ splits,} \quad 
\begin{array}{ccc}
1  & \mapsto  & 1  \\
2  &  \mapsto & \overline\chi  \\
 3 &  \mapsto &  \eps 
\end{array}
\text{if $p$ is inert.}
\]
Thus $T_2 = T_{\overline\chi}$ when $p$ is inert and $T_3 = T_\eps$ in any case, for example. 
By lemma \ref{lemmaB4}, we have a in particular a map
\[ R(\delta_i) \hookrightarrow D_{rig}(T_i)\simeq R(\delta_i'),\]
and we still want to determine the character $\delta_i'$.

To determine $\delta_i'$ the character of $T_i$, we still need to know the weight of $T_i$. We know by Lemma \ref{lemmaB2} that $T_i$ has its Sen operator killed by
\[ \prod_{i = 1}^3 (T - \wt_i) \in A/I[T]^\Sigma.\]
Moreover, at $y \in \mathcal E$, we have, if $p$ splits,
\[(\wt^v_1,\wt^v_2,\wt^v_3) = (0,-1,a-1) \quad \text{and} \quad  (\wt^{\overline v}_1,\wt^{\overline v}_2,\wt^{\overline v}_3) = (-a,0,-1) \]
and if $p$ is inert,
\[ (\wt^\tau_1,\wt^\tau_2,\wt^\tau_3) = (0,-1,a-1) \quad \text{and} \quad  (\wt^{\overline \tau}_1,\wt^{\overline \tau}_2,\wt^{\overline \tau}_3) = (-a,0,-1).\]
Thus, if $a \geq 2$ 
these weights are distincts at $y$.
Thus we can calculate the Hodge-Tate-Sen weight of $T_i$ : $T_1$ has weight $\wt_1$, $T_{\overline\chi}$ has weight $\wt_3$ and $T_\eps$ has weight $\wt_2$ \todo{Revoir, c'est pour $p$ inert ça !}.
If $a =1$, we can't a priori distinguish the two weights $h_1$, $h_3$ at $v$ and $\overline v$ (or $\tau$ and $\overline\tau$), but we know that $T_\eps = T_3$ has weight $\wt(\delta_2)$ at $v$, and 
that $T_1$ has weight $\wt(\delta_2)$ at $\overline v$.

Suppose $p$ is split, using the lemma \ref{lemmaB5} for $T_\eps$, we have (evaluating at $y$ to have the value of $t_\sigma,k_\sigma$),
\[ \wt_v(\delta_2) - \wt_v(\delta_3) - (\wt_v(\delta_2)(y)-\wt_v(\delta_3)(y)) \in I.\]
Using that $\delta_3 = \overline{\delta_1}^{-1}x$ and similarly for $\delta_2$ (or using lemma \ref{lemmaB5} for $T_1$ at $\overline v$), we get also
\[ \wt_{\overline v}(\delta_2) - \wt_{\overline v}(\delta_1) - (\wt_{\overline v}(\delta_2)(y)-\wt_{\overline v}(\delta_1(y))) \in I.\]
If $a \geq 2$, we can also use Lemma \ref{lemmaB5} for $T_1$ and $T_{\overline\chi}$ at $v$ and $\overline v$ to get the remaining,
\[\wt(\delta_i) - \wt(\delta_j) \equiv \wt(\delta_i)(y) - \wt(\delta_j)(y) \pmod{I}.\]
Similarly if $p$ is inert, applying the lemma \ref{lemmaB5} at $v$ for $T_\eps$ and $\delta_2$ we get
\[ \wt_\tau(\delta_2) - \wt_\tau(\delta_3) - (\wt_\tau(\delta_2)(y) - \wt_\tau(\delta_3)(y)) \in I,\]
and applying the previous lemma at $\overline \tau$ to $T_1$ and $\delta_2$ we have moreover,
\[ \wt_{\overline\tau}(\delta_2) - \wt_{\overline\tau}(\delta_1) - (\wt_{\overline\tau}(\delta_2)(y) - \wt_{\overline\tau}(\delta_1)(y)) \in I.\]
Thus if $a \geq 2$ and $p$ is split (and similarly but not for all $i,j$ in the remaining cases), we have 
\[ \wt(\delta_i) - \wt(\delta_j) - (\wt(\delta_i)(y) - \wt(\delta_j)(y)) \in I,\]
for all $i,j$ and all cofinite lenght ideal $I$ containing $I_{tot}$, thus, for all $i,j$, 
\[\wt(\delta_i) - \wt(\delta_j) - (\wt(\delta_i)(y) - \wt(\delta_j)(y)) \in I_{tot}.\] 
\end{proof}

We also need the following result, which is a corollary of theorem \ref{theorB5}.

\begin{cor}
\label{cor1018}
$Ext_T(1,i) \subset H^1_f(E,i)$, for $i = \overline\chi$ or $\eps$.
\end{cor}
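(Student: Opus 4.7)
The plan is to deduce this from Theorem \ref{theorB5} combined with Proposition \ref{proppoids}. First I would make precise the definition of $\mathrm{Ext}_T(1,i)$: using the total reducibility ideal $I_{tot}$ of the pseudocharacter $T$ at $y$, any cofinite-length ideal $I \supset I_{tot}$ produces a decomposition $T \otimes A/I = T_1 + T_{\overline\chi} + T_\eps$, and the infinitesimal deformations of the trivial character by $i$ record a cohomology class in $H^1(G_{E,S}, i)$ for $i = \overline\chi$ or $\eps$. The subspace $\mathrm{Ext}_T(1,i)$ collects those classes which are compatible with the triangulation $\sigma$ of Definition \ref{raf}, i.e.\ with the $(\varphi,\Gamma)$-module structure coming from the eigenvariety, and in particular whose characters $\delta_i'$ of Proposition \ref{prop1013} are the deformations of the refinement at $y$.

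The next step is to verify the Bloch--Kato local conditions at every finite place. Outside $S \cup \{p\}$ the pseudocharacter $T$ is unramified, so the extensions are unramified and automatically in $H^1_f$. At places in $S \setminus \{p\}$, the fact that $\mathcal E$ is cut out at a fixed type $(K_J, J)$ rigidifies the local Weil--Deligne representation attached to $T$, which forces the classes to land in the Bloch--Kato local condition, exactly as in \cite[\S 7]{BC2}.

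The only genuinely non-formal step, and the reason Theorem \ref{theorB5} is invoked, is the local condition at primes above $p$: one must show that the infinitesimal deformation of the character $i$ singled out by the triangulation is crystalline of the expected Hodge--Tate weight. The two required inputs are already at hand. First, the trianguline structure of the $(\varphi,\Gamma)$-module attached to $T$ on a neighborhood of $y$ in $\mathcal E$ is furnished by Corollary 6.3.10 of \cite{KPX} via the characters $\delta_i$ introduced above. Second, the constancy of the Sen weights along the reducibility locus is supplied by Proposition \ref{proppoids}, and matches the expected Hodge--Tate weights of $i$ at $y$. Together these force the deformation of $i$ to be De Rham with the correct Hodge--Tate weights along $I_{tot}$, hence (via the trianguline/De Rham machinery of Section 10.3) crystalline — which is precisely the Bloch--Kato condition at $p$. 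Theorem \ref{theorB5} packages exactly this implication.

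The main obstacle is therefore the place above $p$, and specifically the fact that we must work with a refinement of a not-necessarily-crystalline representation, since $p$ is allowed to divide $\Cond(\chi)$. This is exactly why the refinements of Section 10.3 are developed for De Rham (rather than crystalline) $(\varphi,\Gamma)$-modules via Proposition \ref{prop1013}, and why Theorem \ref{theorB5} is stated in that generality in the appendix; together with the weight control of Proposition \ref{proppoids} it yields the corollary.
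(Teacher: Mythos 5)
Your overall skeleton is right (handle each local condition separately; use Theorem~\ref{theorB5} at $p$; use unramifiedness resp.\ the fixed type $(K_J,J)$ away from $p$), but the explanation of the step at $p$ is not the argument, and one of your cited inputs is not used.

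Theorem~\ref{theorB5} is \emph{not} a statement of the shape ``trianguline $+$ de Rham with the right Hodge--Tate weights $\Rightarrow$ crystalline''; it directly produces a crystalline period for the extension: it says $D_{\mathrm{crys},\star}\bigl(\rho_c(\delta_{1|\Gamma}^{-1})\bigr)^{\varphi=F_1}$ is free of rank $1$ over $A/I$. The way this forces the extension class to lie in $H^1_f(E_v,i)$ is the Frobenius eigenvalue separation: at $y$ the value $F_1(y)$ is $1$, while $i=\overline\chi$ has $|\overline\chi(p)|=p^{-1/2}$ and $i=\eps$ has eigenvalue $p^{-1}$, so $D_{\mathrm{crys}}(i(\delta_1^{-1}))^{\varphi=F_1}=0$. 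Hence the rank-$1$ $\varphi$-eigenspace supplied by Theorem~\ref{theorB5} must surject onto $D_{\mathrm{crys}}$ of the quotient $1$, and by the exact sequence for $D_{\mathrm{crys}}$ the extension is crystalline. This is why the paper stresses ``as the Frobenius eigenvalues of $i$ are different from $1$''. Your ``de Rham with the right weights, hence crystalline'' deduction is incorrect as stated (a de Rham class with the expected weights can fail $H^1_f$), and in any case the Bloch--Kato condition is a condition on the extension class, not on the deformed character $T_i$ alone. You also invoke Proposition~\ref{proppoids} as an input here; it is not used in this corollary (it appears later, to show $I_{tot}\neq 0$ in the proof of Theorem~\ref{corBK}), and bringing in Sen-weight constancy along the reducibility locus is a red herring.

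At places $v\nmid p$ your argument points in the right direction but is vague where the paper is explicit: for $v\nmid p\Cond(\chi)$ one uses density of classical unramified points plus reducedness of $\mathcal E$ to get $T(I_v)=1$, hence the extensions are unramified; for $v\mid\Cond(\chi)$ with $i=\eps$ the class is automatically unramified, and for $i=\overline\chi$ one uses the type argument (\cite{BC1} Prop.~4.2 / \cite{Her3} Prop.~10.21) to produce $I'\subset I_v$ with $T(I')=1$, forcing $\rho_c(I')=1$, so $\rho_c|_{I_v}$ is semisimple and the class lands in $H^1_f(G_v,i)$.
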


\begin{proof}
Indeed, the theorem \ref{theorB5} gives that any extension in $\Ext_T(1,i)$ is crystalline at all place above $p$ (as the Frobenius eigenvalues of $i$ are different from 1).
At $v$ a place dividing $\ell \neq p$, if $v \notdivides \Cond(\chi)$, by hypothesis on the level of $\mathcal E$, the dense set of classical point $Z$ are unramified at $v$, thus $T(I_v) = 1$ on $\mathcal E$ (as $\mathcal E$ is reduced) and thus $\Ext_T(1,i)$ consist of unramified extension at $v$.

Now suppose $v | \Cond(\chi)$. If $i = \eps$, any extension is automatically unramified. Suppose $i = \overline\chi$. By choice of the type $J$ outside $p$ on $\mathcal E$, we know (\cite{BC1} Proposition 4.2 or \cite{Her3} proposition 10.21)
that for all $z \in  Z$, there exists a subgroup $I' \subset I_v$ such that $\rho_z(I') = \{1\}$. Thus, $T(I') = 1$ and for all $x \in \mathcal E, \rho_x(I') = 1$.
Thus, $T _{I_v}$ is locally constant, and the same for $\rho_{x|I_v}$ (as it is semi-simple as $I'$ acts trivially). Up to extending scalars, evaluating at $\rho_y$, we get
\[ T_{|I_v} = (1 \oplus 1 \oplus \overline\chi_{|I_v}) \otimes \mathcal O_U,\]
for some neighborhood $U$ of $x$. But as we have a morphism
\[ M_1/IM_1 \oplus T_i \fleche \rho_c \fleche 0,\]
we have that $\rho_c(I') = 1$, thus $\rho_c$ is semi-simple, thus $\rho_{c|G_v} \in H_f^1(G_v,1)$.
\end{proof}

We have the following improvement of Theorem \ref{thrBK} :

\begin{theor}
\label{corBK}
Let $\chi$ be a polarized algebraic Hecke character as in Theorem \ref{thrBK}. Suppose that $L(\chi,s)$ vanishes with even (non-zero) order at $s=0$. Let $p$ be unramified in $E$.
Then
\[ H^1_f(E,\chi_p) \neq \{0\}.\]
\end{theor}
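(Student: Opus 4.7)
The strategy is to run the Bellaïche--Chenevier construction (as adapted in \cite{BC1,BC2,Her3}) on the eigenvariety $\mathcal{E}$ for $U(2,1)_{E/\QQ}$ built in Theorem \ref{thrHecke}, centered at the non-tempered point $y$ of Proposition \ref{propx0}. The two features of the present paper which allow us to drop the restrictions from Theorem \ref{thrBK} are: (i) the construction of $\mathcal{E}$ without any assumption on $p$ in the inert case (Theorem \ref{thrHecke}), obtained via the degree-and-Hasse strict neighborhoods rather than the results of \cite{Her2}; and (ii) the introduction of nebentypus sheaves (Proposition \ref{chiclass}) together with the generalized notion of refinement for De Rham, non-necessarily crystalline representations (Proposition \ref{prop1013}), which place the ramified-at-$p$ case $p\mid \Cond(\chi)$ on exactly the same footing as the unramified case.

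First I would recall the setup: on $A = \mathcal{O}_{\mathcal{E},y}$ we have the pseudo-character $T\colon G_E \to A$ of dimension $3$ whose residual trace at $y$ is $1+\overline{\chi}_p^c+\varepsilon$, and Proposition \ref{proppoids} tells us that the total reducibility ideal $I_{tot}\subset A$ is non-zero. Next I would apply the version of Ribet's lemma for pseudo-characters of Bellaïche--Chenevier (\cite{BC2}, \S 1.5) to extract from this non-total-reducibility a finitely-generated $A/I_{tot}$-module of extensions between the residual summands; concretely, this produces the space $\Ext_T(1,\overline{\chi})$ already referenced in the introduction and used implicitly in Corollary \ref{cor1018}.

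Then I would argue that this $\Ext$-module is non-zero. This is the main place where the hypothesis that $\ord_{s=0}L(\chi,s)$ is even enters: it forces the non-trivial extension produced by the non-reducible pseudo-character to sit in the ``right'' direction $\Ext_T(1,\overline{\chi})$, not in $\Ext_T(\overline{\chi},1)$, and this is obtained using the polarization $T^\perp = T(-1)$ together with a sign/parity argument (symmetrizing the Ribet extensions in the two possible directions and using that a sign $+1$ on the functional equation rules out the dual direction). This is precisely the argument of \cite{BC1,BC2} in the unitary setting, and in the Picard case of \cite{Her3}; the point is that nothing in that argument requires $p\neq 2$ or $p\nmid \Cond(\chi)$ once the eigenvariety, the point $y$ with the refinement \eqref{raftwist}, and the weight map are all under control--which they are by Theorem \ref{thrHecke}, Proposition \ref{propx0}, and Proposition \ref{proppoids}.

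Finally I would conclude by invoking Corollary \ref{cor1018}, which gives $\Ext_T(1,\overline{\chi})\subset H^1_f(E,\chi_p)$: the extensions are crystalline (hence De Rham) at places above $p$ because the refinement at $y$ is non-critical in the sense that the Frobenius eigenvalues of $\overline{\chi}$ at $v\mid p$ differ from $1$, and they are unramified (or at worst of the right inertial type) away from $p$ by the choice of type $(K_J,J)$ used to cut out $\mathcal{E}$. Thus $H^1_f(E,\chi_p)\neq 0$. The main obstacle is really encapsulated in Proposition \ref{proppoids} (already established in the paper), whose proof requires the trigianulation theory of \cite{KPX} together with the fact that the weights of $\delta_1,\delta_2,\delta_3$ are distinct at $y$--which is exactly why the argument is cleanest when $a\geq 2$, but as in \cite{Her3} the case $a=1$ follows by twisting by a suitable algebraic Hecke character to reduce to $a\geq 2$.
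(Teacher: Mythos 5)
Your overall roadmap---Theorem \ref{thrHecke} to build $\mathcal E$ for all unramified $p$, Proposition \ref{propx0} for the point $y$ carrying the refinement (\ref{raftwist}), Proposition \ref{proppoids} for $I_{tot}\neq 0$, and Corollary \ref{cor1018} for the inclusion $\Ext_T(1,\overline\chi)\subset H^1_f(E,\chi_p)$---matches the paper. But there is a genuine gap at the central step, namely proving $\Ext_T(1,\overline\chi)\neq 0$.

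You describe that step as a "sign/parity argument" which "symmetrizes the Ribet extensions in the two possible directions and uses that a sign $+1$ on the functional equation rules out the dual direction." This is not what the paper does, and it is not the mechanism in \cite{BC2} either. The parity hypothesis on $\ord_{s=0}L(\chi,s)$ is consumed entirely in Rogawski's theorem, i.e.\ in guaranteeing the existence of the non-tempered $\pi^n$ so that the point $y$ exists at all; it does not reappear in the extraction of the extension class. What actually forces the extension into $\Ext_T(1,\overline\chi)$ is a purely algebraic Nakayama argument on the GMA $\left(A_{i,j}\right)_{i,j\in\{1,\overline\chi,\eps\}}$: one has $I_{tot}=A_{1,\overline\chi}A_{\overline\chi,1}$ (as in \cite{BC2} Lemma 8.3.2); if $\Ext_T(1,\overline\chi)=0$ then $A_{\overline\chi,1}=A_{\overline\chi,\eps}A_{\eps,1}$ by \cite{BC2} Theorem 1.5.5; the vanishing $H^1_f(E,\eps)=\{0\}$ (Bloch--Kato for $\QQ_p(1)$), combined with the crystallinity statement of Corollary \ref{cor1018} in the $\eps$ direction, likewise gives $A_{\eps,1}=A_{\eps,\overline\chi}A_{\overline\chi,1}$; substituting yields $I_{tot}\subset\mathfrak m\,I_{tot}$, whence $I_{tot}=0$, contradicting Proposition \ref{proppoids} applied to the pair $(i,j)=(2,3)$. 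Without this chain of GMA identities and the Bloch--Kato vanishing of $H^1_f(E,\eps)$, nothing rules out the wrong extension direction, and a sign argument would not supply it.

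Two smaller points. First, your final sentence about handling $a=1$ "by twisting by a suitable algebraic Hecke character to reduce to $a\geq 2$" is not how the paper proceeds and is not needed: Proposition \ref{proppoids} already produces, for $(i,j)=(2,3)$ and all $a\geq 1$ (and both $p$ split and $p$ inert), the constancy of $\wt(\delta_2)-\wt(\delta_3)$ modulo $I_{tot}$, which is the only part of that proposition invoked in the proof of Theorem \ref{corBK}. It is only the sharper Theorem \ref{thrGEO} (the bound $t'\le h(h+3)/2$) that requires $a\geq 2$ and $p$ split. Second, Corollary \ref{cor1018} is not merely the output of "non-criticality of the refinement"; its proof uses both Theorem \ref{theorB5} at places above $p$ (the Frobenius eigenvalue of $\overline\chi$ differing from $1$) and the choice of type $(K_J,J)$ away from $p$, and you need both halves to land inside $H^1_f$.
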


\begin{proof}
Let $e_1,e_{\overline\chi},e_\eps$ be the indempotents as in Appendix \ref{sectAppB}, and denote $A_{i,j}$, for $i,j \in \{1,\overline\chi,\eps\}$ the corresponding $A$-modules.
Then as in \cite{BC2} Lemma 8.3.2, we get
\[ I_{tot} = A_{1,\overline\chi}A_{\overline\chi,1}.\]
But if $Ext_T(1,\overline\chi) = 0$, then $A_{\overline\chi,1} = A_{\overline\chi,\eps}A_{\eps,1}$ (\cite{BC2}, Theorem 1.5.5).
Thus $A_{\overline\chi,1}A_{1,\overline\chi} = A_{\overline\chi,\eps}A_{\eps,1}A_{1,\overline\chi}$. 
But as $H^1_f(E,\eps) = \{0\}$, we get by the same reasonning
\[ A_{\eps,1} = A_{\eps,\overline\chi}A_{\overline\chi,1}.\]
Thus, 
\[ I_{tot} = A_{\overline\chi,\eps}A_{\eps,\overline\chi}A_{\overline\chi,1}A_{1,\overline\chi} \subset \mathfrak mA_{\overline\chi,1}A_{1,\overline\chi} = \mathfrak mI_{tot}.\]
Thus $I_{tot} = 0$, contradicting proposition \ref{proppoids} for $2 = i \neq j = 3$.
\end{proof}

Actually we can be more precise than Theorem \ref{corBK} if $a \geq 2$ and $p$ splits. Denote from now on by $\mathcal E'$, the Eigenvariety on which $\kappa_2$ is constant, equal to $\kappa_2(y) = 0$

It is actually the base change of the three-dimensionnal $\mathcal E$ by $\mathcal W_{\mathcal O^\times} \hookrightarrow \mathcal W$ (see remark \ref{remaCheJL}).

\begin{theor}
\label{thrGEO} Suppose that $p$ is split.
Remember that we assumed that $L(\chi,0) = 0$ and $\ord_{s=0}L(\chi,s)$ is even. We denoted $y$ the point of $\mathcal E'$ with its refinement proposition \ref{raf}.
Let $t'$ be the dimension of the tangent space of $\mathcal E'$ at $y$, and let $h$ be the dimension of $\Ext_T(1,\overline\chi)$. Then if $a \geq 2$,
\[ t' \leq h\frac{h+3}{2}.\]
\end{theor}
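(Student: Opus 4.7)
The strategy follows the philosophy of Bellaïche--Chenevier (\cite{BC2}, Chapter 8), adapted to the eigenvariety $\mathcal{E}'$. The improvement from $h(h+3)/2 + 1$ in Theorem 1.4 (for $\mathcal{E}$) to $h(h+3)/2$ on $\mathcal{E}'$ comes from exploiting Proposition \ref{proppoids} more fully: the fixing of $\kappa_2$ combined with the full set of weight constraints (available exactly when $a \geq 2$ and $p$ is split) kills the remaining ``diagonal'' tangent direction.

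First, let $A = \mathcal{O}_{\mathcal{E}', y}$ with maximal ideal $\mathfrak{m}$ and residue field $k=k(y)$, so that $t' = \dim_{k} \mathfrak{m}/\mathfrak{m}^2$. The pseudo-character $T : G_E \to A$ deforms $1 + \overline{\chi} + \epsilon$ and satisfies $T^\perp = T(-1)$. Applying the reducibility formalism of \cite{BC2} Section 1.5 to any finite-colength ideal $J \supseteq I_{tot}$, one obtains sub-$A/J$-modules $A_{i,j} \subseteq A/J$ for $i \neq j$ in $\{1, \overline{\chi}, \epsilon\}$, each $A_{i,j}/\mathfrak{m}$ injecting into an Ext group $\mathrm{Ext}_T(T_j, T_i)$ of Galois representations satisfying the trianguline conditions at $p$ (coming from the refinement $\sigma$ of Definition \ref{raf} and Proposition \ref{prop1013}) and the type condition outside $p$.

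Second, I bound the off-diagonal modules. By Corollary \ref{cor1018}, $\mathrm{Ext}_T(1, \overline{\chi}) \subseteq H^1_f(E, \overline{\chi})$, so $\dim A_{1,\overline{\chi}}/\mathfrak{m} \leq h$ by the definition of $h$. The polarisation $T^\perp = T(-1)$ induces an involution identifying $A_{i,j}$ with $A_{j^\perp, i^\perp}$; this gives $\dim A_{\overline{\chi}, 1}/\mathfrak{m} \leq h$ as well, while all Ext slots involving $\epsilon$ on one side reduce (via the polarisation pairing $1 \leftrightarrow \epsilon$, $\overline{\chi} \leftrightarrow \overline{\chi}$ up to twist) either to the same $h$-dimensional Ext or to groups contained in $H^1_f(E, \epsilon) = 0$. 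In particular, modulo polarisation, the dimension of $\mathfrak{m}/(\mathfrak{m}^2 + I_{tot})$ is bounded by $h$.

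Third, I apply Proposition \ref{proppoids} to kill the diagonal contribution. Its conclusion gives, for $a \geq 2$ and $p$ split, that $\wt(\delta_i) - \wt(\delta_j) \equiv \wt(\delta_i)(y) - \wt(\delta_j)(y) \pmod{I_{tot}}$ for all $i,j$. Writing a tangent vector $\tau \in t'$ as $(\alpha, \beta) \in k^2$ in the coordinates $(\kappa_1, \kappa_3)$ on $\mathcal{W}_{\mathcal{O}^\times}$ (recall $\kappa_2$ is already fixed on $\mathcal{E}'$), the three weight deformations are affine functions of $(\alpha, \beta)$, and the values of $(\wt_1, \wt_2, \wt_3)$ at $y$ computed in the proof of Proposition \ref{proppoids} (namely $(0,-1,a-1)$ at $v$ and $(-a,0,-1)$ at $\overline{v}$) show that the matrix of their pairwise differences has rank $2$ as soon as $a \geq 2$. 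Hence both constraints force $\alpha \equiv \beta \equiv 0 \pmod{I_{tot}}$, so the entire weight map factors through $I_{tot}$ modulo $\mathfrak{m}^2$.

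Finally, consider the short exact sequence
$$0 \to (\mathfrak{m}^2 + I_{tot})/\mathfrak{m}^2 \to \mathfrak{m}/\mathfrak{m}^2 \to \mathfrak{m}/(\mathfrak{m}^2 + I_{tot}) \to 0.$$
The right-hand term has dimension $\leq h$ by step 2. The left-hand term is a quotient of $I_{tot}/\mathfrak{m}I_{tot}$; since $I_{tot} = A_{1, \overline{\chi}} \cdot A_{\overline{\chi}, 1}$ (the split-case analogue of \cite{BC2} Lemma 8.3.2, used already in the proof of Theorem \ref{corBK}), and since the polarisation identifies this product with a symmetric pairing on $A_{1,\overline{\chi}}/\mathfrak{m}$, its image in $\mathfrak{m}/\mathfrak{m}^2$ has dimension at most $\binom{h+1}{2} = h(h+1)/2$. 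Summing yields
$$t' \leq h + \frac{h(h+1)}{2} = \frac{h(h+3)}{2}.$$
The main obstacle is the rigorous implementation of the polarisation symmetry: first, converting $T^\perp = T(-1)$ into a concrete involution on the modules $A_{i,j}$ that reduces $2h$ off-diagonal classes to $h$ independent ones modulo $I_{tot}$; second, showing the multiplication $A_{1,\overline{\chi}} \otimes A_{\overline{\chi}, 1} \to I_{tot}$ factors through the symmetric square, so that its image has dimension at most $h(h+1)/2$ rather than $h^2$.
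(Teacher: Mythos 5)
Your proposal has the right overall flavor and even lands on the same numerology, but it diverges from the paper in a way that leaves a genuine gap and one incorrect symmetry argument.

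The paper's key step, which you do not reach, is to prove the stronger statement that $I_{tot}$ is the \emph{maximal ideal} $\mathfrak m_A$ of $A = \mathcal O_{\mathcal E',y}$. Proposition \ref{proppoids} only shows that $I_{tot}$ contains the ideal of the fiber of the weight map, i.e.\ $\mathfrak m_\kappa \subset I_{tot}$; to get $I_{tot} = \mathfrak m_A$ one must further show that any first-order reducible deformation $T_{i,\psi}$ of each constituent $i \in \{1,\overline\chi,\eps\}$ is in fact constant (unramified outside $p$, crystalline at $p$ using $H^1_f(E,\QQ_p)=0$, and with locally constant slope $F_i$), and then that $\mathcal O_\mathcal E$ is generated over $\mathcal O_\mathcal W$ by the Hecke operators, which are determined by $T$ modulo $I_{tot}$. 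Your step 3 stops short of this. Once you know $I_{tot} = \mathfrak m_A$, your two-term decomposition becomes degenerate: the right-hand quotient $\mathfrak m/(\mathfrak m^2 + I_{tot})$ is zero, not merely $\leq h$, and your claimed bound for it (``by step 2'') is not a valid inference anyway — the Ext dimensions $\dim A_{j,i}/\mathfrak m A_{j,i}$ count generators of the $A_{i,j}$'s, not the cotangent space of the reducibility locus. Moreover if your two bounds were both valid with $I_{tot}=\mathfrak m_A$, the second would already give $t' \leq h(h+1)/2$, strictly better than the theorem, which should have been a warning sign.

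The second problem is the symmetric-pairing argument. The involution induced by $T^\perp = T(-1)$ acts on constituents by $i \mapsto i^\perp(1)$, i.e.\ $1 \leftrightarrow \eps$ and $\overline\chi \mapsto \overline\chi$; hence it identifies $A_{1,\overline\chi}$ with $A_{\overline\chi,\eps}$ and $A_{\overline\chi,1}$ with $A_{\eps,\overline\chi}$, \emph{not} $A_{1,\overline\chi}$ with $A_{\overline\chi,1}$. So there is no symmetric form on $A_{1,\overline\chi}/\mathfrak m A_{1,\overline\chi}$ of the kind you invoke, and the bound $h(h+1)/2$ does not follow from what you wrote. The paper's actual route is: $t'$ equals the minimal number of generators of $\mathfrak m = I_{tot} = A_{1,\overline\chi}A_{\overline\chi,1}$, and by the mechanism of \cite{BC2} Lemma 9.4.3 this is at most $h + s$, where $s$ is the minimal number of generators of $A_{\eps,\overline\chi}A_{\overline\chi,1}$. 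It is at this point — and only here — that the polarisation is used: it gives $A_{\eps,\overline\chi} \simeq A_{\overline\chi,1}$, so $A_{\eps,\overline\chi}A_{\overline\chi,1} \simeq A_{\overline\chi,1}^2$, and the product of a module with itself inside the commutative ring $A$ factors through the symmetric square, whence $s \leq h(h+1)/2$. The $h$ in ``$h+s$'' comes from the internal filtration of the generators of $I_{tot}$, not from the cotangent space of $\Spec(A/I_{tot})$; this is the piece of BC2's machinery (Lemma 9.4.3, combined with the control of the $h_{i,j}$'s via Corollary \ref{cor1018} and the bound $\dim H^1_{f'}(E,\overline\chi(-1)) \leq 1$) that your proposal is missing.
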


In particular as $t' \geq 2$, we have that $h \neq 0$. 

\begin{rema}
\begin{enumerate}
\item If $t$ denotes the dimension of the tangent space at $y$ of $\mathcal E$, we have $t \leq t' +1$.
\item If $h=1$ (in particular if $\dim H^1_f(E,\chi_p) = 1$) and $a > 1$ then $\mathcal E$ is regular at $y$, and thus there is a unique component of $\mathcal E$ passing through $y$. Unfortunately, because of the Bloch-Kato conjecture, we never expect this to be the case as the order of vanishing is even.
\item In case $a=1$, unfortunately we don't have that 
\[ H_f^1(E,\overline\chi(-1)) = \{0\}\]
(which is a special case of the Bloch-Kato conjecture) nor $\dim H^1(E,\overline\chi(-1)) \leq 1$ (which is a special case of Jannsen conjecture). Indeed, in this case $V = \overline\chi(-1)$ is of motivic weight 1 (and automorphic) thus the $L$-function of $V^*(1)$ doesn't vanish at $s = 0$. 
But even if we had these results, the main obstruction to get the previous theorem \ref{thrGEO} when $a=1$ is Proposition \ref{proppoids}, as we can't control along the reducibility locus
 the Hodge-Tate weights which have same value at $y$  (as when $p$ is inert).

\end{enumerate}\end{rema}

For now on suppose $a \geq 2$ and $p$ splits (unless otherwise stated). Denote $A = \mathcal O_{\mathcal E',y}$. The GMA associated to $e_\eps,e_{\overline\chi},e_1$ is of the form
\[
\left(
\begin{array}{ccc}
A  & A_{\eps,\overline\chi}  & A_{\eps,1}  \\
A_{\overline\chi,\eps}  &  A & A_{\overline\chi,1}  \\
A_{1,\eps}  & A_{1,\overline\chi}  &   A
\end{array}
\right)
\]

\begin{prop}
$I_{tot}$ is the maximal ideal of $A = \mathcal O_{\mathcal E',y}$.
\end{prop}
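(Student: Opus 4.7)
The plan is to show successively that $B := A/I_{tot}$ is a finite length Artin local $k$-algebra, that the three characters appearing in the canonical decomposition of $T$ over $B$ are forced to be trivial deformations of their residues at $y$, and finally, via Chebotarev density plus the fact that $A$ is generated by Hecke eigenvalues, to conclude $\mathfrak m_A \subset I_{tot}$.

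First, I would use Proposition \ref{proppoids} in the case $a\geq 2$ and $p$ split: it yields $\wt(\delta_i)-\wt(\delta_j)\equiv \wt(\delta_i)(y)-\wt(\delta_j)(y) \pmod{I_{tot}}$ for all $i,j$. Since $\kappa_2$ is fixed on $\mathcal W'$, $\wt(\delta_2)$ is already a constant of $\mathcal W'$, so the two remaining coordinates $\wt(\kappa_1)$ and $\wt(\kappa_3)$ on $\mathcal W'$ at $w(y)$ become constant modulo $I_{tot}$. This gives $w^*(\mathfrak m_{\mathcal W',w(y)})\cdot A\subset I_{tot}$. Since $w:\mathcal E'\to \mathcal W'$ is locally finite by Theorem \ref{thrHecke}, $A$ is finite over $\mathcal O_{\mathcal W',w(y)}$, and therefore $B$ is a quotient of $A\otimes_{\mathcal O_{\mathcal W',w(y)}}k$, hence a finite-dimensional Artin local $k$-algebra.

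Second, by definition of $I_{tot}$ we have $T\otimes_A B = T_1+T_{\overline\chi}+T_\eps$ with each $T_i:G_{E,S}\to B^\times$ a continuous character deforming $\eta_i\in\{1,\overline\chi,\eps\}$. Writing $T_i=\eta_i\cdot\exp(\phi_i)$ for a continuous homomorphism $\phi_i:G_{E,S}\to \mathfrak m_B$, I claim $\phi_i=0$. Three types of conditions constrain $\phi_i$: (i) the constant Hodge--Tate weight property proved in Step~1 (and in Proposition \ref{proppoids}) forces, via the triangulation machinery already invoked in Proposition \ref{prop1013}, the cocycle $\phi_i$ to be crystalline at each place above $p$ with trivial Hodge--Tate weight, i.e.\ trivial on inertia at $p$; (ii) the choice of type $(K_J,J)$ used in the construction of $\mathcal E$ forces $\phi_i$ to vanish on an open subgroup of inertia at each prime of $S\setminus\{p\}$, by exactly the argument of Corollary \ref{cor1018} (density of classical points all of type $J$ in the reduced scheme $\mathcal E'$); (iii) the polarization $T^\perp=T(-1)$ relates $\phi_1$ to $\phi_\eps$ and imposes a self-duality on $\phi_{\overline\chi}$, but introduces no independent new Selmer class. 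The resulting class of $\phi_i$ therefore lies in $H^1_f(E,\mathbf 1)\otimes_k \mathfrak m_B$; and $H^1_f(E,\mathbf 1)=0$ for $E$ imaginary quadratic, because every continuous character of $G_E$ unramified everywhere factors through the finite class group, while $\mathcal O_E^\times$ is torsion (so $\QQ_p$-valued characters vanish). Hence $\phi_i=0$ and $T_i$ coincides with the constant character $\eta_i\otimes_k B$.

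Third, I conclude via generation. The local ring $A$ is, by construction of the eigenvariety, generated as an $\mathcal O_{\mathcal W',w(y)}$-algebra by the images of the Hecke operators, so $\mathfrak m_A$ is generated by $w^*(\mathfrak m_{\mathcal W',w(y)})$ together with the set $\{T(\mathrm{Frob}_\ell)-T(\mathrm{Frob}_\ell)(y)\}$ for $\ell$ running through a Chebotarev-dense set of primes unramified in $E$ and outside $S$. By Step~1 the first set lies in $I_{tot}$; by Step~2, $T(\mathrm{Frob}_\ell)\equiv 1+\overline\chi(\mathrm{Frob}_\ell)+\eps(\mathrm{Frob}_\ell)=T(\mathrm{Frob}_\ell)(y) \pmod{I_{tot}}$, so the second set also lies in $I_{tot}$. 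Hence $\mathfrak m_A \subset I_{tot}$, and combined with the tautological inclusion $I_{tot}\subset \mathfrak m_A$ we obtain the proposition. The main obstacle is Step~2: identifying the constraints coming from the global geometry of $\mathcal E'$ at $y$ with the precise local conditions (crystalline at $p$, type-preserving at $S\setminus\{p\}$) that single out the Bloch--Kato Selmer group $H^1_f(E,\mathbf 1)$; once this is done the vanishing of the latter for imaginary quadratic $E$ is elementary.
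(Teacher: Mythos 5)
Your Steps 1 and 2 track the paper's argument faithfully: Proposition \ref{proppoids} forces $w^*(\mathfrak m_{\mathcal W',w(y)})\subset I_{tot}$, and the characters $T_i$ over $B=A/I_{tot}$ are then constant deformations, via unramified-outside-$pS$, the type-$J$ constraint at bad primes away from $p$, crystallinity at $p$ with fixed Hodge--Tate weight (all as in the paper, though the paper reduces to $k[\varepsilon]$-points while you work over the full Artin ring, which is a harmless cosmetic difference since the relevant lemmas hold for all cofinite-length ideals), and finally the vanishing of $H^1_f(E,\QQ_p)$.

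The gap is in Step 3. You assert that $\mathfrak m_A$ is generated over $w^*(\mathfrak m_{\mathcal W',w(y)})$ by the differences $T(\Frob_\ell)-T(\Frob_\ell)(y)$ for $\ell$ unramified outside $S$. This omits the Hecke operators at $p$: by construction $\mathcal O_{\mathcal E'}$ is generated over $\mathcal O_{\mathcal W'}$ by the unramified spherical Hecke operators \emph{and} by the Atkin--Lehner/$U_p$-type operators in $\mathcal A(p)$, whose images in $A$ are the functions $F_1,F_2,F_3$. These are not of the form $T(\Frob_\ell^n)$, so the congruence $T_i\equiv i\pmod{I_{tot}}$ established in Step 2 does not by itself put $F_i-F_i(y)$ into $I_{tot}$. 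One needs the additional argument the paper gives: by Lemma \ref{lemmaB4} the module $D_{rig}(T_i(\delta_{i|\Gamma}^{-1}))$ has crystalline Frobenius slope $F_i$; since $T_i$ is constant mod $I_{tot}$ and the twist $\delta_{i|\Gamma}$ is constant on the fiber $w^{-1}(\kappa)$, the Frobenius eigenvalue, hence $F_i$, is constant mod $I_{tot}$. Without this piece your generating set for $\mathfrak m_A$ is too small and the conclusion $\mathfrak m_A\subset I_{tot}$ does not follow.
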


\begin{proof}
By proposition \ref{proppoids} we already have that $\Spec(A/I_{tot})$ is included in the fiber of $w: \mathcal E' \fleche \mathcal W$ at $\kappa$, i.e. $I_{tot} \supset \mathfrak m_\kappa\mathcal O_E$.
Let $\psi : A/I_{tot} \fleche k[\eps]$ (where $\eps^2 = 0$) any morphism, and for $i \in \{1,\overline\chi,\eps\}$, denote
\[ T_{i,\psi} = T_i \otimes_{A/I_{tot},\psi} k[\eps].\]
We will show that these $T_{i,\psi}$ are constant deformation of $i$ to $k[\eps]$.
Indeed, they are all unramified outside $p\Cond(\chi)$ by construction (as $T$ is). If $v \Cond(\chi)$ is prime to $p$, then $T_{i,\psi}$ can be seen as an extension of $i$ by itself, thus gives a class in $H^1(E_v,1 = ii^{-1})$. But such a class is trivial (as $\Hom(i,i(-1)) = \{0\}$).
If $v |p$, we need to check that the induced class in $H^1(\QQ_p,\QQ_p)$ is crystalline. For all $i$, it suffices to show it for $T_{i,\psi}i^{-1}$, which reduces to $1$. Thus, in every case we have a deformation of $1$ to $k[\eps]$. But we know moreover that its Hodge-Tate weight is an integer as we are in the 
fiber $w^{-1}(\kappa)$. If $p$ is split, then this is exactly Proposition 2.3.4. If $p$ is inert, the same proof applies, using again Berger's result (\cite{BergerAst}).
Thus, for every $i$, $T_{i,\psi} \in H^1_f(E,\QQ_p)$. But this space is trivial.

Recall that $\mathcal O_{\mathcal E}$ is generated over $\mathcal O_{\mathcal W}$ by $F_i, i \in \{1,2,3\}$ and by (unramified) Hecke operators at $v$ prime to $p$ not dividing 
$\Cond(\chi)$.
But these Hecke operators are determined by the conjugacy class of $\rho_{z}(Frob_v)$ for $z \in \mathcal Z$ by Zariski density and because $\mathcal O_{\mathcal E}$ is 
reduced, and in turn this conjugacy class is determined by $T(\Frob_v^n), n \in \NN$. Thus as we showed that the $T_{i,\psi}$ are trivial, we have for all $v \notdivides \Cond(\chi)$ and different from $p$, that $T_{i,\psi}(\Frob_v^n) \in k$ thus $T_\psi(\Frob_v^n) \in k$. Moreover, modulo $I_{tot}$ the image of $\mathcal O_{\mathcal W}$ in $A = \mathcal O_{y}$ is just $k = A/\mathfrak m_A = \mathcal O_{\mathcal W}/\mathfrak m_\kappa$ as $I_{tot} \supset \mathfrak m_\kappa$. Moreover, we have seen in Lemma \ref{lemmaB4} that $D_{rig}(T_i(\delta_{i,|\Gamma}^{-1}))$ has slope $F_i$, but as $T_{i,\psi}$ is in the fiber of $\kappa$, this means that $D_{rig}(T_{i,\psi}(\delta_{i,\Gamma}^{-1}(y)))$ as slope $F_i \pmod I_{tot}$, but as $T_{i,\psi}$ is constant, $F_i \in k$. Thus $\psi(A/I_{tot}) \subset k$. As this is true for all $\psi$, we get the result.
\end{proof}

Now the rest of the proof is the same as \cite{BC2} Chapter 9. The previous results prove that $I_{tot} = A_{1,\rho}A_{\rho,1}$ (see \cite{BC2} Lemma 9.3.1), thus $A_{\rho,1}A_{\rho,1} = \mathfrak m_A$ by the previous proposition. Denote for $\rho$ a $p$-adic representation of $G_{E,S}$, $H^1_{f'}(E,\rho) \subset H^1(E,\rho)$ the subspace generated by extensions 
\[ 0 \fleche \rho \fleche U \fleche 1 \fleche 0,\]
satisfying, for all $v \notdivides p$,
\[ \dim U^{I_v} = 1 + \dim \rho^{I_v}.\]

\begin{lemma} If $a\geq 2$
 \[\dim H^1_{f'}(E,\overline{\chi}(-1)) \leq 1.\]
If $a = 1$, 
 \[\dim H^1_{f'}(E,\overline{\chi}(-1)) \leq 2 + \dim H^1_f(E,\overline{\chi}(-1)).\]

\end{lemma}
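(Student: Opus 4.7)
The plan is to compare $H^1_{f'}$ with $H^1_f$ via the local conditions at primes above $p$, and then to bound $H^1_f$ using known results on $L$-values.

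Taking $I_v$-invariants of an extension $0 \to \overline\chi(-1) \to U \to 1 \to 0$ and applying inflation-restriction shows that, for $v \nmid p$, the condition $\dim U^{I_v} = 1 + \dim \overline\chi(-1)^{I_v}$ is equivalent to the restriction to $H^1(I_v, \overline\chi(-1))$ of the extension class being zero, i.e.\ to the unramified (Bloch-Kato $f$) condition at $v$. Since $H^1_{f'}$ imposes no constraint at $v \mid p$ whereas $H^1_f$ requires crystallinity there, Poitou-Tate together with local Tate duality yields
\[
\dim H^1_{f'}(E,\overline\chi(-1)) \;\le\; \dim H^1_f(E,\overline\chi(-1)) + \sum_{v \mid p}\dim H^1_f\bigl(E_v,(\overline\chi(-1))^*(1)\bigr).
\]

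The sum on the right is estimated place-by-place. Using the polarization $\chi^\perp = \chi|.|^{-1}$ to identify $(\overline\chi(-1))^*(1)$ as an explicit Tate twist of $\chi$, one reads off its Hodge-Tate weights at $v$ and $\overline v$ from the infinity type $(a, 1-a)$ of $\chi$. The local Bloch-Kato dimension formula
\[
\dim H^1_f(\QQ_p, W) = \dim W^{G_{\QQ_p}} + \dim D_{\mathrm{dR}}(W)/\Fil^0 D_{\mathrm{dR}}(W)
\]
for a de Rham character $W$ then contributes a summand equal to $1$ (resp.\ $0$) at a place where the HT weight is positive (resp.\ nonpositive, $W$ being nontrivial). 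For $a \ge 2$ exactly one of the two summands is $1$ and the other vanishes, giving total $1$; for $a=1$ both summands are $1$, giving total $2$.

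Finally, when $a \ge 2$ the $L$-value $L\bigl((\overline\chi(-1))^*(1),0\bigr)$ equals, up to Tate twist, a value $L(\chi,s_0)$ with $s_0$ strictly inside the region of absolute convergence of the Euler product, hence is non-zero; combined with the Bloch-Kato conjecture for Hecke characters (known here by Rubin's work and its refinements) this forces $H^1_f(E,\overline\chi(-1))=0$, giving $\dim H^1_{f'}(E,\overline\chi(-1))\le 1$. When $a=1$ no absolute-convergence argument is available, and one simply retains $\dim H^1_f(E,\overline\chi(-1))$ as an explicit additive term, yielding $\dim H^1_{f'}(E,\overline\chi(-1))\le 2+\dim H^1_f(E,\overline\chi(-1))$.

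The main obstacle is the justification of the Bloch-Kato vanishing input in the $a \ge 2$ case at $p=2$, where Rubin's original method has restrictions; this must be handled either by invoking extensions of his results or by an independent Euler system argument exploiting that $s_0$ lies well inside the domain of absolute convergence.
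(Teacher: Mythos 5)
Your proof takes essentially the same route as the paper: both reduce the bound on $H^1_{f'}$ to (a) the vanishing of the global $H^1_f(E,\overline\chi(-1))$ when $a\geq 2$, plus (b) an explicit count of the local "extra" contribution at places above $p$, using the Bloch--Kato dimension formula for de Rham characters. The only cosmetic difference is in step (b): you compute $\dim H^1_f(E_v,(\overline\chi(-1))^*(1))$ directly, whereas the paper computes $\dim H^1_f(E_v,\overline\chi(-1))$ and converts it to $\dim H^1(E_v,\cdot)/H^1_f(E_v,\cdot)$ via the local Euler characteristic formula. These are of course equal by local Tate duality (which you invoke to get your displayed inequality in the first place), and the numbers you obtain ($1$ for $a\geq2$, $2$ for $a=1$, covering both the inert place and the two split places) match what the paper gets via BK Corollary~3.8.4.

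One point worth tightening: you attribute the global vanishing $H^1_f(E,\overline\chi(-1))=0$ for $a\geq2$ to "absolute convergence," but while $L((\overline\chi(-1))^*(1),0)$ does lie in the region of absolute convergence, the implication "$L$-value nonzero $\Rightarrow$ $H^1_f$ vanishes" is precisely the (relevant case of the) Bloch--Kato conjecture and not a soft analytic fact; the paper cites it as \cite{BC2} Proposition~5.2.5. Your concern about the $p=2$ dependence of this input through Rubin's theorem is a legitimate one that the paper does not address explicitly when it makes the citation (though note that the only place the lemma is used with the $a\geq2$ conclusion, Theorem~\ref{thrGEO}, is stated under the hypothesis $p$ split, so the practical scope is narrower than your concern suggests).
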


\begin{proof}
This is the same proof as \cite{BC2} Proposition 5.2.7. As $H^1_f(E,\overline\chi(-1)) = 0$ by \cite{BC2} Proposition 5.2.5 if $a \neq 1$, it is enough to calculate
\[ \dim H^1(E_p,\overline\chi(-1))/H_f^1(E_p,\overline\chi(-1)).\]
Thus, by the local Euler Characteristic formula, it is enough to calculate $H^1_f(E_p,\overline\chi(-1))$. But this is given by \cite{BK} Corollary 3.8.4 as $\overline\chi$ is De Rham. We get, when $p$ is inert,
\[ \dim H_f^1(E_p,\overline\chi(-1)) = 
\left\{
\begin{array}{cc}
1  & \text{if } a \geq 2 \\
 0 &  \text{if } a =1
  \end{array}
\right.
 \]
and if $p = v\overline v$ is split,
\[ \dim H_f^1(E_v,\overline\chi_v(-1)) =  0 \quad \text{and} \quad \dim H_f^1(E_v,\overline\chi_{\overline v}(-1)) =  \left\{
\begin{array}{cc}
1  & \text{if } a \geq 2 \\
 0 &  \text{if } a =1
  \end{array}
\right.\]
\end{proof} 

Using Corollary \ref{cor1018} (and the analog outside $p$ for $h_{\overline\chi,1}$) we get exactly as \cite{BC2} Lemma 9.4.2 :

\begin{lemma}
Denote $h_{i,j} = \dim \Ext_T(i,j)$. Then
\begin{enumerate}
\item $h_{1,\overline\chi} = h_{\overline\chi,\eps} = h$
\item $h_{1,\eps} = 0$ and $h_{\eps,1} \leq 1$.
\item $h_{\overline\chi,1} = h_{\eps,\overline\chi} \leq 1$.
\end{enumerate}
\end{lemma}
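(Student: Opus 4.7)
The plan is to follow closely the strategy of \cite{BC2} Lemma 9.4.2, combining the polarization $T^\perp = T(-1)$ of the pseudocharacter on $\mathcal E$ with the vanishing and upper-bound results on Bloch--Kato Selmer groups that were assembled earlier in the section.

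\textbf{Step 1 (Polarization symmetry).} Working in the GMA attached to the idempotents $(e_1, e_{\overline\chi}, e_\eps)$, the relation $T^\perp = T(-1)$ induces an anti-involution of the GMA covering the involution $\sigma$ on characters given by $\sigma(1) = \eps$, $\sigma(\eps) = 1$, $\sigma(\overline\chi) = \overline\chi$. Here the value on $\overline\chi$ is forced by the polarization of $\chi$ itself ($\chi^c = \chi^{-1}|\cdot|$, hence $\overline\chi \cdot \overline\chi^c = \eps$ on the Galois side). As in \cite{BC2} Section 1.8, this produces module isomorphisms $A_{i,j} \simeq A_{\sigma(j),\sigma(i)}$, yielding $h_{1,\overline\chi} = h_{\overline\chi,\eps}$ and $h_{\overline\chi,1} = h_{\eps,\overline\chi}$. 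Together with $h_{1,\overline\chi} = h$ by definition, this gives all of (1) and the equality in (3).

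\textbf{Step 2 (Embedding extensions into Selmer groups).} The analog of Corollary \ref{cor1018} for the other pairs of characters is obtained by exactly the same argument: extensions in $\Ext_T(i,j)$ are crystalline at every $v|p$ by the trianguline criterion of Theorem \ref{theorB5} (whose hypothesis, that the Frobenius eigenvalues of $i$ and $j$ differ, holds in each of the relevant pairs), and satisfy the appropriate local condition away from $p$ thanks to the type $(K_J, J)$ cut out via \cite{BC1} Proposition 4.2. This produces the inclusions $\Ext_T(1,\eps) \subset H^1_f(E,\eps)$, $\Ext_T(\eps,1) \subset H^1_f(E,\eps^{-1})$, and $\Ext_T(\eps,\overline\chi) \subset H^1_{f'}(E,\overline\chi(-1))$ (the last using the $f'$-condition at primes dividing $\Cond(\chi)$, not the full $f$-condition, exactly as in Corollary \ref{cor1018}).

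\textbf{Step 3 (Applying the Selmer bounds).} We feed in the known vanishings and upper bounds: $H^1_f(E,\eps) = 0$ (since $\mathcal O_E^\times \otimes \QQ_p = 0$ for the imaginary quadratic field $E$, so the Kummer-theoretic Bloch--Kato Selmer vanishes), $\dim H^1_f(E,\eps^{-1}) \leq 1$ (standard computation of $H^1_f(E,\QQ_p(-1))$ via Tate's global Euler characteristic formula together with Bloch--Kato duality), and $\dim H^1_{f'}(E,\overline\chi(-1)) \leq 1$ under the assumption $a\geq 2$, by the Lemma immediately preceding. Combined with Step 2 these give $h_{1,\eps} = 0$, $h_{\eps,1} \leq 1$ and $h_{\eps,\overline\chi} \leq 1$, finishing (2) and (3).

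\textbf{Main obstacle.} The only non-formal step is Step 2, where one must verify that GMA-extensions land in the correct (possibly $f'$) Selmer group. At $p$ this requires invoking the trianguline deformation theory of \cite{KPX} in the form of Theorem \ref{theorB5}; at primes $v \mid \Cond(\chi)$ it requires the careful analysis of the local structure imposed by the type $(K_J,J)$, exactly as in the proof of Corollary \ref{cor1018}. Everything else is standard GMA bookkeeping plus appeals to results already established in the preceding subsections.
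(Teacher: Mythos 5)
Your proof follows the same route the paper takes, which is simply to reproduce the argument of \cite{BC2} Lemma 9.4.2: the anti-involution of the GMA coming from $T^\perp = T(-1)$ yields the equalities in (1) and (3), and the bounds come from embedding the $\Ext_T$-groups into Selmer groups and then invoking known (or just-proved) dimension bounds. The one place you over-reach is in Step 2, where you assert that Theorem \ref{theorB5} gives crystallinity at $p$ for ``each of the relevant pairs.'' As stated, Theorem \ref{theorB5} only treats extensions with $T_1$ as the quotient (the $\delta_1$-trianguline step), so it covers $\Ext_T(1,\overline\chi)$ and $\Ext_T(1,\eps)$ but not $\Ext_T(\eps,1)$ or $\Ext_T(\eps,\overline\chi)$. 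This is in fact why the paper, following \cite{BC2}, works with $H^1_{f'}$ (no condition at $p$, unramified outside $p$) for these other pairs — and why the preceding lemma is stated for $H^1_{f'}(E,\overline\chi(-1))$ rather than $H^1_f$. For $h_{\eps,1}$, the bound $\dim H^1_{f'}(E,\QQ_p(-1)) = 1$ already follows from the global Euler characteristic formula over $G_{E,\{p\}}$ (one complex place gives $\chi = -1$, while $H^0 = H^2 = 0$), so no local analysis at $p$ is needed at all; claiming the full $H^1_f$ here via a hypothetical extension of Theorem \ref{theorB5} is both unjustified and unnecessary. Aside from this the bookkeeping is correct and matches the paper.
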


But we know (see \cite{BC2} lemma 9.4.2) that $h$ is the minimal number of generators of $A_{\overline\chi,1}$, and similarly, 
$A_{1,\overline\chi}A_{\overline\chi,1} = \mathfrak m$, which has by hypothesis $t'$ as minimal number of generators, can be generated (see \cite{BC2} Lemma 9.4.3, by
\[ h + s\]
generators, where $s$ is the minimal number of generators of $A_{\eps,\overline\chi}A_{\overline\chi,1} \simeq A_{\overline\chi,1}A_{\overline\chi,1}$, thus 
\[ s \leq h\frac{h+1}{2}.\]
Putting everything together, we get
\[ t' \leq h + h\frac{h+1}{2} = h\frac{h+3}{2}.\]

\appendix
\section{Cohomology of cuspidal automorphic sheaves}
\label{AppA}
\begin{prop}(Lan, \cite{LanIMRN} Theorem 6.1) 
Let $\mathfrak X_1(p^n)^*$ the minimal compactification of $\mathfrak X_1(p^n)$, defined by normalisation of the minimal compactification with our fixed auxiliary level, as in \cite{LanSigma}, Proposition 6.1. There is a proper surjection $p : \mathfrak X_1(p^n)^{tor} \fleche \mathfrak X_1(p^n)^*$.
\end{prop}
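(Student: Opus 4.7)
The plan is to reduce the statement to the analogous result at hyperspecial (spherical) level, which is Lan's classical result, and then propagate the map through the normalisation construction used in \cite{LanSigma} to define both $\mathfrak X_1(p^n)^{tor}$ and $\mathfrak X_1(p^n)^*$.

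First I would recall the classical input: at the auxiliary spherical level, Lan constructs a proper surjection $p^{Sph} : \mathfrak X^{Sph,tor} \fleche \mathfrak X^{Sph,*}$ (this is the reference from \cite{Lan,LanSigma,LanIMRN} at hyperspecial level). Taking products indexed by $\gamma \in \Gamma_n$ as in section \ref{sectNorm}, one obtains
\[
\prod_\gamma p^{Sph} : \prod_\gamma \mathfrak X^{Sph,tor} \fleche \prod_\gamma \mathfrak X^{Sph,*},
\]
which is still proper and surjective. In characteristic zero (i.e.\ at the level of generic fibres), we also have the classical proper surjection $p_K : X_1(p^n)^{tor} \fleche X_1(p^n)^*$ coming from the theory of toroidal vs.\ minimal compactifications of Shimura varieties at arbitrary level.

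Next I would invoke the universal property of the normalisation. By definition $\mathfrak X_1(p^n)^{tor}$ is the normalisation of $\prod_\gamma \mathfrak X^{Sph,tor}$ in $X_1(p^n)^{tor}$, and $\mathfrak X_1(p^n)^*$ is the normalisation of $\prod_\gamma \mathfrak X^{Sph,*}$ in $X_1(p^n)^*$. The composite
\[
\mathfrak X_1(p^n)^{tor} \fleche \prod_\gamma \mathfrak X^{Sph,tor} \fleche \prod_\gamma \mathfrak X^{Sph,*}
\]
together with the generic fibre map $X_1(p^n)^{tor} \fleche X_1(p^n)^*$ (which is compatible, since both factor through the usual Shimura-variety constructions) provides the data required by the universal property: since $\mathfrak X_1(p^n)^{tor}$ is normal and its generic fibre maps to $X_1(p^n)^*$, there is a unique morphism $p : \mathfrak X_1(p^n)^{tor} \fleche \mathfrak X_1(p^n)^*$ fitting in the commutative diagram with $\prod_\gamma p^{Sph}$.

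Finally, to get properness and surjectivity: $\mathfrak X_1(p^n)^{tor}$ is proper over $\Spec(\mathcal O_K)$ by \cite{LanSigma} (as recalled in section \ref{sectNorm}), and $\mathfrak X_1(p^n)^*$ is separated (being a normalisation of a separated scheme), so $p$ is automatically proper. For surjectivity, the image of $p$ is closed (by properness) and contains the schematically dense generic fibre $X_1(p^n)^*$ (because $p_K$ is surjective); since $\mathfrak X_1(p^n)^*$ is flat over $\mathcal O_K$ and normal, hence $X_1(p^n)^*$ is dense in it, the image must equal $\mathfrak X_1(p^n)^*$. The main technical point to verify is merely the compatibility of the spherical map $p^{Sph}$ with the normalisation procedure at Iwahori-like level --- but this is exactly how \cite{LanSigma} builds compactifications by normalisation and is essentially formal once one has the spherical case.
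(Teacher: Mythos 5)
The paper gives no proof of this proposition; it cites it directly as a theorem of Lan (\cite{LanIMRN}, Theorem 6.1), which is the result underpinning the normalisation formalism of \cite{LanSigma}. Your argument is a correct reconstruction of the expected proof along the lines of that formalism: lift the spherical-level proper surjection $\mathfrak X^{Sph,tor}\fleche\mathfrak X^{Sph,*}$ through the normalisation construction, get properness of $p$ from properness of $\mathfrak X_1(p^n)^{tor}$ over $\Spec(\mathcal O_K)$ together with separatedness of the target, and get surjectivity from closedness of the image and density of the generic fibre in the $\mathcal O_K$-flat scheme $\mathfrak X_1(p^n)^*$. The one step worth sharpening is the appeal to the ``universal property of normalisation'': the universal property of the normalisation of $\prod_\gamma\mathfrak X^{Sph,*}$ in $X_1(p^n)^*$ characterizes it among $\prod_\gamma\mathfrak X^{Sph,*}$-schemes that are \emph{integral} over $\prod_\gamma\mathfrak X^{Sph,*}$, and $\mathfrak X_1(p^n)^{tor}$ is not integral over it. What you actually need is the auxiliary lemma that a morphism from a normal, $\mathcal O_K$-flat scheme $Z$ to $\prod_\gamma\mathfrak X^{Sph,*}$ which generically factors through an integral extension $S'$ factors through $S'$ everywhere: locally the pullback of $\mathcal O_{S'}$ lands in the total fraction ring of $\mathcal O_Z$ and is integral over the image of $\mathcal O_{\prod_\gamma\mathfrak X^{Sph,*}}$, hence lies in $\mathcal O_Z$ by normality. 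With that lemma stated explicitly, your sketch is complete and is essentially the argument Lan uses.
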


\begin{defin}
The ($\mu$-ordinary) Hasse invariant ${^\mu}\Ha$ descend to $\mathfrak X_1(p^n)^*$ (modulo $p$), and we can thus define $\mathfrak X_1(p^n)^{\mu-full*}(v)$ to be the 
normalisation in its generic fiber of the greatest open in the blow up of $(^\mu\Ha,p^v)$ where this ideal is generated by ${^\mu\Ha}$. Its generic fiber is 
$\mathcal X_1(p^n)^{\mu-full*}(v)$, 
a strict neighborhood of the (full) $\mu$-ordinary locus. Denote $\mathcal X_1(p^n)^*(v)$ the (union of) connected component which contains a point of maximal degree, and as $\mathfrak X_1(p^n)^{\mu-full*}$ is normal in its generic fiber, there is an associated open $\mathfrak X_1(p^n)^*(v)$. We thus have a map,
\[ \pi(v) : \mathfrak X_1(p^n)^{tor}(v) \fleche \mathfrak X_1(p^n)^*(v).\]
\end{defin}


For all this section, except the last two results (Corollary \ref{corA5}, Theorem \ref{thrvanishing}), we forgot the notation concerning the level at $p$, and denote $\mathfrak X_1(p^n)^{tor}(v)$ by $\mathfrak X^{tor}(v)$, and similarly for $\mathfrak X(v),\mathfrak X^*(v),\mathfrak X^{tor},\mathfrak X,\mathfrak X^*$.
We thus have the previous map,
\[ \pi(v) : \mathfrak X^{tor}(v) \fleche \mathfrak X^*(v).\]

%
%

We have the following vanishing result.

\begin{prop}
Denote by $D(v)$ the boundary in $\mathfrak X^{tor}(v)$. Then, for all $q > 0$,
\[ R^q\pi(v)_*\mathcal O(-D(v)) = 0.\]
\end{prop}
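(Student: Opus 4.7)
The plan is to deduce the statement from Lan's theorem (the preceding proposition, which provides the analogous vanishing for the full map $\pi : \mathfrak X^{tor} \to \mathfrak X^*$) by a base change argument. The key observation is that the formation of the strict neighborhoods $(v)$ is compatible with the projection $\pi$, so that the sheaf $\mathcal O(-D(v))$ we care about on $\mathfrak X^{tor}(v)$ is the pullback of $\mathcal O(-D)$ from $\mathfrak X^{tor}$.

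The first step is to record the functorial compatibility of the construction. Since the $\mu$-ordinary Hasse invariant $\,^\mu\Ha$ is defined on $\mathfrak X^*$ and its pullback along $\pi$ is the Hasse invariant on $\mathfrak X^{tor}$, the admissible formal blow-up of $\mathfrak X^{tor}$ along $(\pi^*\,^\mu\Ha, p^v)$ is canonically identified with the base change along $\pi$ of the admissible formal blow-up of $\mathfrak X^*$ along $(\,^\mu\Ha, p^v)$. Under this identification, the greatest open where the ideal is principally generated by $\,^\mu\Ha$ on the toroidal side corresponds to the preimage of the same open on the minimal side. Taking normalizations in the generic fiber and passing to the relevant connected components (the $\mu$-canonical one), I would obtain a commutative diagram
\[
\begin{array}{ccc}
\mathfrak X^{tor}(v) & \xrightarrow{j^{tor}} & \mathfrak X^{tor} \\
\downarrow \pi(v) & & \downarrow \pi \\
\mathfrak X^*(v) & \xrightarrow{j^*} & \mathfrak X^*
\end{array}
\]
in which $\pi(v)$ is proper (being essentially a base change of the proper map $\pi$, up to normalization) and $\mathcal O(-D(v)) = (j^{tor})^*\mathcal O(-D)$, since boundary divisors are preserved by $\pi$ and by the blow-up/normalization procedure.

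The second step is to invoke base change. The statement $R^q\pi(v)_*\mathcal O(-D(v))=0$ can be checked locally on $\mathfrak X^*(v)$. Working on an affine formal open of $\mathfrak X^*(v)$, the preimage in $\mathfrak X^{tor}(v)$ is identified with the base change of the corresponding preimage in $\mathfrak X^{tor}$, up to normalization; applying proper base change for formal schemes (e.g.\ the version of Fujiwara--Kato, which applies in the admissible formal setting) together with the fact that $j^{tor}$ factors through this base change and is finite (it is the normalization morphism), yields an identification
\[
R^q\pi(v)_*\mathcal O(-D(v)) \simeq (j^*)^* R^q\pi_*\mathcal O(-D).
\]
The right-hand side vanishes for $q>0$ by Lan's theorem \cite{LanIMRN} Theorem 6.1, and the result follows.

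The main obstacle is making the base change argument rigorous in the presence of the normalization step: one needs to verify that the natural map $\mathfrak X^{tor}(v) \to \mathfrak X^{tor}\times_{\mathfrak X^*}\mathfrak X^*(v)$ (after restricting to the greatest opens in the two blow-ups) is the normalization, and that finite pushforward along this normalization commutes with the formation of $R^q\pi_*$. This reduces to a local statement near the boundary, where one uses the local toroidal description of $\pi$ together with normality of $\mathfrak X^{tor}$ and $\mathfrak X^*$ to conclude that normalization does not introduce higher cohomology, so that Lan's vanishing on the full compactification indeed propagates to every $(v)$-strict neighborhood.
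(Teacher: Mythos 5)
Your first step --- showing that the admissible formal blow-up of $\mathfrak X^{tor}$ along $(\pi^*\,{^\mu}\Ha, p^v)$ is the base change along $\pi$ of the blow-up of $\mathfrak X^*$ along $({^\mu}\Ha, p^v)$, because $({^\mu}\Ha, p^v)$ is a regular sequence on both sides --- is correct and is indeed a key ingredient of the paper's argument. The problem is with the second step. The identification $R^q\pi(v)_*\mathcal O(-D(v)) \simeq (j^*)^* R^q\pi_*\mathcal O(-D)$ that you want to derive from "proper base change" does not follow. For coherent cohomology, base change along $j^*: \mathfrak X^*(v) \to \mathfrak X^*$ requires flatness, and $j^*$ is not flat: it is the composite of an open piece of an admissible formal blow-up (which acquires positive-dimensional fibers over the center of the blow-up) with a normalization (finite, but generally not flat). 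The Fujiwara--Kato "proper base change" you invoke is about \'etale-type cohomology in the adic setting, not a coherent-cohomology base-change theorem. And even granting a base-change identification for the non-normalized blow-up, one would still need that passing to the normalization of $\mathfrak X^{tor}\times_{\mathfrak X^*}\mathfrak X^*(v)^0$ does not alter the higher direct images --- this is precisely what you flag as "the main obstacle," and the sketch you give does not actually close it.

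The paper's proof takes a different technical route that does not appeal to any global base change of cohomology. It first reduces, via the theorem on formal functions, to the formal completions of $\mathfrak X^{tor}(v)$ along the boundary strata of $\mathfrak X^*(v)$. It then shows that the local toroidal boundary charts $\mathfrak Y(v)$ of the $(v)$-neighborhood are obtained from Lan's charts $\mathfrak Y$ by the same blow-up-and-normalize procedure --- this is where your first step, together with the regular-sequence argument and the finiteness/normality input from \cite{PS4}, is used --- so that the local description of $\mathfrak X^{tor}(v)$ over $\mathfrak X^*(v)$ via the quotient $\mathfrak Y(v)/\Gamma$ persists. With this in hand one reduces, exactly as Lan does, to the properness of the map $p(v): C(v) \to \mathfrak Z(v)$ together with the relative ampleness of the pulled-back line bundles $\Psi(\ell)$; since $p(v)$ is a base change of a proper map and $\Psi(\ell)$ pulls back to a relatively ample bundle, Lan's original vanishing argument (\cite{LanIMRN} Proposition 8.3) applies verbatim. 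In other words, the reduction to Lan's result happens at the level of the local toroidal geometry, where the pullback of the relevant ample bundle is well behaved, rather than at the level of a global cohomological base change.
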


\begin{proof}
This is essentially Lan's result (see \cite{LanIMRN} Proposition 8.6), slightly modified because of the neighborhood we chose.
First, note that we can prove it for $\mathfrak X^{\mu-full}(v)^{tor}$ and $\mathfrak X^{\mu-full}(v)^*$ and then localise (as the schemes are normal and thus have same connected component as their rigid fiber) to $\mathfrak X^{tor}(v)$ and $\mathfrak X^*(v)$. From now on and until the end of this proof, we denote $\mathfrak X(v)^?$ the neighborhood of the full $\mu$-ordinary locus in $\mathfrak X^?$. By the formal functions theorem we can work on formal completions of points of $x \in \mathfrak X^*(v)$. Let us describe the completions at $x$ of $\mathfrak X(v)^{tor}$. 
Let $\mathfrak Z$ be a stratum of $\mathfrak X^*$ (\cite{LanSigma} Theorem 12.1), and denote $\mathfrak Z(v)$ be the base change of $\mathfrak Z$ to $\mathfrak X^*(v)$. 
As $\mathcal X^*(v)$ is normal in its generic fiber $\mathcal X^*(v)$ by construction, by pullback $\mathfrak Z(v) = \mathfrak X^*(v) \times_{\mathfrak X^*}\mathfrak Z$ is normal in its generic fiber.


Let $C \fleche \mathfrak Z$ be the proper scheme, normal over $\mathcal O$ (\cite{LanSigma} Proposition 8.4). 
Then $C(v):= C \times_{\mathfrak X^*} \mathfrak X^*(v) = C \times_\mathfrak Z \mathfrak Z(v)$ is normal in its rigid fiber (again by pullback). Define analogoulsy the local models (see \cite{LanKoecher} section 4.) $\mathfrak Y_{[\sigma]}(v)$ and $\mathfrak Y(v)$.

We can describe locally $\mathfrak X^{tor}$ over $\mathfrak X^*$ by $\mathfrak Y$ (see \cite{LanSigma} Theorem 10.3, \cite{LanIMRN} Theorem 6.1 (4)) and
also for $\mathcal X^{tor}(v)$ over $\mathcal X^*(v)$, i.e. in rigid fiber, as this is just localisation over an open subset. Denote by  $\mathfrak X^*(v)^0$ denote the open of the blow-up where the ideal is generated by ${^\mu}\Ha$ (i.e. before taking the normalisation in its rigid fiber), and similarly for $\mathfrak X^{tor}(v)^0$. Then $\mathfrak X^{tor} \fleche \mathfrak X^*$ is not flat à priori, but as $({^\mu}\Ha,p^v)$ is in both cases a regular sequence, this implies that the admissible formal blow-up in both cases is given by the closed subset of equation $(X{^\mu}\Ha-Yp^v)$ in $Proj(\mathcal O_{\mathfrak X^?}[X,Y])$ (see e.g. \cite{Bosch}, Proposition 7. (iii)). Thus this admissible blow-up commutes with the base change $\mathfrak X^{tor} \fleche \mathfrak X^*$.
In particular, $\mathfrak X^{tor}(v)^0 = \mathfrak X^{tor} \times_{\mathfrak X^*} \mathfrak X^{*}(v)^0$. Thus $\mathfrak X^{tor}(v)$ is the normalisation of $\mathfrak X^{tor} \times_{\mathfrak X^*}\mathfrak X^{*}(v)^0$ in its rigid fiber. The etale, local isomorphism
\[ \mathfrak Y \fleche \mathfrak X^{tor}_{\hat{Z}},\]
can thus be base changed over $\mathfrak X^*$ by $\mathfrak X^*(v)^0$, and as etale localisation commutes with normalisation, we only need to check that 
our chart $\mathfrak Y(v)$ is the normalisation of $\mathfrak Y(v)^0 := \mathfrak Y \times_{\mathfrak X^*} \mathfrak X^*(v)^0$ in its rigid fiber. But 
$\mathfrak Y(v) \fleche \mathfrak Y(v)^0$ is pulled-back from $\mathfrak X^*(v) \fleche \mathfrak X^*(v)^0$, thus $\mathfrak Y(v)$ is normal in its rigid fiber, and moreover the morphism
\[ \mathfrak X^*(v) \fleche \mathfrak X^*(v)^0,\]
is finite by \cite{PS4} Proposition 1.1. Thus, again by \cite{PS4} Proposition 1.1, $\mathfrak Y(v)$ is the normalisation and we get a local isomorphism
\[ \mathfrak Y(v) \fleche \mathfrak X^{tor}(v).\]


Thus, for 
$x \in \mathfrak Z(v)$ we have, \[(\mathfrak Y(v)/\Gamma)^{\widehat{~}}_x \simeq (\mathfrak X(v)^{tor})_x^{\widehat{~}}\]

Then according to \cite{LanKoecher} Theorem 3.9 (and especially section 7), and \cite{LanIMRN} Theorem 8.6 it is sufficient to prove the analog of Proposition 8.3 (of \cite{LanIMRN}) for $p(v) : C(v) \fleche \mathfrak Z(v)$. But $p(v)$ is also proper, and the pullback of the sheaf $\Psi(\ell)$ relatively ample over $\mathfrak Z(v)$, thus the same proof applies.
\end{proof}

%

In the following, we denote for a object $X$  over $\Spec(\mathcal O)$ or $\Spf(\mathcal O)$ and $n \in \NN^*$, $X_n$ the base change to $\Spec(\mathcal O/p^n)$.
We also denote, as in \cite{AIP}, $\mathfrak W(w)^0$ the analogous weight space, but forgetting the torsion part when constructing $\mathfrak W(w)$. This can be seen for example as characters in $\mathfrak W(w)$ being trivial on the torsion part of $T(\ZZ_p)$, but we don't fix such an identification.

\begin{prop}
Consider the following diagram, for $m \geq n$,
\begin{center}
\begin{tikzpicture}[description/.style={fill=white,inner sep=2pt}] 
\matrix (m) [matrix of math nodes, row sep=3em, column sep=2.5em, text height=1.5ex, text depth=0.25ex] at (0,0)
{ 
\mathfrak X^{tor}(v)_m & & \mathfrak X^{tor}(v)_n \\
\mathfrak X^{*}(v)_m & &\mathfrak X^*(v)_n\\
};
\path[->,font=\scriptsize] 
(m-1-1) edge node[auto] {$i$} (m-1-3)
(m-2-1) edge node[auto] {$i'$} (m-2-3)
(m-1-1) edge node[auto] {$\pi_m$} (m-2-1)
(m-1-3) edge node[auto] {$\pi_n$} (m-2-3)
;
\end{tikzpicture}
\end{center}
We have the equality,
\[ i{'*}\pi_{n*}\mathfrak w_{w,n}^{\kappa^0\dag}(-D) = \pi_{m,*}i^*\mathfrak w_{w,m}^{\kappa^0\dag}(-D).\]
In particular, $\pi_*\mathfrak w_w^{\kappa^0\dag}(-D)$ is a small formal Banach sheaf on $\mathfrak X^*(v) = \mathfrak X_1(p^n)^*(v)$. Similarly for 
$(\pi\times1)_*\mathfrak w_w^{\kappa^{0,univ}\dag}(-D)$ on $\mathfrak X^*(v) \times \mathfrak W(w)^0$. Moreover $H^i(\mathfrak X^*(v), \pi_*\mathfrak w_w^{\kappa^0\dag}(-D))[1/p]$ vanishes for $i \geq 1$ (similarly for the higher direct image of $(\pi\times1)_*\omega_w^{\kappa^{0,univ}\dag}(-D)$ on $\mathcal W(w)$).
\end{prop}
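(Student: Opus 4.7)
The plan is to combine three ingredients: the vanishing proposition $R^q\pi(v)_*\mathcal O(-D(v)) = 0$ proved above, the affinoidness of $\mathcal X^*(v)$ (since ${}^\mu\Ha$ is a section of a power of an ample line bundle on the minimal compactification by \cite{GN}), and the local structure of the small formal Banach sheaf $\mathfrak w_w^{\kappa^0\dag}$.

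First I would establish, for each $m$, the vanishing $R^q(\pi_m)_*\mathfrak w_{w,m}^{\kappa^0\dag}(-D) = 0$ for $q\geq 1$. Recall that $\mathfrak w_w^{\kappa^0\dag}$ is built from the torsor $\mathfrak{IW}_w^+\to \mathfrak{IW}_w$ (for a formal torus $\mathfrak T_w$) and the affine morphism $\mathfrak{IW}_w\to \mathfrak X_1(p^n)^{tor}(v)$, composed with the finite map $\mathfrak X_1(p^n)^{tor}(v)\to \mathfrak X_0(p^n)^{tor}(v)$. In the local AIP picture the reduction $\mathfrak w_{w,m}^{\kappa^0\dag}$ admits, over any small open $\mathfrak U\subset \mathfrak X^{tor}(v)_m$ in the base, a presentation as a filtered limit $\varprojlim_k \mathcal F_k$ whose successive graded pieces are coherent locally free sheaves, each of which is étale-locally isomorphic to $\mathcal O_{\mathfrak X^{tor}(v)_m}^{r_k}$. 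Twisting by $\mathcal O(-D)$ and using the projection formula, the vanishing of $R^q\pi_*\mathcal O(-D)$ implies $R^q(\pi_m)_*(\mathcal F_k(-D))=0$ for every $k$ and every $q\geq 1$. The limit structure of the small formal Banach sheaf provides exactly the Mittag--Leffler condition needed to pass to $\varprojlim$, giving the announced vanishing.

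From this vanishing, the base change formula $i'^*\pi_{n*}\mathfrak w_{w,n}^{\kappa^0\dag}(-D) = \pi_{m,*}i^*\mathfrak w_{w,m}^{\kappa^0\dag}(-D)$ follows by the standard cohomology-and-base-change argument applied to the proper map $\pi_m$: working on an affine open $\Spec(R)\subset \mathfrak X^*(v)_m$ with $\Spec(R/p^n)\subset \mathfrak X^*(v)_n$, the vanishing of higher direct images makes $(\pi_m)_*$ exact on the tower $\{\mathcal F_k(-D)\}$, so tensoring with $R/p^n R$ commutes with taking global sections over the fiber and with the limit. Taking the limit in $m$ shows that the compatible family $(\pi_{n,*}\mathfrak w_{w,n}^{\kappa^0\dag}(-D))_n$ is obtained by reduction mod $p^n$ of $\pi_*\mathfrak w_w^{\kappa^0\dag}(-D)$, so the latter is well-defined and satisfies the defining property of a small formal Banach sheaf on $\mathfrak X^*(v)$: on affine opens of the base the reductions are limits of coherent locally free sheaves with surjective transition maps whose kernels are locally free. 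The same reasoning, carried out over $\mathfrak X^*(v)\times \mathfrak W(w)^0$ with the universal character, yields the analogue for $(\pi\times 1)_*\mathfrak w_w^{\kappa^{0,univ}\dag}(-D)$.

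Finally, for the cohomological vanishing in positive degrees after inverting $p$, I would use that $\mathcal X^*(v)$ is affinoid. Indeed, a small formal Banach sheaf $\mathcal G$ on the formal scheme $\mathfrak X^*(v)$ yields on the affinoid $\mathcal X^*(v)$ a Banach sheaf in the rigid sense whose sections over $\mathcal X^*(v)$ are a Banach $K$-module; tensoring by $\mathcal O(-D)$ (which is pulled back from upstairs) and using Tate's acyclicity for coherent sheaves on affinoids, combined with the limit presentation and passage to $[1/p]$, makes the higher Čech cohomology vanish. The main obstacle will be the Mittag--Leffler / limit argument at the last step: one has to check that the limits computing the global sections of the Banach sheaf do not create extra $\mathrm{R}^1\varprojlim$ contributions in cohomology, which is where the genuine smallness of the Banach sheaf (finite locally free graded pieces with locally free kernels of transitions) is used in an essential way, as in \cite{AIP} Appendice.
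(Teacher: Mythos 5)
Your overall plan matches the paper's proof: establish $R^q\pi_*\mathfrak w_{w,m}^{\kappa^0\dag}(-D)=0$ from the vanishing of $R^q\pi_*\mathcal O(-D)$ using the local structure of the small formal Banach sheaf, deduce the base-change identity, and then use affinoidness of $\mathfrak X^*(v)$ (via \cite{AIP} Theorem A.1.2.2) to kill higher cohomology after inverting $p$. However, there is a genuine error in how you describe the small formal Banach sheaf, and this error is what generates the artificial ``Mittag--Leffler obstacle'' you flag at the end.

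A small formal Banach sheaf has mod-$p^m$ reduction which is locally a \emph{filtered colimit} (direct limit) $\varinjlim_k \mathcal F_k$ of coherent sheaves with \emph{injective} transitions, and the successive \emph{cokernels} $\mathcal F_{k+1}/\mathcal F_k$ are the locally free (in fact direct summands of free) pieces. You have written the opposite in both places: a $\varprojlim_k$ with \emph{surjective} transitions and locally free \emph{kernels}. This is not a notational slip; it changes the whole shape of the argument. With the correct (colimit) description, the vanishing $R^q(\pi_m)_*\mathfrak w_{w,m}^{\kappa^0\dag}(-D)=0$ follows immediately because higher direct images along a quasi-compact quasi-separated (here: proper noetherian) morphism commute with filtered colimits, so $R^q(\pi_m)_*\varinjlim_k\mathcal F_k(-D)=\varinjlim_k R^q(\pi_m)_*\mathcal F_k(-D)=0$; no Mittag--Leffler condition is involved, and there is no $R^1\varprojlim$ to worry about. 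The passage to the $p$-adic limit, and the smallness of $\pi_*\mathfrak w_w^{\kappa^0\dag}(-D)$ on $\mathfrak X^*(v)$, also come out directly once you know the cokernel structure is preserved under $\pi_*$ (because $R^1\pi_*$ of the graded pieces vanishes). Your inverse-limit reading would instead force you to verify Mittag--Leffler for the tower $\{(\pi_m)_*\mathcal F_k(-D)\}_k$, which is not the argument in \cite{AIP} and is exactly the difficulty you identify as ``the main obstacle.'' So the fix is simple but essential: replace $\varprojlim$ by $\varinjlim$, surjective by injective, kernels by cokernels, and the limit step becomes trivial.
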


\begin{proof}
The proof is the same as in \cite{AIP} or \cite{Brasca}, except that we stay at level $\mathfrak X_1(p^n)(v)$ (which is easier), as the map $\mathfrak X_1(p^n)(v) \fleche \mathfrak X(v)$ 
is not finite in our situation. We can prove as in \cite{AIP} that $\mathfrak w_{w}^{\kappa^0\dag}(-D)$ is a direct limit of sheaves whose cokernel is a successive extension of the sheaf $\mathcal O_{\mathfrak X_1(p^n)^{tor}(v)}(-D)$. Thus, it is enough to show that
\[ R^1\pi_*\mathcal O_{\mathfrak X^{tor}(v)}(-D) = 0,\]
but this is the previous proposition. This implies also that $R^i\pi_*\mathfrak w_w^{\kappa^0\dag}(-D) = 0$ for $i>0$. Moreover, as $\pi_*\mathfrak w_w^{\kappa^0\dag}(-D)$ is small on $\mathfrak X^*(v)$ which is generically affinoid, Theorem A.1.2.2 of \cite{AIP} implies its higher cohomology vanishes after inverting $p$.
\end{proof}

Exactly as in \cite{AIP},  section 8.2, we deduce the following two results. Now we go back to the notation $\mathfrak X_1(p^n)^{tor}(v)$ to denote the (integral toroïdal 
compactification of the) Shimura variety with level "$\Gamma_1(p^n)$" at $p$, and $\mathcal X^{tor}(v)$ denote the rigid analog, with Iwahori level at $p$, as in the rest of the 
text. 

\begin{cor}
\label{corA5}
The module
\[M^{0,un}_{v,w,cusp} := H^0(\mathfrak X_1(p^n)^{tor}(v) \times \mathfrak W(w)^0,\mathfrak w_w^{\kappa^{0,un}\dag}(-D))[1/p]\] is a projective $\mathcal O_{\mathfrak W(w)^0}[1/p]$-module, and for all $\kappa \in \mathcal W(w)^0$, the specialisation
\[ M^{0,un}_{v,w,cusp} \fleche H^0(\mathfrak X_1(p^n)^*(v),\mathfrak w_w^{\kappa^{0}\dag}(-D))[1/p],\]
is surjective.
\end{cor}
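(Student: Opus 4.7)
The plan is to push the whole problem down to the minimal compactification, where the base is generically affinoid, and then invoke the formal Banach theory of \cite{AIP} appendix A.

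First, I would factor the structure map as
\[
\pi \times 1 : \mathfrak X_1(p^n)^{tor}(v) \times \mathfrak W(w)^0 \fleche \mathfrak X_1(p^n)^*(v) \times \mathfrak W(w)^0 \fleche \mathfrak W(w)^0,
\]
and observe that by the previous proposition, $(\pi\times 1)_* \mathfrak w_w^{\kappa^{0,un}\dag}(-D)$ is a small formal Banach sheaf on $\mathfrak X_1(p^n)^*(v) \times \mathfrak W(w)^0$, and its higher direct images vanish after inverting $p$. By the Leray spectral sequence this gives
\[
H^0\!\left(\mathfrak X_1(p^n)^{tor}(v)\times\mathfrak W(w)^0,\mathfrak w_w^{\kappa^{0,un}\dag}(-D)\right)[1/p]
= H^0\!\left(\mathfrak X_1(p^n)^*(v)\times\mathfrak W(w)^0,(\pi\times 1)_*\mathfrak w_w^{\kappa^{0,un}\dag}(-D)\right)[1/p].
\]

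The key point is now that the rigid generic fiber $\mathcal X_1(p^n)^*(v)$ of $\mathfrak X_1(p^n)^*(v)$ is affinoid, since it is a strict neighborhood (in the minimal compactification) of the $\mu$-ordinary locus cut out by the vanishing of the ample $\mu$-ordinary Hasse invariant. Thus $\mathfrak X_1(p^n)^*(v) \times \mathfrak W(w)^0$ is a formal scheme whose rigid fiber is affinoid. Applying \cite{AIP} Theorem A.1.2.2 to the small formal Banach sheaf $(\pi\times 1)_*\mathfrak w_w^{\kappa^{0,un}\dag}(-D)$ on this base, we conclude that $M^{0,un}_{v,w,cusp}$ is a projective Banach $\mathcal O_{\mathfrak W(w)^0}[1/p]$-module and that the specialisation at any $\kappa \in \mathcal W(w)^0$ is surjective. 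For this last step, one uses the compatibility
\[
i^{'*}(\pi\times 1)_*\mathfrak w_w^{\kappa^{0,un}\dag}(-D) = \pi_*\,\mathfrak w_w^{\kappa^0\dag}(-D)
\]
proved in the previous proposition (its base-change variant with respect to $\mathfrak W(w)^0$), together with vanishing of $H^1$ of the kernel of the specialisation map, which follows from the same vanishing statement applied over the affinoid base.

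The main technical obstacle is precisely the use of \cite{AIP} Theorem A.1.2.2: one needs that the formal Banach sheaf is small (controlled in \cite{AIP} by being a direct limit of sheaves whose graded pieces are quotients of $\mathcal O(-D)$, which is exactly how the argument of the previous proposition proceeds) and that higher coherent cohomology vanishes on the affinoid $\mathcal X_1(p^n)^*(v)$ after inverting $p$. Once these two ingredients are in place, projectivity and surjectivity of specialisation are formal consequences of the general Banach-sheaf machinery of \cite{AIP}, exactly as in their \textsection 8.2.
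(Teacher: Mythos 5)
Your proof is correct and follows essentially the same route the paper takes: the paper simply says ``Exactly as in \cite{AIP}, section 8.2,'' and your sketch is a faithful write-out of that argument (Leray spectral sequence to descend to the minimal compactification using the vanishing of $R^i\pi_*$ from the preceding proposition, affinoidness of $\mathcal X_1(p^n)^*(v)$ via the $\mu$-ordinary Hasse invariant being a section of an ample line bundle on the minimal compactification, and then the small formal Banach sheaf machinery of \cite{AIP}, Appendix A, to extract projectivity and surjectivity of specialisation). Two small wording slips worth fixing: the strict neighborhood $\mathcal X_1(p^n)^*(v)$ is \emph{not} the vanishing locus of ${}^\mu\Ha$ but the locus where its valuation is $\leq v$ (affinoidness comes from ${}^\mu\Ha$ being a section of the ample $\det\omega_G^{\otimes(p^f-1)}$, and from the finiteness of $\mathfrak X_1(p^n)^* \to \prod_\gamma\mathfrak X^{Sph,*}$ which preserves ampleness); and the base-change compatibility you invoke for the specialisation map is with respect to a point $\kappa\colon\Spf(\mathcal O_K)\to\mathfrak W(w)^0$, which is an analogue (not literally the statement) of the $\bmod\, p^n$ compatibility established in the previous proposition, as you already flag.
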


\begin{theor}
\label{thrvanishing}
For all $v,w$ the module $M^{un}_{v,w} := H^0(\mathcal X^{tor}(v) \times \mathcal W(w),\omega_w^{\kappa^{un}\dag}(-D))$ is a projective $\mathcal O_{\mathcal W(w)}$-module, and for all $\kappa \in \mathcal W(w)$, the specialisation map
\[ M^{un}_{v,w,cusp} \fleche M^{\kappa}_{v,w,cusp},\]
is surjective. Moreover $H^i(\mathcal X^{tor}(v) \times \mathcal W(w),\omega_w^{\kappa^{un}\dag}(-D))$ vanishes for $i > 0$.
\end{theor}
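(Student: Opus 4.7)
The plan is to deduce Theorem \ref{thrvanishing} from Corollary \ref{corA5} by descending from $\Gamma_1(p^n)$-level to Iwahori level via invariants under the finite group $I^0(n)$, using that we are working in characteristic zero after inverting $p$.

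First, I would recall from the proof of Proposition \ref{pro617} the identification
\[
R\Gamma(\mathcal X^{tor}(v) \times \mathcal W(w), \omega_w^{\kappa^{un}\dag}(-D)) \;=\; R\Gamma\bigl(I^0(n),\, R\Gamma(\mathcal X_1(p^n)^{tor}(v) \times \mathcal W(w),\, \omega_w^{\kappa^{0,un}\dag}(-D))(-\kappa^{un})\bigr),
\]
valid because the finite surjective map $\mathcal X_1(p^n)^{tor}(v) \to \mathcal X_0^+(p^n)^{tor}(v) \to \pi_n(\mathcal X_0^+(p^n)^{tor}(v))$ realises the Iwahori-level sheaf as the $(-\kappa^{un})$-isotypic part under the action of $I^0(n)$ (a finite group).

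Second, by Corollary \ref{corA5} the $\mathcal O_{\mathcal W(w)}$-module $M^{0,un}_{v,w,cusp} = H^0(\mathfrak X_1(p^n)^{tor}(v) \times \mathfrak W(w)^0, \mathfrak w_w^{\kappa^{0,un}\dag}(-D))[1/p]$ is projective, the specialisation at any $\kappa^0 \in \mathcal W(w)^0$ is surjective, and the higher cohomology vanishes after inverting $p$. Since $I^0(n)$ is a finite group and we are working in characteristic zero, taking $I^0(n)$-invariants (or more precisely, the $(-\kappa^{un})$-isotypic projector) is an exact functor, and it preserves projectiveness (the $(-\kappa^{un})$-isotypic part is a direct summand as $\mathcal O_{\mathcal W(w)}$-module, hence still projective).

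Third, the surjectivity of the specialisation map at any $\kappa \in \mathcal W(w)$ follows by applying the exact isotypic projector to the surjection given by Corollary \ref{corA5}. Finally, the higher cohomology statement in the theorem follows by combining the vanishing of $H^i(\mathcal X_1(p^n)^{tor}(v) \times \mathcal W(w), \omega_w^{\kappa^{0,un}\dag}(-D))$ for $i>0$ with the Hochschild--Serre/Cartan--Leray type spectral sequence
\[
H^p(I^0(n),\, H^q(\mathcal X_1(p^n)^{tor}(v) \times \mathcal W(w),\, \omega_w^{\kappa^{0,un}\dag}(-D))(-\kappa^{un})) \;\Longrightarrow\; H^{p+q}(\mathcal X^{tor}(v) \times \mathcal W(w),\, \omega_w^{\kappa^{un}\dag}(-D)),
\]
and observing that the group cohomology of the finite group $I^0(n)$ vanishes in positive degree on $\mathbb Q$-vector spaces.

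The main obstacle, and the only point that deserves care, is to verify that the isotypic decomposition under $I^0(n)$ really commutes with the base change from the integral formal setting of Corollary \ref{corA5} to the rigid generic fibre, and that the direct summand $(-\kappa^{un})$-component of a projective $\mathcal O_{\mathcal W(w)}$-module remains projective after inverting $p$. Both are standard once one notes that $|I^0(n)|$ is a $p$-power so becomes invertible in $\mathbb Q_p$, and that a direct summand of a projective module is projective; no further geometric input beyond Corollary \ref{corA5} is required.
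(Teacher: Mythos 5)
Your proof is correct and matches the paper's own reasoning: the paper deduces Theorem \ref{thrvanishing} from Corollary \ref{corA5} by the $I^0(n)$-invariants descent spelled out in the proof of Proposition \ref{pro617} (itself following \cite{AIP}, Section 8.2), using exactness of the isotypic projector in characteristic zero and the Cartan--Leray degeneration for the higher cohomology. One small slip: $|I^0(n)|$ is not in general a $p$-power, since the blocks $I_p(\mathcal O/p^n\mathcal O)$ contain copies of $(\mathcal O/p\mathcal O)^\times$; this does not affect the argument because every nonzero integer is already invertible in a $\QQ_p$-algebra such as $\mathcal O_{\mathcal W(w)}$.
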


\section{Families and triangulations}
\label{sectAppB}
In this appendix we generalise the tools used in \cite{BC2} to prove the theorem in section \ref{sect10}. Fortunately, this is mainly a matter of reformulation, as most of the work is done in \cite{KPX}. From now on, we take $\mathcal E$ to be the eigenvariety for $(G)U(2,1)_{E/\QQ}$ and $p$ a prime unramified in $E$, constructed in section \ref{sect9} (see also \cite{Brasca} for $p$ split in $E$ and \cite{Her3} for $p$ inert not equal to 2), which is 3-dimensionnal or its variant with weight $k_2 \in \ZZ$ fixed, which coincide with the base change by 
\[ \mathcal W_{\mathcal O^\times} \hookrightarrow \mathcal W,\]
which is $2$-dimensionnal. Automorphically, the second construction "fixes the central character" (which can "move" in the three dimensionnal eigenvariety, but keeping its polarisation ; in particular even in the $3$-dimensionnal eigenvariety, we can't twist automorphic forms by a power of the norm character). In any case we always have $Z \subset \mathcal E$ a strongly Zariski-dense subset consisting of classical automorphic forms of integral (= algebraic) weight. This space is not dense for the analytic topology, as it is already the case in $\mathcal W$. We can define $Z^{la}$ the subset of $\mathcal E$ of classical automorphic forms possibly with level at $p$, and locally algebraic weight-character $\kappa \in \mathcal W$. $Z$ doesn't accumulate at $Z^{la}$, and as if $p | \Cond(\chi)$, we will only have a point $y \in Z^{la}$ corresponding to the automorphic representation $\pi^n(\chi)$ of section \ref{sect10}, we first need to enlarge a bit $Z$\footnote{We could actually prove directly the following result on all $Z^{la}$, and even the crystabellianity of these representations, by extending results of \cite{BPS,Bijmu} for all classical modular forms with Nebentypus, as it is done in \cite{PS1}. But the following will be enough for us.}. 

\begin{prop}
\label{propB1}
There exists $Z' \subset Z^{la}$, which accumulates at every point of $Z^{la}$, such that for all $z \in Z'$, we have that the Sen polynomial of $\rho_z$ is killed by
\[ \prod_{i=1}^3(T-wt_i(z)).\]
\end{prop}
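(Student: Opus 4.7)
The plan is to take $Z'$ to consist of those points $z \in Z^{la}$ arising from classical holomorphic cuspidal automorphic representations of $(G)U(2,1)$ whose algebraic weight $\kappa(z)^{alg}$ is strongly regular (i.e., satisfies the hypotheses of Proposition \ref{prop82} and Theorem \ref{thr83}) and whose nebentypus $\chi_0$ at $p$ is an arbitrary finite-order character. For any such $z$, the associated $p$-adic Galois representation $\rho_z$, produced by the now-standard theory of Galois representations attached to cohomological automorphic forms on unitary groups, is de Rham at every place of $E$ above $p$ with Hodge-Tate weights equal to the integers $wt_i(z)$ displayed after Proposition \ref{propx0}. Sen's theorem then yields that the Sen polynomial of $\rho_z$ divides $\prod_{i=1}^3(T - wt_i(z))$, which therefore annihilates the Sen operator, as required.

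To prove accumulation, fix $z_0 \in Z^{la}$ with weight-character $\kappa_0 = \kappa_0^{alg} \cdot \chi_0$, where $\chi_0$ is of finite order, and let $\mathcal U \subset \mathcal E$ be an admissible open neighborhood of $z_0$. The translate $\chi_0 \cdot X^*(T)$ of the algebraic character lattice is Zariski dense in $\mathcal W$ and accumulates at $\kappa_0$; moreover the characters in this translate whose algebraic part satisfies the strong regularity hypotheses of Theorem \ref{thr83} remain Zariski dense near $\kappa_0^{alg}$, since regularity is a Zariski open condition on $X^*(T)$. Pulling back via the locally finite map $w : \mathcal E \to \mathcal W$ and applying the density argument of \cite{Che1} Corollaire 6.4.4 inside $\mathcal U$, we obtain finite-slope eigensystems in $\mathcal U$ lifting such weights; Proposition \ref{prop82} and Theorem \ref{thr83} then force the corresponding overconvergent forms to be classical, yielding the desired accumulating points in $Z' \cap \mathcal U$.

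The hard part is to justify that for a classical point $z$ with a nontrivial finite-order nebentypus $\chi_0$ at $p$, the associated Galois representation $\rho_z$ is de Rham with Hodge-Tate weights exactly equal to the $wt_i(z)$. For trivial nebentypus this is the well-known crystalline case; for a finite-order $\chi_0$, the representation is only crystabelline---becoming crystalline after restriction to an abelian extension of $E_p$---but it remains de Rham, and its Hodge-Tate weights depend only on the algebraic part of the weight-character, not on $\chi_0$. They therefore agree with the integers $wt_i(z)$ read off from the universal characters $\delta_i$ interpolating them along $\mathcal E$, and Sen's theorem then finishes the argument.
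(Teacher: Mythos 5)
Your argument takes a genuinely different route from the paper's, and it has a gap that the paper explicitly flags and avoids. You propose to establish the Sen polynomial identity \emph{directly} at each point $z$ of locally algebraic (nebentypus) weight, by first arguing that such a $z$ corresponds to a classical automorphic form and then invoking the de Rham property with the expected Hodge--Tate weights of its Galois representation. But the classicity step is precisely what is missing: Proposition \ref{prop82} and Theorem \ref{thr83} are stated for overconvergent forms at Iwahori level with $\kappa$ an \emph{algebraic} character, and they do not apply to the sheaves $\omega^{\kappa}(\chi)$ with a nontrivial nebentypus $\chi$ at $p$. The author acknowledges this in the footnote right before the statement: proving the result ``on all $Z^{la}$'' would require extending the classicity results of \cite{BPS,Bijmu} to forms with nebentypus, which is not carried out here. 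Your accumulation paragraph thus uses Theorem \ref{thr83} outside the scope where it is proved, and without it you cannot conclude that the finite-slope eigensystems you produce of locally algebraic weight near $z_0$ come from classical automorphic forms (which is needed to even have an honest Galois representation at those points and identify its Hodge--Tate weights).

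The paper's proof avoids this altogether. It works over an affinoid piece $\mathcal E_{w,\eps}$ of the eigenvariety, uses \cite{BC2} Lemma 7.8.11 to replace the pseudo-character by an honest representation on a coherent torsion-free module over a proper modification $g : \mathcal E'_{w,\eps} \to \mathcal E_{w,\eps}$, forms the Sen operator in family there, and verifies the annihilation identity only on the pullback $Z^{'alg}$ of the set $Z$ of classical points of \emph{purely algebraic} weight (where everything is crystalline and the identity is known). Zariski density of $Z^{'alg}$ then propagates the identity to all of $\mathcal E'_{w,\eps}$, and taking $Z' = g(g^{-1}(Z^{la}) \cap Y)$ (for a dense open $Y$ where the family specializes to $\rho_{g(\cdot)}$) gives the desired dense subset of $Z^{la}$ without ever needing classicity or the de Rham property at the nebentypus points themselves. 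Your approach, if the classicity extension were in place, would prove a stronger statement (the conclusion at all of $Z^{la}$, and crystabellianity); but with only the tools in this paper, the argument does not close.
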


\begin{proof}
Let $z \in Z^{la}$. In particular, there exists $w,n$ such that $z \in \mathcal E_{w,\eps}$ in the notations before Theorem \ref{thrHecke}. This $\mathcal E_{w,\eps}$ is affinoid.
Thus, by \cite{BC2} Lemma 7.8.11, there exists $g : \mathcal E'_{w,\eps}\fleche \mathcal E_{w,\eps}$ on which we have an actual representation on a coherent torsion free of $G = G_{E,S}$.
 We can then apply \cite{KPX} Definition 6.2.11 or \cite{BC2} p125 to have a Sen operator in family over $\mathcal E'_{w,\eps}$. But $Z^{}$ is Zariski dense in $\mathcal E_{w,\eps}$ thus as is its pullback $Z^{'alg}$ in $\mathcal E'_{w,\eps}$. 
Moreover, there is $Y \subset \mathcal E'_{w,\eps}$ Zariski open and dense, on which $Z_Y' = Y \cap Z^{'alg}$ is Zariski dense, with
$\rho_{z} = \rho_{g(z)}$ for all $z \in Y$. Thus for all $z \in Y \cap Z^{'alg}$, we have that the Sen operator is killed by 
\[ \prod_{i=1}^3(T-wt_i(z)).\]
By density, this is true for all $x \in \mathcal E'_{w,\eps}$. Thus, for all $y \in Z'_Y = g^{-1}(Z^{la})\cap Y$, the Sen polynomial of $\rho_y = \rho_{g(y)}$ is killed by the same polynomial. Using $Z' = g(Z'_Y)$ we get the result.
\end{proof}


By proposition \ref{propx0} there exists a point $y \in \mathcal E$, whose (semi-simplified) Galois representation is $1 \oplus \overline\chi \oplus \eps$ and its refinement is $\sigma$ (see definition \ref{raf}). Let $A = \mathcal O_{\mathcal E,y}$ be the rigid analytic local ring at $y$. We want to study this ring and the pseudo-character $T$ at $A$. By \cite{BC2} Theorem 1.4.4 and Lemma 1.8.3 for $S = A[G]/\Ker T$, we choose idempotents $e_\eps,e_{\overline\chi},e_1$ that are compatible with the involution $\tau$ given by $i \mapsto i^\perp(1)$. We thus have a generalized matrix algebra (GMA) of the form
\[
\left(
\begin{array}{ccc}
A  & A_{\eps,\overline\chi}  & A_{\eps,1}  \\
A_{\overline\chi,\eps}  &  A & A_{\overline\chi,1}  \\
A_{1,\eps}  & A_{1,\overline\chi}  &   A
\end{array}
\right)
\]
This defines $\Ext_T(i,j)$ and $h_{i,j} = \dim \Ext_T(i,j)$ for all $i \neq j \in \{1,\eps,\overline\chi\}$. In the end, we want to study $I_{tot}$ the total reducibility locus and this GMA.

On $A/I_{tot}$, we have pseudo-characters of dimension 1 (i.e. actual characters)
\[ T_j : G = G_{E,S} \fleche A/I_{tot}, \quad j \in \{\eps,1,\overline\chi\},\]
such that $T_j \otimes A/\mathfrak m_A = T_j \otimes k(y) = j$. From now on fix $I \supset I_{tot}$ a cofinite lenght ideal.
 
\begin{lemma}
\label{lemmaB2}
The Sen operator of $T_j$ is killed by the polynomial
\[ \prod_{i = 1}^3 (T - \wt_i) \in A/I[T]^\Sigma.\]
\end{lemma}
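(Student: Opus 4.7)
The plan is to compute the characteristic polynomial of the Sen operator attached to the full pseudocharacter $T\otimes A/I$ in two different ways and compare.

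First, I would use Proposition \ref{propB1} to produce a Zariski-dense subset $Z'\subset \mathcal E$ at whose points the Sen polynomial of $\rho_z$ is $\prod_{i=1}^3(X-\wt_i(z))$. Restricting to an affinoid neighbourhood of $y$ and pulling back along a finite cover $g:\mathcal E'\fleche\mathcal E$ that carries a genuine representation of $G_{E,S}$ on a torsion-free coherent sheaf (exactly as in the proof of Proposition \ref{propB1}), I can apply the construction of the Sen operator in families (cf.\ \cite{KPX} Definition 6.2.11, or \cite{BC2} \S 4.1). By continuity of the coefficients of the characteristic polynomial of the Sen operator and Zariski-density of $g^{-1}(Z')$ in $\mathcal E'$, the characteristic polynomial of the Sen operator of the universal representation on $\mathcal E'$ is the pullback $g^*\bigl(\prod_i(X-\wt_i)\bigr)$. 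Since this polynomial depends only on the trace (that is, only on $T$), it descends to $A[X]^\Sigma$, where it equals $\prod_i(X-\wt_i)$. Base-changing to $A/I$ gives the characteristic polynomial of the Sen operator of $T\otimes A/I$.

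On the other hand, since $I\supset I_{tot}$, we have a decomposition $T\otimes A/I = T_1+T_{\overline\chi}+T_\eps$ into one-dimensional continuous pseudocharacters, each of which is genuinely a continuous character $T_j:G_{E,S}\fleche (A/I)^\times$. For such a character the Sen module is free of rank one, and the Sen operator acts as multiplication by a scalar $h_j\in (A/I)^\Sigma$ (the generalised Hodge--Tate--Sen weight of $T_j$). Hence the Sen operator of $T\otimes A/I$ is diagonal with entries $h_1,h_{\overline\chi},h_\eps$, and its characteristic polynomial is $\prod_j(X-h_j)\in (A/I)[X]^\Sigma$.

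Comparing, one obtains the identity
\[
\prod_{j\in\{1,\overline\chi,\eps\}}(X-h_j)\;=\;\prod_{i=1}^3(X-\wt_i)\qquad\text{in}\ (A/I)[X]^\Sigma.
\]
Substituting $X=h_j$ in the left-hand side yields $0$, so $\prod_i(h_j-\wt_i)=0$ in $(A/I)^\Sigma$ for each $j$. This is exactly the assertion that the (one-dimensional) Sen operator of $T_j$ is annihilated by $\prod_i(X-\wt_i)\in (A/I)[X]^\Sigma$.

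The main subtlety is the descent step: the Sen operator itself is most cleanly defined once one disposes of a genuine representation on the auxiliary cover $\mathcal E'$, whereas the statement of the lemma concerns only the pseudocharacter $T$ on $A$. The legitimacy of this descent rests on the fact that the coefficients of the characteristic polynomial of the Sen operator are functions of the trace of the representation only, which is precisely the output of the Kedlaya--Pottharst--Xiao formalism invoked in Proposition \ref{propB1}.
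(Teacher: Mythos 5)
Your first computation — establishing that the characteristic polynomial of the Sen operator of the family representation on the auxiliary cover $\mathcal E'$ equals $g^*\bigl(\prod_i(X-\wt_i)\bigr)$ by Zariski density of $Z'$, and then specialising modulo $I$ — is sound, and is essentially the same device the paper uses. The problem lies in the second computation, and the comparison step it feeds into.

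The decomposition $T\otimes A/I = T_1 + T_{\overline\chi} + T_\eps$ is a decomposition of \emph{pseudocharacters}, not of representations. The Sen operator whose characteristic polynomial you computed in the first step is attached to an honest representation $\rho\otimes\mathcal O_{y'}/I'$ living on the cover $\mathcal E'$, and nothing forces this module to split as $T_1\oplus T_{\overline\chi}\oplus T_\eps$: on the contrary, the whole point of the reducibility-ideal formalism is that the extension classes between the $T_j$ survive and carry the arithmetic information. So the assertion ``the Sen operator of $T\otimes A/I$ is diagonal with entries $h_1,h_{\overline\chi},h_\eps$'' is not justified — and, taken literally, it is false. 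What is true is that, after invoking the GMA structure and \cite{BC2} Theorem 1.5.6, one gets a $G$-stable filtration of a suitable $S$-module $M/IM$ of generic rank $3$ with graded pieces (up to semisimplification) the $T_j$; the induced Sen operator is then block-\emph{triangular}, which would still give the identity of characteristic polynomials $\prod_j(X-h_j)=\prod_i(X-\wt_i)$. But you never invoke this filtration, and without it the comparison of the two characteristic polynomials rests on an implicit claim (``the Sen characteristic polynomial over an artinian ring depends only on the trace of the representation'') which is not obvious and is not cited.

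The paper's argument circumvents this entirely. It produces, via \cite{BC2} Theorem 1.5.6 and Lemma 4.3.9, an $S$-module $M$ with an exact sequence $0\to K\to M/IM\to T_j\to 0$, extends $M$ to a coherent torsion-free sheaf $\mathcal M$ over an affinoid where the Sen operator is defined in families, and shows by Zariski density of $Z'$ and reducedness that $\prod_i(T-\wt_i)$ \emph{annihilates} the Sen operator of $\mathcal M$. It then uses only exactness of $D_{\mathrm{Sen}}$ and the fact that $T_j$ is a quotient of $M/IM$ to transfer the annihilation to $T_j$ — no computation of the full characteristic polynomial, no diagonality, no identification with $\prod_j(X-h_j)$. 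To repair your proof, you should either import the paper's exact sequence and argue via annihilation-on-a-quotient, or explicitly justify the filtration of $M/IM$ by the $T_j$ (again via \cite{BC2} Theorem 1.5.6) before asserting that the Sen operator is block-triangular.
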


\begin{proof}
Let $y \in \mathcal E$. As remarked, the set $Z'$ of \ref{propB1} accumulates at $y$. Fix $j \in \{1,\overline\chi,\eps\}$ and denote $S = A[G]/\Ker T$. 
There exists $M$ a $S$-module, of finite type as $A$-module such that $MK = K^3$ and with an exact sequence
\[  0 \fleche K \fleche M/IM \fleche T_j \fleche 0,\]
such that $K$ as a Jordan-Holder sequence with all subquotient isomorphic to $T_i$ for $i \neq j$ (see \cite{BC2} Theorem 1.5.6 and Lemma 4.3.9).
Thus, it suffices to prove that $M/IM$ as its Sen operator killed by the previous polynomial. But by \cite{BC2} Lemma 4.3.7 (and because $Z'$ accumulates at $y$) we can find 
$U \subset \mathcal E$ an affinoïd open containing $z$, in which $Z'$ is Zariski dense, together with $\mathcal M$ a coherent torsion-free $\mathcal O_U$-module endowed 
with an action of $G$ such that $\mathcal M(U) \otimes A \simeq M$ as $A[G]$-module, and $\mathcal M \otimes_{\mathcal O(U)} \Frac(\mathcal O(U))$ is free of rank 3, semisimple as $G$-module and 
trace $T \otimes_{\mathcal O(X)}\mathcal O(U)$. By generic semi-simplicity and generic flatness, there exists $F \subset U$ a Zariski closed subspace such that for all $x \in U \backslash F$, $\mathcal M_y = \mathcal M_{y}^{ss} = \rho_y$. We can change $Z'$ by $Z' \cap (U \backslash F)$, which is still Zariski dense in $U$.
Denote by $\varphi$ the Sen operator of $D_{Sen}(\mathcal M)$ (or $\mathcal B = \End_{\mathcal O(U)}(\mathcal M(U))$ see \cite{KPX} Definition 6.2.11 or \cite{BC2} proof of lemma 4.3.3). For all $z \in Z'$, $\varphi_z$ is killed by
\[ P = \prod_{i=1}^3(T - \wt_i(z)),\]
by proposition \ref{propB1}, and as $Z'$ is Zariski dense, and $\mathcal O(U)$ is reduced we get that $P$ kills $\varphi$ on $U$, and reducing to $A/I$ we get the result.
\end{proof}

Fix the bijection between $\{1,2,3\}$ and $\{1,\overline\chi,\eps\}$ corresponding to the refinement \ref{raf}, i.e.
\[
\begin{array}{ccc}
1  & \mapsto  & \overline\chi  \\
2  &  \mapsto & 1  \\
 3 &  \mapsto &  \eps 
\end{array}
\text{if $p$ splits,} \quad 
\begin{array}{ccc}
1  & \mapsto  & 1  \\
2  &  \mapsto & \overline\chi  \\
 3 &  \mapsto &  \eps 
\end{array}
\text{if $p$ is inert.}
\]
Thus it makes sense to speak about $T_i, i \in \{1,2,3\}$.

\begin{lemma}
\label{lemmaB4}
For all $i$, the $A/I$-module
\[ H^0_{\varphi,\Gamma}(D_{rig}(T_i)(\delta_i^{-1})\]
is free of rank 1.\todo{revoir, ça dépends de $p$ inert ou non?}
\end{lemma}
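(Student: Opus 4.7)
The approach is to exploit a global triangulation of $D_{rig}$ near $y$ on $\mathcal E$ provided by the family theorem of Kedlaya--Pottharst--Xiao, and then specialize it along the reducibility ideal $I$.

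First, as in the proof of Lemma~\ref{lemmaB2}, I would replace an affinoid neighborhood of $y$ in $\mathcal E$ by an étale cover on which there is a coherent torsion-free module $M$ with a continuous $G_{E,S}$-action of trace $T$ and for which the set $Z'$ of Proposition~\ref{propB1} remains Zariski dense. The Hecke functions $F_i \in \mathcal O(\mathcal E)^\times$ interpolate at each $z \in Z'$ the Frobenius eigenvalues of a non-critical refinement of the crystabelline representation $\rho_z$, and the characters $\delta_i$ are designed so that the triple $(Z', (F_i), (\delta_i))$ satisfies the hypotheses of \cite{KPX} Corollary~6.3.10. That corollary then provides (after shrinking the cover) a triangulation
\[
0 = \Fil_0 \subsetneq \Fil_1 \subsetneq \Fil_2 \subsetneq \Fil_3 = D_{rig}(M),
\]
with $\Fil_i/\Fil_{i-1} \simeq R_A(\delta_i)$ near the preimage of $y$.

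Next, I would base change to the Artinian quotient $A/I$. Since each $\Fil_i/\Fil_{i-1}$ is locally free of rank one as an $R_A$-module, the filtration is preserved by $-\otimes_A A/I$ and yields a filtration of $D_{rig}(M\otimes A/I)$ with graded pieces $R_{A/I}(\overline{\delta_i})$. On the other hand, because $I \supset I_{tot}$ the pseudo-character of $M\otimes A/I$ decomposes as $T_1+T_2+T_3$, and by \cite{KPX} Theorem~6.2.14 each $D_{rig}(T_j)$ is isomorphic to $R_{A/I}(\delta_j'')$ for some continuous character $\delta_j''$. By uniqueness of Jordan--Hölder constituents in the abelian category of $(\varphi,\Gamma)$-modules over $R_{A/I}$, the multisets $\{\delta_j''\}_j$ and $\{\overline{\delta_j}\}_j$ must agree. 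To pin down the labeling I would specialize at $y$: by Proposition~\ref{prop1013} the triangulation $\Fil_\bullet\otimes k(y)$ corresponds to the refinement $\sigma$ of $\rho_y$ recalled in \eqref{raftwist}, and by our choice of bijection in Definition~\ref{raf} the $i$-th graded piece at $y$ is precisely $T_i(y)$. Since the residual values $\delta_j(y)$ are pairwise distinct (the Frobenius values being $\overline\chi_p(p)$, $1$ and $p^{-1}$ up to the fixed twist $\mu|\cdot|^{-1/2}$, all distinct under our hypotheses), the labelling lifts uniquely from $k(y)$ to the Artinian local ring $A/I$, giving $\delta_j''=\overline{\delta_j}$ for every $j$. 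Consequently
\[
D_{rig}(T_i)(\delta_i^{-1}) \simeq R_{A/I}(\mathbf{1}),
\]
whose $(\varphi,\Gamma)$-invariants are $A/I$, free of rank one.

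The main obstacle is precisely this last, ordered, matching: the global triangulation produces only a \emph{filtration} (not a splitting) of $D_{rig}(M\otimes A/I)$, whereas the pseudo-character decomposition is a priori only visible after semisimplification. Extending the identification from the residue field $k(y)$ to the whole Artinian ring $A/I$ therefore genuinely requires both the Jordan--Hölder uniqueness for rank-one $(\varphi,\Gamma)$-modules over $R_{A/I}$ and the distinctness of the Frobenius slopes $\delta_j(y)$; everything else reduces to the standard family triangulation machinery of \cite{KPX}.
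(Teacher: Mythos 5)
Your approach is genuinely different from the paper's, and it has gaps. The paper does \emph{not} produce a global triangulation and then try to match its graded pieces with the constituents $T_j$. Instead it applies \cite{KPX} Theorem~6.3.9 (a single first filtration step, not the full triangulation of Corollary~6.3.10) to the three pseudo-characters $T$, $\Lambda^2 T$ and $\det T$, and uses the resulting devissage $T_i' := T_1\cdots T_i$ to get all the $T_i$ by induction. The point is that $T_i'$ has a one-dimensional lift $M$ inside $\Lambda^i T$ and the complementary constituent $K$ has Frobenius eigenvalues of slope distinct from that of $\delta_1\cdots\delta_i$, so $H^0_{\varphi,\Gamma}(D_{rig}(K)(\delta_1\cdots\delta_i)^{-1})=0$ and only the $H^0$ of $M/I$ needs to be controlled, which is exactly what Theorem~6.3.9 plus \cite{BC2} Proposition~3.2.3 and Lemma~3.3.9 furnish. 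This avoids ever having to compare a $(\varphi,\Gamma)$-triangulation with the Galois-theoretic decomposition over the Artinian ring $A/I$.

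Your route has three concrete problems. First, you invoke \cite{KPX} Corollary~6.3.10 to get a global triangulation near $y$, but the paper explicitly notes after Proposition~\ref{propx0} that the irreducibility hypothesis of Corollary~6.3.10 is \emph{not} known to hold here (the point $y$ carries the reducible representation $1\oplus\overline\chi\oplus\eps$), which is precisely why the paper restricts itself to Theorem~6.3.9. Second, the matching step --- going from ``the multisets of residual characters agree'' to ``$\delta_j''=\overline{\delta_j}$ as $A/I$-valued characters'' --- is not established; rank-one $(\varphi,\Gamma)$-modules over $R_{A/I}$ with $A/I$ Artinian local are not simple objects, so Jordan--H\"older uniqueness only gives agreement over the residue field $k(y)$, and distinctness of the $\delta_j(y)$ does not by itself force the $A/I$-valued characters in your two lists to coincide; you flag this as ``the main obstacle'' but then treat it as resolved. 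Third, your conclusion $D_{rig}(T_i)(\delta_i^{-1})\simeq R_{A/I}(\mathbf{1})$ is strictly stronger than the lemma and is in general false: the graded pieces delivered by \cite{KPX} carry twists by $\prod_\sigma x_\sigma^{m_\sigma}$, and the paper's Lemma~\ref{lemmaB5} is designed precisely to recover the actual character of $D_{rig}(T_i)$ --- which differs from $\delta_i$ by $\prod_\sigma x_\sigma^{k_\sigma-t_\sigma}$ --- \emph{after} Lemma~\ref{lemmaB4} has delivered only the weaker statement that $H^0$ is free of rank one.
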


\begin{proof}
We will consider inductively the pseudocharacters $T$, $\Lambda^2 T$ and $\det T$ whose reduction is respectively $1 \oplus \overline \chi \oplus \eps$, $\overline\chi \oplus\eps\oplus \eps\overline\chi$ and $\eps\overline\chi$. In particular they are multiplicity free. Recall that for $I \supset I_{tot}$, $T$ splits, thus also $\Lambda^2T$, we denote $T_i' = T_1\dots T_i$ for $i = 1,2,3$. By induction on $i$, it is enough to prove the result for $T_i'$. In  particular for all $i$, we can find $M$ a $S$-module, finite type over $A$, of generic rank 3 (if $i = 1,2$, rank 1 and $M = T_3'$ if $i =3$) such that (\cite{BC2} Theorem 1.5.6 and Lemma 4.3.9).
\[ 0 \fleche K \fleche M/I \fleche T_i' \fleche 0,\]
with $K^{ss}$ reducing to a direct sum of $\prod_{k = 1}^i T_{j_k} \neq T_i'$. As $\delta_i(p) = F_i$ and at $y$ these values are
\[ (\overline\chi(p),1,p^{-1}) \text{ if $p$ splits,} \quad (1,\overline\chi(p),p^{-1}) \text{ if $p$ is inert},\]
which are distincts ($|\overline\chi(p)| = p^{-1/2}$), the slope of $\delta_1...\delta_i$ is distinct from the one appearing in $K$. In particular
\[ H^0_{\varphi,\Gamma}(D_{rig}(K(\delta_1...\delta_i)^{-1})) = \{0\}.\]
Thus, it suffices to show that $H^0_{\varphi,\Gamma}(D_{rig}(M(\delta_1...\delta_i)^{-1})))$ is free of rank 1 for every cofinite ideal $J$ of $A = \mathcal O_y$. But this is assured by \cite{KPX} Theorem 6.3.9 and \cite{BC2} Theorem 3.3.3 and Lemma 3.3.9. Indeed, first, by \cite{BC2} Lemma 4.3.7 we can find $U \subset \mathcal E$ containing $y$ an affinoid together with a coherent torsion free module $\mathcal M$ with an action of $G = G_{E,S}$ reducing to $M$ on $A = \mathcal O_y$, which is generically free of rank 3, and such that the trace of $G$ on $\mathcal M$ coincide with $T \otimes_{\mathcal O_{\mathcal E}} \mathcal O(U)$.
Denote $\delta_1^{(i)} = \delta_1\dots\delta_i$, and $H^0_{\varphi,\Gamma}(D_{rig}(-))$ is a functor as in \cite{BC2} Section 3.2.2.
Moreover, by \cite{BC2} Lemma 3.4.2 and \cite{KPX} Theorem 6.3.9 (applied to $\mathcal M'^\vee$ and $\delta = \delta_1^{(i),-1}$) there exists a birational morphism (see \cite{BC2} section 3.2.3)
\[ \pi :  U' \fleche  U,\]
such that the strict transform $\mathcal M'$ of $\mathcal M$ on $U'$ is locally free, and moreover we have a map
\[ D_{rig}(\mathcal M'^\vee) \fleche R_{U'}(\delta_1^{(i),-1})\otimes \mathcal L,\]
whose kernel is a $\varphi,\Gamma$-module of rank 2 (is trivial if $i =3$) and which is generically surjective.
Moreover it is proven in the course of the proof of \cite{KPX} Theorem 6.3.9 that $H^0_{\varphi,\Gamma}(D_{rig}((\mathcal M'^\vee)^\vee)(\delta_1^{-1}))$ is locally free of rank 1.
In particular, as these sheaves are coherent, for all $y' \in \pi^{-1}(y)$, and all cofinite length ideal $J'$ of $\mathcal O_{y'}$,
\[ H^0_{\varphi,\Gamma}(D_{rig}(\mathcal M' (\delta_1^{(i),-1})\otimes \mathcal O_{y'}/J')),\]
is free of rank 1. Indeed, we have the commuting diagram
\begin{center}
\begin{tikzpicture}[description/.style={fill=white,inner sep=2pt}] 
\matrix (m) [matrix of math nodes, row sep=3em, column sep=2.5em, text height=1.5ex, text depth=0.25ex] at (0,0)
{ 
H^0_{\varphi,\Gamma}(D_{rig}(\mathcal M')(\delta_1^{-1})) \otimes \mathcal O_{y'}/I' & H^0_{\varphi,\Gamma}(D_{rig}(\mathcal M')(\delta_1^{-1})\otimes \mathcal O_{y'}/I')  \\
H^0_{\varphi,\Gamma}(D_{rig}(\mathcal M')(\delta_1^{-1})) \otimes \mathcal O_{y'}/\mathfrak m_{y'} & H^0_{\varphi,\Gamma}(D_{rig}(\mathcal M')(\delta_1^{-1}) \otimes \mathcal O_{y'}/\mathfrak m_{y'}) \\
};
\path[->,font=\scriptsize] 
(m-1-1) edge node[auto] {$$} (m-1-2)
(m-2-1) edge node[auto] {$i$} (m-2-2)
(m-1-1) edge node[auto] {$f$} (m-2-1)
(m-1-2) edge node[auto] {$red$} (m-2-2)
;
\end{tikzpicture}
\end{center}
where the map $i$ is injective (\cite{KPX} eq 6.3.9.1). As the map $f$ is non-zero, the map $red$ is also non-zero. Thus by \cite{BC2} Lemma 3.3.9,
\[ H^0_{\varphi,\Gamma}(D_{rig}(\mathcal M')(\delta_1^{-1})\otimes \mathcal O_{y'}/I')\]
is free of rank one over $\mathcal O_{y'}/I'$.
Thus by \cite{BC2} Proposition 3.2.3 and Lemma 3.3.9, for all cofinite lenght ideal $J$ of $\mathcal O_y = A$, we have that 
\[ H^0_{\varphi,\Gamma}(D_{rig}(\mathcal M (\delta_1^{(i),-1})\otimes \mathcal O_{y}/J)),\]
is free of rank 1 over $A/J$.
\end{proof}

\begin{lemma}
\label{lemmaB5}
Suppose that  $D$ is a $\varphi,\Gamma$-module of rank 1 on an artinian ring $A$, and with Hodge-Tate weight $k = (k_\sigma)_{\sigma\in\Sigma} \in \ZZ^\Sigma$.
Fix \[ \delta : K^\times \fleche A^\times,\] and denote $(t_\sigma)_{\sigma \in \Sigma} \in \ZZ^{\Sigma}$ its Hodge-Tate weights. 
Suppose that
\[ H^0_{\varphi,\Gamma}(D(\delta^{-1})),\]
is free of rank 1 over $A$. Then $D = R_A(\delta')$ with $\delta' = \delta \prod_\sigma x_\sigma^{k_\sigma - t_\sigma}$.
\end{lemma}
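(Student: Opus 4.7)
The plan is to reduce the statement to the classification of rank-one triangular $(\varphi,\Gamma)$-modules and to the computation of $H^0_{\varphi,\Gamma}$ for rank-one objects $R_A(\eta)$.

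First I would invoke the classification of rank-one $(\varphi,\Gamma)$-modules over an artinian base, namely \cite{KPX} Theorem 6.2.14 (which was already used in the text in the proof of Proposition~\ref{proppoids}): since $D$ has rank $1$ over $A$ and $A$ is artinian (hence a finite product of artinian local rings, to which one can reduce), there exists a continuous character
\[ \eta : K^\times \fleche A^\times \]
and an isomorphism of $(\varphi,\Gamma)$-modules $D \simeq R_A(\eta)$. The Hodge--Tate weights of $R_A(\eta)$ are read off from the derivative of $\eta_{|\mathcal O_K^\times}$ at the identity, so by hypothesis they are $k = (k_\sigma)$. Tensoring by $R_A(\delta^{-1})$ gives $D(\delta^{-1}) \simeq R_A(\eta\delta^{-1})$, whose Hodge--Tate weights are $k_\sigma - t_\sigma$.

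Next I would analyse the condition that $H^0_{\varphi,\Gamma}(R_A(\eta\delta^{-1}))$ is free of rank one over $A$. Setting $\theta = \eta\delta^{-1}$, any nonzero element of this $H^0$ is a $(\varphi,\Gamma)$-equivariant map $R_A \fleche R_A(\theta)$, i.e.\ an element $f \in R_A$ with $\varphi(f) = \theta(p)^{-1} f$ and $\gamma(f) = \theta(\gamma)^{-1} f$ for $\gamma \in \Gamma$. Reducing modulo $\mathfrak m_A$ and applying the classical description (cf.\ \cite{KPX} Proposition 6.2.8 or Colmez's computation of $H^0$ of rank-one modules) forces $\theta \pmod{\mathfrak m_A}$ to be of the form $\prod_\sigma x_\sigma^{n_\sigma}$ with $n_\sigma \in \ZZ_{\geq 0}$, and the residual generator to be a nonzero multiple of $\prod_\sigma t_\sigma^{n_\sigma}$. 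The Hodge--Tate weights of this character are exactly the $n_\sigma$, so comparing with step one gives $n_\sigma = k_\sigma - t_\sigma$ at the residue field.

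Finally, to promote this identification from $A/\mathfrak m_A$ to all of $A$, I would argue by induction on the length of $A$ (or directly via Nakayama on the rank-one free $A$-module $H^0_{\varphi,\Gamma}(R_A(\theta))$). For an artinian local $A$ with residue field $k$ and a small extension $A \twoheadrightarrow A'$ with kernel $J$ annihilated by $\mathfrak m_A$, the snake lemma applied to the $(\varphi-1,\gamma-1)$-complex together with freeness of $H^0$ over $A$ gives that $\theta$ is automatically the lift of $\prod x_\sigma^{n_\sigma}$ determined by the freeness of the generator, so $\theta = \prod_\sigma x_\sigma^{k_\sigma - t_\sigma}$ on the whole of $A$, i.e.\ $\eta = \delta \prod_\sigma x_\sigma^{k_\sigma-t_\sigma}$, which is the claim. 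The main obstacle is the last deformation-theoretic step: one needs to know that the freeness of $H^0$ implies not merely that the residual character is of the claimed form, but that the same formula lifts verbatim to $A$; this is the content of the base-change result \cite{KPX} Theorem~6.3.9/Lemma~6.2.12 (already invoked in Lemma~\ref{lemmaB4}) applied to rank-one modules.
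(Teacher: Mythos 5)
Your outline shares the paper's overall architecture --- classify $D\simeq R_A(\eta)$ by \cite{KPX} Theorem~6.2.14, twist by $\delta^{-1}$, reduce modulo $\mathfrak m_A$ to pin down the residual character and match Hodge--Tate weights, then lift back to $A$ --- so your steps 1--4 are on track and closely parallel the paper. But the whole content of the lemma sits in your step 5, and that is the step you leave essentially unargued. Saying that $\theta$ ``is automatically the lift of $\prod_\sigma x_\sigma^{n_\sigma}$ determined by the freeness of the generator'' or appealing to ``Nakayama on the rank-one free $A$-module $H^0$'' does not constrain $\theta$: Nakayama tells you a generator of $H^0$ exists and is nonzero residually, but it does not by itself distinguish the trivial lift $\prod_\sigma x_\sigma^{n_\sigma}$ from a lift $\prod_\sigma x_\sigma^{n_\sigma}\cdot\rho$ with $\rho$ residually trivial; deciding between the two requires controlling an obstruction class in degree-one $(\varphi,\Gamma)$-cohomology of the twist, which your snake-lemma sketch does not do. The references you point to, \cite{KPX} Theorem~6.3.9 and Lemma~6.2.12, are a triangulation-in-families theorem and a base-change lemma, neither of which obviously says that freeness of $H^0$ over $A$ forces the character exactly rather than up to a residually-trivial factor.

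The paper closes this gap by a more concrete mechanism. It reinterprets the free generator of $H^0_{\varphi,\Gamma}(D(\delta^{-1}))$ as an injection $R_A(\delta)\hookrightarrow D = R_A(\delta')$ with image $D' := R_A v$, reduces modulo $\mathfrak m_A$ and applies \cite{KPX} Corollary~6.2.9 to get $\overline{D'} = \prod_\sigma t_\sigma^{l_\sigma}\overline D$, identifies $l_\sigma = t_\sigma - k_\sigma$ from the Hodge--Tate weights of $\delta$ and $\delta'$, and then passes to $M := \prod_\sigma t_\sigma^{-l_\sigma} D'$. This $M$ is a saturated rank-one sub-$(\varphi,\Gamma)$-module of $D$, hence equals $D$; since $M\simeq R_A\bigl(\delta\prod_\sigma x_\sigma^{-l_\sigma}\bigr)$, the uniqueness of the twisting character of a rank-one $(\varphi,\Gamma)$-module over $A$ (\cite{KPX} Lemma~6.2.13) yields $\delta' = \delta\prod_\sigma x_\sigma^{k_\sigma - t_\sigma}$ over all of $A$, not merely residually. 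It is exactly this saturation-plus-uniqueness step that your sketch is missing, and without it step 5 is an assertion, not a proof.
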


\begin{proof}
Let $D = R_A(\delta')$ and by hypothesis we have a injective morphism of $R_A$-modules
\[ R(\delta) \hookrightarrow D= R_A(\delta').\]
Let $v$ be the image of a basis of $R(\delta)$, and denote by $e$ a basis of $D$.
Thus, $D' = R_Av$ is a sub-$\varphi,\Gamma$-module of $D$, isomorphic to $R_A(\delta)$.
Reducing modulo $\mathfrak m_A$, by \cite{KPX} corollary 6.2.9 we have that $\overline{D'} = \prod_\sigma t_\sigma^{l_\sigma}\overline D$ for some $l_\sigma \in \ZZ$.
But $\Gamma$ acts on $v$ as $\delta(\gamma)$. Moreover, using the previous equality, it also acts on $\overline v$ by
\[ \gamma \overline v = \prod_{\sigma} LT_\sigma(\gamma)^{l_\sigma}\delta'(\gamma)\overline v.\]
Thus, $\overline \delta_{|\Gamma} = (\prod_{\sigma}x_\sigma \overline\delta')_{|\Gamma}$, which by hypothesis gives
\[ l_\sigma = t_\sigma - k_\sigma.\]
Consider $M = \prod_\sigma t_\sigma^{-l_\sigma} R_Av$. Then $M$ is saturated in $D'$, thus $D' = M$.
But as $R_Av \simeq R_A(\delta)$, $M \simeq R(\prod_\sigma x_\sigma^{-l_\sigma} \delta)$, thus, by \cite{KPX} Lemma 6.2.13,
\[ \delta' = \delta\prod_\sigma x_\sigma^{k_\sigma - t_\sigma} .\]
\end{proof}

Recall (\cite{BC2} Lemma 8.27, that we have an injective map
\[ \iota_{T,i,j} : \Ext_T(i,j) \hookrightarrow \Ext_{k[G_{E,S}]}(i,j).\]
\begin{theor}
\label{theorB5}
Let $\rho : G \fleche \GL_{d_i+d_j}(A/I)$ an extension of $T_1$ by $T_i$ inside the image of $\iota_{T,i,1}$. Then, if $p$ splits, for $\star = v,\overline v$
\[ D_{crys,\star}(\rho_c(\delta_{1,|\Gamma}^{-1}))^{\varphi = F_1}\]
is free of rank 1 over $A/I$.  If $p$ is inert,
\[ D_{crys,\tau}(\rho_c(\delta_{1,|\Gamma}^{-1}))^{\varphi^2 = F_1}\] is free of rank 1 over $A/I$. 

\end{theor}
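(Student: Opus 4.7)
The plan is to localize at a place $\star$ above $p$, pass to $(\varphi,\Gamma)$-modules over the Robba ring $R_{A/I,\star}$, and exploit the triangulation of $\rho$ induced by its membership in the image of $\iota_{T,i,1}$. Twisting the extension $0\to T_i\to\rho\to T_1\to 0$ by $\delta_{1|\Gamma}^{-1}$ and applying the exact functor $D_{rig,\star}$ gives a short exact sequence
\[
0\to D_i\to D\to D_1\to 0,
\]
with $D_j = D_{rig,\star}(T_j(\delta_{1|\Gamma}^{-1}))$. By Lemma \ref{lemmaB4} combined with Lemma \ref{lemmaB5}, and using Lemma \ref{lemmaB2} together with Proposition \ref{proppoids} to pin down the Sen weights modulo $I_{tot}$, each $D_{rig,\star}(T_j)$ equals $R_{A/I}(\delta_j')$, where $\delta_j'$ agrees with $\delta_j$ at $p$ and differs from $\delta_j$ on $\Gamma$ only by a character with integer Hodge--Tate weights. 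In particular $D_j = R_{A/I}(\delta_j'\delta_{1|\Gamma}^{-1})$ has Frobenius eigenvalue $F_j$ at $p$ and integer weights on $\Gamma$.

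First I would show that the $\varphi=F_1$-eigenspace of $D_{crys,\star}(D)$ injects into that of $D_{crys,\star}(D_1)$, which is free of rank one by the above description of $D_1$. Taking $D_{crys,\star}$ of the short exact sequence and restricting to the $\varphi=F_1$ eigenspace, the left-hand term $D_{crys,\star}(D_i)^{\varphi=F_1}$ vanishes: indeed $D_i$ has natural slope $F_i$, and $F_i-F_1\in (A/I)^\times$ because at the reference point $y$ the three eigenvalues $(F_1,F_2,F_3)$ take pairwise distinct values, namely $(\overline\chi(p),1,p^{-1})$ in the split case and $(1,\overline\chi(p),p^{-1})$ in the inert case; these have pairwise distinct absolute values since $|\overline\chi(p)|=p^{-1/2}$, so $F_i-F_1$ is a unit both at $y$ and in the local ring $A/I$. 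In the inert case the same argument applies with $\varphi^2$ replacing $\varphi$, since over the unramified quadratic extension $E_\star/\QQ_p$ the relevant crystalline Frobenius acting $E_\star$-linearly on $D_{crys,\tau}$ is $\varphi^2$.

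The hard part will be the opposite inclusion: exhibiting a crystalline rank-one sub-object of $D$ with Frobenius eigenvalue $F_1$ (respectively $\varphi^2=F_1$). This is where the hypothesis $\rho\in\mathrm{Im}(\iota_{T,i,1})$ must enter. The idea is that such extensions inherit a triangulation from the pseudo-deformation $T$ equipped with its refinement $\sigma$ of Definition \ref{raf}: running the argument of Lemma \ref{lemmaB4} through the $S$-module $M$ of trace $T$ produces a free rank-one space of $\varphi$-$F_1$-invariant vectors in $D_{rig,\star}(M(\delta_1^{-1}))$, and the surjection $M/I_{tot}M\twoheadrightarrow\rho$ (whose existence is precisely what membership in the image of $\iota$ encodes) descends these vectors to a saturated sub-$(\varphi,\Gamma)$-module $L\subset D$ isomorphic to $R_{A/I}(\delta_1'\delta_{1|\Gamma}^{-1})$. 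Since this character is trivial on $\Gamma$ up to integer weights and sends $p$ to $F_1$, the line $L$ is crystalline and produces the required rank-one summand of $D_{crys,\star}(\rho(\delta_{1|\Gamma}^{-1}))^{\varphi=F_1}$, matching the bound from the first step. The technical heart of the proof is thus converting the GMA/$\iota_{T,i,1}$ hypothesis into the existence of the triangulation $L$, which will require the machinery of \cite{KPX} on families of triangulated $(\varphi,\Gamma)$-modules, adapted along the lines of \cite{BC2} Chapter 4 to the non-necessarily-crystalline setting introduced in Section 10.3.
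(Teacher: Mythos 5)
Your overall strategy matches the paper's: squeeze $D_{crys,\star}(\rho_c(\delta_{1|\Gamma}^{-1}))^{\varphi=F_1}$ between an upper bound extracted from the quotient $T_1$ and a lower bound pushed down from a large $S$-module $M$ of trace $T$ carrying a triangulation à la \cite{KPX}. Your second paragraph (vanishing of the $T_i$-contribution because the Frobenius slopes $F_i$ and $F_1$ differ, identification of $D_{rig}(T_j)$ via Lemmas \ref{lemmaB4} and \ref{lemmaB5}) is essentially what the paper does.

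The gap is in the lower-bound step, precisely the part you flag as the ``technical heart.'' You propose to descend the free rank-one $A/I$-line in $H^0_{\varphi,\Gamma}(D_{rig,\star}(M(\delta_1^{-1})))$ through ``the surjection $M/I_{tot}M\twoheadrightarrow\rho$'' to a saturated sub-$(\varphi,\Gamma)$-module $L\subset D_{rig,\star}(\rho_c(\delta_{1|\Gamma}^{-1}))$. This would not work as stated: a surjection of $(\varphi,\Gamma)$-modules does not send a rank-one sub-object to a nonzero, let alone saturated, sub-object (the line could die in the kernel or have torsion image), and producing a genuine triangulation of $D_{rig,\star}(\rho_c)$ is a strictly stronger claim than the theorem requires. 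What membership in the image of $\iota_{T,i,1}$ actually buys you, via \cite{BC2} Theorem 1.5.6(2), is an exact sequence over $A/I$ (not $A/I_{tot}$)
\[
0\fleche K \fleche M_j/IM_j \oplus \rho_i \fleche \rho_c \fleche 0,
\]
whose kernel $K$ has semi-simplification a direct sum of $\overline{T_k}$ with $k\neq 1$. This constituent constraint on $K$ — which your sketch never isolates — is the whole point: it forces $D_{crys,\star}(K(\delta_1^{-1}))^{\varphi=F_1}=0$ (and likewise for $\rho_i$, since $i\neq 1$), so applying the left-exact functor $D_{crys,\star}(\cdot(\delta_1^{-1}))^{\varphi=F_1}$ to the sequence gives an injection
\[
D_{crys,\star}(M_j(\delta_1^{-1}))^{\varphi=F_1}\hookrightarrow D_{crys,\star}(\rho_c(\delta_1^{-1}))^{\varphi=F_1}
\]
with no saturation or nonvanishing of an image line to worry about. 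The source is then shown to be free of rank one by the devissage of Lemma \ref{lemmaB4}: the inclusion $R_{A/I}\hookrightarrow D_{rig}(M(\delta_1^{-1}))$, comparison with the residual rank at $y$, and Nakayama. So the hard part is not ``converting the GMA hypothesis into a triangulation of $D_{rig}(\rho_c)$,'' which your plan asks for but cannot obviously deliver; it is invoking the constituent structure of $K$ from \cite{BC2} Theorem 1.5.6(2) and then letting left-exactness of $D_{crys}$ do the transport.
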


\begin{proof}
Recall that $1$ is the only constituent of $\rho_y$ which has $1 = p^{\wt_1}F_1$ as eigenvalue for its Frobenius. 
By \cite{BC2} Theorem 1.5.6 (2), there is an exact sequence,
\[ 0 \fleche K \fleche (M_j/IM_j \oplus \rho_i) \fleche \rho_c \fleche 0,\]
with $K^{ss}$ being a direct sum of $\overline{T_k}$, $k \neq 1$.
Thus, $D_{crys}(K(\delta_1^{-1}))^{\varphi = F_1} = D_{crys}(T_i(\delta_1^{-1}))^{\varphi = F_1} = \{0\}$. In particular, it is enough to prove that
\[ D_{crys}(M_j(\delta_1^{-1}))^{\varphi = F_1}\]
is free of rank 1 over $A$. We will use the same devissage as in \ref{lemmaB4}. By \cite{BC2} Lemma 4.3.9, there exists $M = M_j \oplus N_j$ such that $MK = K^3$ a sub-$A[G]$ module of $K^3$ of finite type over $A$. Extending this module to an affinoid $U \subset \mathcal E$ containing $y$, and using the accumulation of $Z'$ at $y$ (Proposition \ref{propB1}), we can find a birational morphism $\pi : U' \fleche U$ and $\mathcal M'$ the strict transform of $\mathcal M$, locally free on $U'$, for which the conclusion of \cite{KPX} Theorem 6.3.9 for $(\mathcal M')^\vee$ and $\delta_1^{-1}$ applies. In particular 
\[H^0_{\varphi,\Gamma}(D_{rig}(\mathcal M')(\delta_1^{-1}))\]
is locally free of rank one on $U'$. 

As in Lemma \ref{lemmaB4} we can specialize 
at $\mathcal O_{y'}$ for every $y'$ above $y \in U$. But we have the commuting diagram
\begin{center}
\begin{tikzpicture}[description/.style={fill=white,inner sep=2pt}] 
\matrix (m) [matrix of math nodes, row sep=3em, column sep=2.5em, text height=1.5ex, text depth=0.25ex] at (0,0)
{ 
H^0_{\varphi,\Gamma}(D_{rig}(\mathcal M')(\delta_1^{-1})) \otimes \mathcal O_{y'}/I' & H^0_{\varphi,\Gamma}(D_{rig}(\mathcal M')(\delta_1^{-1})\otimes \mathcal O_{y'}/I')  \\
H^0_{\varphi,\Gamma}(D_{rig}(\mathcal M')(\delta_1^{-1})) \otimes \mathcal O_{y'}/\mathfrak m_{y'} & H^0_{\varphi,\Gamma}(D_{rig}(\mathcal M')(\delta_1^{-1}) \otimes \mathcal O_{y'}/\mathfrak m_{y'}) \\
};
\path[->,font=\scriptsize] 
(m-1-1) edge node[auto] {$$} (m-1-2)
(m-2-1) edge node[auto] {$i$} (m-2-2)
(m-1-1) edge node[auto] {$f$} (m-2-1)
(m-1-2) edge node[auto] {$red$} (m-2-2)
;
\end{tikzpicture}
\end{center}
where the map $i$ is injective (\cite{KPX} eq 6.3.9.1), the map $f$ is non-zero, thus the map $red$ is also non-zero. By \cite{BC2} Lemma 3.3.9,
\[ H^0_{\varphi,\Gamma}(D_{rig}(\mathcal M')(\delta_1^{-1})\otimes \mathcal O_{y'}/I')\]
is free of rank one over $\mathcal O_{y'}/I'$ for all $y' \in \pi^{-1}(y)$ and $I'$ of cofinite length.
Thus the hypothesis of \cite{BC2} Proposition 3.2.3 are satisfied, and
by \cite{BC2} Lemma 3.3.9 again,
\[ H^0_{\varphi,\Gamma}(D_{rig}(M \otimes A/I)(\delta_1^{-1}))\]
is free of rank 1. In particular we have an injection of $R_{A/I}$-modules,
\[ 0 \fleche R_A \fleche D_{rig}(M\otimes A/I)(\delta_1^{-1}) \fleche Q \fleche 0.\]
Moreover, as the reduction to $A/\mathfrak m_A$ of $D_{crys}(M(\delta_1^{-1}))^{\varphi = 1}$ is of rank 1,
using the functor $D_{cris}$, we have that $D_{cris}(1) \subset D_{cris}(M(\delta_1^{-1}))^{\varphi = 1}$ and thus $D_{crys}(Q)^{\varphi = 1} = \{0\}$.
In particular $D_{crys}(M(\delta^{-1}))^{\varphi = 1} = D_{crys}(1)$ is free of rank 1 over $A$, and thus
\[ D_{crys}(M(\delta_{1|\Gamma})^{-1})^{\varphi = F_1}\]
is free of rank 1 over $A$. The same proof remains valid in the case where $p$ is inert.
\end{proof}

\bibliographystyle{smfalpha} 
\bibliography{biblio} 

\end{document}